\DeclareMathOperator{\re}{Re}
\DeclareMathOperator{\im}{Im}
\newcommand{\mC}{{\mathds C}}
\newcommand{\mR}{{\mathds R}}
\newcommand{\mZ}{{\mathds Z}}
\newtheorem{theorem}{Theorem}[subsection]
\newtheorem{proposition}[theorem]{Proposition}
\newtheorem{lemma}[theorem]{Lemma}
\newtheorem{corollary}[theorem]{Corollary}
\theoremstyle{definition}
\newtheorem*{definition}{Definition}
\newtheorem*{remark}{Remark}
\begin{document}

\title{Discrete complex analysis on planar quad-graphs}

\author{Alexander I.~Bobenko\footnote{Institut f\"ur Mathematik, MA 8-3, Technische
Universit\"at Berlin, Stra{\ss}e des 17. Juni 136, 10623 Berlin, Germany.}$\;^{,1}$ \and Felix G\"unther\footnotemark[1]$\;^{,2}$}

\date{}
\maketitle

\footnotetext[1]{Supported by the DFG Collaborative Research Center SFB/TRR 109 ``Discretization in Geometry and Dynamics''. E-mail: bobenko@math.tu-berlin.de}
\footnotetext[2]{Supported by the Deutsche Telekom Stiftung. E-mail: fguenth@math.tu-berlin.de}

\begin{abstract}
\noindent
We develop a linear theory of discrete complex analysis on general quad-graphs, continuing and extending previous work of Duffin, Mercat, Kenyon, Chelkak and Smirnov on discrete complex analysis on rhombic quad-graphs. Our approach based on the medial graph yields more instructive proofs of discrete analogs of several classical theorems and even new results. We provide discrete counterparts of fundamental concepts in complex analysis such as holomorphic functions, derivatives, the Laplacian, and exterior calculus. Also, we discuss discrete versions of important basic theorems such as Green's identities and Cauchy's integral formulae. For the first time, we discretize Green's first identity and Cauchy's integral formula for the derivative of a holomorphic function. In this paper, we focus on planar quad-graphs, but we would like to mention that many notions and theorems can be adapted to discrete Riemann surfaces in a straightforward way.

\noindent
In the case of planar parallelogram-graphs with bounded interior angles and bounded ratio of side lengths, we construct a discrete Green's function and discrete Cauchy's kernels with asymptotics comparable to the smooth case. Further restricting to the integer lattice of a two-dimensional skew coordinate system yields appropriate discrete Cauchy's integral formulae for higher order derivatives.\\ \vspace{0.5ex}

\noindent
\textbf{2010 Mathematics Subject Classification:} 39A12; 30G25.\\ \vspace{0.5ex}

\noindent
\textbf{Keywords:} Discrete complex analysis, quad-graph, Green's function, Cauchy's integral formulae, parallelogram-graphs.
\end{abstract}

\raggedbottom
\setlength{\parindent}{0pt}
\setlength{\parskip}{1ex}


\section{Introduction}\label{sec:intro}

We start with a short history of discrete complex analysis before sketching the organization of our paper. Our aim is to give just a very rough overview of the linear discrete theories of complex analysis we are enhancing and to name connections to statistical physics and the nonlinear discrete theory of complex analysis based on circle patterns. For a more detailed discussion of these topics, we refer to the survey of Smirnov \cite{Sm10S}, on which our introduction is based and that also discusses applications to probability theory and mathematical physics, and to the book \cite{BoSu08} of the first author and Suris that not only investigates the influence of discrete integrable systems to discrete theories of complex analysis, but also gives a good overview of discrete differential geometry in general.

Linear theories of discrete complex analysis look back on a long and varied history. Already Kirchhoff's circuit laws describe a discrete harmonicity condition for the potential function whose gradient describes the current flowing through the electric network. A notable application of Kirchhoff's laws in geometry was the article \cite{BSST40} of Brooks, Smith, Stone, and Tutte, who used coupled discrete harmonic functions (in fact, discrete holomorphic functions) to construct tilings of rectangles into squares with different integral side lengths.

Discrete harmonic functions on the square lattice were studied by a number of authors in the 1920s, including Courant, Friedrichs, and Lewy, who showed convergence of solutions of the Dirichlet boundary value problem to their corresponding continuous counterpart \cite{CoFrLe28}. Recently, Skopenkov studied Dirichlet boundary value problems of discrete analytic functions on general quad-graphs in \cite{Sk13}, and he showed a convergence result in the case of quad-graphs where the diagonals of quadrilaterals intersect orthogonally.

Discrete holomorphic functions on the square lattice were studied by Isaacs \cite{Is41}. He proposed two different definitions for holomorphicity. The first one said that $f$ is \textit{monodiffric of the first kind} if\[f(z+i\varepsilon)-f(z)=i(f(z+\varepsilon)-f(z)),\] where $\varepsilon$ denotes the side length of the squares. This definition is not symmetric on the square lattice, but it becomes symmetric on the triangular lattice obtained by inserting the diagonals parallel to $1-i$. Dynnikov and Novikov studied this notion in \cite{DN03}.

Isaac's second definition was given by \[f(z+i\varepsilon)-f(z+\varepsilon)=i(f(z+(1+i)\varepsilon)-f(z)).\] In this context, it is natural to consider the real part of a discrete holomorphic function as being defined on one type of vertices, say black, and the imaginary part on the other type of vertices, say white, corresponding to a bipartite decomposition of the square lattice. Lelong-Ferrand reintroduced this notion in \cite{Fe44} and developed the theory to a level that allowed her to prove the Riemann mapping theorem using discrete methods \cite{Fe55}. Duffin also studied discrete complex analysis on the square grid \cite{Du56} and was the first who extended the theory to rhombic lattices \cite{Du68}. Mercat \cite{Me01}, Kenyon \cite{Ke02}, and Chelkak and Smirnov \cite{ChSm11} resumed the investigation of discrete complex analysis on rhombic lattices or, equivalently, isoradial graphs. In these settings, it was natural to split the real and the imaginary part of a discrete holomorphic function to the two vertex sets of a bipartite decomposition.

Some two-dimensional discrete models in statistical physics exhibit conformally invariant properties in the thermodynamical limit. Such conformally invariant properties were established by Smirnov for site percolation on a triangular grid \cite{Sm01} and for the random cluster model \cite{Sm10}, by Chelkak and Smirnov for the Ising model \cite{ChSm12}, and by Kenyon for the dimer model on a square grid (domino tiling) \cite{Ke00}. In all cases, linear theories of discrete analytic functions on regular grids were highly important. Kenyon as well as Chelkak and Smirnov obtained important analytic results \cite{Ke02, ChSm11} that were instrumental in the proof that the critical Ising model is universal, i.e., that the scaling limit is independent of the shape of the lattice \cite{ChSm12}. Already Mercat related the theory of discrete complex analysis to the Ising model and investigated criticality \cite{Me01}.

Important non-linear discrete theories of complex analysis involve circle packings or, more generally, circle patterns. Rodin and Sullivan first proved that the Riemann mapping of a complex domain to the unit disk can be approximated by circle packings \cite{RSul87}. A similar result for isoradial circle patterns, even with irregular combinatorics, is due to B\"ucking \cite{Bue08}. In their paper \cite{BoMeSu05}, the first author, Mercat, and Suris showed how the linear theory of discrete holomorphic functions on quad-graphs can be obtained by linearizing the theory on circle patterns: Discrete holomorphic functions describe infinitesimal deformations of circle patterns. In the case of parallelogram-graphs, they embedded the quad-graph in $\mathds{Z}^n$ and introduced a discrete exponential function and a discrete logarithm, generalizing Kenyon's discrete exponential and discrete Green's function \cite{Ke02}.

Our setup in Section~\ref{sec:general} is a strongly regular cellular decomposition of the complex plane into quadrilaterals, called quad-graph, that we assume to be bipartite. Of crucial importance for our work is the medial graph of a quad-graph. It provides the connection between the notions of discrete derivatives of Kenyon \cite{Ke02}, Mercat \cite{Me07}, and Chelkak and Smirnov \cite{ChSm11}, extended from rhombic to general quad-graphs, and discrete differential forms and discrete exterior calculus as suggested by Mercat \cite{Me01,Me08}. Concerning discrete differential forms, we get essentially the same definitions as Mercat proposed in \cite{Me08}. However, our notation of discrete exterior calculus is slightly more general and shows its power when considering integral formulae. Discrete Stokes' Theorem~\ref{th:stokes} will be a consequence and not part of the definition of the discrete exterior derivate as in the work of Mercat \cite{Me01,Me08}, and in Theorem~\ref{th:derivation} we prove that the discrete exterior derivative is a derivation of the discrete wedge-product. These two theorems are the most powerful ones in our setting. In particular, the proof of discrete Green's identities in Theorem~\ref{th:Green_identities} is an immediate corollary. Here, a discrete version of Green's first identity is provided for the first time.

According to the paper \cite{ChSm11} of Chelkak and Smirnov, one of the unpleasant facts of all discrete theories of complex analysis is that (pointwise) multiplication of discrete holomorphic functions does not yield a discrete holomorphic function in general. We suggest at least a partial solution in Corollary~\ref{cor:discrete_product}, where we describe how the medial graph allows to (kind of pointwise) multiply discrete holomorphic functions to a function that is not defined on the vertex sets of the original graphs anymore, but that is discrete holomorphic in a certain sense.

Based on Skopenkov's results on the existence and uniqueness of solutions to the discrete Dirichlet boundary value problem \cite{Sk13}, we prove surjectivity of the discrete differentials and the discrete Laplacian seen as linear operators in Theorem~\ref{th:plane_surjective}. In particular, discrete Green's functions and discrete Cauchy's kernels $z^{-1}$ exist. As a consequence, we can formulate discrete Cauchy's integral formulae for discrete holomorphic functions in Theorem~\ref{th:Cauchy_formula} and for the discrete derivative of a discrete holomorphic function on the vertices of the quad-graph in Theorem~\ref{th:Cauchy_formula_derivative}. Unfortunately, we cannot require any certain asymptotic behavior of them in the general setup so far. But at least we show in Theorem~\ref{th:harmonic_asymptotics} that any discrete harmonic function with asymptotics $o(v^{-1/2})$ is constant, provided that all interior angles and side lengths of the quadrilaterals are bounded.

Section~\ref{sec:parallel} is devoted to discrete complex analysis on planar parallelogram-graphs. There, we construct discrete Green's functions and discrete Cauchy's kernels with asymptotics similar to the functions in the rhombic case \cite{Ke02,Bue08,ChSm11} and close to the smooth case. This will be proven in Theorems~\ref{th:Green_asymptotics}, \ref{th:Cauchy_asymptotics_1}, and~\ref{th:Cauchy_asymptotics_2}, assuming that the interior angles and the ratio of side lengths of all parallelograms are bounded. The construction of these functions is based on the discrete exponential introduced by Kenyon on quasicrystallic rhombic quad-graphs \cite{Ke02} and by the first author, Mercat, and Suris on quasicrystallic parallelogram-graphs \cite{BoMeSu05}. In the end, we close with the very special case of the integer lattice of a skew coordinate system in the complex plane. In this case, we show in Theorem~\ref{th:Cauchy_formula_n_derivative} that discrete Cauchy's integral formulae for higher order discrete derivatives of a discrete holomorphic function exist and that the asymptotics of these formulae match the expectations from the previous results.

Finally, in the appendix we give the proofs of some statements concerning the combinatorics of parallelogram-graphs that we use for determining the asymptotic behavior of certain discrete functions.


\section{Discrete complex analysis on planar quad-graphs}\label{sec:general}

Although we focus on planar quad-graphs in this paper, many of our notions and theorems generalize to discrete Riemann surfaces. A corresponding linear theory of discrete Riemann surfaces will be discussed in a subsequent paper and can also be found in the PhD thesis of the second author \cite{Gue14}.


\subsection{Basic definitions and notation} \label{sec:basics}

The aim of this section is to introduce first bipartite quad-graphs and some basic notation in Section~\ref{sec:quadgraph} and then to discuss the medial graph in Section~\ref{sec:medial}. 


\subsubsection{Bipartite quad-graphs} \label{sec:quadgraph}

We consider a strongly regular cellular decomposition of the complex plane $\mC$ described by an embedded bipartite \textit{quad-graph} $\Lambda$ such that $0$-cells correspond to vertices $V(\Lambda)$, $1$-cells to edges $E(\Lambda)$, and $2$-cells to quadrilateral faces $F(\Lambda)$. We refer to the maximal independent sets of vertices of $\Lambda$ as \textit{black} and \textit{white} vertices. Furthermore, we restrict to locally finite cellular decompositions, i.e., a compact subset of $\mC$ contains only finitely many quadrilaterals.

The assumption of strong regularity asserts that the boundary of a quadrilateral contains a particular vertex or a particular edge at most once and that two different edges or faces have at most one vertex or edge in common, respectively. As a consequence, if a line segment connecting two vertices of $\Lambda$ is the (possibly outer) diagonal of a quadrilateral, there is just one such face of $\Lambda$. Let $\Gamma$ and $\Gamma^*$ be the graphs defined on the black and white vertices where the edges are exactly the corresponding diagonals of faces of $\Lambda$. If the diagonal lies outside the face, it is more convenient to consider the corresponding edge of $\Gamma$ or $\Gamma^*$ not as the straight line segment connecting its two endpoints, but as a curve lying inside the face. Then, the \textit{duality} between $\Gamma$ and $\Gamma^*$ becomes obvious: Black and white vertices are in one-to-one correspondence to the white and black faces they are contained in, and black and white edges dual to each other are exactly these who cross each other. For simplicity, we identify vertices of $\Lambda$ or of $\Gamma$ and $\Gamma^*$ with their corresponding complex values, and to oriented edges of $\Lambda,\Gamma,\Gamma^*$ we assign the complex numbers determined by the difference of their two endpoints.

To $\Lambda$ we associate its \textit{dual} $\Diamond:=\Lambda^*$. In general, we look at $\Diamond$ in an abstract way, identifying vertices or faces of $\Diamond$ with corresponding faces or vertices of $\Lambda$, respectively. However, in the particular case that all quadrilaterals are parallelograms, it makes sense to place the vertices of $\Diamond$ at the centers of the parallelograms. Here, the center of a parallelogram is the point of intersection of its two diagonals. Further details will be given in Sections~\ref{sec:derivative_lambda} and~\ref{sec:derivative_diamond}.

If a vertex $v \in V(\Lambda)$ is a vertex of a quadrilateral $Q\in F(\Lambda)\cong V(\Diamond)$, we write $Q \sim v$ or $v \sim Q$ and say that $v$ and $Q$ are \textit{incident} to each other. The vertices of $Q$ are denoted by $b_-,w_-,b_+,w_+$ in counterclockwise order, where $b_\pm \in V(\Gamma)$ and $w_\pm \in V(\Gamma^*)$.

\begin{definition}
For a quadrilateral $Q \in V(\Diamond)$ we define \[\rho_Q:= -i \frac{w_+-w_-}{b_+-b_-}.\] Moreover, let \[\varphi_Q:=\arccos\left(\re\left(i\frac{\rho_Q}{|\rho_Q|}\right)\right)=\arccos\left(\re\left(\frac{ (b_+ - b_-)\overline{(w_+ - w_-)}}{|b_+ - b_-||w_+ - w_-|}\right)\right)\] be the angle under which the diagonal lines of $Q$ intersect.
\end{definition}

Note that $0<\varphi_Q<\pi$. Figure~\ref{fig:quadgraph} shows a finite bipartite quad-graph together with the notations we have introduced for a single quadrilateral $Q$ and the notations we are using later for the \textit{star of a vertex} $v$, i.e., the set of all faces incident to $v$.

\begin{figure}[htbp]
\begin{center}
\beginpgfgraphicnamed{quad}
\begin{tikzpicture}
[white/.style={circle,draw=black,fill=white,thin,inner sep=0pt,minimum size=1.2mm},
black/.style={circle,draw=black,fill=black,thin,inner sep=0pt,minimum size=1.2mm},
gray/.style={circle,draw=black,fill=gray,thin,inner sep=0pt,minimum size=1.2mm}]
\node[white] (w1) [label=left:$v'_{s-1}$]
at (-1,-1) {};
\node[white] (w2) [label=below:$v'_s$]
 at (1,-1) {};
\node[white] (w3) [label=below:$v'_k$]
 at (1,0) {};
\node[white] (w4) [label=left:$v'_1$]
 at (1,1) {};
\node[white] (w5) [label=left:$v'_2$]
 at (-1,1) {};
\node[white] (w6) [label=above:$w_+$]
 at (-4,0) {};
\node[white] (w7)
 at (1,-2) {};
\node[white] (w8)
 at (3,0) {};
\node[white] (w9)
 at (1,2) {};
\node[white] (w10) [label=below:$w_-$]
 at (-6,-2) {};
\node[white] (w11)
 at (-2,-2) {};
\node[black] (b1) [label=above:$v$]
 at (0,0) {};
\node[black] (b2) [label=right:$v_1$]
 at (2,1) {};
\node[black] (b3)
 at (2,-1) {};
\node[black] (b4) [label=below:$v_s$]
 at (0,-2) {};
\node[black] (b5) [label=below:$b_+$]
 at (-4,-2) {};
\node[black] (b6) [label=above:$v_2$]
 at (0,2) {};
\node[black] (b7) [label=above :$b_-$]
 at (-6,0) {};
\node[black] (b8)
 at (-2,0) {};
\draw[color=white] (w1) --node[midway,color=black] {$Q_s$} (w2);
\draw (b2) -- (w9) -- (b6) -- (w5) -- (b8);
\draw (b4) -- (w7) -- (b3) -- (w8) -- (b2);
\draw (b8) -- (w1) --  (b4) --  (w2) -- (b3) -- (w3) -- (b2) -- (w4) -- (b6);
\draw (b1) -- (w1);
\draw (b1) -- (w2);
\draw (b1) -- (w3);
\draw (b1) -- (w4);
\draw (b1) -- (w5);
\draw (b8) -- (w11) -- (b4);
\draw (w6) -- (b5) -- (w10) -- (b7) -- (w6);
\draw (b5) -- (w11);
\draw (w6) -- (b8);
\draw[dashed] (b5) -- (b7);
\draw[dashed] (w10) -- (w6);
\node[gray] (z) [label=below:$Q$]
at (-5,-1) {};
\draw (-4.45,-1.55) arc (-45:43:0.8cm);
\coordinate[label=right:$\varphi_Q$] (phi) at (-4.9,-1);
\end{tikzpicture}
\endpgfgraphicnamed
\caption{Bipartite quad-graph with notations}
\label{fig:quadgraph}
\end{center}
\end{figure}
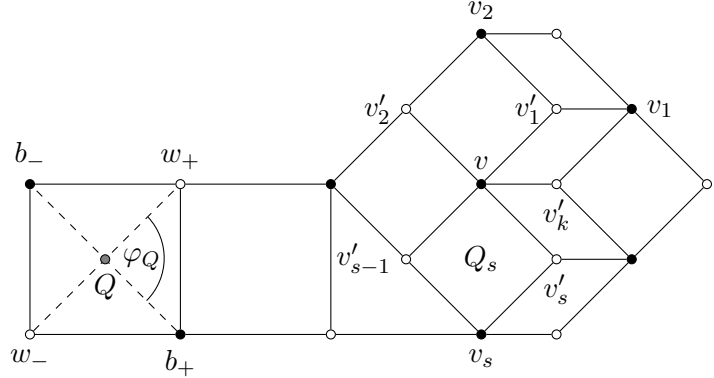

In addition, we denote by $\Diamond_0$ always a connected subset of $\Diamond$. It is said to be \textit{simply-connected} if the corresponding set of cells in $\mC$ is simply-connected. Its vertices induce a subgraph $\Lambda_0$ of $\Lambda$ together with subgraphs $\Gamma_0$ of $\Gamma$ and $\Gamma_0^*$ of $\Gamma^*$. For simplicity, we assume that the induced subgraphs are connected as well. By $\partial \Lambda_0$ we denote the subgraph of $\Lambda_0$ that consists of boundary vertices and edges.


\subsubsection{Medial graph} \label{sec:medial}

\begin{definition}
The \textit{medial graph} $X$ of $\Lambda$ is defined as follows. Its vertex set is given by all the midpoints of edges of $\Lambda$, and two vertices are adjacent if and only if the corresponding edges belong to the same face and have a vertex in common. The set of faces of $X$ is in bijection with $V(\Lambda)\cup V(\Diamond)$: A face $F_v$ corresponding to $v\in V (\Lambda)$ consists of the midpoints of edges of $\Lambda$ incident to $v$, and a face $F_Q$ corresponding to $Q\in V(\Diamond)$ consists of the midpoints of the four edges of $\Lambda$ belonging to $Q$.

Any edge $e$ of $X$ is the common edge of two faces $F_Q$ and $F_v$ for $Q \sim v$, denoted by $[Q,v]$.
\end{definition}

Let $Q \in V(\Diamond)$ and $v_0 \sim Q$. Due to Varignon's theorem, $F_Q$ is a parallelogram and the complex number assigned to the edge $e=[Q,v_0]$ connecting the midpoints of edges $v_0v'_-$ and $v_0v'_+$ of $\Lambda$ is just half of $e=v'_+-v'_-$. Note that if $Q$ is nonconvex, some part of $F_Q$ lies outside $Q$ and it may happen that $v$ lies not inside $F_v$. These situations do not occur if all quadrilaterals are convex.

In Figure~\ref{fig:medial}, showing $\Lambda$ with its medial graph, the vertices of $F_Q$ and $F_v$ are colored gray.

\begin{figure}[htbp]
\begin{center}
\beginpgfgraphicnamed{medial}
\begin{tikzpicture}
[white/.style={circle,draw=black,fill=white,thin,inner sep=0pt,minimum size=1.2mm},
black/.style={circle,draw=black,fill=black,thin,inner sep=0pt,minimum size=1.2mm},
gray/.style={circle,draw=black,fill=gray,thin,inner sep=0pt,minimum size=1.2mm}]
\node[white] (w1)
at (-1,-1) {};
\node[white] (w2)
 at (1,-1) {};
\node[white] (w3)
 at (1,0) {};
\node[white] (w4)
 at (1,1) {};
\node[white] (w5)
 at (-1,1) {};
\node[white] (w6)
 at (-4,0) {};
\node[white] (w7)
 at (1,-2) {};
\node[white] (w8)
 at (3,0) {};
\node[white] (w9)
 at (1,2) {};
\node[white] (w10)
 at (-6,-2) {};
\node[white] (w11)
 at (-2,-2) {};
\node[black] (b1) [label=above:$v$]
 at (0,0) {};
\node[black] (b2)
 at (2,1) {};
\node[black] (b3)
 at (2,-1) {};
\node[black] (b4)
 at (0,-2) {};
\node[black] (b5)
 at (-4,-2) {};
\node[black] (b6)
 at (0,2) {};
\node[black] (b7)
 at (-6,0) {};
\node[black] (b8)
 at (-2,0) {};
\draw[color=white] (w1) -- (w2) --  (w3) --  (w4) --  (w5) --  (w1);
\draw[dashed] (b2) -- (w9) -- (b6) -- (w5) -- (b8);
\draw[dashed] (b4) -- (w7) -- (b3) -- (w8) -- (b2);
\draw[dashed] (b8) -- (w1) --  (b4) --  (w2) -- (b3) -- (w3) -- (b2) -- (w4) -- (b6);
\draw[dashed] (b1) -- (w1);
\draw[dashed] (b1) -- (w2);
\draw[dashed] (b1) -- (w3);
\draw[dashed] (b1) -- (w4);
\draw[dashed] (b1) -- (w5);
\draw[dashed] (b8) -- (w11) -- (b4);
\draw[dashed] (w6) -- (b5) -- (w10) -- (b7) -- (w6);
\draw[dashed] (b5) -- (w11);
\draw[dashed] (w6) -- (b8);

\coordinate[label=center:$Q$] (z)  at (-5,-1) {};

\coordinate (m4) at (-3,0);
\coordinate (m5) at (-2,-1);
\coordinate (m6) at (-3,-2);
\coordinate (m8) at (-1.5,-0.5);
\coordinate (m9) at (-1.5,0.5);
\coordinate (m13) at (0.5,-1.5);
\coordinate (m14) at (1.5,-1);
\coordinate (m15) at (1.5,-1.5);
\coordinate (m16) at (0.5,-2);
\coordinate (m17) at (-0.5,-1.5);
\coordinate (m18) at (-1,-2);
\coordinate (m19) at (-0.5,1.5);
\coordinate (m20) at (0.5,1.5);
\coordinate (m21) at (0.5,2);
\coordinate (m22) at (1.5,1.5);
\coordinate (m23) at (1.5,1);
\coordinate (m25) at (1.5,0.5);
\coordinate (m26) at (2.5,0.5);
\coordinate (m27) at (2.5,-0.5);
\coordinate (m28) at (1.5,-0.5);

\node[gray] (m1)
 at (-6,-1) {};
\node[gray] (m2)
 at (-5,0) {};
\node[gray] (m3)
 at (-4,-1) {};
\node[gray] (m7)
 at (-5,-2) {};
\node[gray] (m12)
 at (0.5,-0.5) {};
\node[gray] (m11)
 at (-0.5,-0.5) {};
\node[gray] (m10)
 at (-0.5,0.5) {};
\node[gray] (m24)
 at (0.5,0.5) {};
\node[gray] (m29)
 at (0.5,0) {};

\draw (m1) -- (m2) -- (m3) -- (m4) -- (m5) -- (m6) -- (m3) -- (m7) -- (m1);
\draw (m5) -- (m8) -- (m9) -- (m10) -- (m11) -- (m12) -- (m13) -- (m14) -- (m15) -- (m16) -- (m13) -- (m17) -- (m18) -- (m5);
\draw (m8) -- (m17) -- (m11) -- (m8);
\draw (m10) -- (m19) -- (m20) -- (m21) -- (m22) -- (m23) -- (m20) -- (m24) -- (m23) -- (m25) -- (m26) -- (m27) -- (m28) -- (m14) -- (m12) -- (m29) -- (m28) -- (m25) -- (m29) -- (m24) -- (m10);
\end{tikzpicture}
\endpgfgraphicnamed
\caption{Bipartite quad-graph (dashed) with medial graph (solid)}
\label{fig:medial}
\end{center}
\end{figure}
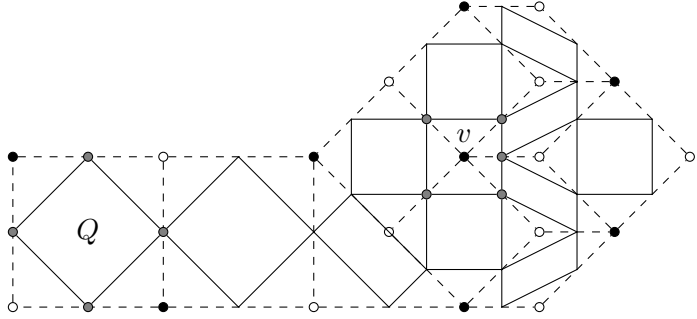

For a subgraph $\Diamond_0 \subseteq \Diamond$, we denote by $X_0 \subseteq X$ the subgraph of $X$ consisting of all edges $[Q,v]$ where $Q\in V(\Diamond_0)$ and $v\sim Q$. In the case of convex quadrilaterals, this means that we take all edges of $X$ lying inside $\Diamond_0$. The medial graph $X$ corresponds to a (strongly regular and locally finite) cellular decomposition of $\mC$ in a canonical way. In particular, we can talk about a topological disk in $F(X)$ and about a (counterclockwise oriented) boundary $\partial X_0$.

\begin{definition}
For $v\in V(\Lambda)$ and $Q \in V(\Diamond)$, let $P_v$ and $P_Q$ be the closed paths on $X$ connecting the midpoints of edges of $\Lambda$ incident to $v$ and $Q$, respectively, in counterclockwise direction. In Figure~\ref{fig:medial}, their vertices are colored gray. We say that $P_v$ and $P_Q$ are \textit{discrete elementary cycles}.
\end{definition}


\subsection{Discrete holomorphicity} \label{sec:holomorphicity}

In the classical theory, a real differentiable function $f:U\subseteq\mC \to \mC$ is holomorphic if and only if the Cauchy-Riemann equation $\partial_x f=-i\partial_y f$ is satisfied in all points of $U$. Moreover, holomorphic functions with nowhere-vanishing derivative preserve angles, and at a single point, lengths are uniformly scaled.

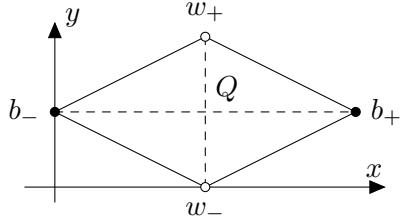
\begin{figure}[htbp]
\begin{center}
\beginpgfgraphicnamed{rhomb}
\begin{tikzpicture}[line cap=round,line join=round,>=triangle 45,x=2.0cm,y=2.0cm,
white/.style={circle,draw=black,fill=white,thin,inner sep=0pt,minimum size=1.2mm},
black/.style={circle,draw=black,fill=black,thin,inner sep=0pt,minimum size=1.2mm}]
  \draw[->,color=black] (-0.2,0) -- (2.2,0);
  \draw[->,color=black] (0,-0.1) -- (0,1.1);
  \draw[color=black] (0pt,-10pt);
  \draw [dashed] (1,1)-- (1,0);
  \draw [dashed] (0,0.5)-- (2,0.5);
  \draw (0,0.5)-- (1,1);
  \draw (1,1)-- (2,0.5);
  \draw (2,0.5)-- (1,0);
  \draw (1,0)-- (0,0.5);
  \node[white] (w1) [label=below:$w_-$] at (1,0) {};
  \node[white] (w2) [label=above:$w_+$] at (1,1) {};
  \node[black] (b1) [label=left:$b_-$] at (0,0.5) {};
  \node[black] (b2) [label=right:$b_+$] at (2,0.5) {};
  \coordinate[label=above right:$Q$] (Q) at (1,0.5);
  \coordinate[label=above right:$x$] (x) at (2,0);
  \coordinate[label=above right:$y$] (y) at (0,1);
\end{tikzpicture}
\endpgfgraphicnamed
\caption{Discretization of Cauchy-Riemann equation}
\label{fig:rhomb}
\end{center}
\end{figure}

Now, let us imagine a rhombus $Q$ in $\mC$ with vertices $b_-,w_-,b_+,w_+$ and diagonals $b_-b_+,w_-w_+$ aligned to the real $x$- and the imaginary $y$-axis, see Figure~\ref{fig:rhomb}. Then, \begin{align*} \frac{f(b_+)-f(b_-)}{b_+-b_-}\ &\textnormal{ discretizes}\qquad \partial_x f (Q),\\ \frac{f(w_+)-f(w_-)}{w_+-w_-} \ &\textnormal{ discretizes } -i \, \partial_y f (Q).\end{align*} This motivates the following definition of discrete holomorphicity due to Mercat \cite{Me08} that was also used previously in the rhombic setting by Duffin \cite{Du68} and others.

\begin{definition}
Let $Q \in V(\Diamond)$ and $f$ be a complex function on $b_-,w_-,b_+,w_+$. $f$ is said to be \textit{discrete holomorphic} at $Q$ if the \textit{discrete Cauchy-Riemann equation} is satisfied: \[\frac{f(b_+)-f(b_-)}{b_+ - b_-}=\frac{f(w_+)-f(w_-)}{w_+ - w_-}.\]
\end{definition}

If $Q$ is not a rhombus, we do not have an immediate interpretation of the discrete Cauchy-Riemann equation as before. But if a discrete holomorphic function $f$ does not have the same value on both black vertices $b_-$ and $b_+$, it preserves the angle $\varphi_Q$ and $f$ uniformly scales the lengths of the diagonals of $Q$. However, the image of $Q$ under $f$ might be a degenerate quadrilateral.

We immediately see that for discrete holomorphicity, only the differences at black and at white vertices matter. Hence, we should not consider constants on $V(\Lambda)$, but biconstants \cite{Me07} determined by each a value on $V(\Gamma)$ and $V(\Gamma^*)$. A function that is constant on $V(\Gamma)$ and constant on $V(\Gamma^*)$ is said to be \textit{biconstant}.

In the following, we will define discrete analogs of $\partial,\bar{\partial}$, first of functions on $V(\Lambda)$ in Section~\ref{sec:derivative_lambda} and later of functions on $V(\Diamond)$ in Section~\ref{sec:derivative_diamond}. Before, we introduce discrete differential forms in Section~\ref{sec:differential_forms}.


\subsubsection{Discrete derivatives of functions on \texorpdfstring{$V(\Lambda)$}{the vertices of the quad-graph}} \label{sec:derivative_lambda}

Remember that the derivatives $\partial:=\partial_z$ and $\bar{\partial}:=\partial_{\bar{z}}$ in complex analysis are defined through \[\partial=\left(\partial_x -i \partial_ y\right)/2 \textnormal{ and } \bar{\partial}=\left(\partial_x +i \partial_ y\right)/2;\] the coefficients in front of the partial derivatives in $\bar{\partial}$ are complex conjugate to the coefficients appearing in $\partial$. Using the interpretation \begin{align*}\frac{f(b_+)-f(b_-)}{b_+-b_-}&\cong \quad \partial_x f,\\ \frac{f(w_+)-f(w_-)}{w_+-w_-}&\cong -i \partial_y f\end{align*} for rhombi $Q$ as above, the definition of discrete derivatives in rhombic quad-graphs used by Chelkak and Smirnov in \cite{ChSm11}, \begin{align*}\partial_\Lambda f (Q)&=\frac{1}{2}\left(\frac{f(b_+)-f(b_-)}{b_+ - b_-}+\frac{f(w_+)-f(w_-)}{w_+ - w_-}\right),\\ \bar{\partial}_\Lambda f (Q)&=\frac{1}{2}\left(\frac{f(b_+)-f(b_-)}{\overline{b_+ - b_-}}+\frac{f(w_+)-f(w_-)}{\overline{w_+ - w_-}}\right),\end{align*} becomes plausible.

In a general quadrilateral $Q$, the diagonals usually do not intersect orthogonally and their quotient is not necessarily purely imaginary. Therefore, we have to choose different factors in front of the difference quotients, taking the deviation $\varphi_Q-\pi/2$ from orthogonality into account.

\begin{definition}
Let $Q \in V(\Diamond)$, and let $f$ be a complex function on $b_-,w_-,b_+,w_+$. The \textit{discrete derivatives} $\partial_\Lambda f$, $\bar{\partial}_\Lambda f$ are defined by
\begin{align*}
 \partial_\Lambda f (Q)&:=\lambda_Q \frac{f(b_+)-f(b_-)}{b_+ - b_-}+\bar{\lambda}_Q\frac{f(w_+)-f(w_-)}{w_+ - w_-},\\
 \bar{\partial}_\Lambda f (Q)&:=\bar{\lambda}_Q\frac{f(b_+)-f(b_-)}{\overline{b_+ - b_-}}+\lambda_Q\frac{f(w_+)-f(w_-)}{\overline{w_+ - w_-}},
\end{align*}
where $2\lambda_Q:=\exp\left(-i\left(\varphi_Q-\frac{\pi}{2}\right)\right)/\sin(\varphi_Q)$.
\end{definition}

Obviously, biconstant functions have vanishing discrete derivatives. If the quadrilateral $Q$ is a rhombus, $\varphi_Q=\pi/2$ and $\lambda_Q=1/2$. Thus, the definition above then reduces to the previous one in \cite{ChSm11}. The definition of discrete derivatives also matches the notion of discrete holomorphicity; and the discrete derivatives approximate their smooth counterparts correctly up to order one for general quad-graphs and up to order two for parallelogram-graphs:

\begin{proposition}\label{prop:examples}
 Let $Q \in V(\Diamond)$ and $f$ be a complex function on $b_-,w_-,b_+,w_+$.
 \begin{enumerate}
 \item $f$ is discrete holomorphic at $Q$ if and only if $\bar{\partial}_\Lambda f(Q)=0$.
 \item If $f(v)=v$, $\bar{\partial}_\Lambda f(Q)=0$ and $\partial_\Lambda f (Q)=1$.
 \item If $Q$ is a parallelogram and $f(v)=v^2$, $\bar{\partial}_\Lambda f(Q)=0$ and $\partial_\Lambda f(Q)=2Q$, where $Q$ as a vertex is placed at the center of the parallelogram.
 \item If $Q$ is a parallelogram and $f(v)=|v|^2$, $\bar{\partial}_\Lambda f(Q)=\overline{\partial_\Lambda f(Q)}=Q$, where $Q$ as a vertex is placed at the center of the parallelogram.
\end{enumerate}
\end{proposition}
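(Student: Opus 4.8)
The plan is to verify each of the four identities by direct substitution of the coordinates of the four vertices of $Q$ into the definitions of $\partial_\Lambda$ and $\bar\partial_\Lambda$, exploiting repeatedly the basic identity $\rho_Q=-i(w_+-w_-)/(b_+-b_-)$ together with $2\lambda_Q=\exp(-i(\varphi_Q-\pi/2))/\sin\varphi_Q$ and the relation $\re(i\rho_Q/|\rho_Q|)=\cos\varphi_Q$. First I would record the algebraic consequences of these definitions that will be used throughout: writing $e:=b_+-b_-$ and $e^*:=w_+-w_-$, we have $e^*/e = i\rho_Q$, and $2\lambda_Q\,|\rho_Q|\sin\varphi_Q = e^{-i(\varphi_Q-\pi/2)} = i e^{-i\varphi_Q} = i(\cos\varphi_Q - i\sin\varphi_Q)$; I would also note that from the definition of $\varphi_Q$, $|\rho_Q|e^{i(\pi/2-\varphi_Q)} = \rho_Q$ up to the choice of sign of the imaginary part, so that $2\lambda_Q\rho_Q = 1 + i\cot\varphi_Q$ or its conjugate, depending on orientation. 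The cleanest bookkeeping is to observe the single identity $\lambda_Q + \bar\lambda_Q\,\overline{\rho_Q}/\rho_Q\cdot(\text{something})$; more concretely, I expect the key reduction to be that $2\lambda_Q + 2\bar\lambda_Q(e/e^*)\cdot(e^*/e)$-type combinations collapse because $\lambda_Q\rho_Q^{-1}$-terms are arranged precisely so that the cross terms cancel. Once these one-line lemmas are in place, (i) and (ii) are immediate.

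For (i), note that $\bar\partial_\Lambda f(Q)=0$ reads $\bar\lambda_Q\,\Delta b/\bar e + \lambda_Q\,\Delta w/\overline{e^*}=0$, where $\Delta b:=f(b_+)-f(b_-)$ and $\Delta w:=f(w_+)-f(w_-)$; multiplying through by $\bar e/\bar\lambda_Q$ and using $\overline{e^*} = \overline{i\rho_Q e} = -i\overline{\rho_Q}\,\bar e$ together with $\lambda_Q/\bar\lambda_Q = e^{-i(2\varphi_Q-\pi)} = -e^{-2i\varphi_Q}$ and $\rho_Q = |\rho_Q|e^{i(\pi/2-\varphi_Q)}$ (so $\overline{\rho_Q} = |\rho_Q|e^{-i(\pi/2-\varphi_Q)}$), the equation simplifies to $\Delta b/e = \Delta w/e^*$, which is exactly the discrete Cauchy--Riemann equation; reversing the steps gives the converse. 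For (ii), with $f(v)=v$ we have $\Delta b = e$ and $\Delta w = e^*$, so $\partial_\Lambda f(Q) = \lambda_Q + \bar\lambda_Q$ and $\bar\partial_\Lambda f(Q) = \bar\lambda_Q\, e/\bar e + \lambda_Q\, e^*/\overline{e^*}$; the first is $2\re\lambda_Q = \re(e^{-i(\varphi_Q-\pi/2)})/\sin\varphi_Q = \sin\varphi_Q/\sin\varphi_Q = 1$, and the second vanishes by the same conjugation identity used in (i) (it is the defining relation with $\Delta b=e$, $\Delta w=e^*$, which does satisfy discrete Cauchy--Riemann trivially).

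For (iii) and (iv) I would use the parallelogram hypothesis, which gives $b_\pm = m \pm e/2$ and $w_\pm = m \pm e^*/2$ where $m$ is the center, i.e. $Q=m$ as a vertex. For $f(v)=v^2$: $\Delta b = b_+^2 - b_-^2 = (b_+ - b_-)(b_+ + b_-) = e\cdot 2m$, similarly $\Delta w = e^*\cdot 2m$; hence $\Delta b/e = \Delta w/e^* = 2m$, so by the computation in (ii), $\bar\partial_\Lambda f(Q)=2m\cdot 0 = 0$ (the Cauchy--Riemann difference vanishes) and $\partial_\Lambda f(Q) = 2m(\lambda_Q+\bar\lambda_Q) = 2m = 2Q$. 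For $f(v)=|v|^2 = v\bar v$: $\Delta b = b_+\overline{b_+} - b_-\overline{b_-} = (m+e/2)\overline{(m+e/2)} - (m-e/2)\overline{(m-e/2)} = m\bar e + \bar m e$ (the $|e|^2/4$ terms cancel), i.e. $\Delta b = 2\re(\bar m e)$, and likewise $\Delta w = 2\re(\bar m e^*)$. Then $\partial_\Lambda f(Q) = \lambda_Q\,\Delta b/e + \bar\lambda_Q\,\Delta w/e^* = \lambda_Q(\bar m + \bar e\,m/e\cdot\overline{(\ )}\ldots)$ — here I would write $\Delta b/e = \bar m + \bar e m/ e$ (from $2\re(\bar m e) = \bar m e + m\bar e$, divide by $e$) and $\Delta w/e^* = \bar m + \overline{e^*}m/e^*$, giving $\partial_\Lambda f(Q) = (\lambda_Q+\bar\lambda_Q)\bar m + m(\lambda_Q \bar e/e + \bar\lambda_Q\,\overline{e^*}/e^*)$; the first term is $\bar m$ by (ii), and the second term vanishes by precisely the identity that made $\bar\partial_\Lambda v = 0$ in (ii) (it is the complex conjugate of that vanishing expression). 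Thus $\partial_\Lambda f(Q) = \bar m = \bar Q$, and the analogous computation — or the observation that $f$ is real-valued so $\bar\partial_\Lambda f = \overline{\partial_\Lambda f}$ after using $\overline{e^*} = -i\overline{\rho_Q}\bar e$ — gives $\bar\partial_\Lambda f(Q) = m = Q$.

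The computations are entirely routine once the right normalization identities for $\lambda_Q$ are isolated; the only mild subtlety, and the step I would be most careful about, is the consistent choice of sign/branch in $\rho_Q = |\rho_Q|e^{\pm i(\pi/2-\varphi_Q)}$ versus the factor $e^{-i(\varphi_Q-\pi/2)}$ defining $\lambda_Q$ — i.e. making sure the orientation conventions for $b_\pm, w_\pm$ (counterclockwise) are matched so that the cross terms genuinely cancel rather than doubling. I would handle this once, at the start, by computing $2\lambda_Q\rho_Q$ and $2\bar\lambda_Q\overline{\rho_Q}$ explicitly in terms of $\varphi_Q$ from the definitions and recording the outcome as a displayed identity, after which (i)--(iv) follow mechanically.
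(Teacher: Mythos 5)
Your proposal is correct, and for parts (i)--(iii) it follows essentially the same route as the paper: your part (i) is the paper's conjugation identity (the paper writes $2\sin(\varphi_Q)\bar{\lambda}_Q/\overline{b_+-b_-}=-i(w_+-w_-)/(|w_+-w_-||b_+-b_-|)$ and its twin, then clears denominators to land on the discrete Cauchy--Riemann equation), and (ii), (iii) reduce to $\lambda_Q+\bar{\lambda}_Q=1$ exactly as you do. Part (iv) is where you genuinely diverge: the paper does not expand the differences directly, but first proves translation invariance of the claim --- using $g(v)=v\bar{z}$ and $h(v)=|v-z|^2$ together with part (ii) --- and then reduces to the case where the center of the parallelogram is the origin, where $b_+=-b_-$ and $w_+=-w_-$ force $\partial_\Lambda f(Q)=0$ at once. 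Your direct expansion $\Delta b/e=\bar{m}+m\bar{e}/e$, with the cross term killed by the complex conjugate of the identity $\bar{\partial}_\Lambda v=0$ from (ii), is a valid and arguably more self-contained alternative; the paper's version trades that algebra for the auxiliary function $h$. One point you must pin down, as you yourself anticipate: the branch is $\rho_Q=|\rho_Q|\exp(i(\varphi_Q-\pi/2))$, equivalently $\arg(i\rho_Q)=\varphi_Q$, forced by the counterclockwise labelling $b_-,w_-,b_+,w_+$ (and stated explicitly later in the paper, in the proof of Theorem~\ref{th:harmonic_asymptotics}); with the opposite sign, which is the one you actually display in your sketch of (i), the cancellation $\lambda_Q/(\bar{\lambda}_Q\, i\bar{\rho}_Q)=1/(i\rho_Q)$ fails, so the preliminary normalization identity you promise to record is genuinely load-bearing and not merely bookkeeping.
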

\begin{proof}
(i) We observe that \begin{align*}\frac{2\sin(\varphi_Q)\bar{\lambda}_Q}{\overline{b_+-b_-}}&=\frac{\exp\left(i\left(\varphi_Q-\frac{\pi}{2}\right)\right)}{\overline{b_+-b_-}}=-i\frac{w_+-w_-}{|w_+-w_-| |b_+-b_-|},\\ \frac{2\sin(\varphi_Q)\lambda_Q}{\overline{w_+-w_-}}&=\frac{\exp\left(-i\left(\varphi_Q-\frac{\pi}{2}\right)\right)}{\overline{w_+-w_-}}=i\frac{b_+-b_-}{|w_+-w_-| |b_+-b_-|}.\end{align*}
So if we multiply $\bar{\partial}_\Lambda f (Q)$ by $2i|w_+-w_-| |b_+-b_-|\sin(\varphi_Q)\neq 0$, we obtain \[\left(w_+-w_-\right)\left(f\left(b_+\right)-f\left(b_-\right)\right)-\left(b_+-b_-\right)\left(f\left(w_+\right)-f\left(w_-\right)\right).\] The last expression vanishes if and only if the discrete Cauchy-Riemann equation is satisfied.

(ii) Clearly, $f(v)=v$ satisfies the discrete Cauchy-Riemann equation. By the first part, $\bar{\partial}_\Lambda f(Q)=0$. Due to $2\sin(\varphi_Q)=\exp\left(-i\left(\varphi_Q-\frac{\pi}{2}\right)\right)+\exp\left(i\left(\varphi_Q-\frac{\pi}{2}\right)\right)$, $\partial_\Lambda f (Q)$ simplifies to $\lambda_Q+\bar{\lambda}_Q=1$ .

(iii) For $f(v)=v^2$, the discrete Cauchy-Riemann equation is equivalent to $b_++b_-=w_++w_-$. But since $Q$ is a parallelogram, both $(b_++b_-)/2$ and $(w_++w_-)/2$ equal the center $Q$ of the parallelogram. Thus, $f$ is discrete holomorphic at $Q$ and \[\partial_\Lambda f (Q)=\lambda_Q (b_++b_-)+\bar{\lambda}_Q (w_++w_-)=2Q (\lambda_Q+\bar{\lambda}_Q)=2Q.\]

(iv) Since $f$ is a real function, $\bar{\partial}_\Lambda f(Q)=\overline{\partial_\Lambda f(Q)}$ follows straight from the definition. Let $z\in \mC$ be arbitrary. If $g(v):=v\bar{z}$, $\partial_\Lambda g(Q)=\bar{z}$ and $\partial_\Lambda \bar{g}(Q)=0$ by the second part. So if we define the function $h(v):=|v-z|^2=|v|^2-v\bar{z}-\bar{v}z+|z|^2$, then $\overline{\partial_\Lambda h(Q)}=\overline{\partial_\Lambda f(Q)}-z$. Hence, the statement is invariant under translation, and it suffices to consider the case when the center of the parallelogram $Q$ is the origin. Then, $b_+=-b_-$ and $w_+=-w_-$ since $Q$ is a parallelogram. It follows that $f(b_-)=f(b_+)$ and $f(w_-)=f(w_+)$, so $\partial_\Lambda f(Q)=0$ as desired.
\end{proof}

Our first discrete analogs of classical theorems are immediate consequences of the discrete Cauchy-Riemann equation:

\begin{proposition}\label{prop:zeroderivative}
Let $\Diamond_0 \subseteq \Diamond$ and $f:V(\Lambda)\to\mC$ be discrete holomorphic.
\begin{enumerate}
 \item If $f$ is purely imaginary or purely real, $f$ is biconstant.
 \item If $\partial_\Lambda f\equiv 0$, $f$ is biconstant.
\end{enumerate}
\end{proposition}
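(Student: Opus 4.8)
The plan is to exploit the fact that on each quadrilateral $Q$ the discrete Cauchy-Riemann equation links the difference of $f$ across the black diagonal to the difference across the white diagonal, and that these two complex scalars are forced to have a fixed ratio determined by the geometry of $Q$. Concretely, for each $Q \in V(\Diamond)$ write $d_\Gamma(Q) := f(b_+) - f(b_-)$ and $d_{\Gamma^*}(Q) := f(w_+) - f(w_-)$; discrete holomorphicity says $d_\Gamma(Q)/(b_+-b_-) = d_{\Gamma^*}(Q)/(w_+-w_-)$, i.e.\ $d_{\Gamma^*}(Q) = i\rho_Q\, d_\Gamma(Q)$ (using the definition of $\rho_Q$, whose argument is never real-aligned since $0 < \varphi_Q < \pi$).

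For part (i): suppose $f$ is purely real. Then each $d_\Gamma(Q)$ and each $d_{\Gamma^*}(Q)$ is real. But $d_{\Gamma^*}(Q) = i\rho_Q\, d_\Gamma(Q)$, and $i\rho_Q = (w_+-w_-)/(b_+-b_-)$ is a nonzero complex number which is real only if the diagonals of $Q$ are parallel — impossible, since they intersect at angle $\varphi_Q \in (0,\pi)$. Hence $i\rho_Q \notin \mathds{R}$, so a real number times $i\rho_Q$ is real iff that real number is $0$; therefore $d_\Gamma(Q) = 0 = d_{\Gamma^*}(Q)$ for every $Q$. Since every edge of $\Gamma$ is a black diagonal of some face $Q$, this says $f$ takes the same value on the two endpoints of every edge of $\Gamma$; as $\Gamma$ is connected, $f$ is constant on $V(\Gamma)$, and likewise on $V(\Gamma^*)$, i.e.\ $f$ is biconstant. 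The purely imaginary case is identical after multiplying $f$ by $i$ (or by noting $d_\Gamma, d_{\Gamma^*}$ are then purely imaginary and running the same argument). The one point to be slightly careful about is the case of a nonconvex quadrilateral with an outer diagonal: the relation $d_{\Gamma^*}(Q) = i\rho_Q d_\Gamma(Q)$ still holds verbatim from the definitions (the complex numbers $b_\pm, w_\pm$ are the vertices regardless of convexity), so the argument goes through unchanged.

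For part (ii): if $\partial_\Lambda f \equiv 0$, then combining with discrete holomorphicity ($\bar\partial_\Lambda f \equiv 0$ by Proposition~\ref{prop:examples}(i)) we have on every $Q$ both $\lambda_Q\, d_\Gamma(Q)/(b_+-b_-) + \bar\lambda_Q\, d_{\Gamma^*}(Q)/(w_+-w_-) = 0$ and $d_\Gamma(Q)/(b_+-b_-) = d_{\Gamma^*}(Q)/(w_+-w_-)$. Substituting the second into the first gives $(\lambda_Q + \bar\lambda_Q)\, d_\Gamma(Q)/(b_+-b_-) = 0$; but $\lambda_Q + \bar\lambda_Q = 1$ (shown in the proof of Proposition~\ref{prop:examples}(ii)), so $d_\Gamma(Q) = 0$, and then $d_{\Gamma^*}(Q) = 0$ too. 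As before, connectedness of $\Gamma$ and $\Gamma^*$ forces $f$ to be biconstant. Neither part presents a real obstacle; the only thing demanding a moment's thought is the linear-algebra observation that $i\rho_Q$ is never real (equivalently that the $2\times 2$ real system "$d_\Gamma, d_{\Gamma^*}$ real and proportional by a genuinely complex factor" has only the trivial solution), which is exactly the hypothesis $0 < \varphi_Q < \pi$ encoded in the definition.
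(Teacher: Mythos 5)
Your proposal is correct and follows essentially the same route as the paper: in (i) the observation that $(w_+-w_-)/(b_+-b_-)=i\rho_Q$ is never real (since $0<\varphi_Q<\pi$) forces both diagonal differences to vanish, and in (ii) substituting the Cauchy--Riemann equation into $\partial_\Lambda f=0$ and using $\lambda_Q+\bar\lambda_Q=1$ kills both difference quotients, after which connectedness of $\Gamma$ and $\Gamma^*$ gives biconstancy. The paper phrases (i) as a contradiction and leaves the $\lambda_Q+\bar\lambda_Q=1$ step implicit in (ii), but the content is identical.
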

\begin{proof}
(i) Let $b_-$, $b_+$ be two adjacent vertices of $\Gamma$, and let $b_-,w_-,b_+,w_+ \in V(\Lambda_0)$ be the vertices of the corresponding quadrilateral in $\Diamond$. Let us assume that $f(b_+)\neq f(b_-)$. Due to the discrete Cauchy-Riemann equation, \[\frac{f(w_+)-f(w_-)}{f(b_+)-f(b_-)}=\frac{w_+ - w_-}{b_+ - b_-}.\] The left hand side is real, but the right hand side is not, contradiction. The same argumentation goes through if we start with two adjacent vertices of $\Gamma^*$.

(ii) Since $f$ is discrete holomorphic, \[\frac{f(b_+)-f(b_-)}{b_+ - b_-}=\frac{f(w_+)-f(w_-)}{w_+ - w_-}.\] $\partial_\Lambda f\equiv 0$ then yields that both sides of the discrete Cauchy-Riemann equation equal zero, so $f$ is constant on $V(\Gamma_0)$ and constant on $V(\Gamma_0^*)$.
\end{proof}


\subsubsection{Discrete differential forms} \label{sec:differential_forms}

We mainly consider two type of functions, functions $f:V(\Lambda)\to\mC$ and functions $h:V(\Diamond)\to\mC$. An example for a function on the quadrilateral faces is $\partial_\Lambda f$.

A \textit{discrete one-form} $\omega$ is a complex function on the oriented edges of the medial graph $X$, and a \textit{discrete two-form} $\Omega$ is a complex function on the faces of $X$. The evaluations of $\omega$ at an oriented edge $e$ of $X$ and of $\Omega$ at a face $F$ of $X$ are denoted by $\int_e \omega$ and $\iint_F \Omega$, respectively.

If $P$ is a directed path in $X$ consisting of oriented edges $e_1,e_2,\ldots,e_n$, the \textit{discrete integral} along $P$ is defined as $\int_P \omega=\sum_{k=1}^n \int_{e_k} \omega$. For closed paths $P$, we write $\oint_P \omega$ instead. In the case that $P$ is the boundary of an oriented disk in $X$, we say that the discrete integral is a \textit{discrete contour integral} with \textit{discrete contour} $P$. Similarly, the \textit{discrete integral} of $\Omega$ over a set of faces of $X$ is defined.

It turns out that discrete one-forms that actually come from discrete one-forms on $\Gamma$ and $\Gamma^*$ are of particular interest. We say that such a discrete one-form $\omega$ is of \textit{type} $\Diamond$. It is characterized by the property that for any $Q \in V(\Diamond)$ there exist complex numbers $p,q$ such that $\omega=p dz+qd\bar{z}$ on all edges $e=[Q,v]$, $v \sim Q$. In an analogous fashion we can define discrete one-forms $\omega'$ of \textit{type} $\Lambda$ by representing them as $\omega'=p dz+qd\bar{z}$ on all edges $e=[v,Q]$, $Q \sim v$, for some fixed vertex $v \in V(\Lambda)$ and complex numbers $p,q$ attached to it. However, these discrete one-forms do not play such an important role than the others. We mainly give them to get a complete picture.

As we are mainly interested in functions $f:V(\Lambda)\to\mC$ and $h:V(\Diamond)\to\mC$, discrete two-forms of particular interest are those that vanish on faces of $X$ corresponding to vertices of either $\Diamond$ or $\Lambda$. We will call them of being of \textit{type} $\Lambda$ or \textit{type} $\Diamond$, respectively. These discrete two-forms correspond to functions on $V(\Lambda)$ or $V(\Diamond)$ by the discrete Hodge star that will be defined later in Section~\ref{sec:hodge}.

\begin{definition}
Let $F$ be a face of the medial graph $X$. We define $\textnormal{ar}(F)$ to be twice the Euclidean area of $F$ in the complex plane. In contrast, $\textnormal{area}(P)$ will always denote the Euclidean area of a polygon $P$.
\end{definition}

\begin{remark}
As we have mentioned before, our main objects either live on the quad-graph $\Lambda$ or on its dual $\Diamond$. Thus, we have to deal with two different cellular decompositions at the same time. The medial graph has the crucial property that its faces are in one-to-one correspondence to vertices of $\Lambda$ and of $\Diamond$, i.e., to faces of $\Diamond$ and of $\Lambda$. Furthermore, the Euclidean area of the Varignon parallelogram inside $Q\in V(\Diamond)$ is just half of the area of $Q$. In some sense, a corresponding statement is true for the cells of $X$ corresponding to vertices of $\Lambda$, i.e., faces of $\Diamond$. However, there is no natural embedding of $\Diamond$ in the general setting. But in the particular case of parallelogram-graphs, we can make the statement precise: If an edge $QQ'$ of $\Diamond$ is represented by the two line segments that connect the centers of the parallelogram $Q$ and $Q'$ with the midpoint of their common edge, then the Euclidean area of the face of $X$ corresponding to a vertex $v \in V(\Lambda) \cong F(\Diamond)$ is exactly half of the area of the face $v$.

In summary, the medial graph allows us to deal with just one decomposition of the complex plane, but we have to count areas twice in order to get the right prefactors as in the continuous setup.
\end{remark}

For later purposes, let us compute $\textnormal{ar}(F)$. First, if $F$ is the Varignon parallelogram corresponding to the quadrilateral $Q\in V(\Diamond)$, \[\textnormal{ar}(F) =\frac{1}{2}|b_+-b_-| |w_+-w_-| \sin(\varphi_Q).\] Note again that the Euclidean area of $F$ is just half of $\textnormal{ar}(F)$. Second, if $F$ is the face of $X$ corresponding to a vertex $v\in V(\Lambda)$, its Euclidean area is a quarter of the area of the polygon $v'_1v'_2\ldots v'_k$ in the star of $v$. So $\textnormal{ar}(F)$ equals \[\frac{1}{2}\sum\limits_{Q_s \sim v}\textnormal{area}(\triangle vv'_{s-1} v'_{s})=\frac{1}{4}\sum\limits_{Q_s \sim v}\im\left(\left(v'_s-v\right)\overline{\left(v'_{s-1}-v\right)}\right)=\frac{1}{4}\sum\limits_{Q_s \sim v}\im\left(v'_s\bar{v}'_{s-1}\right),\] using that $\sum_{Q_s \sim v} \left(v \bar{v}'_{s-1}+\bar{v}v'_{s}\right)=\sum_{Q_s \sim v} \left(v \bar{v}'_{s}+\bar{v}v'_{s}\right)$ is real.

\begin{definition}
The discrete one-forms $dz$ and $d\bar{z}$ are given by $\int_e dz=e$ and $\int_e d\bar{z}=\bar{e}$ for any oriented edge $e$ of $X$. The discrete two-forms $\Omega_\Lambda$ and $\Omega_\Diamond$ are zero on faces of $X$ corresponding to vertices of $\Diamond$ or $\Lambda$, respectively, and defined by \[\iint_F \Omega_\Lambda=-2i\textnormal{ar}(F) \textnormal{ and } \iint_F \Omega_\Diamond=-2i\textnormal{ar}(F)\] on faces $F$ corresponding to vertices of $\Lambda$ or $\Diamond$, respectively. As defined above, $\textnormal{ar}(F)$ is twice the Euclidean area of $F$.
\end{definition}

\begin{remark}
$\Omega_\Lambda$ and $\Omega_\Diamond$ are the straightforward discretizations of $dz \wedge d\bar{z}$ having in mind that they are essentially defined on faces of $\Diamond$ or of $\Lambda$, respectively. It turns out that in local coordinates, we can perform our calculations with $\Omega_\Lambda$ and $\Omega_\Diamond$ in the discrete setting exactly as we do with $dz \wedge d\bar{z}$ in the smooth theory. Indeed, we will see in Section~\ref{sec:wedge} that $\Omega_\Diamond$ is indeed the discrete wedge product of $dz$ and $d\bar{z}$ seen as discrete one-forms of type $\Diamond$. The same is true for $\Omega_\Lambda$ if we consider $dz$ and $d\bar{z}$ as being of type $\Lambda$, but the discrete wedge-product is of interest just for discrete one-forms of type $\Diamond$.
\end{remark}

\begin{definition}
Let $f:V(\Lambda)\to\mC$, $h:V(\Diamond)\to\mC$, $\omega$ a discrete one-form, and $\Omega_1,\Omega_2$ discrete two-forms of type $\Lambda$ and $\Diamond$, respectively. For any edge $e=[Q,v]$ and any faces $F_v, F_Q$ of $X$ corresponding to the vertex star of $v\in V(\Lambda)$ or the Varignon parallelogram inside $Q \in V(\Diamond)$, we define the products $f\omega$, $h\omega$, $f\Omega_1$, and $h\Omega_2$ by
\begin{align*}
\int\limits_{e}f\omega:&=f(v)\int\limits_{e}\omega \ \quad \textnormal{ and } \quad \iint\limits_{F_v} f\Omega_1:=f(v)\iint\limits_{F_v}\Omega_1, \quad \iint\limits_{F_Q} f\Omega_1:=0;\\
\int\limits_{e}h\omega:&=h(Q)\int\limits_{e}\omega \quad \textnormal{ and } \quad \iint\limits_{F_v} h\Omega_2:=0, \qquad \qquad \quad \iint\limits_{F_Q} h\Omega_2:=h(Q)\iint\limits_{F_Q}\Omega_2.
\end{align*}
\end{definition}

In the following table, we give a quick overview of various discrete differential forms most of them will be discussed in Section~\ref{sec:exterior} and state whether they are essentially functions on $V(\Lambda)$ or functions on $V(\Diamond)$ or entirely objects on the cellular decomposition $X$. Although discrete one-forms of type $\Lambda$ or of type $\Diamond$ do not live themselves on $\Lambda$ or $\Diamond$, they are described by two functions defined on the vertices of $\Lambda$ or $\Diamond$, respectively. 

\begin{center}
\begin{tabular}{|c|c|c|c|} \hline
 & \textbf{$\Lambda$} & \textbf{$\Diamond$} & \textbf{$X$}\\
\hline 0-forms & $f,g : V(\Lambda) \to \mC$ & $h_1,h_2 : V(\Diamond) \to \mC$ & $f\cdot g=\int (fdg+gdf)$ \\
 & $\partial_{\Diamond} h, \bar{\partial}_{\Diamond} h$ & $\partial_{\Lambda} f, \bar{\partial}_{\Lambda} f$ &  \\
\hline
1-forms & $dh$ & $df$ & $fdg+gdf$ \\
& $h_1 dz + h_2 d\bar{z}$ & $f dz + gd\bar{z}$ & $fhdz$ \\
 & $\eta$ of type $\Lambda$ & $\omega, \omega'$ of type $\Diamond$ & $f\omega$ \\
\hline
2-forms & $\Omega_\Lambda$ & $\Omega_\Diamond$ &  \\
 & $\star f$ & $\star h$ & \\
 & $d\omega$ & $d\eta$ & $d(fhdz)$ \\
& $fd\omega$ & $\omega \wedge \omega'$  & $d(f\omega)$\\
\hline
\end{tabular}
\end{center}


\subsubsection{Discrete derivatives of functions on \texorpdfstring{$V(\Diamond)$}{the faces of the quad-graph}} \label{sec:derivative_diamond}

Before we pass on to discrete derivatives of functions on $V(\Diamond)$, we first prove an alternative formula for the discrete derivatives of functions on $V(\Lambda)$.

\begin{lemma}\label{lem:derivative_lambda}
Let $Q \in V(\Diamond)$ and $f$ be a complex function on the vertices of $Q$. Let $P_Q$ be the discrete elementary cycle around $Q$ and $F$ the face of $X$ corresponding to $Q$. Then,
\begin{align*}
 \partial_\Lambda f(Q)&=\frac{-1}{2i\textnormal{ar}(F)}\oint\limits_{P_Q} f d\bar{z},\\
 \bar{\partial}_\Lambda f (Q)&=\frac{1}{2i\textnormal{ar}(F)}\oint\limits_{P_Q} f dz.
\end{align*}
\end{lemma}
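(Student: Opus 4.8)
The plan is to compute the contour integral $\oint_{P_Q} f\,d\bar z$ directly by summing the contributions of the four edges of the discrete elementary cycle $P_Q$, and to check that the result is exactly $-2i\,\mathrm{ar}(F)\,\partial_\Lambda f(Q)$. Write the four vertices of $Q$ as $b_-,w_-,b_+,w_+$ in counterclockwise order. The cycle $P_Q$ runs through the four midpoints $m_1=(b_-+w_-)/2$, $m_2=(w_-+b_+)/2$, $m_3=(b_++w_+)/2$, $m_4=(w_++b_-)/2$ in this order. Hence $\oint_{P_Q} d\bar z$ telescopes to zero, which means that adding a constant to $f$ does not change the integral; so we may work with the differences $f(b_+)-f(b_-)$ and $f(w_+)-f(w_-)$ only, consistent with the fact that both sides of the asserted identity annihilate biconstants.

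First I would organize the edge sum so that the coefficient of each value $f(b_\pm),f(w_\pm)$ is collected. The edge from $m_1$ to $m_2$ has $d\bar z$-value $\overline{(b_+-b_-)/2}$ and carries the value at the common $\Lambda$-vertex $w_-$ (by the definition of $\int_e h\omega$ for an edge $e=[Q,v]$, here $v=w_-$); the edge from $m_2$ to $m_3$ carries $f(b_+)$ with $d\bar z$-value $\overline{(w_+-w_-)/2}$; from $m_3$ to $m_4$ carries $f(w_+)$ with value $\overline{(b_--b_+)/2}$; from $m_4$ to $m_1$ carries $f(b_-)$ with value $\overline{(w_--w_+)/2}$. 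Summing and regrouping by the black/white difference structure gives
\[
\oint_{P_Q} f\,d\bar z \;=\; \tfrac12\big(f(b_+)-f(b_-)\big)\,\overline{(w_+-w_-)}\;-\;\tfrac12\big(f(w_+)-f(w_-)\big)\,\overline{(b_+-b_-)} .
\]
Now I would recall the identities already established in the proof of Proposition~\ref{prop:examples}(i), namely
\[
2\sin(\varphi_Q)\,\bar\lambda_Q \;=\; -i\,\frac{\overline{b_+-b_-}\,(w_+-w_-)}{|b_+-b_-|\,|w_+-w_-|},\qquad
2\sin(\varphi_Q)\,\lambda_Q \;=\; i\,\frac{\overline{w_+-w_-}\,(b_+-b_-)}{|b_+-b_-|\,|w_+-w_-|},
\]
which, after dividing by $(b_+-b_-)$ and $(w_+-w_-)$ respectively, let me rewrite the two coefficients above as $-i\,\mathrm{ar}(F)\cdot 2\lambda_Q/(b_+-b_-)$ and $-i\,\mathrm{ar}(F)\cdot 2\bar\lambda_Q/(w_+-w_-)$, using $\mathrm{ar}(F)=\tfrac12|b_+-b_-||w_+-w_-|\sin(\varphi_Q)$. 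Substituting and factoring out $-2i\,\mathrm{ar}(F)$ reproduces precisely $\lambda_Q\frac{f(b_+)-f(b_-)}{b_+-b_-}+\bar\lambda_Q\frac{f(w_+)-f(w_-)}{w_+-w_-}=\partial_\Lambda f(Q)$, giving the first formula. The second formula follows the identical computation with $d\bar z$ replaced by $dz$ (so the edge values lose their conjugation bars), together with the companion identities for $\lambda_Q/\overline{(b_+-b_-)}$ and $\bar\lambda_Q/\overline{(w_+-w_-)}$ from the same lemma; the sign flips because $\oint f\,dz$ pairs $f(b_+)-f(b_-)$ with $+(w_+-w_-)$ rather than its conjugate.

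The only genuine subtlety — and the step I would be most careful about — is bookkeeping: correctly assigning to each of the four oriented edges of $P_Q$ the right $\Lambda$-vertex (the one it is incident to as $[Q,v]$) and the right complex increment $\tfrac12\overline{(v'_+-v'_-)}$ with the correct sign from the counterclockwise orientation, using the Varignon-parallelogram description of the edges of $X$ around $F_Q$ noted just after the definition of the medial graph. Once the four terms are written down correctly, regrouped into the two difference quotients, and matched against the explicit formulas for $\lambda_Q,\bar\lambda_Q$ and $\mathrm{ar}(F)$, everything is a short algebraic identity. One should also remark that nonconvexity of $Q$ causes no trouble: the computation is purely in terms of the complex numbers assigned to edges and is insensitive to whether $F_Q$ lies inside $Q$.
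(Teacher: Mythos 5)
Your proof is correct and follows essentially the same route as the paper's: evaluate $\oint_{P_Q} f\,d\bar z$ directly over the four edges of the Varignon parallelogram, group the terms into the differences $f(b_+)-f(b_-)$ and $f(w_+)-f(w_-)$, and match the resulting prefactors against $\lambda_Q/(b_+-b_-)$ and $\bar\lambda_Q/(w_+-w_-)$ via the identities from the proof of Proposition~\ref{prop:examples}~(i). The only cosmetic difference is that the paper obtains the second formula from the first by observing that all prefactors are complex conjugates, whereas you redo the computation with $dz$; both are fine.
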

\begin{proof}
Since $F$ is a parallelogram, $f(b_+)$ and $-f(b_-)$ are multiplied by the same factor $\overline{(w_+-w_-)}/2$ when evaluating the discrete contour integral $\oint_{P_Q} f d\bar{z}$. Therefore, the prefactor in front of $f(b_+)-f(b_-)$ in the right hand side of the first equation is \[i\frac{\overline{w_+-w_-}}{4\textnormal{ar}(F)}=-\overline{i}\frac{\overline{w_+-w_-}}{2 \sin(\varphi_Q)|w_+-w_-| |b_+-b_-|}=\frac{\exp\left(-i\left(\varphi_Q-\frac{\pi}{2}\right)\right)}{2\sin(\varphi_Q)(b_+-b_-)}=\frac{\lambda_Q}{b_+-b_-}\] (compare with the proof of Proposition~\ref{prop:examples}~(i)), which is exactly the prefactor appearing in $\partial_\Lambda f(Q)$. In an analogous manner, the prefactors in front of $f(w_+)-f(w_-)$ are equal. This shows the first equation. The second one follows from the first, noting that the prefactors in front of $f(b_+)-f(b_-)$ and $f(w_+)-f(w_-)$ on both sides of the second equation are just complex conjugates of the corresponding prefactors appearing in the first equation.
\end{proof}

\begin{figure}[htbp]
   \centering
    \subfloat[Lemma~\ref{lem:derivative_lambda} for $\partial_\Lambda,\bar{\partial}_\Lambda$]{
    \beginpgfgraphicnamed{derivative_lambda}
			\begin{tikzpicture}[white/.style={circle,draw=black,fill=white,thin,inner sep=0pt,minimum size=1.2mm},
black/.style={circle,draw=black,fill=black,thin,inner sep=0pt,minimum size=1.2mm},
gray/.style={circle,draw=black,fill=gray,thin,inner sep=0pt,minimum size=1.2mm},scale=0.7]
			\clip(-1.5,-6.5) rectangle (7.2,-0.2);
			\draw (-0.6,-4.16)-- (2.02,-1.28);
			\draw (2.02,-1.28)-- (6.4,-4.16);
			\draw (6.4,-4.16)-- (2.94,-5.52);
			\draw (2.94,-5.52)-- (-0.6,-4.16);
			\draw [color=gray] (1.17,-4.84)-- (0.71,-2.72);
			\draw [color=gray] (0.71,-2.72)-- (4.21,-2.72);
			\draw [color=gray] (4.21,-2.72)-- (4.67,-4.84);
			\draw [color=gray] (4.67,-4.84)-- (1.17,-4.84);
			\node[white] (w1) [label=below:$w_-$] at (2.94,-5.52) {};
			\node[white] (w2) [label=above:$w_+$] at (2.02,-1.28) {};
			\node[black] (b1) [label=left:$b_-$] at (-0.6,-4.16) {};
			\node[black] (b2) [label=right:$b_+$] at (6.4,-4.16) {};
			\node[gray] (m1) at (0.71,-2.72) {};
			\node[gray] (m2) at (4.21,-2.72) {};
			\node[gray] (m3) at (4.67,-4.84) {};
			\node[gray] (m4) at (1.17,-4.84) {};
			\draw (3.06,-3.2) node {$P_Q$};
		\end{tikzpicture}
		\endpgfgraphicnamed}
		\qquad
		\subfloat[Definitions of $\partial_\Diamond,\bar{\partial}_\Diamond$]{
		\beginpgfgraphicnamed{derivative_diamond}
		\begin{tikzpicture}
		[white/.style={circle,draw=black,fill=white,thin,inner sep=0pt,minimum size=1.2mm},
black/.style={circle,draw=black,fill=black,thin,inner sep=0pt,minimum size=1.2mm},
gray/.style={circle,draw=black,fill=gray,thin,inner sep=0pt,minimum size=1.2mm},scale=0.75]
	\clip(2,-4) rectangle (10.8,2);
				\draw [dash pattern=on 5pt off 5pt] (7.83,0.18)-- (3.64,0.54);
				\draw [dash pattern=on 5pt off 5pt] (3.64,0.54)-- (4.31,-1.61);
				\draw [dash pattern=on 5pt off 5pt] (4.31,-1.61)-- (6.51,-3.02);
				\draw [dash pattern=on 5pt off 5pt] (8.3,-1.39)-- (6.51,-3.02);
				\draw [dash pattern=on 5pt off 5pt] (8.3,-1.39)-- (7.83,0.18);
				\draw (2.4,-1.03)-- (3.64,0.54);
				\draw (3.64,0.54)-- (6.18,1.3);
				\draw (3.64,0.54)-- (5.56,-1.12);
				\draw (5.56,-1.12)-- (4.31,-1.61);
				\draw (4.31,-1.61)-- (2.4,-1.03);
				\draw (4.31,-1.61)-- (4.6,-3.59);
				\draw (9.82,-0.09)-- (7.83,0.18);
				\draw (9.82,-0.09)-- (8.3,-1.39);
				\draw (7.83,0.18)-- (5.56,-1.12);
				\draw (8.3,-1.39)-- (5.56,-1.12);
				\draw (7.83,0.18)-- (6.18,1.3);
				\draw (5.56,-1.12)-- (6.51,-3.02);
				\draw (6.51,-3.02)-- (4.6,-3.59);
				\draw (6.51,-3.02)-- (8.48,-3.38);
				\draw (8.48,-3.38)-- (8.3,-1.39);
				\draw [color=gray] (4.94,-1.36)-- (4.6,-0.29);
				\draw [color=gray] (4.6,-0.29)-- (6.7,-0.47);
				\draw [color=gray] (6.7,-0.47)-- (6.93,-1.25);
				\draw [color=gray] (6.93,-1.25)-- (6.04,-2.07);
				\draw [color=gray] (6.04,-2.07)-- (4.94,-1.36);
					\node[white] (w1) [label=right:$v_s$] at (9.82,-0.09) {};
					\node[white] (w2) [label=above:$v$] at (5.56,-1.12) {};
					\node[white] (w3) at (2.4,-1.03) {};
					\node[white] (w4) at (8.48,-3.38) {};
					\node[white] (w5) at (4.6,-3.59) {};
					\node[white] (w6) at (6.18,1.3) {};
					\node[black] (b1) [label=above right:$v'_s$] at (7.83,0.18) {};
					\node[black] (b2) [label=below right:$v'_{s-1}$] at (8.3,-1.39) {};
					\node[black] (b3) at (3.64,0.54) {};
					\node[black] (b4) at (6.51,-3.02) {};
					\node[black] (b5) at (4.31,-1.61) {};
					\node[gray] (m0) [label=right:$Q_s$] at (8.07,-0.61) {};		
					\node[gray] (m1) at (6.93,-1.25) {};
					\node[gray] (m2) at (6.7,-0.47) {};
					\node[gray] (m3) at (4.6,-0.29) {};
					\node[gray] (m4) at (4.94,-1.36) {};
					\node[gray] (m5) at (6.04,-2.07) {};
					\draw (5.32,0) node {$P_v$};
			\end{tikzpicture}
		\endpgfgraphicnamed}
   \caption[]{Integration formulae for discrete derivatives}
   \label{fig:derivative_cycles}
\end{figure}
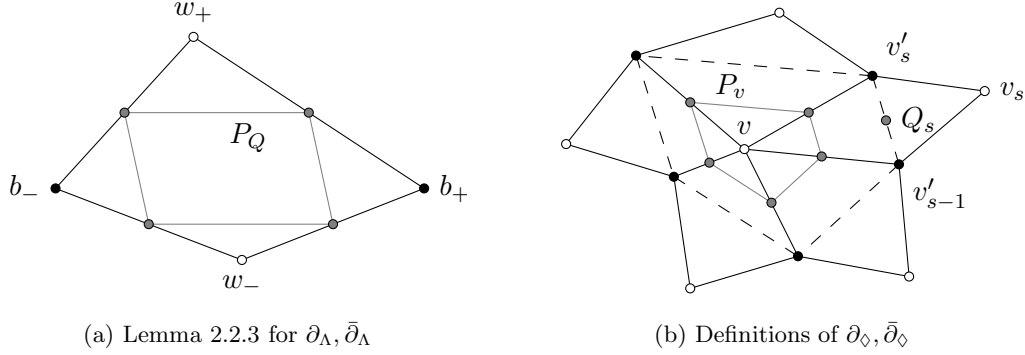

Inspired by Lemma~\ref{lem:derivative_lambda}, we can now define the discrete derivatives for complex functions on $V(\Diamond)$, see Figure~\ref{fig:derivative_cycles}~(b).

\begin{definition}
Let $v\in V(\Lambda)$ and $h$ be a complex function defined on all quadrilaterals $Q_s \sim v$. Let $P_v$ be the discrete elementary cycle around $v$ and $F$ the face of $X$ corresponding to $v$. Then, the \textit{discrete derivatives} $\partial_\Diamond h$, $\bar{\partial}_\Diamond h$ at $v$ are defined by
\begin{align*}
 \partial_\Diamond h(v)&:=\frac{-1}{2i\textnormal{ar}(F)}\oint\limits_{P_v} h d\bar{z},\\
 \bar{\partial}_\Diamond h(v)&:=\frac{1}{2i\textnormal{ar}(F)}\oint\limits_{P_v} h dz.
\end{align*}
$h$ is said to be \textit{discrete holomorphic} at $v$ if $\bar{\partial}_\Diamond h (v)=0$.
\end{definition}

Note that in the rhombic case, our definition coincides with the one used by Chelkak and Smirnov in \cite{ChSm11}. As an immediate consequence of the definition, we obtain a \textit{discrete Morera's theorem}.

\begin{proposition}\label{prop:morera}
Functions $f:V(\Lambda)\to\mC$ and $h:V(\Diamond)\to\mC$ are discrete holomorphic if and only if $\oint_{P} f dz=0$ and $\oint_{P} h dz=0$ for all discrete contours $P$.
\end{proposition}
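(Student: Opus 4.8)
The plan is to reduce the statement to the definitions of discrete holomorphicity via the integral formulae of Lemma~\ref{lem:derivative_lambda} (for functions on $V(\Lambda)$) and of the definition of $\bar\partial_\Diamond$ (for functions on $V(\Diamond)$), together with the observation that the discrete elementary cycles $P_Q$ and $P_v$ generate all discrete contours. First I would treat the ``if'' direction, which is the trivial one: since $P_Q$ is the boundary of the single face $F_Q$ of $X$ corresponding to $Q$, and $P_v$ is the boundary of the face $F_v$ corresponding to $v$, these are particular discrete contours. Hence if $\oint_P f\,dz=0$ for all discrete contours $P$, then in particular $\oint_{P_Q} f\,dz=0$ for every $Q\in V(\Diamond)$; by Lemma~\ref{lem:derivative_lambda} this says $\bar\partial_\Lambda f(Q)=0$ for all $Q$, i.e.\ $f$ is discrete holomorphic. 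The same argument with $P_v$ and the definition of $\bar\partial_\Diamond$ handles $h$.

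For the ``only if'' direction I would argue that any discrete contour $P$ bounding a topological disk $D$ in $F(X)$ decomposes, up to cancellation of interior edges, into the elementary cycles of the faces of $X$ enclosed by $P$. Concretely, write $\oint_P f\,dz=\sum_{F\subseteq D}\oint_{\partial F} f\,dz$, where the sum runs over all faces $F$ of $X$ inside $D$: each interior edge of $X$ appears in exactly two of the boundary cycles $\partial F$ with opposite orientations, so those contributions cancel and only the outer boundary $P$ survives. (This is the discrete Stokes phenomenon and could alternatively be cited from discrete Stokes' Theorem~\ref{th:stokes}, but it is elementary enough to spell out directly here.) The faces $F$ of $X$ are exactly the $F_Q$ for $Q\in V(\Diamond)$ and the $F_v$ for $v\in V(\Lambda)$, whose boundary cycles are $P_Q$ and $P_v$ respectively. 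If $f$ is discrete holomorphic, then $\oint_{P_Q} f\,dz=2i\,\mathrm{ar}(F_Q)\,\bar\partial_\Lambda f(Q)=0$ by Lemma~\ref{lem:derivative_lambda}, and $\oint_{P_v} f\,dz=0$ trivially since $f\,dz$ restricted to the edges around $v$ is $f(v)$ times $\oint_{P_v} dz$, and $\oint_{P_v} dz$ is the sum of the edge-vectors around a closed polygon, hence zero. Therefore every term in the decomposition vanishes and $\oint_P f\,dz=0$. The argument for $h$ is symmetric: $\oint_{P_v} h\,dz=0$ because $h$ is discrete holomorphic at $v$, and $\oint_{P_Q} h\,dz=0$ because $h\,dz$ around $Q$ is $h(Q)\oint_{P_Q} dz=0$.

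The main obstacle is the careful bookkeeping in the decomposition of an arbitrary discrete contour into elementary cycles: one must check that the medial graph $X$, which we have only described as a ``strongly regular and locally finite cellular decomposition of $\mC$,'' genuinely has the property that the interior of any contour $P$ is a finite union of faces $F_Q$ and $F_v$, and that the orientations match up so that interior edges cancel in pairs. For convex quadrilaterals this is immediate from the planar embedding; in the nonconvex case one has to invoke the canonical cellular structure on $X$ mentioned after the definition of the medial graph and work with abstract faces rather than their (possibly self-overlapping) geometric realizations. A clean way to sidestep this is simply to invoke discrete Stokes' Theorem~\ref{th:stokes} applied to the discrete one-form $f\,dz$, reducing $\oint_P f\,dz$ to the discrete integral of $d(f\,dz)$ over the enclosed faces and then noting that this integrand vanishes on every face precisely when $\bar\partial_\Lambda f\equiv 0$ (resp.\ $\bar\partial_\Diamond h\equiv 0$); but since Stokes' theorem is developed later, spelling out the cancellation directly keeps the present proposition self-contained.
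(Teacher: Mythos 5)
Your proposal is correct and follows essentially the same route as the paper: both directions reduce to the discrete elementary cycles via Lemma~\ref{lem:derivative_lambda} and the definition of $\bar{\partial}_\Diamond$, with the trivial vanishing $\oint_{P_v} f\,dz = f(v)\oint_{P_v} dz = 0$ (resp.\ $\oint_{P_Q} h\,dz = 0$) on the other type of cycle, and an arbitrary discrete contour decomposed into elementary cycles. The paper states the decomposition step in one sentence where you spell out the cancellation of interior edges, but the argument is the same.
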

\begin{proof}
Clearly, $\oint_{P_v} f dz=f(v)\oint_{P_v} dz=0$ for any discrete elementary cycle $P_v$ around a vertex $v$ of $V(\Lambda)$. Similarly, $\oint_{P_Q} h dz=0$ for any $Q\in V(\Diamond)$. Using Lemma~\ref{lem:derivative_lambda} and the definition of $\bar{\partial}_\Diamond$, $f$ and $h$ are discrete holomorphic if and only if $\oint_{P} f dz=0$ and $\oint_{P} h dz=0$ for all discrete elementary cycles $P$. To conclude the proof, we observe that any integration along a discrete contour can be decomposed into integrations along discrete elementary cycles.
\end{proof}

The discrete derivatives of constant functions on $V(\Diamond)$ vanish. As an analog of Proposition~\ref{prop:examples}, we prove that the discrete derivatives $\partial_\Diamond,\bar{\partial}_\Diamond$ locally approximate their smooth counterparts correctly up to order one if the vertices $Q$ are placed at the midpoints of black or white edges. Note that even for rhombic quad-graphs, these discrete derivatives generally do not coincide with the smooth derivatives in order two.

\begin{proposition}\label{prop:examples2}
Let $v \in V(\Lambda)$, and let $h$ be a complex function on all faces $Q_s\sim v$. Assume that the vertices $Q_s$ are placed at the midpoints $(v'_{s-1}+v'_{s})/2$ of edges of the star of $v$. Then, if $h(Q)=Q$, $\bar{\partial}_\Diamond h(v)=0$ and $\partial_\Diamond h (v)= 1$.
\end{proposition}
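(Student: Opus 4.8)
This is the $V(\Diamond)$-analog of Proposition~\ref{prop:examples}~(ii), and the plan is to evaluate the two discrete contour integrals $\oint_{P_v}h\,dz$ and $\oint_{P_v}h\,d\bar z$ from the definitions of $\bar\partial_\Diamond h(v)$ and $\partial_\Diamond h(v)$ by a direct computation. First I fix notation: let $v'_1,\dots,v'_k$ be the neighbors of $v$ in $\Lambda$ in counterclockwise order, with $v'_0:=v'_k$, so that $Q_s\sim v$ is the face of $\Lambda$ containing the edges $vv'_{s-1}$ and $vv'_s$. The vertices of $P_v$ are the midpoints $m_s:=(v+v'_s)/2$, and, traversed counterclockwise, the oriented edge of $X$ from $m_{s-1}$ to $m_s$ is the edge $[Q_s,v]$, whose complex value is $m_s-m_{s-1}=(v'_s-v'_{s-1})/2$ (cf.\ the description of the edges of $F_Q$ after the definition of the medial graph). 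Using the definition of the product $h\,dz$ on $[Q_s,v]$ together with the hypothesis $h(Q_s)=Q_s=(v'_{s-1}+v'_s)/2$, both integrals reduce to elementary sums over $s=1,\dots,k$.

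For the antiholomorphic part,
\[
\oint_{P_v}h\,dz=\frac14\sum_{s=1}^{k}(v'_{s-1}+v'_s)(v'_s-v'_{s-1})=\frac14\sum_{s=1}^{k}\bigl((v'_s)^2-(v'_{s-1})^2\bigr)=0
\]
by telescoping (since $v'_0=v'_k$), hence $\bar\partial_\Diamond h(v)=0$. For the holomorphic part the same computation with $d\bar z$ in place of $dz$ gives
\[
\oint_{P_v}h\,d\bar z=\frac14\sum_{s=1}^{k}(v'_{s-1}+v'_s)(\overline{v'_s}-\overline{v'_{s-1}}).
\]
Expanding the product, each summand equals $\bigl(|v'_s|^2-|v'_{s-1}|^2\bigr)+\bigl(v'_{s-1}\overline{v'_s}-v'_s\overline{v'_{s-1}}\bigr)$; the first group telescopes to $0$, while the second equals $-2i\,\im\bigl(v'_s\overline{v'_{s-1}}\bigr)$. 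Invoking the identity $\textnormal{ar}(F)=\frac14\sum_{Q_s\sim v}\im\bigl(v'_s\overline{v'_{s-1}}\bigr)$ established earlier in this subsection, we get $\oint_{P_v}h\,d\bar z=-2i\,\textnormal{ar}(F)$, and therefore $\partial_\Diamond h(v)=\frac{-1}{2i\,\textnormal{ar}(F)}\cdot(-2i\,\textnormal{ar}(F))=1$.

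There is no genuine obstacle here, since the argument is a bare-hands computation; the only points requiring care are reading off the correct complex value and the counterclockwise orientation of each edge of $P_v$ on the medial graph (so that the signs arising in $\oint_{P_v}h\,d\bar z$ match those in the area formula for $\textnormal{ar}(F)$) and keeping track of the two telescoping cancellations. I would also remark that nothing in the argument uses convexity or any metric regularity of the quadrilaterals incident to $v$, so both identities hold for an arbitrary vertex star.
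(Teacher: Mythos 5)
Your proof is correct and follows essentially the same route as the paper: both evaluate $\oint_{P_v}h\,dz$ and $\oint_{P_v}h\,d\bar z$ directly, use the telescoping cancellations, and invoke the identity $\textnormal{ar}(F_v)=\frac14\sum_{Q_s\sim v}\im\bigl(v'_s\overline{v'_{s-1}}\bigr)$ to conclude $\partial_\Diamond h(v)=1$. The bookkeeping of the factor $\tfrac14$ and the sign in $-2i\,\im\bigl(v'_s\overline{v'_{s-1}}\bigr)$ matches the paper's computation exactly.
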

\begin{proof}
\begin{align*}
4\oint\limits_{P_v} h dz&=\sum\limits_{Q_s\sim v} (v'_{s-1}+v'_{s}) (v'_{s}-v'_{s-1})=\sum\limits_{Q_s\sim v} \left(\left(v'_{s}\right)^2-\left(v'_{s-1}\right)^2\right)=0,\\
4\oint_{P_v} h d\bar{z}&=\sum\limits_{Q_s\sim v} (v'_{s-1}+v'_{s}) \overline{(v'_{s}-v'_{s-1})}=\sum\limits_{Q_s\sim v} \left(\left|v'_{s}\right|^2-\left|v'_{s-1}\right|^2-2i\im\left(v'_s\bar{v}'_{s-1} \right)\right)\\&=-2i\sum\limits_{Q_s\sim v} \im\left(v'_s\bar{v}'_{s-1} \right)=-8i\textnormal{ar}(F_v).
\end{align*}
These equations yield $\bar{\partial}_\Diamond h(v)=0$ and $\partial_\Diamond h(v)=1$.
\end{proof}

\begin{remark}
As before, parallelogram-graphs play a special role. In a parallelogram-graph, the midpoint of a black edge equals the midpoint of its dual white edge. Placing the vertices $Q\in V(\Diamond)$ at the centers of the corresponding parallelograms then yields a global approximation statement. 
\end{remark}

In \cite{ChSm11}, Chelkak and Smirnov used averaging operators to map functions on $V(\Lambda)$ to functions on $V(\Diamond)$ and vice versa. On parallelogram-graphs, the \textit{averaging operator} $m(f)(Q):=\sum_{v \sim Q}f(v)/4$ actually maps discrete holomorphic functions $f:V(\Lambda)\to\mC$ to discrete holomorphic functions on $V(\Diamond)$. The corresponding statement for rhombic quad-graphs was shown in \cite{ChSm11}, our proof is similar.

\begin{proposition}\label{prop:average}
Let $\Lambda$ be a parallelogram-graph and $f:V(\Lambda)\to\mC$ be discrete holomorphic. Then, $m(f):V(\Diamond)\to\mC$ is discrete holomorphic.
\end{proposition}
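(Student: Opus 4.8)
The plan is to reduce the claim to a local computation on a single vertex $v\in V(\Lambda)$, using the integral characterization of $\bar\partial_\Diamond$ and Proposition~\ref{prop:morera}. Concretely, I would fix $v$ and show that $\oint_{P_v} m(f)\,dz=0$, where $P_v$ is the discrete elementary cycle around $v$. Writing the faces incident to $v$ as $Q_1,\dots,Q_k$ in counterclockwise order, with $Q_s$ having vertices $v,v'_{s-1},b_s^\pm,v'_s$ (in a parallelogram the fourth vertex opposite $v$ is determined), the contour integral $\oint_{P_v}m(f)\,dz$ becomes $\tfrac12\sum_s m(f)(Q_s)(v'_s-v'_{s-1})$, since the edge of $X$ inside $Q_s$ that is part of $P_v$ carries the complex number $(v'_s-v'_{s-1})/2$ by Varignon's theorem.

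Next I would insert $m(f)(Q_s)=\tfrac14\big(f(v)+f(v'_{s-1})+f(v'_s)+f(v''_s)\big)$, where $v''_s$ is the vertex of $Q_s$ opposite to $v$; since $Q_s$ is a parallelogram, $v''_s=v'_{s-1}+v'_s-v$. The plan is to split this sum into four pieces. The $f(v)$ piece gives $f(v)\sum_s(v'_s-v'_{s-1})=0$ because the telescoping sum around the closed star vanishes. The remaining three pieces must be combined and simplified using the discrete Cauchy-Riemann equation on each $Q_s$. The key identity to exploit is that discrete holomorphicity of $f$ at $Q_s$ says $\frac{f(v''_s)-f(v)}{v''_s-v}=\frac{f(v'_s)-f(v'_{s-1})}{v'_s-v'_{s-1}}$ — note that $v,v''_s$ are opposite vertices of one color class and $v'_{s-1},v'_s$ of the other — hence $f(v''_s)-f(v)=(v'_s-v'_{s-1})\cdot\frac{f(v'_s)-f(v'_{s-1})}{v'_s-v'_{s-1}}$, but more usefully $f(v''_s)=f(v'_{s-1})+f(v'_s)-f(v)+\big(\text{correction}\big)$ only when $f$ is affine; in general the CR equation lets me replace $f(v''_s)(v'_s-v'_{s-1})$ by an expression involving $f(v'_{s-1}),f(v'_s),f(v)$ and the edge vectors. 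After substitution I expect the sum over $s$ to telescope once the CR relations at consecutive faces are used, because the edge $vv'_s$ is shared by $Q_s$ and $Q_{s+1}$.

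An alternative and perhaps cleaner route, which I would pursue in parallel, is to use Lemma~\ref{lem:derivative_lambda}: the quantity $\partial_\Lambda f(Q_s)$ is constant over $V(\Diamond)$-neighborhoods when $f$ is holomorphic in the sense that $\bar\partial_\Lambda f\equiv 0$, and one can try to express $m(f)$ in terms of primitives. Actually the slickest argument may be: by Proposition~\ref{prop:examples}(i), $f$ holomorphic means $df$ is a closed discrete one-form of type $\Diamond$ (its integrals around $P_v$ and $P_Q$ vanish), so $f$ has a well-defined discrete derivative $g:=\partial_\Lambda f:V(\Diamond)\to\mC$; one then checks that $m(f)$ and $g$ are related by $m(f)$ being a discrete primitive-type object whose $\bar\partial_\Diamond$ vanishes precisely because $g$ is well-defined on faces. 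I would verify the bookkeeping that on parallelogram-graphs the averaging of $f$ commutes appropriately with taking the discrete $\bar\partial$, reducing everything to the already-established fact that $\bar\partial_\Lambda f=0$.

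The main obstacle I anticipate is the combinatorial bookkeeping in the telescoping step: one must carefully track, for each face $Q_s$ in the star, which of its four vertices is $v$, which are the two $\Lambda$-neighbors $v'_{s-1},v'_s$ of $v$ along the boundary of the star, and which is the opposite vertex $v''_s$, and then verify that the contributions from the shared edge $vv'_s$ coming from $Q_s$ and from $Q_{s+1}$ cancel after the respective discrete Cauchy-Riemann equations are applied. The parallelogram hypothesis is essential here — it is what makes $v''_s-v$ equal to $v'_s-v'_{s-1}$ up to sign relations between the four edge vectors of $Q_s$, and what makes the midpoint identities in the remark preceding the proposition hold — so I would be careful to use it exactly where needed and to confirm that the argument genuinely fails (as it should) for non-parallelogram quad-graphs.
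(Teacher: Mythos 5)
Your first route is exactly the paper's proof: one substitutes the Cauchy--Riemann relation $f(v_s)-f(v)=\frac{v_s-v}{v'_s-v'_{s-1}}\left(f(v'_s)-f(v'_{s-1})\right)$ together with the parallelogram identities $v_s-v'_{s-1}=v'_s-v$ and $v_s-v'_s=v'_{s-1}-v$ into $4m(f)(Q_s)$, obtaining $2f(v)+2\frac{v'_s-v}{v'_s-v'_{s-1}}f(v'_s)-2\frac{v'_{s-1}-v}{v'_s-v'_{s-1}}f(v'_{s-1})$, after which $\oint_{P_v}m(f)\,dz$ telescopes to zero exactly as you predict. Two small caveats: the parallelogram hypothesis gives $v''_s=v'_{s-1}+v'_s-v$ (equivalently $v''_s-v'_{s-1}=v'_s-v$), not that the diagonal $v''_s-v$ equals the diagonal $v'_s-v'_{s-1}$ as your last paragraph suggests; and your third-paragraph ``alternative route'' is too vague to stand on its own, but it is not needed.
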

\begin{proof}
Let us consider the star of the vertex $v\in V(\Lambda)$. Since $f$ is discrete holomorphic, the discrete Cauchy-Riemann equation is satisfied on any $Q_s \sim v$. Therefore, we can express $f(v_s)$ in terms of $f(v)$, $f(v'_s)$ and $f(v'_{s-1})$. Plugging this in the definition of the averaging operator, we obtain \begin{align*}4m(f)(Q_s)&=2f(v)+\frac{v_s-v+v'_{s}-v'_{s-1}}{v'_{s}-v'_{s-1}}f(v'_{s})+\frac{v_s-v-v'_{s}+v'_{s-1}}{v'_{s}-v'_{s-1}}f(v'_{s-1})\\&=2f(v)+2\frac{v'_{s}-v}{v'_{s}-v'_{s-1}}f(v'_{s}) -2\frac{v'_{s-1}-v}{v'_{s}-v'_{s-1}}f(v'_{s-1}).\end{align*} Here, we have used the properties $v_s-v'_{s-1}=v'_{s}-v$ and $v_s-v'_{s}=v'_{s-1}-v$ of the parallelogram $Q_s$. Therefore, \[2\oint\limits_{P_v}m(f)dz=f(v)\oint\limits_{P_v}dz+\sum\limits_{Q_s \sim v}(v'_{s}-v)f(v'_{s})-\sum\limits_{Q_s \sim v}(v'_{s-1}-v)f(v'_{s-1})=0,\] and $m(f)$ is discrete holomorphic at $v$.
\end{proof}
\begin{remark}
As noted by Chelkak and Smirnov, discrete holomorphic functions on $V(\Diamond)$ cannot be averaged to discrete holomorphic functions on $V(\Lambda)$ in general.
\end{remark}

As mentioned above, our main interest lies in functions that are defined either on the vertices or the faces of the quad-graph. Now, extending $f: V(\Lambda) \to \mC$ to a complex function on $F(X)$ by using its average $m(f)$ on $V(\Diamond)$ seems to be an option. However, functions on $V(\Lambda)$ and on $V(\Diamond)$ behave differently. In Corollary~\ref{cor:commutativity} we will see that $\partial_\Lambda f$ is discrete holomorphic if $f$ is, but $\partial_\Diamond m(f)$ does not need to be discrete holomorphic in general. So to make sense of differentiating twice, we can only consider functions on $V(\Lambda)$.

\begin{definition}
Let $f_1,f_2:V(\Lambda)\to\mC$ and $h_1,h_2:V(\Diamond)\to\mC$. Their \textit{discrete scalar products} are defined as \[\langle f_1,f_2 \rangle := -\frac{1}{2i}\iint\limits_{F(X)} f_1\bar{f}_2\Omega_\Lambda \quad \textnormal{and} \quad \langle h_1,h_2 \rangle := -\frac{1}{2i}\iint\limits_{F(X)} h_1\bar{h}_2\Omega_\Diamond,\]
whenever the right hand side converges absolutely.
\end{definition}

\begin{proposition}\label{prop:adjoint}
$-\partial_\Diamond$ and $-\bar{\partial}_\Diamond$ are the \textit{formal adjoints} of $\bar{\partial}_\Lambda$ and $\partial_\Lambda$, respectively. That is, if $f:V(\Lambda)\to\mC$ or $h:V(\Diamond) \to \mC$ is compactly supported, \[\langle\partial_\Lambda f, h\rangle+\langle f, \bar{\partial}_\Diamond h \rangle=0=\langle\bar{\partial}_\Lambda f, h\rangle+\langle f, \partial_\Diamond h\rangle.\]
\end{proposition}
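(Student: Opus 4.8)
The plan is to unwind both discrete scalar products into sums of discrete contour integrals over discrete elementary cycles, and then to observe that these sums cancel edge by edge on the medial graph $X$. Since $\partial_\Lambda f,\bar\partial_\Lambda f$ are functions on $V(\Diamond)$ whereas $\partial_\Diamond h,\bar\partial_\Diamond h$ are functions on $V(\Lambda)$, the definition of the discrete scalar products together with $\iint_{F_Q}\Omega_\Diamond=-2i\,\textnormal{ar}(F_Q)$ and $\iint_{F_v}\Omega_\Lambda=-2i\,\textnormal{ar}(F_v)$ gives
\begin{align*}
\langle\partial_\Lambda f,h\rangle&=\sum_{Q\in V(\Diamond)}\partial_\Lambda f(Q)\,\overline{h(Q)}\,\textnormal{ar}(F_Q),&\langle f,\bar\partial_\Diamond h\rangle&=\sum_{v\in V(\Lambda)}f(v)\,\overline{\bar\partial_\Diamond h(v)}\,\textnormal{ar}(F_v),
\end{align*}
and the compact support hypothesis makes both sums finite. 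By Lemma~\ref{lem:derivative_lambda}, $\textnormal{ar}(F_Q)\,\partial_\Lambda f(Q)=\tfrac{-1}{2i}\oint_{P_Q}f\,d\bar z$; from the definition of $\bar\partial_\Diamond$ together with $\overline{1/(2i)}=-1/(2i)$ and $\overline{\oint_{P_v}h\,dz}=\oint_{P_v}\bar h\,d\bar z$ one gets $\textnormal{ar}(F_v)\,\overline{\bar\partial_\Diamond h(v)}=\tfrac{-1}{2i}\oint_{P_v}\bar h\,d\bar z$. Hence the first identity is equivalent to
\[\sum_{Q\in V(\Diamond)}\overline{h(Q)}\oint_{P_Q}f\,d\bar z+\sum_{v\in V(\Lambda)}f(v)\oint_{P_v}\bar h\,d\bar z=0.\]

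The next step is to expand these contour integrals over the edges of $X$. Every edge of $X$ is of the form $[Q,v]$ for a unique incident pair $Q\sim v$; orient it as $e$ (a complex number) with $F_Q$ on its left. The product conventions give $\int_{e}f\,d\bar z=f(v)\,\bar e$ and $\int_{e}\bar h\,d\bar z=\overline{h(Q)}\,\bar e$. The key combinatorial fact about the medial graph is that $e$ lies on exactly one elementary cycle $P_Q$ and exactly one elementary cycle $P_v$; since both are oriented counterclockwise around their respective faces $F_Q$ and $F_v$, which lie on opposite sides of $e$, the edge is traversed as $e$ by $P_Q$ and as $-e$ by $P_v$. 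Therefore the contribution of this edge to $\overline{h(Q)}\oint_{P_Q}f\,d\bar z$ is $\overline{h(Q)}f(v)\,\bar e$, while its contribution to $f(v)\oint_{P_v}\bar h\,d\bar z$ is $f(v)\overline{h(Q)}\,\overline{(-e)}$; these cancel. Summing over all edges of $X$ (only finitely many are nontrivial, by compact support) yields the displayed identity, hence $\langle\partial_\Lambda f,h\rangle+\langle f,\bar\partial_\Diamond h\rangle=0$.

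The identity $\langle\bar\partial_\Lambda f,h\rangle+\langle f,\partial_\Diamond h\rangle=0$ follows by the same argument with $dz$ in place of $d\bar z$: Lemma~\ref{lem:derivative_lambda} gives $\textnormal{ar}(F_Q)\,\bar\partial_\Lambda f(Q)=\tfrac{1}{2i}\oint_{P_Q}f\,dz$, the definition of $\partial_\Diamond$ together with $\overline{-1/(2i)}=1/(2i)$ gives $\textnormal{ar}(F_v)\,\overline{\partial_\Diamond h(v)}=\tfrac{1}{2i}\oint_{P_v}\bar h\,dz$, and the same edgewise cancellation on $X$ applies. I do not anticipate a genuine obstacle; the only delicate points are the bookkeeping of the orientations of the edges shared by $P_Q$ and $P_v$, the correct reading of the product conventions (so that $f\,d\bar z$ on $[Q,v]$ carries the factor $f(v)$ while $\bar h\,d\bar z$ on $[Q,v]$ carries $\overline{h(Q)}$, making the two cycle contributions to a fixed edge share the scalar factor $f(v)\overline{h(Q)}$), and invoking compact support to guarantee absolute convergence and to legitimize the rearrangement of the sums.
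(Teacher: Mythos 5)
Your proof is correct and follows essentially the same route as the paper's: both rewrite the scalar products via Lemma~\ref{lem:derivative_lambda} and the definition of $\bar{\partial}_\Diamond$ as sums of elementary-cycle integrals of $f\bar{h}\,d\bar{z}$, and both conclude by the cancellation of oppositely oriented shared edges of $X$ (the paper packages this as reducing to a single large contour on which $f\bar{h}$ vanishes, while you cancel edge by edge, which amounts to the same thing). The orientation and product-convention bookkeeping in your argument checks out.
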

\begin{proof}
Using Lemma~\ref{lem:derivative_lambda} and $\partial_\Diamond \bar{h}=\overline{\bar{\partial}_\Diamond h}$, we get \[ -2i\langle\partial_\Lambda f, h\rangle-2i\langle f, \bar{\partial}_\Diamond h \rangle=\sum\limits_{Q\in V(\Diamond)} \bar{h}(Q) \oint\limits_{P_Q} fd\bar{z}+\sum\limits_{v\in V(\Lambda)} f(v) \oint\limits_{P_v} \bar{h} d\bar{z}=\oint\limits_P f\bar{h}d\bar{z}=0,\] where $P$ is a large contour enclosing all the vertices of $\Lambda$ and $\Diamond$ where $f$ or $h$ do not vanish. In particular, $f\bar{h}$ vanishes in a neighborhood of $P$. In the same way, $\langle\bar{\partial}_\Lambda f, h\rangle+\langle f, \partial_\Diamond h\rangle=0$.
\end{proof}

\begin{remark}
In their work on discrete complex analysis on rhombic quad-graphs, Kenyon \cite{Ke02} and Mercat \cite{Me07} defined the discrete derivatives for functions on the faces in such a way that they were the formal adjoints of the discrete derivatives for functions on the vertices of the quad-graph.
\end{remark}

In Corollary~\ref{cor:commutativity}, we will prove that $\partial_\Lambda f$ is discrete holomorphic if $f:V(\Lambda) \to \mC$ is. Conversely, we can find discrete primitives of discrete holomorphic functions on simply-connected domains $\Diamond_0$, extending the corresponding result for rhombic quad-graphs given in the paper of Chelkak and Smirnov \cite{ChSm11}.

\begin{proposition}\label{prop:primitive}
Let $\Diamond_0 \subseteq \Diamond$ be simply-connected. Then, for any discrete holomorphic function $h$ on $V(\Diamond_0)$, there is a \textit{discrete primitive} $f:=\int h$ on $V(\Lambda_0)$, i.e., $f$ is discrete holomorphic and $\partial_\Lambda f=h$. $f$ is unique up to two additive constants on $\Gamma_0$ and $\Gamma^*_0$.
\end{proposition}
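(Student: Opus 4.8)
The plan is to construct $f$ explicitly by discrete contour integration and then verify the two required properties. First I would fix a base vertex $v_0 \in V(\Lambda_0)$ and set $f(v_0)$ arbitrarily (this accounts for one of the two additive constants; note that since $\Lambda_0$ has both black and white vertices in possibly separate "connectivity classes" through faces, one needs a base point in each of $\Gamma_0$ and $\Gamma_0^*$). For any other vertex $v$, choose a path in $X_0$ from a midpoint of an edge incident to $v_0$ to a midpoint of an edge incident to $v$, and define $f(v) - f(v_0)$ via a discrete integral of the one-form $h\,dz$ along that path — more precisely, I would use the formula suggested by Lemma~\ref{lem:derivative_lambda} and the definition of $\partial_\Diamond$: integrating $h\,dz$ over the portion of $\partial X_0$-type edges picks up exactly the increments $f$ should have along edges of $\Gamma_0$ and $\Gamma_0^*$. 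The cleanest bookkeeping is: along an edge $[Q,v]$ of $X$ the one-form $h\,dz$ contributes $h(Q)\cdot e/2$ where $e$ is the corresponding edge of $\Gamma$ or $\Gamma^*$; traversing the two medial edges of a face $F_Q$ that lie between two consecutive $v$'s around $Q$ gives an increment consistent with the discrete Cauchy–Riemann equation $\bigl(f(b_+)-f(b_-)\bigr)(w_+-w_-) = \bigl(f(w_+)-f(w_-)\bigr)(b_+-b_-)$.

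The key step is \textbf{well-definedness}, i.e.\ path-independence of the integral. Since $\Diamond_0$ is simply-connected, by the same decomposition argument used in the proof of Proposition~\ref{prop:morera} every closed path in $X_0$ is a $\mZ$-combination of the discrete elementary cycles $P_Q$ for $Q \in V(\Diamond_0)$ and $P_v$ for $v \in V(\Lambda_0)$ (the elementary cycles around boundary vertices of $\Lambda_0$ need care, but they too bound). Around $P_v$ the increment is $h(?)\oint_{P_v}\!dz$-type and vanishes trivially since $\oint_{P_v} dz = 0$; around $P_Q$ the total increment is $\oint_{P_Q} h\,dz = h(Q)\oint_{P_Q} dz = 0$. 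Actually the relevant cycles over which we must check vanishing are exactly those on which the increment of $f$ is prescribed: going around $P_v$ the four (or $2k$) prescribed increments of $f$ along the black/white edges of the star must sum to zero, and this is precisely the content of $\bar\partial_\Diamond h(v)=0$ after unwinding Lemma~\ref{lem:derivative_lambda} — wait, more directly, it follows because the prescribed increments satisfy the Cauchy–Riemann relation on each face and the telescoping around a vertex star closes up, with the closure failure being a multiple of $\oint_{P_v} h\,dz = 0$. So path-independence reduces to the two identities $\oint_{P_v} h\,dz = 0$ and $\oint_{P_Q} h\,dz = 0$, the first being the discrete holomorphicity of $h$ at $v$ (definition of $\bar\partial_\Diamond$) and the second being automatic. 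Simple-connectivity of $\Diamond_0$ is what guarantees these elementary cycles generate all closed cycles.

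Once $f$ is well-defined, \textbf{verifying $\partial_\Lambda f = h$} is a local computation on each $Q \in V(\Diamond_0)$: by construction the increments $f(b_+)-f(b_-)$ and $f(w_+)-f(w_-)$ along the diagonals of $Q$ are read off from the integral of $h\,dz$ along two half-edges of $F_Q$, and plugging into the definition of $\partial_\Lambda f(Q)$ (or better, into the contour formula of Lemma~\ref{lem:derivative_lambda}, $\partial_\Lambda f(Q) = \frac{-1}{2i\,\mathrm{ar}(F)}\oint_{P_Q} f\,d\bar z$) yields $h(Q)$ after the same prefactor manipulation done in the proof of that lemma — here one uses that $h$ is constant ($=h(Q)$) on the relevant pieces so it pulls out. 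That $f$ is discrete holomorphic is then immediate from Proposition~\ref{prop:examples}~(i) since $\partial_\Lambda f = h$ being well-defined forces $\bar\partial_\Lambda f = 0$ — actually one should check $\bar\partial_\Lambda f(Q)=0$ directly: this is equivalent to the discrete Cauchy–Riemann equation for $f$ at $Q$, which holds by the very way the increments of $f$ across $Q$ were defined (they were defined to satisfy it). \textbf{Uniqueness}: if $f_1, f_2$ are two discrete primitives then $g = f_1 - f_2$ is discrete holomorphic with $\partial_\Lambda g \equiv 0$, so by Proposition~\ref{prop:zeroderivative}~(ii) $g$ is biconstant, i.e.\ $f$ is determined up to one additive constant on $\Gamma_0$ and one on $\Gamma_0^*$. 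The main obstacle is the careful treatment of path-independence — in particular making rigorous the reduction of arbitrary closed medial cycles in $X_0$ to elementary cycles when $\Diamond_0$ (hence $\Lambda_0$, $\Gamma_0$, $\Gamma_0^*$) is simply-connected, and tracking that the prescribed $f$-increments, which live on edges of $\Gamma_0 \cup \Gamma_0^*$, close up around both types of faces.
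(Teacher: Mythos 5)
Your proposal is correct and follows essentially the same route as the paper: integrate the type-$\Diamond$ one-form $h\,dz$, use simple-connectivity together with $\oint_{P_v}h\,dz=0$ (discrete holomorphicity of $h$) and $\oint_{P_Q}h\,dz=0$ (automatic) to get path-independence — which is exactly the discrete Morera argument the paper invokes — and then read off the discrete Cauchy--Riemann equation and $\partial_\Lambda f=h$ from the prescribed increments along the diagonals. The paper phrases the construction as first producing $f_X$ on $V(X)$ and then splitting it into a function on $V(\Lambda)$, while you prescribe the increments on $\Gamma_0$ and $\Gamma_0^*$ directly, but these are the same construction.
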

\begin{proof}
Since $h$ is discrete holomorphic, $\oint_P hdz=0$ for any discrete contour $P$. Thus, $h dz$ can be integrated to a well defined function $f_X$ on $V(X)$ that is unique up to an additive constant. Using that $h dz$ is a discrete one-form of type $\Diamond$, we can construct a function $f$ on $V(\Lambda)$ such that the equation $f_X\left(\left(v+w\right)/2\right)=\left(f(v)+f(w)\right)/2$ holds for any edge $(v,w)$ of $\Lambda$. Given $f_X$, $f$ is unique up to an additive constant.

In summary, $f$ is unique up to two additive constants that can be chosen independently on $\Gamma_0$ and $\Gamma^*_0$. By construction, $f$ satisfies \[\frac{f(b_+)-f(b_-)}{b_+-b_-}=h(Q)=\frac{f(w_+)-f(w_-)}{w_+-w_-}\] on any quadrilateral $Q$. So $f$ is discrete holomorphic and $\partial_\Lambda f=h$.
\end{proof}


\subsection{Discrete exterior calculus}\label{sec:exterior}

Our notation of discrete exterior calculus is similar to the approach of Mercat in \cite{Me01,Me07,Me08}, but differs in some aspects. The main differences are due to our different notation of multiplication of functions with discrete one-forms, which allows us to define a discrete exterior derivative on a larger class of discrete one-forms in Section~\ref{sec:exterior_derivative}. It coincides with Mercat's discrete exterior derivative in the case of discrete one-forms of type $\Diamond$ that Mercat considers. In contrast, our definitions are based on a coordinate representation, making the connection to the smooth case evident. Eventually, they lead to essentially the same definitions of a discrete wedge product in Section~\ref{sec:wedge} and a discrete Hodge star in Section~\ref{sec:hodge} as in \cite{Me08}.


\subsubsection{Discrete exterior derivative}\label{sec:exterior_derivative}

\begin{definition}
 Let $f:V(\Lambda) \to \mC$ and $h:V(\Diamond) \to \mC$. We define the \textit{discrete exterior derivatives} $df$ and $dh$ as follows:
\begin{align*}
 df&:=\partial_\Lambda f dz+\bar{\partial}_\Lambda f d\bar{z};\\
 dh&:=\partial_\Diamond h dz+\bar{\partial}_\Diamond h d\bar{z}.
\end{align*}

Let $\omega$ be a discrete one-form. Around faces $F_v$ and $F_Q$ of $X$ corresponding to vertices $v \in V(\Lambda)$ and $Q \in V(\Diamond)$, respectively, we write $\omega=p dz+ q d\bar{z}$ with functions $p,q$ defined on faces $Q_s \sim v$ or vertices $b_\pm, w_\pm \sim Q$, respectively. The \textit{discrete exterior derivative} $d\omega$ is given by
\begin{align*}
 d\omega|_{F_v}&:=\left(\partial_\Diamond q - \bar{\partial}_\Diamond p\right)   \Omega_\Lambda,\\
 d\omega|_{F_Q}&:=\left(\partial_\Lambda q - \bar{\partial}_\Lambda p\right)  \Omega_\Diamond.
\end{align*}
\end{definition}

The representation of $\omega$ as $p dz+ q d\bar{z}$ ($p,q$ defined on edges of $X$) we have used above may be nonunique. However, $d\omega$ is well defined by \textit{discrete Stokes' theorem} that also justifies our definition of $df$ and $d\omega$. Note that Mercat defined the discrete exterior derivative by the discrete Stokes' theorem \cite{Me01}.

\begin{theorem}\label{th:stokes}
 Let $f:V(\Lambda) \to \mC$ and $\omega$ be a discrete one-form. Then, for any directed edge $e$ of $X$ starting in the midpoint of the edge $vv'_-$ and ending in the midpoint of the edge $vv'_+$ of $\Lambda$ and for any face $F$ of $X$ with counterclockwise oriented boundary $\partial F$ we have:

\begin{align*}
 \int\limits_e df&=\frac{f(v'_+)-f(v'_-)}{2}=\frac{f(v)+f(v'_+)}{2}-\frac{f(v)+f(v'_-)}{2};\\
 \iint\limits_F d\omega&=\oint\limits_{\partial F} \omega.
\end{align*}
\end{theorem}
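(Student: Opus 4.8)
The plan is to verify both identities directly from the definitions, exploiting that every term in a discrete one-form is, on a given edge $[Q,v]$, a combination of $dz$ and $d\bar z$ whose evaluations are just the complex increments along edges of $X$. For the first identity, I would simply unwind the definition $df=\partial_\Lambda f\,dz+\bar\partial_\Lambda f\,d\bar z$ on the edge $e=[Q,v]$, which connects the midpoint of $vv'_-$ to the midpoint of $vv'_+$. Since $e$ as a complex number equals $(v'_+-v'_-)/2$ (Varignon), we get $\int_e df=\partial_\Lambda f(Q)\cdot\tfrac{v'_+-v'_-}{2}+\bar\partial_\Lambda f(Q)\cdot\overline{\left(\tfrac{v'_+-v'_-}{2}\right)}$, and the point is that the prefactors $\lambda_Q,\bar\lambda_Q$ are designed precisely so that this telescopes to $\tfrac12\big(f(v'_+)-f(v'_-)\big)$. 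Concretely I would note that among $b_-,w_-,b_+,w_+$, the pair $\{v'_-,v'_+\}$ is one black and one white vertex (they are the two neighbours of $v$ on $Q$), so the claim reduces to checking that the coefficient of $f(v'_+)$ on the right-hand side is $\tfrac12$ when $v'_+$ is, say, the black vertex $b_+$ — which follows from the identity $2\lambda_Q\frac{b_+-b_-}{b_+-b_-}$ contributing $\lambda_Q$ and $2\bar\lambda_Q\sin\varphi_Q/\overline{w_+-w_-}=i(b_+-b_-)/(|b_+-b_-||w_+-w_-|)$ from the proof of Proposition~\ref{prop:examples}(i), so that the net coefficient is $\lambda_Q+\bar\lambda_Q=1$ times $\tfrac12$; the white case is symmetric and the signs for $v'_-$ are opposite. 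The rewriting as a difference of two averages $\tfrac{f(v)+f(v'_\pm)}{2}$ is then just algebra and suggests that $df$ integrates the "vertex-midpoint" values $f_X\big((v+v')/2\big):=(f(v)+f(v'))/2$, consistently with Proposition~\ref{prop:primitive}.

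For the second identity, $\iint_F d\omega=\oint_{\partial F}\omega$, I would split into the two kinds of faces of $X$. Take first $F=F_Q$ for $Q\in V(\Diamond)$, the Varignon parallelogram, with boundary the discrete elementary cycle $P_Q$. Write $\omega=p\,dz+q\,d\bar z$ with $p,q$ the values attached to the four edges of $P_Q$. Then $\oint_{P_Q}\omega=\oint_{P_Q}(p\,dz+q\,d\bar z)$, and since on $P_Q$ the functions $p$ and $q$ are functions of the four vertices $b_\pm,w_\pm$ of $Q$ exactly as $f$ was in Lemma~\ref{lem:derivative_lambda}, I can apply that lemma (or rather its proof) term by term: $\oint_{P_Q}q\,dz=2i\,\mathrm{ar}(F)\,\bar\partial_\Lambda q(Q)$ and $\oint_{P_Q}p\,d\bar z=-2i\,\mathrm{ar}(F)\,\partial_\Lambda p(Q)$, whence $\oint_{P_Q}\omega=2i\,\mathrm{ar}(F)\big(\bar\partial_\Lambda q-\partial_\Lambda p\big)(Q)=-\big(\partial_\Lambda p-\bar\partial_\Lambda q\big)(Q)\cdot\iint_{F_Q}\Omega_\Diamond$. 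Comparing with the definition $d\omega|_{F_Q}=(\partial_\Lambda q-\bar\partial_\Lambda p)\,\Omega_\Diamond$ — wait, the sign: one must be careful that the formula in the definition pairs $q$ with $\partial_\Lambda$ and $p$ with $\bar\partial_\Lambda$; I would check this matches by writing out $\oint_{P_Q}(p\,dz+q\,d\bar z)$ and grouping the $p$-terms against $d\bar z$-type contours and $q$-terms against $dz$-type contours, using $\oint_{P_Q}p\,dz$ and $\oint_{P_Q}q\,d\bar z$ instead — the key point is only that each of the four integrals $\oint_{P_Q}(\text{value})\,dz$, $\oint_{P_Q}(\text{value})\,d\bar z$ is handled by Lemma~\ref{lem:derivative_lambda}. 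The face $F=F_v$ is treated identically with $P_v$ in place of $P_Q$ and the definition of $\partial_\Diamond,\bar\partial_\Diamond$ in place of Lemma~\ref{lem:derivative_lambda}, giving $\oint_{P_v}\omega=-(\partial_\Diamond p-\bar\partial_\Diamond q)(v)\cdot\iint_{F_v}\Omega_\Lambda=\iint_{F_v}d\omega$.

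The main obstacle, I expect, is not the elementary-cycle case but showing that this extends to \emph{arbitrary} faces $F$ of $X$ — or rather, making sure the statement is exactly about faces of $X$, which by construction are precisely the $F_v$ and $F_Q$, so in fact \emph{every} face of $X$ is one of these two types and no further argument is needed. What does require a word of care is the well-definedness caveat flagged just before the theorem: $\omega$ may have several coordinate representations $p\,dz+q\,d\bar z$ on the edges around a face, so one must check that $\oint_{\partial F}\omega$ depends only on $\omega$ (which is immediate, since $\oint_{\partial F}\omega=\sum_{e\in\partial F}\int_e\omega$ uses only the intrinsic edge-values) and that the right-hand side $(\partial_\Lambda q-\bar\partial_\Lambda p)\Omega_\Diamond$ is therefore forced to be representation-independent too — which is exactly what the computation above establishes, since it expresses $(\partial_\Lambda q-\bar\partial_\Lambda p)(Q)$ as $-\tfrac{1}{2i\,\mathrm{ar}(F)}\oint_{P_Q}\omega$, manifestly intrinsic. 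So the structure is: (1) prove $\int_e df$ formula by the coefficient computation; (2) reduce $\iint_F d\omega=\oint_{\partial F}\omega$ to the two face types; (3) in each type, expand $\omega$ in coordinates and apply Lemma~\ref{lem:derivative_lambda} (resp. the definition of $\partial_\Diamond,\bar\partial_\Diamond$) to each of the four scalar contour integrals; (4) observe this simultaneously proves well-definedness of $d\omega$.
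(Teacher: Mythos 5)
Your handling of the second identity is essentially the paper's own argument: every face of $X$ is an $F_v$ or an $F_Q$, one writes $\omega=p\,dz+q\,d\bar z$ locally, and each of the scalar contour integrals $\oint p\,dz$, $\oint q\,d\bar z$ is evaluated by Lemma~\ref{lem:derivative_lambda} (around $F_Q$) or by the very definition of $\partial_\Diamond,\bar\partial_\Diamond$ (around $F_v$); your observation that this simultaneously shows $d\omega$ is representation-independent is also the paper's point. The pairing slip you flag yourself ($p$ must go with $dz$ and $q$ with $d\bar z$) and the interchange of $p$ and $q$ in your final formula for $\oint_{P_v}\omega$ (the correct combination is $(\partial_\Diamond q-\bar\partial_\Diamond p)\,\Omega_\Lambda$, matching the definition of $d\omega|_{F_v}$) are bookkeeping errors that a careful writeup would fix.

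The first identity, however, contains a genuine gap. Your premise that $\{v'_-,v'_+\}$ consists of one black and one white vertex is false: both $v'_-$ and $v'_+$ are adjacent to $v$ in $\Lambda$, hence both carry the colour opposite to $v$, so $v'_+-v'_-$ is (up to sign) the diagonal of $Q$ \emph{not} containing $v$. Consequently the expansion $\int_e df=\partial_\Lambda f(Q)\,\frac{v'_+-v'_-}{2}+\bar{\partial}_\Lambda f(Q)\,\frac{\overline{v'_+-v'_-}}{2}$ splits into \emph{two} groups of terms: the difference $f(v'_+)-f(v'_-)$ along that diagonal, which indeed acquires the coefficient $\frac{1}{2}(\lambda_Q+\bar{\lambda}_Q)=\frac{1}{2}$ (the part your computation captures, modulo the colour confusion), and the difference $f(v)-f(v_-)$ along the other diagonal, where $v_-$ is the fourth vertex of $Q$, which acquires the coefficient $\re\bigl(\bar{\lambda}_Q\,\frac{v'_+-v'_-}{v-v_-}\bigr)$ (when $v$ is white). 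The substance of the lemma is that this second coefficient vanishes, and that is not ``just algebra'': one must check that $\bar{\lambda}_Q\,\frac{v'_+-v'_-}{v-v_-}$ is purely imaginary, which follows from $\arg\bar{\lambda}_Q=\varphi_Q-\frac{\pi}{2}$ together with the fact that the two diagonals meet at angle $\varphi_Q$. This cancellation is entirely absent from your proposal, and without it the identity $\int_e df=\frac{1}{2}(f(v'_+)-f(v'_-))$ is not established.
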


\begin{proof}
Let $v_-$ be the other vertex of the quadrilateral $Q$ with vertices $v$, $v'_-$ and $v'_+$. Without loss of generality, let $v$ be white. Then, $\int_e df$ equals
\begin{align*}
& \partial_\Lambda f \frac{v'_+-v'_-}{2}+\bar{\partial}_\Lambda f \frac{\overline{v'_+-v'_-}}{2}\\=&\frac{1}{2}(\lambda_Q+\bar{\lambda}_Q)(f(v'_+)-f(v'_-))+\frac{1}{2}\left(\bar{\lambda}_Q\frac{v'_+-v'_-}{v-v_-}+\lambda_Q\overline{\frac{v'_+-v'_-}{v-v_-}}\right)\left(f(v)-f(v_-)\right)\\=&\frac{f(v'_+)-f(v'_-)}{2}+\re\left(\bar{\lambda}_Q\frac{v'_+-v'_-}{v-v_-}\right)\left(f(v)-f(v_-)\right)\\=&\frac{f(v'_+)-f(v'_-)}{2}.
\end{align*}
Here, we have used $\lambda_Q+\bar{\lambda}_Q=1$ and \[\textnormal{arg}\left(\bar{\lambda}_Q\frac{v'_+-v'_-}{v-v_-}\right)=\textnormal{arg}\left(\pm\exp\left(i\left(\varphi_Q-\frac{\pi}{2}\right)\right)\exp\left(-i\varphi_Q\right)\right)=\pm \pi/2.\] The sign depends on the orientation of the vertices $v,v'_-v_-,v'_+$. But in any case, the expression inside $\textnormal{arg}$ is purely imaginary.

Let us write $\omega=p dz+q d\bar{z}$ around $F_Q$ or $F_v$, where $p,q$ are functions defined on the vertices of $Q\in V(\Diamond)$ or on the faces incident to $v\in V(\Lambda)$. Then, by Lemma~\ref{lem:derivative_lambda} and the definition of the discrete derivatives $\partial_\Diamond,\bar{\partial}_\Diamond$, \begin{align*}\iint\limits_{F_Q} d\omega&=\iint\limits_{F_Q} \left(\partial_\Diamond q -\bar{\partial}_\Diamond p\right)\Omega_\Diamond=-2i\textnormal{ar}(F_Q)\left(\partial_\Diamond q -\bar{\partial}_\Diamond p \right)=\oint\limits_{\partial F_Q}pdz+\oint\limits_{\partial F_Q}qd\bar{z}=\oint\limits_{\partial F_Q} \omega,\\ \iint\limits_{F_v} d\omega&=\iint\limits_{F_v} \left(\partial_\Lambda q -\bar{\partial}_\Lambda p\right)\Omega_\Lambda=-2i\textnormal{ar}(F_v)\left(\partial_\Lambda q -\bar{\partial}_\Lambda p \right)=\oint\limits_{\partial F_v}pdz+\oint\limits_{\partial F_v}qd\bar{z}=\oint\limits_{\partial F_v} \omega.\end{align*}
\end{proof}

Note that if $\omega$ is a discrete one-form of type $\Diamond$, $\iint_{F} d\omega=0$ for any face $F$ corresponding to a vertex of $\Diamond$. A discrete one-form $\omega$ is said to be \textit{closed} if $d\omega\equiv 0$. Examples for closed discrete one-forms are discrete exterior derivatives of complex functions on $V(\Lambda)$:

\begin{proposition}\label{prop:dd0}
 Let $f:V(\Lambda) \to \mC$. Then, $ddf=0$.
\end{proposition}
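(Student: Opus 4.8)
The plan is to use discrete Stokes' Theorem~\ref{th:stokes} to reduce the claim $ddf = 0$ to a local computation on each face of $X$. Since $df = \partial_\Lambda f\, dz + \bar\partial_\Lambda f\, d\bar z$ is a discrete one-form of type $\Diamond$ (it is given by fixed coefficients $p = \partial_\Lambda f(Q)$, $q = \bar\partial_\Lambda f(Q)$ on all edges $[Q,v]$ with $v\sim Q$), the remark following Theorem~\ref{th:stokes} already gives $\iint_{F_Q} d(df) = 0$ for every face $F_Q$ corresponding to a vertex $Q\in V(\Diamond)$. So the only thing to check is that $\iint_{F_v} d(df) = 0$ for every face $F_v$ corresponding to a vertex $v\in V(\Lambda)$.

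For that, I would apply the second identity of Theorem~\ref{th:stokes}: $\iint_{F_v} d(df) = \oint_{\partial F_v} df = \oint_{P_v} df$, where $P_v$ is the discrete elementary cycle around $v$. Now the first identity of Theorem~\ref{th:stokes} evaluates $df$ on each directed edge of $P_v$: the edge of $X$ running from the midpoint of $vv'_{s-1}$ to the midpoint of $vv'_s$ contributes $\tfrac12\big(f(v'_s) - f(v'_{s-1})\big)$. Summing these contributions around the closed cycle $P_v$, the sum telescopes to $0$. Hence $\iint_{F_v} d(df) = 0$, and combined with the vanishing on the $F_Q$ faces this gives $d(df) = 0$ on all of $F(X)$, i.e.\ $ddf = 0$.

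I expect essentially no serious obstacle here; the only point requiring a little care is confirming that $df$ really is a discrete one-form of type $\Diamond$ so that the remark after Theorem~\ref{th:stokes} applies to the $F_Q$ faces — but this is immediate from the definition $df = \partial_\Lambda f\, dz + \bar\partial_\Lambda f\, d\bar z$, since $\partial_\Lambda f$ and $\bar\partial_\Lambda f$ are functions on $V(\Diamond)$, so the coefficients are constant along all edges $[Q,v]$ for fixed $Q$. Alternatively, one can sidestep even this and just verify directly via Theorem~\ref{th:stokes} that $\oint_{P_Q} df$ telescopes to zero exactly as $\oint_{P_v} df$ does, since the first identity of Theorem~\ref{th:stokes} applies verbatim to the edges of $P_Q$ as well. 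Either way the proof is a one-line telescoping argument once Stokes' theorem is invoked.
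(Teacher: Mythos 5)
Your proof is correct and follows essentially the same route as the paper: reduce via discrete Stokes' Theorem~\ref{th:stokes} to showing $\oint_P df=0$ on discrete elementary cycles, dispose of the $P_Q$ cycles because $df$ is of type $\Diamond$, and telescope the sum $\sum_{Q_s\sim v}\tfrac{1}{2}\left(f(v'_s)-f(v'_{s-1})\right)$ around $P_v$. No gaps.
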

\begin{proof}
By discrete Stokes' Theorem~\ref{th:stokes}, $ddf=0$ if $\oint_P df=0$ for any discrete elementary cycle $P$. Since $df$ is of type $\Diamond$, the statement is trivially true if $P=P_Q$ for $Q\in V(\Diamond)$. So let $P=P_v$ for $v \in V(\Lambda)$. Using discrete Stokes' Theorem~\ref{th:stokes} again, \[\oint\limits_{P_v} df=\sum\limits_{Q_s \sim v} \frac{f(v'_{s})-f(v'_{s-1})}{2}=0.\]
\end{proof}
An immediate corollary of the last proposition is the commutativity of discrete differentials, generalizing the known result for rhombic quad-graphs as provided in \cite{ChSm11}.

\begin{corollary}\label{cor:commutativity}
Let $f:V(\Diamond) \to \mC$. Then, $\partial_\Diamond\bar{\partial}_\Lambda f\equiv\bar{\partial}_\Diamond\partial_\Lambda f.$ In particular, $\partial_\Lambda f$ is discrete holomorphic if $f$ is discrete holomorphic.
\end{corollary}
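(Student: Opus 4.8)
The plan is to obtain both assertions immediately from Proposition~\ref{prop:dd0}. Here $f$ must be a function on $V(\Lambda)$, as is required for $\partial_\Lambda f$ and $\bar\partial_\Lambda f$ to be defined. Write $df=\partial_\Lambda f\,dz+\bar\partial_\Lambda f\,d\bar z$; this is a discrete one-form of type $\Diamond$ whose coefficient functions $p:=\partial_\Lambda f$ and $q:=\bar\partial_\Lambda f$ are defined on $V(\Diamond)$. Hence the defining formula for the discrete exterior derivative of a one-form applies on each face $F_v$ of $X$ corresponding to a vertex $v\in V(\Lambda)$ and yields
\[
 d(df)\big|_{F_v}=\big(\partial_\Diamond q-\bar\partial_\Diamond p\big)\,\Omega_\Lambda=\big(\partial_\Diamond\bar\partial_\Lambda f(v)-\bar\partial_\Diamond\partial_\Lambda f(v)\big)\,\Omega_\Lambda.
\]
By Proposition~\ref{prop:dd0} the left-hand side vanishes, and since $\iint_{F_v}\Omega_\Lambda=-2i\,\textnormal{ar}(F_v)\neq 0$, the common factor can be cancelled to give $\partial_\Diamond\bar\partial_\Lambda f(v)=\bar\partial_\Diamond\partial_\Lambda f(v)$ for every $v\in V(\Lambda)$, which is the asserted commutativity.

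For the final statement, suppose $f$ is discrete holomorphic. By Proposition~\ref{prop:examples}(i) this is equivalent to $\bar\partial_\Lambda f\equiv 0$ on $V(\Diamond)$, whence $\partial_\Diamond\bar\partial_\Lambda f\equiv 0$. The commutativity just established then forces $\bar\partial_\Diamond\partial_\Lambda f\equiv 0$, which by definition means that $\partial_\Lambda f:V(\Diamond)\to\mC$ is discrete holomorphic.

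I do not expect a genuine obstacle here; this is a true corollary. The only points needing a moment's care are bookkeeping ones: that $df$ really is a discrete one-form of type $\Diamond$, so that the face-$F_v$ formula for $d\omega$ is available with $p,q$ functions on $V(\Diamond)$, and that $\Omega_\Lambda$ is nonvanishing on each $F_v$ so the factor may be divided out — both immediate from the definitions. One could alternatively bypass Proposition~\ref{prop:dd0} and argue straight from discrete Stokes' Theorem~\ref{th:stokes}, computing $\oint_{P_v}df=\tfrac12\sum_{Q_s\sim v}\big(f(v'_s)-f(v'_{s-1})\big)=0$ and then expanding $\iint_{F_v}ddf$ via the integral definitions of $\partial_\Diamond$ and $\bar\partial_\Diamond$, but invoking Proposition~\ref{prop:dd0} keeps the argument shortest.
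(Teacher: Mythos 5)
Your proof is correct and follows essentially the same route as the paper: both deduce the identity from $ddf=0$ (Proposition~\ref{prop:dd0}) together with the coordinate formula $ddf|_{F_v}=(\partial_\Diamond\bar\partial_\Lambda f-\bar\partial_\Diamond\partial_\Lambda f)\Omega_\Lambda$, and then obtain the holomorphicity statement by setting $\bar\partial_\Lambda f\equiv 0$. You also rightly note that the hypothesis should read $f:V(\Lambda)\to\mC$; your extra bookkeeping (that $df$ is of type $\Diamond$ and that $\Omega_\Lambda$ is nonvanishing on $F_v$) is sound but not a departure from the paper's argument.
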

\begin{proof}
Due to Proposition~\ref{prop:dd0}, $0=ddf=\left(\partial_\Diamond \bar{\partial}_\Lambda f-\bar{\partial}_\Diamond \partial_\Lambda f\right) \Omega_\Lambda$.
\end{proof}

\begin{remark}
Note that even in the generic rhombic case, $\partial_\Lambda \bar{\partial}_\Diamond h$ does not always equal $\bar{\partial}_\Lambda\partial_\Diamond h$ for a function $h:V(\Diamond)\rightarrow\mC$ \cite{ChSm11}. Hence, an analog of Proposition~\ref{prop:dd0} cannot hold for such functions $h$ in general.
\end{remark}

\begin{corollary}\label{cor:f_holomorphic}
 Let $f:V(\Lambda) \to \mC$. Then, $f$ is discrete holomorphic if and only if $df=p dz$ for some $p:V(\Diamond) \to \mC$. In this case, $p$ is discrete holomorphic.
\end{corollary}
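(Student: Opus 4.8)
The plan is to read the statement off the definition
$df=\partial_\Lambda f\,dz+\bar{\partial}_\Lambda f\,d\bar z$, together with Proposition~\ref{prop:examples}~(i) and Corollary~\ref{cor:commutativity}, so that essentially nothing new has to be proved.

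First I would record that a discrete one-form of type $\Diamond$ has a \emph{unique} coordinate representation $p\,dz+q\,d\bar z$ on the four edges $[Q,v]$, $v\sim Q$, bounding the Varignon parallelogram $F_Q$. Indeed, these four oriented edges carry only two distinct complex values, $\pm e_1$ and $\pm e_2$, where $e_1,e_2$ are the two edge vectors of $F_Q$; since $F_Q$ is a nondegenerate parallelogram (the diagonals of $Q$ meet at the angle $\varphi_Q\in(0,\pi)$, so $e_1,e_2$ are not real multiples of one another, i.e.\ $e_1^2/|e_1|^2\neq e_2^2/|e_2|^2$), the two linear equations $p\,e_j+q\,\bar e_j=\int_{[Q,v]}\omega$, $j=1,2$, determine $p$ and $q$ uniquely. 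Consequently $df=p\,dz$ for some function $p$ on $V(\Diamond)$ if and only if, on every face $F_Q$, the $d\bar z$-coefficient of $df$ vanishes, that is $\bar{\partial}_\Lambda f(Q)=0$, and in that case necessarily $p(Q)=\partial_\Lambda f(Q)$.

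Now Proposition~\ref{prop:examples}~(i) says that $\bar{\partial}_\Lambda f(Q)=0$ holds at a quadrilateral $Q$ precisely when $f$ satisfies the discrete Cauchy--Riemann equation there, i.e.\ when $f$ is discrete holomorphic at $Q$. Letting $Q$ range over all of $V(\Diamond)$ yields the asserted equivalence, with $p=\partial_\Lambda f$. Finally, if $f$ is discrete holomorphic then Corollary~\ref{cor:commutativity} gives that $\partial_\Lambda f$ is again discrete holomorphic, which is the last claim since $p=\partial_\Lambda f$. The only step requiring genuine (though minimal) care is the nondegeneracy argument securing uniqueness of the coordinate representation of a type-$\Diamond$ one-form; beyond that, the proof is just a combination of two results already established, so I do not anticipate a real obstacle.
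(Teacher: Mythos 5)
Your proposal is correct and follows essentially the same route as the paper: both rest on the uniqueness of the representation $p\,dz+q\,d\bar z$ on $\partial F_Q$ (which the paper defers to Lemma~\ref{lem:representation} and you verify directly via the nondegeneracy of the Varignon parallelogram), combined with $df=\partial_\Lambda f\,dz+\bar{\partial}_\Lambda f\,d\bar z$ and Proposition~\ref{prop:examples}~(i). For the final claim the paper applies Proposition~\ref{prop:dd0} directly to $df=p\,dz$ to get $\bar{\partial}_\Diamond p\equiv 0$, while you invoke Corollary~\ref{cor:commutativity}, which is itself an immediate consequence of that proposition, so the arguments coincide in substance.
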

\begin{proof}
Since all quadrilaterals $Q$ are nondegenerate, the representation of $df|_{\partial{F_Q}}$ as $pdz+qd\bar{z}$ is unique (see also Lemma~\ref{lem:representation} below). Now, $df=\partial_\Lambda f dz+\bar{\partial}_\Lambda f d\bar{z}$. It follows that $f$ is discrete holomorphic at $Q$ if and only if $df|_{\partial{F_Q}}=p dz$.

Assuming that $df=pdz$ for some $p:V(\Diamond) \to \mC$, $ddf=0$ by Proposition~\ref{prop:dd0} yields $\bar{\partial}_\Diamond p\equiv0$.
\end{proof}

Let us say that a discrete one-form $\omega$ is \textit{discrete holomorphic} if $\omega=p dz$ for some $p:V(\Diamond) \to \mC$ and $d\omega=0$. This notion recurs in the more general setting of discrete Riemann surfaces in the thesis of the second author \cite{Gue14}. By Corollary~\ref{cor:f_holomorphic}, $df$ is discrete holomorphic if $f$ is, and by Proposition~\ref{prop:primitive}, any discrete holomorphic one-form $\omega$ on a simply-connected domain is the discrete exterior derivative of a discrete holomorphic function on $V(\Lambda)$.

Due to Chelkak and Smirnov \cite{ChSm11}, one of the unpleasant facts of all discrete theories of complex analysis is that (pointwise) multiplication of discrete holomorphic functions does not yield a discrete holomorphic function in general. We can define a product of complex functions on $V(\Lambda)$ that is defined on $V(X)$ and a product of complex functions on $V(\Lambda)$ with functions on $V(\Diamond)$ that is defined on $E(X)$. In general, the product of two discrete holomorphic functions is not discrete holomorphic according to the classical quad-based definition, but it will be discrete holomorphic in the sense that a discretization of its exterior derivative is closed and of the form $p dz$, $p:E(X)\to \mC$, or in the sense that it fulfills a discrete Morera's theorem.

\begin{corollary}\label{cor:discrete_product}
Let $f,g:V(\Lambda) \to \mC$ and $h:V(\Diamond) \to \mC$.
\begin{enumerate}
\item $fdg+g df$ is a closed discrete one-form.
\item If $f$ and $h$ are discrete holomorphic, $fh dz$ is a closed discrete one-form.
\end{enumerate}
\end{corollary}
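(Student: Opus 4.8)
The plan is to use discrete Stokes' Theorem~\ref{th:stokes} together with Proposition~\ref{prop:dd0} and the computation of discrete exterior derivatives of the products $fhdz$ introduced earlier. Recall that a discrete one-form $\omega$ is closed precisely when $\oint_P \omega = 0$ for every discrete elementary cycle $P$, since any discrete contour integral decomposes into such cycles (as in the proof of Proposition~\ref{prop:morera}). So in both parts it suffices to check vanishing of the integral around each $P_v$, $v\in V(\Lambda)$, and each $P_Q$, $Q\in V(\Diamond)$.

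For part (i): I would first treat the cycles $P_Q$. On a quadrilateral $Q$ the form $fdg+gdf$ restricted to the four edges $[Q,v]$, $v\sim Q$, must be analyzed; by discrete Stokes applied to $dg$ and to $df$ separately, $\int_{[Q,v]} df = (f(v'_+)-f(v'_-))/2$ where $v'_\pm$ are the two neighbors of $v$ on $Q$. Using the product rule definition $\int_e f\omega = f(v)\int_e\omega$ for $e=[Q,v]$, one obtains $\int_{[Q,v]}(fdg+gdf) = f(v)(g(v'_+)-g(v'_-))/2 + g(v)(f(v'_+)-f(v'_-))/2$. Summing this telescoping-type expression over the four edges of $P_Q$ (vertices in cyclic order $b_-,w_-,b_+,w_+$) should give exactly $\sum (f(v_{j+1})g(v_{j+1}) - f(v_j)g(v_j))/1 = 0$ after recognizing that each term rearranges into a difference of the values of $fg$ at consecutive vertices — i.e.\ the integrand is a discrete analog of $d(fg)$, and integrating an exact form around a cycle yields zero. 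The same argument, carried out around $P_v$ using the expression $\oint_{P_v} df = \sum_{Q_s\sim v}(f(v'_s)-f(v'_{s-1}))/2 = 0$ from the proof of Proposition~\ref{prop:dd0}, handles the cycles $P_v$. In fact, the cleanest route is to observe that $fdg+gdf$ is, by the table and Theorem~\ref{th:stokes}, nothing but the discrete exterior derivative of the product function $f\cdot g$ on $V(X)$ defined by $(f\cdot g)((v+v')/2) := (f(v)g(v')+f(v')g(v))/2$ — here one checks $\int_e d(f\cdot g) = (f\cdot g)(\text{end}) - (f\cdot g)(\text{start}) = \int_e(fdg+gdf)$ edge by edge — whence closedness is $\oint_P d(f\cdot g)=0$ around any cycle, automatic.

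For part (ii): if $f$ and $h$ are discrete holomorphic, then by Corollary~\ref{cor:f_holomorphic} we have $df = (\partial_\Lambda f)\,dz$ and, since $h$ is discrete holomorphic, $dh$ restricted appropriately also has the form $(\partial_\Diamond h)\,dz$ when integrated around the relevant cycles (its $d\bar z$-component vanishes). One then needs to show $\oint_P fh\,dz = 0$ for every discrete elementary cycle. For $P_Q$: on the four edges $[Q,v]$, $fh\,dz$ evaluates to $f(v)h(Q)\int_{[Q,v]}dz$, and $h(Q)$ is constant on these four edges, so $\oint_{P_Q} fh\,dz = h(Q)\sum_{v\sim Q} f(v)\int_{[Q,v]}dz = h(Q)\cdot 2\,\partial_\Lambda f(Q)\cdot(\text{something})$... more carefully, $\sum_{v\sim Q}f(v)\int_{[Q,v]}dz$ is, up to the prefactor from Lemma~\ref{lem:derivative_lambda} rewritten via $\oint_{P_Q} f\,dz$, proportional to $\bar\partial_\Diamond$-type quantity that vanishes because $f$ is discrete holomorphic — indeed $\oint_{P_Q} f\,dz = 0$ for holomorphic $f$ by the proof of Proposition~\ref{prop:morera}, and since $h(Q)$ factors out of $\oint_{P_Q}fh\,dz = h(Q)\oint_{P_Q}f\,dz$, this cycle contributes zero. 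For $P_v$: here $f(v)$ is constant on all edges of $P_v$, so $\oint_{P_v} fh\,dz = f(v)\oint_{P_v} h\,dz = 0$ since $h$ is discrete holomorphic (again by Morera). Thus $fh\,dz$ is closed.

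The main obstacle I anticipate is purely bookkeeping: in part (i) one must be careful that the product $fdg+gdf$ as a genuine discrete one-form on $E(X)$ is being evaluated consistently with the multiplication conventions $\int_e f\omega = f(v)\int_e\omega$ for $e=[Q,v]$ — and around $P_v$ the ``base vertex'' of each edge $[Q_s,v]$ is the \emph{same} vertex $v$, so both $f(v)$ and $g(v)$ factor out, making that cycle immediate, whereas around $P_Q$ the base vertices vary over $b_\pm,w_\pm$ and the telescoping must be checked honestly. Once the identification of $fdg+gdf$ with $d(f\cdot g)$ (equivalently, $fhdz$-type computations reduce to $\oint f\,dz$ and $\oint h\,dz$ vanishing) is in hand, both statements follow from Theorem~\ref{th:stokes} and Proposition~\ref{prop:morera} with no further analysis.
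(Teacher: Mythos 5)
Your argument for part (ii) is exactly the paper's: $h(Q)$ factors out of $\oint_{P_Q} fh\,dz$ and $f(v)$ factors out of $\oint_{P_v} fh\,dz$, and both remaining integrals vanish by discrete Morera's Theorem~\ref{prop:morera}. For part (i) you take a genuinely different route on the cycles $P_Q$. The paper handles $P_v$ as you do (factor out $f(v)$ and $g(v)$, then invoke Proposition~\ref{prop:dd0}), but for $P_Q$ it writes $fdg+gdf$ in coordinates as $f\partial_\Lambda g\,dz+f\bar{\partial}_\Lambda g\,d\bar{z}+g\partial_\Lambda f\,dz+g\bar{\partial}_\Lambda f\,d\bar{z}$ and uses Lemma~\ref{lem:derivative_lambda} to convert each $\oint_{P_Q}f\cdot(\,\cdot\,)$ into discrete derivatives of $f$ and $g$, which cancel in pairs. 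You instead exhibit an explicit primitive: the symmetrized product $(f\cdot g)\bigl((v+v')/2\bigr)=\bigl(f(v)g(v')+f(v')g(v)\bigr)/2$ on $V(X)$, whose edge-wise differences reproduce $\int_e(fdg+gdf)$ by Theorem~\ref{th:stokes} and the multiplication convention. Since the value at a midpoint is symmetric in $v,v'$, this is a well-defined function on $V(X)$, so the integral around \emph{any} closed path on $X$ telescopes to zero. This is a valid and arguably cleaner argument: it treats $P_v$ and $P_Q$ uniformly, avoids Lemma~\ref{lem:derivative_lambda} entirely, and proves the stronger statement that $fdg+gdf$ is globally exact with an explicit primitive (which is precisely the product $f\cdot g$ the paper only defines implicitly, by integration, in the remark following the corollary). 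One small caveat: your intermediate claim that the sum over $P_Q$ telescopes as $\sum\bigl(f(v_{j+1})g(v_{j+1})-f(v_j)g(v_j)\bigr)$ with values of $fg$ at vertices of $\Lambda$ is not right as written --- the telescoping happens at the vertices of $X$ with the symmetrized values --- but your subsequent ``cleanest route'' formulation corrects this and is the one to keep.
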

\begin{proof}
 (i) Let $\omega:=fdg+g df$. By Proposition~\ref{prop:dd0}, $df$ and $dg$ are closed. Thus, \[\oint\limits_{\partial F_v} \omega =f(v) \oint\limits_{\partial F_v} dg+g(v)\oint\limits_{\partial F_v}df= 0\] for any face $F_v$ corresponding to $v\in V(\Lambda)$. Using Lemma~\ref{lem:derivative_lambda}, \begin{align*}2i\textnormal{ar}(F_Q)\oint\limits_{\partial F_Q} \omega &= 2i\textnormal{ar}(F_Q)\oint\limits_{\partial F_Q} \left(f\partial_\Lambda gdz+f\bar{\partial}_\Lambda gd\bar{z}+g\partial_\Lambda fdz+g\bar{\partial}_\Lambda fd\bar{z} \right)\\&=\bar{\partial}_\Lambda f \partial_\Lambda g - \partial_\Lambda f \bar{\partial}_\Lambda g +\bar{\partial}_\Lambda g \partial_\Lambda f -\partial_\Lambda g \bar{\partial}_\Lambda f=0\end{align*} for any face $F_Q$ corresponding to $Q\in V(\Diamond)$. It follows by discrete Stokes' Theorem~\ref{th:stokes} that $d\omega=0$.

(ii) By discrete Morera's Theorem~\ref{prop:morera}, $\oint_{\partial F} fh dz=0$ for any face $F$ of $X$, using that $f$ and $h$ are discrete holomorphic. Therefore, $fh dz$ is closed.
\end{proof}

\begin{remark}
In particular, a product $f\cdot g: V(X)\to \mC$ can be defined by integration and $f \cdot g$ is defined up to an additive constant. Furthermore, $f \cdot h:E(X) \to \mC$ can be defined by ``pointwise'' multiplication. If $f,g,h$ are discrete holomorphic, $fdg+g df=p dz$ is closed, where $p=f\cdot \partial_\Lambda g+g\cdot \partial_\Lambda f:E(X)\to \mC$, and so to say a discrete holomorphic one-form, meaning that $f\cdot g$ is discrete holomorphic in this sense. Similarly, $fh dz$ is closed, so $f\cdot h$ is discrete holomorphic in the sense that a discrete Morera's theorem holds true. However, $f\cdot g$ and $f\cdot h$ are generally not discrete holomorphic everywhere according to the classical quad-based definition of discrete holomorphicity on the dual of a bipartite quad-graph. For this, we place any $Q\in V(\Diamond)$ as a vertex somewhere in the interior of the corresponding quadrilateral.

In fact, $f\cdot g$ is a complex function on the vertices of $X$. The medial graph $X$ is not a quad-graph itself, but $X$ is the dual of a bipartite quad-graph. More precisely, $X$ is the dual of the bipartite quad-graph with vertex set $V(\Lambda)\sqcup V(\Diamond)$, edges connecting points $Q\in V(\Diamond)$ with all incident vertices $v\in V(\Lambda)$, and faces corresponding to edges of $\Lambda$. Then, $f\cdot g$ does not need to be a discrete holomorphic function on the faces of the latter quad-graph. For example, consider $f(v)=0$ if $v$ is black and $f(v)=1$ if $v$ is white, and a discrete holomorphic $g$ that is not biconstant. Then, the product $f\cdot g$ is not discrete holomorphic at all $Q \in V(\Diamond)$ (seen as vertices of the quad-graph described above) where $\partial_\Lambda g (Q)\neq 0$.

Furthermore, $f \cdot h$ is a complex function on the edges of $X$, so it is a function on the vertices of the medial graph of $X$. The medial graph of the medial graph of $\Lambda$ is usually not a quad-graph, but it is the dual of a bipartite quad-graph. Namely, it is the dual of the quad-graph with vertex set $\left(V(\Lambda)\cup V(\Diamond)\right)\sqcup V(X)$, edges connecting points $v\in V(\Lambda)$ or $Q\in V(\Diamond)$ with the midpoints of all incident edges, and each face having an edge of $X$ as a diagonal. Since $fhdz$ is closed, $f \cdot h$ is discrete holomorphic at vertices of $\Lambda$ or $\Diamond$ by discrete Morera's Theorem~\ref{prop:morera}. But there is no need for $f \cdot h$ to be discrete holomorphic at vertices of $X$, even for constant $h$ and $f$ defined by $f(v)=0$ if $v$ is black and $f(v)=1$ if $v$ is white.

In summary, we defined reasonable products $f\cdot g$ and $f \cdot h$, where $f,g:V(\Lambda) \to \mC$ and $h:V(\Diamond)\to\mC$ are discrete holomorphic. Somehow missing is a product $h\cdot h'$, where $h':V(\Diamond)\to\mC$. In the general case, we do not know an appropriate product so far. But we want to point out that Chelkak and Smirnov defined such a product for so-called \textit{spin holomorphic} functions $h,h'$ in \cite{ChSm12}. This product satisfies $\re\left(\bar{\partial}_{\Diamond}\left( h \cdot h'\right)\right)\equiv 0$.

\end{remark}


\subsubsection{Discrete wedge product}\label{sec:wedge}

Following Whitney \cite{Whi38}, Mercat defined in \cite{Me01} a discrete wedge product for discrete one-forms living on the edges of $\Lambda$. Then, the discrete exterior derivative defined by a discretization of Stokes' theorem is a derivation for the discrete wedge product. However, a discrete Hodge star cannot be defined on $\Lambda$. To circumvent this problem, Mercat used an averaging map to relate discrete one-forms on the edges of $\Lambda$ with discrete one-forms on the edges of $\Gamma$ and $\Gamma^*$, i.e., discrete one-forms of type $\Diamond$. Then, he could define a discrete Hodge star; however, the discrete exterior derivative was not a derivation for the now heteregoneous discrete wedge product anymore.

We propose a different interpretation of the discrete wedge product. It the end, we somehow recover the definitions Mercat proposed in \cite{Me01,Me07,Me08}, but our derivation is different. Starting with discrete one-forms of type $\Diamond$ that are defined on the edges of $X$, we obtain a discrete wedge product on the faces of $X$ that is of type $\Diamond$. This definition is different from Whitney's \cite{Whi38} and has the advantage that both a discrete wedge product and a discrete Hodge star can be defined on the same structure. In contrast to Mercat's work, we now can make sense out of the statement that the discrete exterior derivative is a derivation for the discrete wedge product, see Theorem~\ref{th:derivation}. This proposition is of crucial importance to deduce discrete integral formulae such as discrete Green's identities.

\begin{lemma}\label{lem:representation}
 Let $\omega$ be a discrete one-form of type $\Diamond$. Then, there is a unique representation $\omega=p dz+q d\bar{z}$ with functions $p,q:V(\Diamond)\to\mC$. On a quadrilateral $Q \in V(\Diamond)$, $p$ and $q$ are given by
\begin{align*}
 p(Q)&=\lambda_Q\frac{\int_e \omega}{e}+\bar{\lambda}_Q\frac{\int_{e^*} \omega}{e^*},\\
 q(Q)&=\bar{\lambda}_Q\frac{\int_e \omega}{\bar{e}}+\lambda_Q\frac{\int_{e^*} \omega}{\bar{e}^*}.
\end{align*}
Here, $e$ is an edge of $X$ parallel to a black edge of $\Gamma$, and $e^*$ corresponds to a white edge of $\Gamma^*$.
\end{lemma}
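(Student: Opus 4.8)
The plan is to fix a quadrilateral $Q \in V(\Diamond)$ with its four incident edges of $X$ emanating from the four midpoints of the sides of $Q$, and to decompose them into the two ``diagonal directions'' coming from $\Gamma$ and $\Gamma^*$. Recall that by Varignon's theorem the face $F_Q$ is a parallelogram, and the edge $e=[Q,v]$ of $X$ connecting the midpoints of $vv'_-$ and $vv'_+$ equals $\tfrac12(v'_+-v'_-)$; for the two black vertices this direction is parallel to $b_+-b_-$, and for the two white vertices it is parallel to $w_+-w_-$. So among the four edges of $X$ bounding $F_Q$, two are parallel to the black edge $b_+b_-$ of $\Gamma$ (call a representative $e$, with $\int_e\omega$ well defined since $\omega$ is of type $\Diamond$) and two are parallel to the white edge $w_+w_-$ of $\Gamma^*$ (call a representative $e^*$). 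Being of type $\Diamond$ means precisely that there exist complex numbers $p(Q),q(Q)$ with $\int_{e'}\omega = p(Q)\,e' + q(Q)\,\bar{e'}$ for every edge $e'$ of $X$ incident to $F_Q$; in particular this holds for $e$ and $e^*$.

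First I would write out the two linear equations this yields:
\begin{align*}
\int_e\omega &= p(Q)\,e + q(Q)\,\bar{e},\\
\int_{e^*}\omega &= p(Q)\,e^* + q(Q)\,\bar{e}^*.
\end{align*}
This is a $2\times 2$ linear system for $(p(Q),q(Q))$ with coefficient matrix having determinant $e\bar{e}^* - \bar{e}e^* = -2i\,\im(\bar{e}e^*)$. Since $e$ is parallel to $b_+-b_-$ and $e^*$ is parallel to $w_+-w_-$, and these two diagonal directions of $Q$ are not parallel (the intersection angle $\varphi_Q$ satisfies $0<\varphi_Q<\pi$), the determinant is nonzero; hence $p(Q)$ and $q(Q)$ are uniquely determined, which already gives the uniqueness claim. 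It also shows the representation is independent of which of the two parallel $X$-edges in each diagonal class one picks, since any such choice solves the same consistent system.

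Next I would solve the system explicitly and match it with the asserted formulae. Solving by Cramer's rule gives $p(Q) = \bigl(\bar{e}^*\int_e\omega - \bar{e}\int_{e^*}\omega\bigr)/(e\bar{e}^*-\bar{e}e^*)$ and similarly for $q(Q)$, so it remains to check that $\bar{e}^*/(e\bar{e}^*-\bar{e}e^*) = \lambda_Q/e$ and $-\bar{e}/(e\bar{e}^*-\bar{e}e^*) = \bar{\lambda}_Q/e^*$, i.e.\ that $e\bar{e}^*/(e\bar{e}^*-\bar{e}e^*) = \lambda_Q$ and $-\bar{e}e^*/(e\bar{e}^*-\bar{e}e^*) = \bar{\lambda}_Q$. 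Writing $e = |e|\exp(i\alpha)$ and $e^* = |e^*|\exp(i\beta)$, one has $e\bar{e}^* = |e||e^*|\exp(i(\alpha-\beta))$ and $e\bar{e}^*-\bar{e}e^* = 2i|e||e^*|\sin(\alpha-\beta)$, so $e\bar{e}^*/(e\bar{e}^*-\bar{e}e^*) = \exp(i(\alpha-\beta))/(2i\sin(\alpha-\beta))$. Since $e$ points along $b_+-b_-$ and $e^*$ along $w_+-w_-$, the signed angle from $e^*$ to $e$ is (up to orientation and sign, exactly as in the computation of $\varphi_Q$ and as in the proof of Proposition~\ref{prop:examples}~(i)) equal to $\varphi_Q-\pi/2$ or its negative; substituting $\alpha-\beta = \varphi_Q - \pi/2$ turns $\exp(i(\alpha-\beta))/(2i\sin(\alpha-\beta))$ into $\exp(i(\varphi_Q-\pi/2))/(2i\sin(\varphi_Q-\pi/2)) = \exp(i(\varphi_Q-\pi/2))/(-2i\cos\varphi_Q)$, and a short trigonometric identity (comparing with $2\lambda_Q = \exp(-i(\varphi_Q-\pi/2))/\sin\varphi_Q$) confirms the claim; the computation for $q(Q)$ is entirely analogous, conjugating appropriately. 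The same list of formulae as in the proof of Proposition~\ref{prop:examples}~(i) relating $\lambda_Q$ and $\bar\lambda_Q$ to the normalized diagonals makes this matching immediate rather than a genuine obstacle.

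\textbf{Main obstacle.} The one point requiring care is bookkeeping of orientations and signs: which of the four $X$-edges bounding $F_Q$ one designates as $e$ versus $e^*$, the orientation of the counterclockwise boundary of $F_Q$, and the sign of the angle $\alpha-\beta$ in terms of $\varphi_Q-\pi/2$. Since $\lambda_Q$ and $\bar\lambda_Q$ enter symmetrically in the two slots (the roles of $e$ and $e^*$ swap under the black/white swap, which exchanges $\lambda_Q \leftrightarrow \bar\lambda_Q$), the final formulae are in fact insensitive to these choices, but one must verify this consistency rather than assume it; this is exactly the kind of sign-checking already performed in the proofs of Lemma~\ref{lem:derivative_lambda} and Proposition~\ref{prop:examples}, so I would mirror those computations.
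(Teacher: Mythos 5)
Your overall strategy is sound and matches the paper's for the uniqueness part: being of type $\Diamond$ gives the $2\times 2$ linear system, and its determinant $e\bar{e}^*-\bar{e}e^*$ is nonzero because the diagonals are not parallel. For the explicit formulae, however, your key substitution is wrong, and as written the verification fails. With $e$ parallel to $b_+-b_-$ and $e^*$ parallel to $w_+-w_-$, the definition $\varphi_Q=\arccos\bigl(\re\bigl(i\rho_Q/|\rho_Q|\bigr)\bigr)$ gives $\arg(e^*)-\arg(e)=\varphi_Q$, i.e.\ $\alpha-\beta=-\varphi_Q$ — the angle between the two diagonal directions is $\varphi_Q$ itself, not $\varphi_Q-\pi/2$ (the latter is the deviation from orthogonality, which enters only through the phase of $\lambda_Q$). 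Your substitution $\alpha-\beta=\varphi_Q-\pi/2$ produces $\exp(i(\varphi_Q-\pi/2))/(-2i\cos\varphi_Q)$, which is \emph{not} equal to $\lambda_Q$: it even has a pole at $\varphi_Q=\pi/2$, precisely the rhombic case where $\lambda_Q=1/2$. The correct substitution closes the argument cleanly:
\[
\frac{e\bar{e}^*}{e\bar{e}^*-\bar{e}e^*}=\frac{\exp(i(\alpha-\beta))}{2i\sin(\alpha-\beta)}
=\frac{\exp(-i\varphi_Q)}{-2i\sin\varphi_Q}
=\frac{\exp\left(-i\left(\varphi_Q-\frac{\pi}{2}\right)\right)}{2\sin\varphi_Q}=\lambda_Q,
\]
and likewise $-\bar{e}e^*/(e\bar{e}^*-\bar{e}e^*)=\bar{\lambda}_Q$, which settles both the $p$- and the $q$-formula (the $q$-check reduces to the same two identities). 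So the gap is a concrete sign/angle error in the one computation the proof hinges on; it is fixable, but the "short trigonometric identity" you invoke does not confirm the claim in the form you stated it.

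It is also worth noting that the paper avoids this trigonometry entirely: it constructs a local potential $f$ on the four vertices of $Q$ with $2\int_e\omega=f(b_+)-f(b_-)$ and $2\int_{e^*}\omega=f(w_+)-f(w_-)$, applies discrete Stokes' Theorem~\ref{th:stokes} to get $\omega=df=\partial_\Lambda f\,dz+\bar{\partial}_\Lambda f\,d\bar{z}$ locally, and then reads off $p=\partial_\Lambda f$, $q=\bar{\partial}_\Lambda f$ from the already-established coefficient formulae, substituting back the integrals of $\omega$. That route recycles the sign bookkeeping done once in Lemma~\ref{lem:derivative_lambda} instead of redoing it; your direct Cramer's-rule computation is a legitimate alternative but forces you to get the angle right by hand.
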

\begin{proof}
Since $\omega$ is of type $\Diamond$, a representation $\omega=p dz+q d\bar{z}$ exists for any quadrilateral $Q$. In fact, given $\omega$, we have to solve a nondegenerate system of two linear equations in the variables $p$ and $q$. Thus, $p,q$ are uniquely defined on $V(\Diamond)$.

Furthermore, we can find for any quadrilateral $Q$ a function $f$ that is defined on the vertices of $Q$ such that $2\int_e \omega =f(b_+)-f(b_-)$ and $2\int_{e^*} \omega =f(w_+)-f(w_-)$, where $e$ is one of the two oriented edges of $X$ going from the midpoint of $b_-$ and $w_\pm$ to the midpoint of $b_+$ and $w_\pm$, and $e^*$ is one of the two edges connecting the midpoint of $w_-$ and $b_\pm$ with the midpoint of $w_+$ and $b_\pm$. By discrete Stokes' Theorem~\ref{th:stokes}, we locally get $\omega=df=p dz +qd\bar{z}$ with $p=\partial_\Lambda f$ and $q=\bar{\partial}_\Lambda f$. Replacing the differences of $f$ by discrete integrals of $\omega$ yields the desired result. 
\end{proof}

\begin{definition}
 Let $\omega=p dz+ q d\bar{z}$ and $\omega'=p' dz+ q' d\bar{z}$ be two discrete one-forms of type $\Diamond$, where the functions $p,p',q,q':V(\Diamond) \to \mC$ are given by Lemma~\ref{lem:representation}. Then, the \textit{discrete wedge product} $\omega\wedge\omega'$ is defined as the discrete two-form of type $\Diamond$ that equals \[\left(pq'-qp'\right)\Omega_\Diamond\] on faces corresponding to $V(\Diamond)$.
\end{definition}

Note that if one considers $dz$ and $d\bar{z}$ as discrete one-forms of type $\Diamond$, \[\Omega_\Diamond=dz \wedge d\bar{z}.\]

\begin{proposition}\label{prop:wedge_Mercat}
Let $F$ be a face of $X$ corresponding to $Q\in V(\Diamond)$, and let $e,e^*$ be oriented edges of $X$ parallel to the black and white diagonal of $Q$, respectively, such that $\im \left(e^*/e\right)>0$. Then, \[\iint\limits_F \omega\wedge\omega' =  2\int\limits_e \omega \int\limits_{e^*} \omega'- 2\int\limits_{e^*} \omega \int\limits_e \omega'.\]
\end{proposition}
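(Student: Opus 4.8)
The plan is to compute both sides in terms of the defining data of the discrete one-forms and check they agree. First I would use Lemma~\ref{lem:representation} to write $\omega = p\,dz + q\,d\bar z$ and $\omega' = p'\,dz + q'\,d\bar z$ with $p,q,p',q'$ expressed via the integrals $\int_e\omega, \int_{e^*}\omega$ and $\int_e\omega', \int_{e^*}\omega'$, where $e$ is parallel to the black diagonal $b_+-b_-$ and $e^*$ to the white diagonal $w_+-w_-$. Concretely, $p(Q) = \lambda_Q \frac{\int_e\omega}{e} + \bar\lambda_Q \frac{\int_{e^*}\omega}{e^*}$ and $q(Q) = \bar\lambda_Q\frac{\int_e\omega}{\bar e} + \lambda_Q\frac{\int_{e^*}\omega}{\bar e^*}$, and similarly for $p',q'$. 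By definition $\iint_F \omega\wedge\omega' = (pq'-qp')\iint_F \Omega_\Diamond = -2i\,\textnormal{ar}(F)\,(pq'-qp')$.

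Next I would expand the combination $pq'-qp'$. Writing $a=\int_e\omega$, $b=\int_{e^*}\omega$, $a'=\int_e\omega'$, $b'=\int_{e^*}\omega'$, one gets a sum of four bilinear terms; the coefficients involve products like $\lambda_Q\bar\lambda_Q$ and $\lambda_Q^2, \bar\lambda_Q^2$ together with the four quantities $\frac{1}{e},\frac{1}{e^*},\frac{1}{\bar e},\frac{1}{\bar e^*}$. The key algebraic fact, already used in the proof of Proposition~\ref{prop:examples}(i) and in Lemma~\ref{lem:derivative_lambda}, is that $\bar\lambda_Q/\bar e$ and $\lambda_Q/\bar e^*$ have specific closed forms in terms of $b_+-b_-$, $w_+-w_-$, and $\sin\varphi_Q$; in particular the ``diagonal'' terms (coefficients of $ab'$ and $a'b$) survive while the ``pure'' terms (coefficients of $aa'$ and $bb'$) cancel because $\lambda_Q\bar\lambda_Q\bigl(\frac{1}{e\bar e}-\frac{1}{\bar e e}\bigr)=0$ and analogously for the starred edge. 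After this cancellation one is left with $pq'-qp' = C\,(ab' - a'b)$ for a single scalar $C$ depending only on $Q$.

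The final step is to pin down $C$ and match it against $\textnormal{ar}(F)$. Since the left-hand side equals $-2i\,\textnormal{ar}(F)\,C\,(ab'-a'b)$, it suffices to check that $-2i\,\textnormal{ar}(F)\,C = 2$, i.e. $C = i/\textnormal{ar}(F)$; here I would use $\textnormal{ar}(F) = \tfrac12|b_+-b_-||w_+-w_-|\sin\varphi_Q$ and the closed-form expressions for $\lambda_Q/\bar e^*$ and $\bar\lambda_Q/\bar e$ to evaluate $C$ explicitly. (One must also confirm that the sign convention $\im(e^*/e)>0$ is the one that makes $e$, $e^*$ correspond to $b_+-b_-$, $w_+-w_-$ rather than their negatives, since swapping an orientation flips a sign on both sides consistently; the hypothesis $\im(e^*/e)>0$ together with $0<\varphi_Q<\pi$ fixes this.) A cleaner alternative, which I would actually prefer to present, is to invoke Lemma~\ref{lem:representation}'s own proof device: locally realize $\omega = df$ and $\omega' = df'$ for functions $f,f'$ on the vertices of $Q$ with $2\int_e\omega = f(b_+)-f(b_-)$, $2\int_{e^*}\omega = f(w_+)-f(w_-)$ and likewise for $f'$, so that $p=\partial_\Lambda f$, $q=\bar\partial_\Lambda f$, etc.; then $pq'-qp' = \partial_\Lambda f\,\bar\partial_\Lambda f' - \bar\partial_\Lambda f\,\partial_\Lambda f'$, and expanding via the original definition of the discrete derivatives in terms of $\frac{f(b_+)-f(b_-)}{b_+-b_-}$ and $\frac{f(w_+)-f(w_-)}{w_+-w_-}$ makes the cancellation of the ``pure'' terms and the evaluation of the surviving coefficient essentially identical to the computation already carried out in Lemma~\ref{lem:derivative_lambda}. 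The main obstacle is purely bookkeeping: keeping track of the four conjugation patterns and the orientation sign so that the surviving coefficient comes out to exactly $i/\textnormal{ar}(F)$ with the correct sign; there is no conceptual difficulty once Lemma~\ref{lem:representation} is in hand.
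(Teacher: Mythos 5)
Your proof is correct, but it takes a different route from the paper. The paper observes that both sides of the identity are bilinear and antisymmetric in $(\omega,\omega')$, and since the type-$\Diamond$ one-forms on the edges around $F_Q$ form a two-dimensional space spanned by $dz$ and $d\bar{z}$, it suffices to verify the single case $\omega=dz$, $\omega'=d\bar{z}$; there both sides evaluate in one line to $-2i\,\textnormal{ar}(F)$, using $4i\im(e\bar{e}^*)=-i\,|2e||2e^*|\sin(\varphi_Q)$. Your approach instead expands $pq'-qp'$ in full generality via Lemma~\ref{lem:representation}; the cancellation of the $aa'$ and $bb'$ terms and the identity $\frac{\lambda_Q^2}{e\bar{e}^*}-\frac{\bar{\lambda}_Q^2}{\bar{e}e^*}=\frac{2i\cos(\varphi_Q-\pi/2)}{\sin^2(\varphi_Q)\,|b_+-b_-||w_+-w_-|}=\frac{i}{\textnormal{ar}(F)}$ do check out, so the argument closes. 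What the paper's reduction buys is brevity and the avoidance of the bookkeeping you flag as the main obstacle; what your computation buys is an explicit view of why only the cross terms survive, which is the structural content of the statement. One small imprecision: the remark that ``swapping an orientation flips a sign on both sides consistently'' is not quite right --- reversing only one of $e,e^*$ changes the right-hand side but not the left, which is precisely why the hypothesis $\im(e^*/e)>0$ is needed; reversing both simultaneously preserves that hypothesis and leaves both sides unchanged. You do draw the correct conclusion from the hypothesis, so this does not affect the validity of the proof.
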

\begin{proof}
Both sides of the equation are bilinear and antisymmetric in $\omega,\omega'$. Hence, it suffices to check the identity for $\omega=dz$, $\omega'=d\bar{z}$. On the left hand side, we get $\iint_F \omega\wedge\omega'=-2i\textnormal{ar}(F)$. This equals the right hand side \[2e\bar{e}^*-2e^*\bar{e}=4i\im(e\bar{e}^*)=-i|2e||2e^*|\sin(\varphi_Q)=-2i\textnormal{ar}(F).\]
\end{proof}

\begin{remark}
Since the complex numbers $e$ and $e^*$ are just half of the oriented diagonals, we recover the definition of the discrete wedge product given by Mercat in \cite{Me01,Me07,Me08}.
\end{remark}

The discrete exterior derivative is a derivation for the discrete wedge product if one considers functions on $\Lambda$ and discrete one-forms of type $\Diamond$:

\begin{theorem}\label{th:derivation}
Let $f:V(\Lambda) \to \mC$ and let $\omega$ be a discrete one-form of type $\Diamond$. Then, \[d(f\omega)=df\wedge\omega+fd\omega.\]
\end{theorem}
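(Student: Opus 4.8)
The plan is to verify the identity $d(f\omega)=df\wedge\omega+fd\omega$ directly on each face of the medial graph $X$. There are two kinds of faces: the faces $F_v$ attached to vertices $v\in V(\Lambda)$ and the faces $F_Q$ attached to quadrilaterals $Q\in V(\Diamond)$. Since both sides of the claimed identity are discrete two-forms, it is enough to compare their integrals over each $F_v$ and over each $F_Q$. Two observations organize the whole argument. First, $df=\partial_\Lambda f\,dz+\bar\partial_\Lambda f\,d\bar z$ is of type $\Diamond$, so $df\wedge\omega$ is defined and, being of type $\Diamond$, vanishes on every $F_v$. Second, because $\omega$ is of type $\Diamond$, $d\omega$ vanishes on every $F_Q$ (as noted right after discrete Stokes' Theorem~\ref{th:stokes}); thus $d\omega$ is of type $\Lambda$, the product $fd\omega$ is well defined, and it vanishes on every $F_Q$.

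For a face $F_v$ the wedge term is absent, so I must show $\iint_{F_v}d(f\omega)=\iint_{F_v}fd\omega$. By discrete Stokes' Theorem~\ref{th:stokes}, $\iint_{F_v}d(f\omega)=\oint_{\partial F_v}f\omega$; every edge of $\partial F_v$ has the form $[Q_s,v]$, and on such an edge $\int_{[Q_s,v]}f\omega=f(v)\int_{[Q_s,v]}\omega$, so $\oint_{\partial F_v}f\omega=f(v)\oint_{\partial F_v}\omega=f(v)\iint_{F_v}d\omega$, applying discrete Stokes' Theorem~\ref{th:stokes} once more. Since $d\omega$ is of type $\Lambda$, $\iint_{F_v}fd\omega=f(v)\iint_{F_v}d\omega$ by the definition of the product of $f$ with a two-form of type $\Lambda$, and the two sides agree. (Alternatively, writing $\omega=p\,dz+q\,d\bar z$ around $F_v$ with $p,q$ functions on the $Q_s\sim v$, one has $f\omega=(f(v)p)\,dz+(f(v)q)\,d\bar z$ with constant scalar $f(v)$, whence $d(f\omega)|_{F_v}=f(v)(\partial_\Diamond q-\bar\partial_\Diamond p)\Omega_\Lambda=f(v)\,d\omega|_{F_v}$.)

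For a face $F_Q$ the term $fd\omega$ is absent, so I must show $\iint_{F_Q}d(f\omega)=\iint_{F_Q}df\wedge\omega$. By discrete Stokes' Theorem~\ref{th:stokes}, $\iint_{F_Q}d(f\omega)=\oint_{\partial F_Q}f\omega$, a sum over the four boundary edges $[Q,w_-],[Q,b_+],[Q,w_+],[Q,b_-]$ of $F_Q$, taken counterclockwise. By Varignon's theorem the edge $[Q,v]$ runs parallel to the diagonal of $Q$ not incident to $v$, so $[Q,w_-]$ and $[Q,w_+]$ are parallel to the black diagonal and $[Q,b_+]$ and $[Q,b_-]$ to the white one; since $\omega$ is of type $\Diamond$, the two edges in each such pair carry opposite values of $\int\omega$ in the counterclockwise orientation. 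Hence $\oint_{\partial F_Q}f\omega$ collapses to $(f(b_+)-f(b_-))\int_{e^*}\omega-(f(w_+)-f(w_-))\int_e\omega$, where $e=[Q,w_-]$ and $e^*=[Q,b_+]$ are oriented so that $\int_e dz=(b_+-b_-)/2$, $\int_{e^*}dz=(w_+-w_-)/2$ and $\im(e^*/e)>0$. On the other hand, Proposition~\ref{prop:wedge_Mercat} gives $\iint_{F_Q}df\wedge\omega=2\int_e df\int_{e^*}\omega-2\int_{e^*}df\int_e\omega$, and discrete Stokes' Theorem~\ref{th:stokes} evaluates $\int_e df=\tfrac12(f(b_+)-f(b_-))$ and $\int_{e^*}df=\tfrac12(f(w_+)-f(w_-))$; substituting these into Proposition~\ref{prop:wedge_Mercat} reproduces exactly the collapsed expression for $\oint_{\partial F_Q}f\omega$.

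The only delicate point is the sign bookkeeping on $F_Q$: one must keep the counterclockwise orientation of $\partial F_Q$, the identification of each boundary edge with a pair $[Q,v]$, and the orientation convention $\im(e^*/e)>0$ of Proposition~\ref{prop:wedge_Mercat} mutually consistent, so that all four terms of $\oint_{\partial F_Q}f\omega$ come out with the correct sign. The counterclockwise bipartite labelling $b_-,w_-,b_+,w_+$ of the vertices of $Q$ makes this routine, and everything else in the proof is linearity and direct substitution.
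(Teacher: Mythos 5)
Your proof is correct and follows essentially the same strategy as the paper: verify the identity face by face on $F_v$ and $F_Q$, using that $df\wedge\omega$ vanishes on $F_v$ and $fd\omega$ vanishes on $F_Q$. The only difference is one of execution — the paper checks both local identities directly in the coordinate representation $\omega=p\,dz+q\,d\bar z$ of Lemma~\ref{lem:representation} (which is exactly your parenthetical alternative for $F_v$), whereas on $F_Q$ you route the computation through discrete Stokes' Theorem~\ref{th:stokes} and Proposition~\ref{prop:wedge_Mercat}; your sign bookkeeping there is consistent and the two evaluations agree.
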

\begin{proof}
 Let $\omega=pdz+qd\bar{z}$ with $p,q:V(\Diamond)\to\mC$ given by Lemma~\ref{lem:representation}. If $F_v$ and $F_Q$ are faces of $X$ corresponding to a vertex $v$ and a face $Q$ of $\Lambda$,
\begin{align*}
 d(f\omega)|_{F_v}&=\left(f(v)\left(\partial_\Diamond q\right)(v) - f(v)\left(\bar{\partial}_\Diamond p\right)(v)\right)  \Omega_\Lambda=fd\omega|_{F_v},\\
 d(f\omega)|_{F_Q}&=\left(q(Q)\left(\partial_\Lambda f\right)(Q) - p(Q)\left(\bar{\partial}_\Lambda f\right)(Q)\right) \Omega_\Diamond=(df\wedge\omega)|_{F_Q}.
\end{align*}
But $(df\wedge\omega)|_{F_v}=0$ and $fd\omega|_{F_Q}=0$, so $d(f\omega)=df\wedge\omega+fd\omega$.
\end{proof}

\begin{remark}
In \cite{Me01}, Mercat proved an analog of Theorem~\ref{th:derivation} in a setting where discrete one-forms are defined on edges of $\Lambda$. He also considered discrete differential forms as we do, however, the claim $d(f\omega)=df\wedge\omega+fd\omega$ could not be well defined in his setting. At first, our approach using the medial graph allows to make sense out of this statement. It turns out that Theorem~\ref{th:derivation} is a very powerful theorem leading to discretizations of Green's identities in Section~\ref{sec:definition} and of a Cauchy's integral formula for the discrete derivative of a discrete holomorphic function in Section~\ref{sec:Cauchy}.
\end{remark}

Above, we defined a discrete wedge product just of two discrete one-forms of type $\Diamond$. Actually, we could define a discrete wedge product of two discrete one-forms of type $\Lambda$ in essentially the same way, getting a discrete two-form of type $\Lambda$. Then, the analog of Theorem~\ref{th:derivation} would be true for this kind of discrete wedge product and functions on $V(\Diamond)$. Also the discrete Hodge star of a discrete one-form in the next section could be defined not only for those of type $\Diamond$. However, there exist no analogs of Proposition~\ref{prop:wedge_Mercat} and ~\ref{prop:hodge_Mercat}. These propositions imply that the discrete wedge product as well as the discrete Hodge star of discrete one-forms of type $\Diamond$ can be defined in a chart-independent way. This enables one to consider these objects on discrete Riemann surfaces, see the thesis of the second author \cite{Gue14}. There are no such statements if one chooses discrete one-forms of type $\Lambda$. In fact, a discrete one-form of type $\Lambda$ cannot be canonically defined on a discrete Riemann surface as opposed to discrete one-forms of type $\Diamond$. So since our interest lies in the latter, we do not define a discrete wedge product or a discrete Hodge star for discrete one-forms of type $\Lambda$.


\subsubsection{Discrete Hodge star}\label{sec:hodge}

\begin{definition}
 Let $f:F(\Lambda)\to\mC$, $h:V(\Diamond)\to\mC$, $\omega=p dz +qd\bar{z}$ a discrete one-form of type $\Diamond$ with complex functions $p,q:V(\Diamond)\to\mC$ given by Lemma~\ref{lem:representation}, and $\Omega_1,\Omega_2$ discrete two-forms of type $\Lambda$ and $\Diamond$. The \textit{discrete Hodge star} is given by \[ \star f:= -\frac{1}{2i}f \Omega_\Lambda; \quad \star h:= -\frac{1}{2i}h \Omega_\Diamond; \quad \star \omega:=-ip dz+iq d\bar{z} ; \quad \star \Omega_1:=-2i\frac{\Omega_1}{\Omega_\Lambda}; \quad \star \Omega_2:=-2i\frac{\Omega_2}{\Omega_\Diamond}.\]
If $\omega$ and $\omega^\prime$ are both discrete one-forms of type $\Diamond$, we define their \textit{discrete scalar product} \[\langle \omega, \omega^\prime \rangle:=\iint\limits_{F(X)} \omega \wedge \star\bar{\omega}^\prime,\] whenever the right hand side converges absolutely. Similarly, a discrete scalar product for discrete two-forms of the same type is defined.
\end{definition}

\begin{corollary}\label{cor:hodge}
\begin{enumerate}
 \item $\star^2=-\textnormal{Id}$ on discrete one-forms of type $\Diamond$.
 \item $\star^2=\textnormal{Id}$ on complex functions on $V(\Lambda)$ or $V(\Diamond)$ and discrete two-forms of type $\Lambda$ or $\Diamond$.
 \item $f:V(\Lambda)\to\mC$ is discrete holomorphic if and only if $\star df=-idf$.
 \item $\langle f_1,f_2 \rangle = \iint_{F(X)} f_1 \overline{\star f_2}$ and $\langle h_1,h_2 \rangle = \iint_{F(X)} h_1 \overline{\star h_2}$ for functions $f_1,f_2:V(\Lambda)\to\mC$ and functions $h_1,h_2:V(\Diamond)\to\mC$.
 \item $\langle \cdot,\cdot \rangle$ is a Hermitian scalar product on discrete differential forms (of type $\Lambda$ or of type $\Diamond$).
\end{enumerate}
\end{corollary}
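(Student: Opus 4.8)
The plan is to prove all five parts by unwinding the definition of the discrete Hodge star, leaning on two structural facts recorded earlier. First, by Lemma~\ref{lem:representation} the representation $\omega = p\,dz + q\,d\bar z$ of a discrete one-form of type $\Diamond$ by functions $p,q\colon V(\Diamond)\to\mC$ is \emph{unique}; hence $\star$ is well defined on such forms, it may be iterated, and two discrete one-forms of type $\Diamond$ coincide iff their components $p,q$ do. Second, $\iint_F\Omega_\Lambda = \iint_F\Omega_\Diamond = -2i\,\textnormal{ar}(F)$ takes purely imaginary values, so $\overline{\Omega_\Lambda} = -\Omega_\Lambda$ and $\overline{\Omega_\Diamond} = -\Omega_\Diamond$, while $\textnormal{ar}(F) = 2\,\textnormal{area}(F) > 0$ on every face $F$ of $X$. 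Everything below is bookkeeping around these points.

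For (i), writing $\omega = p\,dz + q\,d\bar z$ one has $\star\omega = -ip\,dz + iq\,d\bar z$, again of type $\Diamond$ with components $(-ip,\,iq)$; applying $\star$ once more gives $-i(-ip)\,dz + i(iq)\,d\bar z = -p\,dz - q\,d\bar z = -\omega$. For (ii), for $f\colon V(\Lambda)\to\mC$ the form $-\tfrac1{2i}f\Omega_\Lambda$ is a discrete two-form of type $\Lambda$, so $\star\star f = \star\!\left(-\tfrac1{2i}f\Omega_\Lambda\right) = -2i\,\dfrac{-\tfrac1{2i}f\Omega_\Lambda}{\Omega_\Lambda} = f$, and dually $\star\star\Omega_1 = -\tfrac1{2i}\left(-2i\tfrac{\Omega_1}{\Omega_\Lambda}\right)\Omega_\Lambda = \Omega_1$; the cases of $h\colon V(\Diamond)\to\mC$ and of two-forms of type $\Diamond$ are verbatim with $\Omega_\Lambda$ replaced by $\Omega_\Diamond$. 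For (iii), $df = \partial_\Lambda f\,dz + \bar\partial_\Lambda f\,d\bar z$ is of type $\Diamond$ with components $(\partial_\Lambda f,\,\bar\partial_\Lambda f)$, hence $\star df = -i\partial_\Lambda f\,dz + i\bar\partial_\Lambda f\,d\bar z$, whereas $-i\,df = -i\partial_\Lambda f\,dz - i\bar\partial_\Lambda f\,d\bar z$; by uniqueness of the representation these agree iff $\bar\partial_\Lambda f\equiv 0$, i.e.\ iff $f$ is discrete holomorphic.

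For (iv), expand both sides face by face: on $F_v$ one gets $-\tfrac1{2i}\iint_{F_v}f_1\bar f_2\Omega_\Lambda = -\tfrac1{2i}f_1(v)\overline{f_2(v)}(-2i\,\textnormal{ar}(F_v)) = f_1(v)\overline{f_2(v)}\,\textnormal{ar}(F_v)$, and, using $\textnormal{ar}(F_v)\in\mR$, also $\iint_{F_v}f_1\overline{\star f_2} = f_1(v)\overline{-\tfrac1{2i}f_2(v)(-2i\,\textnormal{ar}(F_v))} = f_1(v)\overline{f_2(v)}\,\textnormal{ar}(F_v)$; summing over $v$ gives the first identity, and the case of $h$ is identical with $F_Q$ and $\Omega_\Diamond$. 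For (v), the same computation shows each scalar product is a positive-weight $\ell^2$-pairing. For functions, $\langle f_1,f_2\rangle = \sum_{v}f_1(v)\overline{f_2(v)}\,\textnormal{ar}(F_v)$ (resp.\ over $V(\Diamond)$ for $h$), which is sesquilinear and conjugate-symmetric, with $\langle f,f\rangle = \sum_v|f(v)|^2\textnormal{ar}(F_v)\ge 0$, vanishing iff $f\equiv 0$ because $\textnormal{ar}(F_v)>0$. For one-forms of type $\Diamond$, writing $\omega = p\,dz+q\,d\bar z$ and $\omega' = p'\,dz+q'\,d\bar z$, the conjugate one-form is $\bar\omega' = \overline{q'}\,dz + \overline{p'}\,d\bar z$, so $\star\bar\omega' = -i\overline{q'}\,dz + i\overline{p'}\,d\bar z$, and the definition of the discrete wedge product (or Proposition~\ref{prop:wedge_Mercat}) gives $\omega\wedge\star\bar\omega' = \bigl(p\cdot i\overline{p'} - q\cdot(-i\overline{q'})\bigr)\Omega_\Diamond = i\bigl(p\overline{p'}+q\overline{q'}\bigr)\Omega_\Diamond$, whence $\langle\omega,\omega'\rangle = 2\sum_{Q}\bigl(p(Q)\overline{p'(Q)}+q(Q)\overline{q'(Q)}\bigr)\textnormal{ar}(F_Q)$ — again positive-definite Hermitian, and $\langle\omega,\omega\rangle = 0$ forces $p\equiv q\equiv 0$, hence $\int_e\omega = p(Q)e+q(Q)\bar e = 0$ on every edge and $\omega\equiv 0$. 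Two-forms of type $\Lambda$ or $\Diamond$ are handled by pushing them through $\star$ to functions on $V(\Lambda)$ or $V(\Diamond)$ and invoking the function case.

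I expect no genuinely hard step: the whole content is keeping prefactors straight and invoking (a) uniqueness of the type-$\Diamond$ representation, which legitimizes iterating $\star$ and comparing componentwise in (i) and (iii), (b) the purely imaginary values of $\Omega_\Lambda,\Omega_\Diamond$ for the conjugation identities in (iv)–(v), and (c) the strict positivity $\textnormal{ar}(F) = 2\,\textnormal{area}(F) > 0$ for definiteness in (v). The one point deserving a word of care is that the scalar products are defined only under absolute convergence, so the rearrangements in (iv) and (v) — splitting $\iint_{F(X)}$ into a sum over faces and regrouping — are justified precisely by that standing hypothesis.
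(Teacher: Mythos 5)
Your proof is correct, and it follows the only natural route: direct verification from the definitions of $\star$, $\Omega_\Lambda$, $\Omega_\Diamond$, and the wedge product, using the uniqueness of the type-$\Diamond$ representation from Lemma~\ref{lem:representation}. The paper states this corollary without proof precisely because it is such an immediate unwinding of the definitions, so your argument matches the intended (implicit) one, including the correct attention to positivity of $\textnormal{ar}(F)$ and to absolute convergence.
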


\begin{proposition}\label{prop:hodge_Mercat}
Let $\omega$ be a discrete one-form of type $\Diamond$, let $Q\in V(\Diamond)$, and let $e,e^*$ be oriented edges of $X$ parallel to the black and white diagonal of $Q$, respectively, such that $\im \left(e^*/e\right)>0$. Then, \begin{align*}
 \int\limits_e \star\omega&=\cot\left(\varphi_Q\right) \int\limits_e \omega-\frac{|e|}{|e^*| \sin\left(\varphi_Q\right)}\int\limits_{e^*}\omega,\\
 \int\limits_{e^*} \star\omega&=\frac{|e^*|}{|e| \sin\left(\varphi_Q\right)} \int\limits_e \omega-\cot\left(\varphi_Q\right)\int\limits_{e^*}\omega.
\end{align*}
\end{proposition}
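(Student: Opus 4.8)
The plan is to follow the same pattern as the proof of Proposition~\ref{prop:wedge_Mercat}: reduce to a basis. Both sides of each of the two claimed identities are $\mC$-linear functionals of $\omega$. For the right-hand sides this is immediate, and for the left-hand sides it follows from Lemma~\ref{lem:representation}, which attaches to $\omega$ functions $p,q$ depending linearly on $\omega$, so that $\star\omega=-ip\,dz+iq\,d\bar z$ and hence $\int_e\star\omega$ and $\int_{e^*}\star\omega$ are linear in $\omega$. The restrictions of discrete one-forms of type $\Diamond$ to the boundary of $F$ form a two-dimensional complex vector space, since such a restriction is determined by the pair $\bigl(\int_e\omega,\int_{e^*}\omega\bigr)$ and $e,e^*$ are not parallel; moreover $dz$ and $d\bar z$ are linearly independent there, because $\alpha\,dz+\beta\,d\bar z\equiv 0$ on $e,e^*$ would force $\alpha e+\beta\bar e=0=\alpha e^*+\beta\bar e^*$, contradicting the non-parallelity of $e$ and $e^*$. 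Hence it suffices to verify the two identities for $\omega=dz$ and for $\omega=d\bar z$.

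The single geometric ingredient is the relation $e^*=\tfrac{|e^*|}{|e|}\exp(i\varphi_Q)\,e$. By the construction of the medial graph, $e$ equals $\pm\tfrac12(b_+-b_-)$ and $e^*$ equals $\pm\tfrac12(w_+-w_-)$; the hypothesis $\im(e^*/e)>0$ fixes the signs so that $e^*/e=(w_+-w_-)/(b_+-b_-)=i\rho_Q$, and then $\arg(i\rho_Q)=\varphi_Q$, since $\re\bigl(i\rho_Q/|\rho_Q|\bigr)=\cos\varphi_Q$ by the definition of $\varphi_Q$ while $\im(i\rho_Q)>0$ and $0<\varphi_Q<\pi$ pin down the argument uniquely. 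Conjugating gives $\bar e^*=\tfrac{|e^*|}{|e|}\exp(-i\varphi_Q)\,\bar e$.

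It remains to insert the two basis forms. For $\omega=dz$ one has $\star\omega=-i\,dz$, hence $\int_e\star\omega=-ie$, $\int_{e^*}\star\omega=-ie^*$, $\int_e\omega=e$, $\int_{e^*}\omega=e^*$; dividing the first claimed identity by $e$ and using the relation above reduces it to $-i=\cot\varphi_Q-\exp(i\varphi_Q)/\sin\varphi_Q$, and dividing the second by $e^*$ reduces it to $-i=\exp(-i\varphi_Q)/\sin\varphi_Q-\cot\varphi_Q$; both are elementary. For $\omega=d\bar z$ one has $\star\omega=i\,d\bar z$, and the same two divisions (now using $\bar e^*/\bar e=\tfrac{|e^*|}{|e|}\exp(-i\varphi_Q)$) reduce the identities to $i=\cot\varphi_Q-\exp(-i\varphi_Q)/\sin\varphi_Q$ and $i=\exp(i\varphi_Q)/\sin\varphi_Q-\cot\varphi_Q$, again trivial. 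The only point requiring a little care is the sign and orientation bookkeeping behind $e^*/e=i\rho_Q$ and $\arg(i\rho_Q)=\varphi_Q$ — in particular keeping track of how the counterclockwise labeling of the vertices of $Q$ interacts with the condition $\im(e^*/e)>0$ when $Q$ is not convex; everything past that is a direct trigonometric check.
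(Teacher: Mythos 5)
Your proof is correct and follows essentially the same route as the paper's: reduce by linearity to the basis forms and verify the resulting trigonometric identities using $e^*/e=\frac{|e^*|}{|e|}\exp(i\varphi_Q)$. The only cosmetic difference is that the paper invokes symmetry under complex conjugation to dispose of $d\bar z$ and normalizes $e=1$, $e^*=\exp(i\varphi_Q)$ by scaling and rotation, whereas you check both $dz$ and $d\bar z$ explicitly and keep $e,e^*$ general.
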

\begin{proof}
Both sides of any of the two equations are linear and behave the same under complex conjugation. Thus, it suffices to check the statement for $\omega=dz$. Hence, it remains to show that \begin{align*}
 -ie&=\cot\left(\varphi_Q\right) e-\frac{|e|}{|e^*| \sin\left(\varphi_Q\right)}e^*,\\
e^*&=\frac{|e^*|}{|e| \sin\left(\varphi_Q\right)} e-\cot\left(\varphi_Q\right)e^*.
\end{align*}
Now, both sides of the first equation behave the same under scaling and simultaneous rotation of $e$ and $e^*$, the same statement is true for the second equation. Thus, we may assume $e=1$ and $e^*=\cos\left(\varphi_Q\right)+i\sin(\varphi_Q)$. Multiplying both equations by $\sin(\varphi_Q)$ gives the equivalent statements \begin{align*}
 -i\sin(\varphi_Q)&=\cos\left(\varphi_Q\right)-\left(\cos\left(\varphi_Q\right)+i\sin(\varphi_Q)\right),\\
-i\sin(\varphi_Q)\exp(i\varphi_Q)&=1-\cos\left(\varphi_Q\right)\exp(i\varphi_Q).
\end{align*}
Both equations are true, noting for the second that $\cos\left(\varphi_Q\right)-i\sin(\varphi_Q)=\exp(-i\varphi_Q)$.
\end{proof}

\begin{remark}
Proposition~\ref{prop:hodge_Mercat} shows that our definition of a discrete Hodge star on discrete one-forms coincides with Mercat's definition given in \cite{Me08}. But on discrete two-forms and complex functions, our definition of the discrete Hodge star includes an additional factor of the area of the corresponding face of $X$.
\end{remark}

\begin{proposition}\label{prop:adjoint2}
$\delta:=-\star d \star$ is the \textit{formal adjoint} of the discrete exterior derivative $d$: Let $f:V(\Lambda)\to\mC$, and let $\omega$ be a discrete one-form of type $\Diamond$ and $\Omega$ a discrete two-form of type $\Lambda$. Assume that all of them are compactly supported. Then, \[\langle df, \omega \rangle =\langle f, \delta \omega \rangle \textnormal{ and }\langle d\omega,\Omega\rangle= \langle \omega, \delta \Omega\rangle.\]
\end{proposition}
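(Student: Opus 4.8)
The plan is to establish both identities by the same two-step argument: first rewrite each side so that one factor is stripped of its Hodge star, then integrate by parts using that $d$ is a derivation of the discrete wedge product (Theorem~\ref{th:derivation}) together with discrete Stokes' Theorem~\ref{th:stokes}, whose boundary contribution vanishes because every form in sight is compactly supported. The only auxiliary facts needed are that $d$ and the discrete Hodge star commute with (entrywise) complex conjugation of discrete differential forms; the sign rules $\star^2=-\mathrm{Id}$ on discrete one-forms of type $\Diamond$ and $\star^2=\mathrm{Id}$ on functions and on discrete two-forms of type $\Lambda$ (Corollary~\ref{cor:hodge}~(i),(ii)); and the type bookkeeping that $df$ and $\star\bar\omega$ are of type $\Diamond$, while $d(\star\bar\omega)$ and $d\omega$ are discrete two-forms of type $\Lambda$.

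For the first identity I would start from $\langle df,\omega\rangle=\iint_{F(X)}df\wedge\star\bar\omega$. Since $\star\bar\omega$ is a discrete one-form of type $\Diamond$, Theorem~\ref{th:derivation} gives $d(f\,\star\bar\omega)=df\wedge\star\bar\omega+f\,d(\star\bar\omega)$. Summing $\iint_F$ over all faces $F$ of $X$ and invoking discrete Stokes' Theorem~\ref{th:stokes}, the total integral of the exact form $d(f\,\star\bar\omega)$ reduces to a discrete contour integral of $f\,\star\bar\omega$ along an arbitrarily large loop, which vanishes by compact support; hence $\langle df,\omega\rangle=-\iint_{F(X)}f\,d(\star\bar\omega)$. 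On the other hand, by Corollary~\ref{cor:hodge}~(iv) together with $\star(\delta\omega)=\star(-\star d\star\omega)=-d\star\omega$ (using $\star^2=\mathrm{Id}$ on the type-$\Lambda$ two-form $d\star\omega$), one gets $\langle f,\delta\omega\rangle=\iint_{F(X)}f\,\overline{\star(\delta\omega)}=-\iint_{F(X)}f\,\overline{d\star\omega}=-\iint_{F(X)}f\,d(\star\bar\omega)$. Comparing the two expressions proves $\langle df,\omega\rangle=\langle f,\delta\omega\rangle$.

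For the second identity I would reverse the roles. Put $g:=\overline{\star\Omega}$, which is a function on $V(\Lambda)$; using $\delta\Omega=-\star d\star\Omega$, the commutation of $d$ and $\star$ with conjugation, and $\star^2=-\mathrm{Id}$ on the type-$\Diamond$ one-form $dg$, one computes $\star\overline{\delta\Omega}=dg$, so $\langle\omega,\delta\Omega\rangle=\iint_{F(X)}\omega\wedge\star\overline{\delta\Omega}=\iint_{F(X)}\omega\wedge dg=-\iint_{F(X)}dg\wedge\omega$ by antisymmetry of the discrete wedge product. Theorem~\ref{th:derivation} now gives $d(g\,\omega)=dg\wedge\omega+g\,d\omega$, and $\iint_{F(X)}d(g\,\omega)=0$ by discrete Stokes' Theorem~\ref{th:stokes} and compact support, whence $\langle\omega,\delta\Omega\rangle=\iint_{F(X)}g\,d\omega=\iint_{F(X)}d\omega\cdot\overline{\star\Omega}=\langle d\omega,\Omega\rangle$ by the definition of the discrete scalar product of discrete two-forms.

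The routine part is the sign-chasing through the $\star^2$ conventions and the interplay of conjugation with $\wedge$ and $\star$; the genuinely substantive point --- and the only place where the hypotheses are used --- is the vanishing of $\iint_{F(X)}d(\cdot)$. This is the discrete analog of ``the integral of an exact form over a closed surface is zero'': summing $\oint_{\partial F}$ over the faces of $X$, each oriented edge of $X$ is traversed once in each direction as the two incident faces are circled positively, so all contributions cancel once the integrand has compact support. I expect no obstacle beyond this and the careful bookkeeping of types and signs.
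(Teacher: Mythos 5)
Your proposal is correct and follows essentially the same route as the paper: apply the derivation property (Theorem~\ref{th:derivation}) to $f\star\bar{\omega}$ and to $\star\bar{\Omega}\,\omega$, integrate over $F(X)$, and kill the total integral via discrete Stokes' Theorem~\ref{th:stokes} and compact support, with Corollary~\ref{cor:hodge} handling the identification of the remaining terms with the scalar products. The extra sign- and conjugation-chasing you carry out is exactly the bookkeeping the paper leaves implicit.
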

\begin{proof}
By discrete Stokes' Theorem~\ref{th:stokes}, Theorem~\ref{th:derivation}, and Corollary~\ref{cor:hodge}~(ii), \begin{align*}0&=\iint\limits_{F(X)} d(f\star \bar{\omega})=\iint\limits_{F(X)} df\wedge\star \bar{\omega}+\iint\limits_{F(X)} fd\star \bar{\omega} = \langle df, \omega \rangle + \langle f, \star d \star \omega \rangle,\\
0&=\iint\limits_{F(X)} d(\star \bar{\Omega} \omega)=\iint\limits_{F(X)} (d\star \bar{\Omega})\wedge\omega+\iint\limits_{F(X)} \star \bar{\Omega}d\omega = \langle \omega, \star d \star \Omega \rangle + \langle d\omega, \Omega \rangle.
\end{align*}
\end{proof}


\subsection{Discrete Laplacian} \label{sec:Laplacian}

The discrete Laplacian and the discrete Dirichlet energy on general quad-graphs were first introduced by Mercat in \cite{Me08}. Later, Skopenkov reintroduced these definitions in \cite{Sk13}, taking the same definition in a different notation. In our discussion of the discrete Laplacian in Section~\ref{sec:definition}, we follow the classical approach of Mercat (up to sign) and adapt it to our notations. A feature of our notation is that we are able to formulate a discrete analog of Green's first identity and that our formulation of discrete Green's second identity is more intuitive than the previous ones of Mercat, Chelkak and Smirnov, and Skopenkov \cite{Me08,ChSm11,Sk13}.

In Section~\ref{sec:Dirichlet}, we investigate the discrete Dirichlet energy. In particular, we show how uniqueness and existence of solutions to the discrete Dirichlet boundary value problem imply surjectivity of the discrete differentials and the discrete Laplacian in Theorem~\ref{th:plane_surjective}. We conclude this section with a result concerning the asymptotics of discrete harmonic functions.


\subsubsection{Definition and basic properties} \label{sec:definition}

\begin{definition}
The \textit{discrete Laplacian} on functions $f:V(\Lambda)\to\mC$, discrete one-forms of type $\Diamond$, or discrete two-forms of type $\Lambda$ is defined as the linear operator \[\triangle:=-\delta d-d\delta=\star d \star d +d \star d \star.\]
$f$ is said to be \textit{discrete harmonic} at $v\in V(\Lambda)$ if $\triangle f(v)=0$. 
\end{definition}

The following factorization of the discrete Laplacian in terms of discrete derivatives generalizes the corresponding result given by Chelkak and Smirnov in \cite{ChSm11} to general quad-graphs. The local representation of $\triangle f$ at $v\in V(\Lambda)$ is, up to a factor involving the area of the face $F_v$ of $X$ corresponding to $v$, the same as Mercat's in \cite{Me08}.

\begin{corollary}\label{cor:factorization}
Let $f:V(\Lambda)\to\mC$. Then, $\triangle f=4\partial_\Diamond\bar{\partial}_\Lambda f=4\bar{\partial}_\Diamond\partial_\Lambda f$.  At a vertex $v$ of $\Lambda$, $\triangle f(v)$ equals
\[\frac{1}{2\textnormal{ar}(F_v)}\sum\limits_{Q_s\sim v}\frac{1}{\re\left(\rho_s\right)}\left(|\rho_s|^2 \left(f(v_s)-f(v)\right)+\im\left(\rho_s\right)\left(f(v'_s)-f(v'_{s-1})\right)\right).\]
Here, $\rho_s:=\rho_{Q_s}$ if $v$ is black, and $\rho_s:=1/\rho_{Q_s}$ if $v$ is white. 

In particular, $\re(\triangle f)\equiv\triangle \re(f)$ and $\im(\triangle f)\equiv \triangle \im(f)$.
\end{corollary}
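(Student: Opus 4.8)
The plan is to establish $\triangle f = 4\partial_\Diamond\bar\partial_\Lambda f$ first, then extract the explicit local formula, and finally read off the real/imaginary decoupling as an immediate corollary. The factorization should follow by unwinding the definition $\triangle = \star d\star d + d\star d\star$ applied to a function $f:V(\Lambda)\to\mC$. Since $df = \partial_\Lambda f\, dz + \bar\partial_\Lambda f\, d\bar z$ is a discrete one-form of type $\Diamond$, I can apply the discrete Hodge star from Section~\ref{sec:hodge}: $\star df = -i\partial_\Lambda f\, dz + i\bar\partial_\Lambda f\, d\bar z$. Then $d\star df$ is a discrete two-form; on a face $F_v$ of $X$ it equals $\left(\partial_\Diamond(i\bar\partial_\Lambda f) - \bar\partial_\Diamond(-i\partial_\Lambda f)\right)\Omega_\Lambda = i\left(\partial_\Diamond\bar\partial_\Lambda f + \bar\partial_\Diamond\partial_\Lambda f\right)\Omega_\Lambda$, and by Corollary~\ref{cor:commutativity} the two mixed second derivatives agree, giving $2i\,\partial_\Diamond\bar\partial_\Lambda f\,\Omega_\Lambda$. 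Applying $\star$ once more to this two-form of type $\Lambda$ uses $\star\Omega_\Lambda = -2i$ and $\star$ on functions, so $\star d\star d f = 4\partial_\Diamond\bar\partial_\Lambda f$ after tracking the constants. The term $d\star d\star f$ vanishes because $\star f$ is a discrete two-form of type $\Lambda$, $d\star f$ is therefore a discrete three-form — which is zero on a planar decomposition — hence $d\star d\star f = 0$. This yields $\triangle f = 4\partial_\Diamond\bar\partial_\Lambda f$, and the second equality is again Corollary~\ref{cor:commutativity}.

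For the explicit local formula, I would expand $4\partial_\Diamond\bar\partial_\Lambda f(v)$ using the integral representation from the definition of $\partial_\Diamond$, namely $\partial_\Diamond h(v) = \frac{-1}{2i\,\mathrm{ar}(F_v)}\oint_{P_v} h\,d\bar z$, applied to $h = \bar\partial_\Lambda f$. This turns the Laplacian into a sum over the faces $Q_s\sim v$ of contributions $\bar\partial_\Lambda f(Q_s)$ times the complex numbers assigned to the two medial edges of $P_v$ bounding $F_{Q_s}$, which are half-diagonals of $Q_s$. Then I substitute the definition of $\bar\partial_\Lambda f(Q_s)$ in terms of the difference quotients over $b_\pm$ and $w_\pm$ and the factor $\lambda_{Q_s}$, using $2\lambda_Q = \exp(-i(\varphi_Q - \pi/2))/\sin(\varphi_Q)$ and the relation $\rho_Q = -i(w_+-w_-)/(b_+-b_-)$. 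The black and white vertices of $Q_s$ are, in the star notation, $v$ together with $v_s$ on one color and $v'_{s-1}, v'_s$ on the other; here the case distinction between $v$ black and $v$ white enters, which is exactly what the definition $\rho_s := \rho_{Q_s}$ or $1/\rho_{Q_s}$ is designed to absorb. After collecting the coefficient of $f(v_s)-f(v)$ and of $f(v'_s)-f(v'_{s-1})$ and simplifying $|\rho_s|^2$ and $\im(\rho_s)$ out of the exponentials, the expression should reduce to the claimed $\frac{1}{2\,\mathrm{ar}(F_v)}\sum_{Q_s\sim v}\frac{1}{\re(\rho_s)}\left(|\rho_s|^2(f(v_s)-f(v)) + \im(\rho_s)(f(v'_s)-f(v'_{s-1}))\right)$, matching Mercat's formula up to the area prefactor.

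The final claim, $\re(\triangle f)\equiv\triangle\re(f)$ and $\im(\triangle f)\equiv\triangle\im(f)$, is then immediate: the local formula exhibits $\triangle f(v)$ as a real-linear combination (the coefficients $\frac{|\rho_s|^2}{2\,\mathrm{ar}(F_v)\re(\rho_s)}$ and $\frac{\im(\rho_s)}{2\,\mathrm{ar}(F_v)\re(\rho_s)}$ are real) of values of $f$, so $\triangle$ commutes with taking real and imaginary parts.

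The main obstacle will be the bookkeeping in the local formula: correctly identifying which half-diagonal of $Q_s$ is assigned to which oriented medial edge of $P_v$, keeping the signs consistent with the counterclockwise orientation, and carrying out the $\lambda_{Q_s}$-to-$\rho_s$ simplification cleanly in both the black and white cases. The factorization step itself is conceptually short but requires care with the factors of $2i$ hidden in the normalizations of $\Omega_\Lambda$, $\star$ on functions, and $\star$ on two-forms; I would double-check these against Corollary~\ref{cor:hodge} before asserting the constant $4$.
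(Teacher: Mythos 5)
Your factorization argument is exactly the paper's: $\triangle f=\star d\star df$ (the $d\star d\star f$ term vanishing since there are no discrete three-forms), expanded through the definitions of $\star$ and $d$ and symmetrized with Corollary~\ref{cor:commutativity}; your constant-tracking (the $-2i$ from $\star$ on two-forms against the $i$ in $d\star df$) is correct and gives the factor $4$. For the local formula the paper takes a slightly different but equivalent route: it writes $\triangle f(v)=\frac{1}{\textnormal{ar}(F_v)}\oint_{\partial F_v}\star df$ and invokes the explicit Hodge-star formula of Proposition~\ref{prop:hodge_Mercat}, whose coefficients $|e_s^*|/(|e_s|\sin\varphi_{Q_s})$ and $\cot\varphi_{Q_s}$ are already real, so the stated sum drops out term by term. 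Your route---expanding $4\partial_\Diamond\bar{\partial}_\Lambda f(v)=\frac{i}{\textnormal{ar}(F_v)}\sum_{Q_s\sim v}\bar{\partial}_\Lambda f(Q_s)\,\overline{(v'_s-v'_{s-1})}$ and substituting the $\lambda_{Q_s}$-definition---also works, but one point your ``collect the coefficients'' step glosses over: using $\bar{\lambda}_Q=\rho_Q/(2\re(\rho_Q))$ one finds the per-quadrilateral coefficient of $f(v'_s)-f(v'_{s-1})$ to be $i\bar{\rho}_s/\left(2\textnormal{ar}(F_v)\re(\rho_s)\right)$, which is \emph{not} real; its imaginary part is the $s$-independent constant $1/\left(2\textnormal{ar}(F_v)\right)$ and only cancels after summing, because $\sum_{Q_s\sim v}\left(f(v'_s)-f(v'_{s-1})\right)=0$ telescopes around the closed cycle $P_v$. (Equivalently, your integrand $2i\,\bar{\partial}_\Lambda f\,d\bar{z}$ differs from the paper's $\star df$ by $-i\,df$, whose integral over $P_v$ vanishes by Proposition~\ref{prop:dd0}.) With that observation the two computations agree, the coefficient of $f(v_s)-f(v)$ comes out as the real number $|\rho_s|^2/\left(2\textnormal{ar}(F_v)\re(\rho_s)\right)$ as you expect, and the final claim about $\re$ and $\im$ follows from the reality of the coefficients exactly as you say.
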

\begin{proof}
The first statement follows from $\triangle f = \star d \star d f= 2 \partial_\Diamond\bar{\partial}_\Lambda f+2\bar{\partial}_\Diamond\partial_\Lambda f$, noting that $\partial_\Diamond\bar{\partial}_\Lambda f\equiv\bar{\partial}_\Diamond\partial_\Lambda f$ by Corollary~\ref{cor:commutativity}.

For the second statement, let us assume without loss of generality that $v\in V(\Gamma)$. Then, we have to show that $\triangle f(v)$ equals \[\frac{1}{2\textnormal{ar}(F_v)}\sum\limits_{Q_s\sim v}\left(\frac{|\rho_{Q_s}|}{\sin(\varphi_{Q_s})} \left(f(v_s)-f(v)\right)-\cot(\varphi_{Q_s})\left(f(v'_s)-f(v'_{s-1})\right)\right).\]
The structure is similar to the formula of the discrete Hodge star in Proposition~\ref{prop:hodge_Mercat}. Indeed, if $e_s$ denotes an edge of $X$ parallel to the black diagonal $vv_s$ and $e^*_s$ an edge parallel to the dual diagonal, $\triangle f(v)$ equals \begin{align*}&\frac{1}{\textnormal{ar}(F_v)}\iint\limits_{F_v} d \star df=\frac{1}{\textnormal{ar}(F_v)}\oint\limits_{\partial F_v} \star df\\ =& \frac{1}{\textnormal{ar}(F_v)}\sum\limits_{Q_s\sim v}\left(\frac{|e^*_s|}{|e_s|\sin(\varphi_{Q_s})} \int\limits_{e_s} df-\cot(\varphi_{Q_s})\int\limits_{e^*_s} df\right)\\=&\frac{1}{2\textnormal{ar}(F_v)}\sum\limits_{Q_s\sim v}\left(\frac{|\rho_{Q_s}|}{\sin(\varphi_{Q_s})} \left(f(v_s)-f(v)\right)-\cot(\varphi_{Q_s})\left(f(v'_s)-f(v'_{s-1})\right)\right),\end{align*}
using discrete Stokes' Theorem~\ref{th:stokes}, Proposition~\ref{prop:hodge_Mercat}, and $|\rho_{Q_s}|=|e^*_s|/|e_s|$.
\end{proof}

\begin{remark}
In the case that the diagonals of the quadrilaterals are orthogonal to each other, $\rho_Q$ is always a positive real number. In this case, the discrete Laplacian splits into two separate discrete Laplacians on $\Gamma$ and $\Gamma^*$. In this case, it is known and actually an immediate consequence of the local representation in Corollary~\ref{cor:factorization} that a discrete maximum principle holds true, i.e., a discrete harmonic function can attain its maximum only at the boundary of a region. This is not true for general quad-graphs, see for example Skopenkov's paper \cite{Sk13}. 
\end{remark}

\begin{corollary}\label{cor:holomorphic_harmonic}
Let $f:V(\Lambda)\to\mC$.
\begin{enumerate}
 \item If $f$ is discrete harmonic, $\partial_\Lambda f$ is discrete holomorphic. 
 \item If $f$ is discrete holomorphic, $f$, $\re f$, and $\im f$ are discrete harmonic.
\end{enumerate}
\end{corollary}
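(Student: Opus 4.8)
The plan is to derive both parts from the factorization $\triangle f = 4\partial_\Diamond\bar\partial_\Lambda f = 4\bar\partial_\Diamond\partial_\Lambda f$ established in Corollary~\ref{cor:factorization}, together with the commutativity of discrete differentials from Corollary~\ref{cor:commutativity}.

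For part (i), suppose $f$ is discrete harmonic, so $\triangle f\equiv 0$. By Corollary~\ref{cor:factorization}, $4\bar\partial_\Diamond\partial_\Lambda f = \triangle f \equiv 0$, which says precisely that $\partial_\Lambda f$ is discrete holomorphic in the sense of the definition in Section~\ref{sec:derivative_diamond} (a function $h:V(\Diamond)\to\mC$ is discrete holomorphic at $v$ iff $\bar\partial_\Diamond h(v)=0$). So part (i) is essentially immediate.

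For part (ii), suppose $f$ is discrete holomorphic, i.e.\ $\bar\partial_\Lambda f\equiv 0$. Then by Corollary~\ref{cor:factorization}, $\triangle f = 4\partial_\Diamond\bar\partial_\Lambda f = 4\partial_\Diamond 0 \equiv 0$, so $f$ itself is discrete harmonic. For $\re f$ and $\im f$, I would invoke the last line of Corollary~\ref{cor:factorization}, namely $\re(\triangle f)\equiv\triangle\re(f)$ and $\im(\triangle f)\equiv\triangle\im(f)$; since $\triangle f\equiv 0$, both its real and imaginary parts vanish, hence $\triangle\re(f)\equiv 0$ and $\triangle\im(f)\equiv 0$, so $\re f$ and $\im f$ are discrete harmonic.

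There is essentially no obstacle here: the corollary is a direct bookkeeping consequence of Corollary~\ref{cor:factorization} and Corollary~\ref{cor:commutativity}, which have already done the real work (the factorization of the Laplacian and the fact that $\partial_\Diamond$ and $\bar\partial_\Diamond$ are real-linear so they commute with taking real and imaginary parts). The only thing to be careful about is matching the direction of the discrete derivatives correctly — using $\bar\partial_\Diamond\partial_\Lambda f$ for part (i) and $\partial_\Diamond\bar\partial_\Lambda f$ for the first claim of part (ii) — but both expressions equal $\tfrac14\triangle f$, so either works.
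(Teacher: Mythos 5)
Your proof is correct and follows exactly the paper's own argument: both parts are read off from the factorization $\triangle f = 4\partial_\Diamond\bar{\partial}_\Lambda f = 4\bar{\partial}_\Diamond\partial_\Lambda f$ of Corollary~\ref{cor:factorization}, and the statements about $\re f$ and $\im f$ from the identities $\re(\triangle f)\equiv\triangle\re(f)$ and $\im(\triangle f)\equiv\triangle\im(f)$ recorded there. No gaps.
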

\begin{proof}
By Corollary~\ref{cor:factorization}, $\triangle f \equiv 4 \bar{\partial}_\Diamond \partial_\Lambda f\equiv 4 \partial_\Diamond \bar{\partial}_\Lambda f$. In particular, $\bar{\partial}_\Diamond \partial_\Lambda f \equiv 0$ if $\triangle f \equiv 0$, which shows (i). Also, $f$ is discrete harmonic if it is discrete holomorphic. Using $\re(\triangle f)\equiv\triangle \re(f)$ and $\im(\triangle f)\equiv \triangle \im(f)$, $\re(f)$ and $\im(f)$ are discrete harmonic if $f$ is.
\end{proof}

Similar to Proposition~\ref{prop:examples}, the discrete Laplacian coincides with the smooth one up to order one in the general case and up to order two for parallelogram-graphs. This was already shown by Skopenkov in \cite{Sk13}. Since this result follows immediately from our previous ones, we give a proof here as well.

\begin{proposition}\label{prop:examples3}
Let $f_{\mC}:\mC\to\mC$ and $f$ its restriction to $V(\Lambda)$.
\begin{enumerate}
\item If $f_{\mC}(z)$ is a polynomial in $\re(z)$ and $\im(z)$ of degree at most one, then the smooth and the discrete Laplacian coincide on vertices: $\triangle_{\mC} f_{\mC} (v)=\triangle f (v)$.
\item Let all quadrilaterals of $\Diamond$ be parallelograms. If $f_{\mC}(z)$ is a polynomial in $\re(z)$ and $\im(z)$ of degree at most two, then the smooth and the discrete Laplacian coincide on vertices: $\triangle_{\mC} f_{\mC} (v)=\triangle f (v)$.
\end{enumerate}
\end{proposition}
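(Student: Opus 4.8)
The strategy is to test both Laplacians on a conjugation-closed spanning set of monomials and then invoke linearity. By Corollary~\ref{cor:factorization}, the discrete Laplacian satisfies $\triangle f = 4\,\partial_\Diamond\bar\partial_\Lambda f = 4\,\bar\partial_\Diamond\partial_\Lambda f$ and obeys $\re(\triangle f)\equiv\triangle\re(f)$, $\im(\triangle f)\equiv\triangle\im(f)$; in particular $\triangle$ is $\mC$-linear and commutes with complex conjugation, exactly as the smooth operator $\triangle_\mC = 4\partial\bar\partial$ does. A polynomial in $\re(z),\im(z)$ of degree at most one (resp.\ at most two) is a $\mC$-linear combination of $1,z,\bar z$ (resp.\ of $1,z,\bar z,z^2,\bar z^2,|z|^2$), and this spanning set is closed under conjugation, so it suffices to check equality of the two Laplacians on $1$ and $z$ (resp.\ also on $z^2$ and $|z|^2$), propagating to the conjugates by symmetry and to general polynomials by linearity.

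For part~(i): the constant $1$ is biconstant, hence discrete harmonic, matching $\triangle_\mC 1 = 0$. The function $f(v)=v$ is discrete holomorphic by Proposition~\ref{prop:examples}, hence discrete harmonic by Corollary~\ref{cor:holomorphic_harmonic}, matching $\triangle_\mC z = 0$; and $\bar z$ is the conjugate of a discrete harmonic function, hence discrete harmonic, matching $\triangle_\mC\bar z=0$. This proves part~(i) on every bipartite quad-graph.

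For part~(ii) we place each $Q\in V(\Diamond)$ at the center of its parallelogram. Then $f(v)=v^2$ is discrete holomorphic by Proposition~\ref{prop:examples}~(iii), hence discrete harmonic, matching $\triangle_\mC z^2 = 0$, and $\bar z^2$ follows by conjugation. For $f(v)=|v|^2$, Proposition~\ref{prop:examples}~(iv) gives $\bar\partial_\Lambda f(Q)=Q$; since the center of a parallelogram is simultaneously the midpoint $(v'_{s-1}+v'_s)/2$ of the corresponding edge of the star of each incident vertex, the remark following Proposition~\ref{prop:examples2} applies and yields $\partial_\Diamond(Q\mapsto Q)(v)=1$. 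Hence $\triangle f(v) = 4\,\partial_\Diamond\bar\partial_\Lambda f(v) = 4 = \triangle_\mC|z|^2$. Linearity together with part~(i) completes part~(ii).

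The argument is essentially pure assembly of earlier results; the one place that requires care is matching the conventions for the position of the dual vertices: Proposition~\ref{prop:examples}~(iii),(iv) are phrased with $Q$ at the parallelogram center, while Proposition~\ref{prop:examples2} is phrased with $Q$ at an edge midpoint of the star, and one must observe that on a parallelogram-graph these two prescriptions agree and are globally consistent. No step presents a genuine obstacle.
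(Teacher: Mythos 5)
Your proof is correct and follows essentially the same route as the paper: both reduce to the spanning monomials via linearity, use Proposition~\ref{prop:examples}~(ii)--(iv) together with Corollary~\ref{cor:holomorphic_harmonic} for the harmonic cases, and compute $\triangle|v|^2=4\partial_\Diamond\bar{\partial}_\Lambda|v|^2=4$ via Proposition~\ref{prop:examples2} with the dual vertices at the parallelogram centers. The only cosmetic difference is your choice of the conjugation-closed basis $z^2,\bar z^2,|z|^2$ where the paper uses $f_1^2-f_2^2$, $f_1^2+f_2^2$, $f_1f_2$; these span the same space, so the arguments are interchangeable.
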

\begin{proof}
(i) It follows from Proposition~\ref{prop:examples}~(ii) and Corollary~\ref{cor:holomorphic_harmonic}~(ii) that constants as well as the complex function $f(v)=v$ are discrete harmonic since they are discrete holomorphic. Similarly, $f(v)=\overline{v}$ is discrete harmonic.

(ii) In the parallelogram case, let us place the vertices of $\Diamond$ at the centers of the parallelograms. Then, $f(v)=v^2$ is discrete harmonic by Proposition~\ref{prop:examples}~(iii) and Corollary~\ref{cor:holomorphic_harmonic}~(ii). Looking at real and imaginary part separately, $\triangle f_1^2 = \triangle f_2^2$ and $\triangle \left(f_1 f_2\right)=0$ with $f_1(v)=\re(v), f_2(v)=\im(v)$. Finally, \[\triangle |f|^2= 4 \partial_\Diamond\bar{\partial}_\Lambda |f|^2=4 \partial_\Diamond h=4\] with $h(Q)=Q$, due to Propositions~\ref{prop:examples}~(iv) and~\ref{prop:examples2}. Since any polynomial in $\re(z)$ and $\im(z)$ of monomials of degree two is a linear combination of $f_1^2-f_2^2$, $f_1^2+f_2^2$, and $f_1f_2$, and since we have shown that the discrete Laplacian $\triangle$ and the smooth Laplacian $\triangle_\mC$ coincide on these, we are done.
\end{proof}
\begin{remark}
The second part of the last proposition generalizes the known result for rhombi given by Chelkak and Smirnov in \cite{ChSm11}. Note that this is not true for general quadrilaterals even if one assumes that the diagonals of quadrilaterals are orthogonal to each other. For this, consider the following (finite) bipartite quad-graph of Figure~\ref{fig:laplace}: $0\in\Gamma$ is adjacent to the white vertices $\pm 1$ and $\pm i$ in $\Lambda$, and adjacent to the black vertices $2+2i$, $-1\pm i$, and $1-i$ in $\Gamma$. There are no further vertices. Then, $\triangle f (0)\neq 0$ for $f(v)=v^2$. Indeed, we would get $\triangle f (0)= 0$ if we had replaced $v=2+2i$ by $v=1+i$ obtaining a rhombic quad-graph; but $|\rho_Q|^2/\re(\rho_Q)\left(f(v)-f(0)\right)$ scales by a factor of $2$, whereas the other nonzero summands in the local representation of $\triangle f (0)$ remain invariant. 

\begin{figure}[htbp]
\begin{center}
\beginpgfgraphicnamed{laplace}
\begin{tikzpicture}
[white/.style={circle,draw=black,fill=white,thin,inner sep=0pt,minimum size=1.2mm},
black/.style={circle,draw=black,fill=black,thin,inner sep=0pt,minimum size=1.2mm},
scale=1.2]
\node[black] (b1) [label=below right:$0$]
 at (0,0) {};
\node[black] (b2) [label=right:$2+2i$]
 at (2,2) {};
\node[black] (b3) [label=below:$1-i$]
 at (1,-1) {};
\node[black] (b4) [label=above:$-1+i$]
 at (-1,1) {};
\node[black] (b5) [label=below:$-1-i$]
 at (-1,-1) {};
\node[white] (w1) [label=below right:$1$]
 at (1,0) {};
\node[white] (w2) [label=above:$i$]
 at (0,1) {};
\node[white] (w3) [label=below:$-i$]
 at (0,-1) {};
\node[white] (w4) [label=below right:$-1$]
 at (-1,0) {};
\coordinate[label=center:$Q$] (z)  at (1,1) {};
\draw (b1) -- (w3) -- (b3) -- (w1) -- (b1) -- (w2) -- (b2) -- (w1);
\draw (b1) -- (w4) --  (b5) --  (w3);
\draw (w4) -- (b4) -- (w2);
\end{tikzpicture}
\endpgfgraphicnamed
\caption{$\triangle f (0)\neq 0$ for $f(v)=v^2$}
\label{fig:laplace}
\end{center}
\end{figure}

In the case of general quad-graphs, smooth functions $f_\mC:\mC \to \mC$, and restrictions $f$ to $V(\Lambda)$, Skopenkov compared the integral of $\triangle_\mC f_\mC$ over a square domain $R$ and a sum of $\triangle f (v)$ over black vertices of $\Lambda$ in $R$ \cite{Sk13}. Moreover, he showed that for $f(v)=|v|^2$, \[\triangle f(v)=\frac{2}{ \textnormal{ar}(F_v)} \sum\limits_{Q_s \sim v} \textnormal{area}(vv'_{s-1}Q_sv'_s)\] when $Q_s$ is placed at the intersection point of the middle perpendiculars to the diagonals of the corresponding quadrilateral (which equals the intersection point of the diagonals if the quadrilateral is a parallelogram). Note that in general, $h(Q)=Q$ is not discrete holomorphic if the vertices $Q \in V(\Diamond)$ are placed on the intersections of these middle perpendiculars.
\end{remark}

For a finite subset $\Diamond_0 \subset \Diamond$ and two functions $f,g:V(\Lambda_0)\to\mC$, we denote by \[\langle f,g\rangle_{\Diamond_0}:=-\frac{1}{2i}\iint\limits_{F(X_0)} f \bar{g} \Omega_\Lambda\] the discrete scalar product of $f$ and $g$ restricted to $\Diamond_0$. Similarly, the restriction of the discrete scalar product of two discrete one-forms of type $\Diamond$ on $\Diamond_0$ is defined.

In the rhombic setup, discrete versions of Green's second identity were already stated by Mercat \cite{Me01}, whose integrals were not well defined separately, and Chelkak and Smirnov \cite{ChSm11}, whose boundary integral was an explicit sum involving boundary angles. Skopenkov formulated a discrete Green's second identity with a vanishing boundary term \cite{Sk13}. We are able to provide a \textit{discrete Green's first identity} for the first time, prove it completely analogously to the smooth setting, and deduce \textit{discrete Green's second identity} out of it.

\begin{theorem}\label{th:Green_identities}
Let $\Diamond_0 \subset \Diamond$ be finite, and let $f,g:V(\Lambda_0)\to\mC$.
\begin{enumerate}
 \item $\langle f,\triangle g\rangle_{\Diamond_0}+ \langle df, dg\rangle_{\Diamond_0}=\oint\limits_{\partial X_0} f \star d\bar{g}.$
 \item $\langle \triangle f,g\rangle_{\Diamond_0}-\langle f,\triangle g\rangle_{\Diamond_0}=\oint\limits_{\partial X_0} \left(f \star d\bar{g} - \bar{g} \star df\right).$
\end{enumerate}
\end{theorem}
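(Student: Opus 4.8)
The plan is to mimic the classical proof of Green's identities, using discrete Stokes' Theorem~\ref{th:stokes} together with the fact (Theorem~\ref{th:derivation}) that the discrete exterior derivative is a derivation for the discrete wedge product on functions on $V(\Lambda)$ and discrete one-forms of type $\Diamond$. The key observation is that $d\bar{g}$ is a discrete one-form of type $\Diamond$ (since $d\bar g=\partial_\Lambda\bar g\,dz+\bar\partial_\Lambda\bar g\,d\bar z$), hence so is $\star d\bar g$ by the definition of the discrete Hodge star, and therefore $f\star d\bar g$ is a legitimate discrete one-form to which we may apply Theorem~\ref{th:derivation}.

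First I would establish (i). Apply the derivation rule to $f\,\omega$ with $\omega:=\star d\bar g$, giving
\[
d\!\left(f\star d\bar g\right)=df\wedge\star d\bar g + f\,d\!\left(\star d\bar g\right).
\]
Now integrate over the faces $F(X_0)$. On the left, discrete Stokes' Theorem~\ref{th:stokes} turns the integral into $\oint_{\partial X_0}f\star d\bar g$, the desired boundary term. For the right-hand side I would identify the two bulk terms with the two scalar products. By the definition of the discrete Laplacian $\triangle=\star d\star d+d\star d\star$ applied to a function on $V(\Lambda)$ (where the $d\star d\star$ part annihilates functions, cf.\ the computation in Corollary~\ref{cor:factorization}), we have $\triangle\bar g=\star d\star d\bar g$, so $d\star d\bar g=\star\triangle\bar g$ (using $\star^2=\mathrm{Id}$ on two-forms of type $\Lambda$, Corollary~\ref{cor:hodge}~(ii)); hence $\iint_{F(X_0)}f\,d\star d\bar g=\iint_{F(X_0)}f\,\overline{\star\triangle g}=\langle f,\triangle g\rangle_{\Diamond_0}$ by Corollary~\ref{cor:hodge}~(iv), where one uses $\overline{\triangle g}=\triangle\bar g$ which follows from the local real representation in Corollary~\ref{cor:factorization}. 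For the other term, $\iint_{F(X_0)}df\wedge\star d\bar g=\langle df,dg\rangle_{\Diamond_0}$ directly from the definition of the discrete scalar product on one-forms of type $\Diamond$, namely $\langle\omega,\omega'\rangle=\iint\omega\wedge\star\bar\omega'$. Assembling these three identifications yields (i).

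Then (ii) follows by antisymmetrization: write down (i) and the identity obtained from it by swapping the roles of $f$ and $\bar g$ in the appropriate way — more precisely, apply (i) once as stated and once with $f$ replaced by $\bar g$ and $g$ replaced by $\bar f$ (so that the second identity reads $\langle\bar g,\triangle\bar f\rangle_{\Diamond_0}+\langle d\bar g,d\bar f\rangle_{\Diamond_0}=\oint_{\partial X_0}\bar g\star df$), take complex conjugates if needed so that the symmetric Dirichlet-type term $\langle df,dg\rangle_{\Diamond_0}$ cancels, and subtract. The Dirichlet inner-product terms cancel because $\langle df,dg\rangle_{\Diamond_0}$ and the conjugate of $\langle d\bar g,d\bar f\rangle_{\Diamond_0}$ agree (both equal $\iint df\wedge\star d\bar g$), leaving exactly $\langle\triangle f,g\rangle_{\Diamond_0}-\langle f,\triangle g\rangle_{\Diamond_0}$ on the left and $\oint_{\partial X_0}(f\star d\bar g-\bar g\star df)$ on the right.

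The main obstacle is bookkeeping with conjugations and with the two different types ($\Lambda$ versus $\Diamond$) of differential forms: one must check carefully that $\star d\bar g$ really is of type $\Diamond$, that $f\star d\bar g$ is an admissible argument for the derivation identity (Theorem~\ref{th:derivation} requires exactly $f\colon V(\Lambda)\to\mC$ times a one-form of type $\Diamond$), and that the scalar-product identifications use the correct form of Corollary~\ref{cor:hodge} (parts (ii) and (iv)) without sign errors. The identity $\overline{\triangle g}=\triangle\bar g$ and $d\star d\bar g=\star\,\overline{\triangle g}$ should be isolated as small preliminary observations. Once the type-checking and the Hodge-star normalization (recall our $\star$ on functions and two-forms carries an extra area factor compared to Mercat's) are handled, everything else is the same two-line argument as in the smooth case.
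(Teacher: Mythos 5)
Your proof is correct and follows essentially the same route as the paper: part (i) is exactly the paper's argument (apply Theorem~\ref{th:derivation} to $f\star d\bar{g}$, identify $f\,d\star d\bar g=f\star\triangle\bar g$ with $\langle f,\triangle g\rangle_{\Diamond_0}$ and $df\wedge\star d\bar g$ with $\langle df,dg\rangle_{\Diamond_0}$, then invoke discrete Stokes' Theorem~\ref{th:stokes}), and part (ii) is precisely what the paper means by ``apply discrete Green's first identity twice,'' with your substitution $(f,g)\mapsto(\bar g,\bar f)$ making the cancellation $\langle d\bar g,d\bar f\rangle_{\Diamond_0}=\langle df,dg\rangle_{\Diamond_0}$ explicit. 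One caveat worth recording: carrying out that subtraction carefully produces $\langle f,\triangle g\rangle_{\Diamond_0}-\langle\triangle f,g\rangle_{\Diamond_0}$ on the left for the stated right-hand side (as the test case $f\equiv 1$ confirms via (i)), so the sign of the left-hand side of (ii) as printed — which your write-up reproduces — should be double-checked; this is an issue with the statement rather than with your method.
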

\begin{proof}
(i) Since the discrete exterior derivative is a derivation for the discrete wedge product by Theorem~\ref{th:derivation}, $d\left(f \star d\bar{g}\right)=df\wedge\star d\bar{g}+f \star (\star d \star d \bar{g})$. Now, discrete Stokes' Theorem~\ref{th:stokes} yields the desired result.

(ii) Just apply twice discrete Green's first identity.
\end{proof}

\textit{Discrete Weyl's lemma} is a direct consequence of discrete Green's second identity, Theorem~\ref{th:Green_identities}~(ii). A version for rhombic quad-graphs was given by Mercat in \cite{Me01}, proven by an explicit calculation.

\begin{corollary}\label{cor:Weyl}
 $f:V(\Lambda)\to\mC$ is discrete harmonic if and only if $\langle f, \triangle g \rangle =0$ for every compactly supported $g:V(\Lambda)\to\mC$.
\end{corollary}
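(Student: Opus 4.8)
The plan is to deduce both implications from discrete Green's second identity, Theorem~\ref{th:Green_identities}~(ii), once we notice that its boundary term disappears for compactly supported test functions.

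First I would fix $f:V(\Lambda)\to\mC$ and a compactly supported $g:V(\Lambda)\to\mC$, and choose a finite connected $\Diamond_0\subset\Diamond$ large enough that the (finite) supports of $g$ and of $\triangle g$ lie well inside $\Diamond_0$; in particular $\bar g$ vanishes at every $\Lambda$-vertex incident to $\partial X_0$ and the one-form $d\bar g$ vanishes on every edge of $X$ meeting $\partial X_0$. Then $f\star d\bar g-\bar g\star df\equiv 0$ along $\partial X_0$, so Theorem~\ref{th:Green_identities}~(ii) collapses to $\langle\triangle f,g\rangle_{\Diamond_0}=\langle f,\triangle g\rangle_{\Diamond_0}$. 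Since the relevant integrands are supported on the finitely many vertices inside $\Diamond_0$, this is the same as $\langle\triangle f,g\rangle=\langle f,\triangle g\rangle$ for the full-plane scalar products, both of which converge because the supports are finite. In other words, $\triangle$ is formally self-adjoint when tested against compactly supported functions. (The same identity could also be read off from Corollary~\ref{cor:factorization} together with Proposition~\ref{prop:adjoint}, or from Proposition~\ref{prop:adjoint2}.)

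Granting this identity, the forward implication is immediate: if $f$ is discrete harmonic then $\triangle f\equiv 0$, hence $\langle f,\triangle g\rangle=\langle\triangle f,g\rangle=0$ for every compactly supported $g$. For the converse, assume $\langle f,\triangle g\rangle=0$ for all compactly supported $g$; by self-adjointness this forces $\langle\triangle f,g\rangle=0$ for all such $g$. Unwinding the definition of the scalar product gives $\langle\triangle f,g\rangle=\sum_{v\in V(\Lambda)}(\triangle f)(v)\,\overline{g(v)}\,\textnormal{ar}(F_v)$, so taking $g$ to be the function equal to $1$ at a prescribed vertex $v_0$ and $0$ elsewhere leaves only the single term $(\triangle f)(v_0)\,\textnormal{ar}(F_{v_0})=0$. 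Since $\textnormal{ar}(F_{v_0})>0$, we conclude $(\triangle f)(v_0)=0$, and as $v_0$ was arbitrary, $f$ is discrete harmonic.

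The only point requiring care, and essentially the only place anything could go wrong, is the vanishing of the boundary integral: one has to verify that enlarging $\Diamond_0$ kills not merely $\bar g$ but also $d\bar g$ (which couples neighbouring $g$-values) on all edges of $\partial X_0$, and that $\bar g$ vanishes at the $\Lambda$-vertices bordering $\partial X_0$ so that $\bar g\star df$ dies there as well. This is purely bookkeeping about how large $\Diamond_0$ must be, so I expect no genuine obstacle.
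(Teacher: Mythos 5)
Your argument is correct and is exactly the route the paper intends: the paper gives no separate proof but declares the corollary ``a direct consequence of discrete Green's second identity,'' i.e.\ the boundary term in Theorem~\ref{th:Green_identities}~(ii) vanishes for compactly supported $g$, yielding $\langle \triangle f, g\rangle = \langle f, \triangle g\rangle$, after which testing against Kronecker deltas gives the converse. Your bookkeeping of the boundary term and the identification $\langle\triangle f,g\rangle=\sum_{v}(\triangle f)(v)\overline{g(v)}\,\textnormal{ar}(F_v)$ are both accurate.
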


Skopenkov introduced the notion of discrete harmonic conjugates in \cite{Sk13}. We recover his definitions in our notation, observing that his discrete gradient corresponds to our discrete exterior derivative and his counterclockwise rotation by $\pi/2$ corresponds to our discrete Hodge star.

\begin{definition}
Let $\Diamond_0 \subseteq \Diamond$ and $f$ be a real (discrete harmonic) function on $V(\Lambda_0)$. A real discrete harmonic function $\tilde{f}$ on $V(\Lambda_0)$ is said to be a \textit{discrete harmonic conjugate of} $f$ if $f+i\tilde{f}$ is discrete holomorphic.
\end{definition}

Note that the existence of a real function $\tilde{f}$ such that $f+i\tilde{f}$ is discrete holomorphic requires already that $f$ is discrete harmonic due to Corollary~\ref{cor:holomorphic_harmonic}~(ii).

\begin{lemma}\label{lem:conjugate}
Let $\Diamond_0 \subseteq \Diamond$ and $f$ be a real discrete harmonic function on $V(\Lambda_0)$.
\begin{enumerate}
 \item The discrete harmonic conjugate $\tilde{f}$ is unique up to two additive real constants on $\Gamma_0$ and $\Gamma_0^*$.
 \item If $\Diamond_0$ is simply-connected, a discrete harmonic conjugate $\tilde{f}$ exists.
\end{enumerate}
\end{lemma}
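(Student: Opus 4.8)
The plan is to imitate the classical construction of a harmonic conjugate. In the smooth setting one uses that $\partial_z f$ is holomorphic when $f$ is harmonic, integrates $2\partial_z f\,dz$ on a simply-connected domain, and observes that the real part of the primitive recovers $f$ up to a constant. All three ingredients are already in place here: $\partial_\Lambda f$ is discrete holomorphic by Corollary~\ref{cor:holomorphic_harmonic}~(i), Proposition~\ref{prop:primitive} integrates it on a simply-connected $\Diamond_0$, and the reality of $f$ will force the real part of the primitive to match $f$.

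For part (i) I would argue as follows. If $\tilde f_1$ and $\tilde f_2$ are discrete harmonic conjugates of $f$, then $f+i\tilde f_1$ and $f+i\tilde f_2$ are discrete holomorphic, hence so is $i(\tilde f_1-\tilde f_2)$ and therefore $\tilde f_1-\tilde f_2$ itself, by linearity of $\bar\partial_\Lambda$. Being real, $\tilde f_1-\tilde f_2$ is biconstant by Proposition~\ref{prop:zeroderivative}~(i), and its two constant values are automatically real. Conversely, if $\tilde f$ is a discrete harmonic conjugate of $f$ and $c$ is a real biconstant, then $f+i(\tilde f+c)=(f+i\tilde f)+ic$ is discrete holomorphic and $\tilde f+c$ is real and discrete harmonic, so it is again a discrete harmonic conjugate. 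Hence the set of discrete harmonic conjugates of $f$ is precisely a coset of the real biconstants, which is the assertion.

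For part (ii) I would set $h:=2\partial_\Lambda f$, which is discrete holomorphic on $V(\Diamond_0)$ by Corollary~\ref{cor:holomorphic_harmonic}~(i). Since $\Diamond_0$ is simply-connected, Proposition~\ref{prop:primitive} yields a discrete holomorphic function $g:=\int h$ on $V(\Lambda_0)$ with $\partial_\Lambda g=h=2\partial_\Lambda f$ (and $\bar\partial_\Lambda g=0$ because $g$ is discrete holomorphic). Now $f$ is real, so $\bar\partial_\Lambda f=\overline{\partial_\Lambda f}$, and $\partial_\Lambda\bar g=\overline{\bar\partial_\Lambda g}$, $\bar\partial_\Lambda\bar g=\overline{\partial_\Lambda g}$ straight from the definition of the discrete derivatives. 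Therefore the function $\phi:=g+\bar g-2f=2\re g-2f$ satisfies
\[
 \partial_\Lambda\phi=\partial_\Lambda g+\overline{\bar\partial_\Lambda g}-2\partial_\Lambda f=0,\qquad \bar\partial_\Lambda\phi=\bar\partial_\Lambda g+\overline{\partial_\Lambda g}-2\,\overline{\partial_\Lambda f}=0.
\]
By Proposition~\ref{prop:zeroderivative}~(ii), $\phi$ is biconstant, i.e.\ $c:=\re g-f$ is a real biconstant. Replacing $g$ by the discrete holomorphic function $g-c$ (biconstants are discrete holomorphic), we get $\re(g-c)=f$, so $\tilde f:=\im(g-c)=\im g$ is a real function with $f+i\tilde f=g-c$ discrete holomorphic. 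By Corollary~\ref{cor:holomorphic_harmonic}~(ii), $\tilde f$ is discrete harmonic, hence a discrete harmonic conjugate of $f$.

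The argument is short because its analytic content — that $\partial_\Lambda f$ is discrete holomorphic and that discrete holomorphic functions on $V(\Diamond_0)$ admit discrete primitives on simply-connected domains — has already been established. The only step requiring genuine care is the matching of real parts in part (ii): one has to use that $f$ real forces $\bar\partial_\Lambda f=\overline{\partial_\Lambda f}$, so that $\re g$ has the same discrete derivatives as $f$ and therefore differs from it only by a real biconstant. This is the discrete analog of the smooth identity $\nabla\re g=\nabla f$, and it is precisely what legitimizes the normalization by $c$; I expect this bookkeeping, rather than any substantial difficulty, to be the crux.
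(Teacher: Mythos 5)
Your proof is correct. Part (i) is essentially the paper's argument: the difference of two conjugates is real and discrete holomorphic, hence biconstant by Proposition~\ref{prop:zeroderivative}; your added verification that any real biconstant shift again yields a conjugate is trivial but harmless. For part (ii) you take a slightly different route. The paper observes that harmonicity makes $\star df$ a closed discrete one-form of type $\Diamond$, that reality of $f$ makes $\star df=2\im(\partial_\Lambda f\,dz)$ real-valued on edges, and then integrates $\star df$ directly (as in the proof of Proposition~\ref{prop:primitive}) to obtain the real function $\tilde f$; the identity $d(f+i\tilde f)=2\partial_\Lambda f\,dz$ then gives holomorphicity via Corollary~\ref{cor:f_holomorphic}. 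You instead invoke Proposition~\ref{prop:primitive} as a black box on $h=2\partial_\Lambda f$ to get a discrete holomorphic primitive $g$, and then must do the extra bookkeeping of showing $\re g-f$ is a real biconstant (via $\partial_\Lambda\phi=\bar\partial_\Lambda\phi=0$ and Proposition~\ref{prop:zeroderivative}~(ii)) before setting $\tilde f=\im g$. The two arguments integrate the same data — your $g$ is the paper's $f+i\tilde f$ up to constants — but the paper's version avoids the real-part matching by integrating only the imaginary half of the form, while yours buys a cleaner reduction to an already-stated proposition at the cost of that normalization step. Both are valid; your identities $\bar\partial_\Lambda f=\overline{\partial_\Lambda f}$ for real $f$ and $\partial_\Lambda\bar g=\overline{\bar\partial_\Lambda g}$ check out directly from the definition of the discrete derivatives.
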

\begin{proof}
(i) If $\tilde{f}_1$ and $\tilde{f}_2$ are two real discrete harmonic conjugates, their difference $\tilde{f}_1-\tilde{f}_2$ is real and discrete holomorphic. So by Proposition~\ref{prop:zeroderivative}~(ii), it is biconstant.

(ii) Since $f$ is harmonic, $d\star d f=0$, i.e., $\star df$ is closed and of type $\Diamond$. Moreover, reality of $f$ implies $\star df=-i \partial_\Lambda f dz + i\bar{\partial}_\Lambda f d\bar{z}= 2\im\left(\partial_\Lambda f dz \right)$. So in the same manner as in the proof of Proposition~\ref{prop:primitive}, $\star df$ can be integrated to a real function $\tilde{f}$ on $V(\Lambda_0)$. Finally, $f+i\tilde{f}$ is discrete holomorphic by Corollary~\ref{cor:f_holomorphic} since $d(f+i\tilde{f})=df+i\star df=2\re\left(\partial_\Lambda f dz\right)+2i\im\left(\partial_\Lambda f dz \right)=2\partial_\Lambda f dz$.
\end{proof}

Note that in the case of quadrilaterals with orthogonal diagonals, such that $\triangle$ splits into two discrete Laplacians on $\Gamma$ and $\Gamma^*$, it follows that a discrete harmonic conjugate of a discrete harmonic function on $V(\Gamma)$ can be defined on $V(\Gamma^*)$ and vice versa, as was already noted by Chelkak and Smirnov in \cite{ChSm11}.

\begin{corollary}\label{cor:holomorphic_realimaginary}
Let $\Diamond_0\subseteq\Diamond$ and $f:V(\Lambda_0)\rightarrow\mC$ be discrete holomorphic. Then, $\im(f)$ is uniquely determined by $\re(f)$ up to two additive real constants on $\Gamma_0$ and $\Gamma_0^*$.
\end{corollary}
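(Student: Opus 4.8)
The plan is to reduce the statement directly to the uniqueness of discrete harmonic conjugates established in Lemma~\ref{lem:conjugate}~(i). First I would write $f=\re(f)+i\,\im(f)$ and record that, since $f$ is discrete holomorphic on $V(\Lambda_0)$, Corollary~\ref{cor:holomorphic_harmonic}~(ii) guarantees that $\re(f)$ and $\im(f)$ are real discrete harmonic functions on $V(\Lambda_0)$. In particular $\re(f)$ is a legitimate input for the notion of a discrete harmonic conjugate, and by the very definition of that notion, $\im(f)$ \emph{is} a discrete harmonic conjugate of $\re(f)$: we have $\re(f)+i\,\im(f)=f$ discrete holomorphic, with both parts real and discrete harmonic.

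Next I would invoke Lemma~\ref{lem:conjugate}~(i). Suppose $g:V(\Lambda_0)\to\mR$ is any other discrete harmonic conjugate of $\re(f)$, i.e.\ $\re(f)+ig$ is discrete holomorphic. Then both $\im(f)$ and $g$ are real discrete harmonic conjugates of the same real discrete harmonic function $\re(f)$, so by Lemma~\ref{lem:conjugate}~(i) their difference $\im(f)-g$ is biconstant; that is, $\im(f)$ and $g$ differ by an additive real constant on $\Gamma_0$ together with a (possibly different) additive real constant on $\Gamma_0^*$. Conversely, adding such a pair of constants to $\im(f)$ does not change $d(\re(f)+i\,\im(f))$ and hence preserves discrete holomorphicity, so the family of admissible imaginary parts is precisely this two-parameter family. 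This is exactly the asserted statement.

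The only point requiring a moment's care—and it is hardly an obstacle—is to observe that the corollary presupposes nothing beyond discrete holomorphicity of $f$: the discrete harmonicity of $\re(f)$ that is needed even to speak of its conjugate is not an extra hypothesis but a consequence of Corollary~\ref{cor:holomorphic_harmonic}~(ii). No further computation is needed; the corollary follows immediately by combining Corollary~\ref{cor:holomorphic_harmonic}~(ii) with Lemma~\ref{lem:conjugate}~(i).
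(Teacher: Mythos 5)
Your proposal is correct and is exactly the argument the paper intends: the corollary is stated immediately after Lemma~\ref{lem:conjugate} precisely because $\im(f)$ is, by Corollary~\ref{cor:holomorphic_harmonic}~(ii) and the definition, a discrete harmonic conjugate of $\re(f)$, and Lemma~\ref{lem:conjugate}~(i) gives the uniqueness up to biconstants. Your additional observation that adding a real biconstant to $\im(f)$ preserves discrete holomorphicity correctly confirms that the two-parameter family is exactly the set of admissible imaginary parts.
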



\subsubsection{Discrete Dirichlet energy} \label{sec:Dirichlet}

We follow the classical approach of discretizing the Dirichlet energy introduced by Mercat in \cite{Me08}. Note that Skopenkov's definition in \cite{Sk13} is exactly the same. In particular, Skopenkov's results, including an approximation property of the Laplacian, convergence of the discrete Dirichlet energy to the smooth Dirichlet energy for nondegenerate uniform sequences of quad-graphs, and further theorems for quad-graphs with orthogonal diagonals apply as well in our setting. We refer to his work \cite{Sk13} for details on these results.

\begin{definition}
Let $\Diamond_0 \subseteq \Diamond$. For $f:V(\Lambda_0)\to\mC$, we define the \textit{discrete Dirichlet energy} of $f$ on $\Diamond_0$ as $E_{\Diamond_0}(f):=\langle df,df \rangle_{\Diamond_0}\in[0,\infty]$.

If $\Diamond_0$ is finite, the \textit{discrete Dirichlet boundary value problem} asks for a real discrete harmonic function $f$ on $V(\Lambda_0)$ such that $f$ agrees with a preassigned real function $f_0$ on the boundary $V(\partial \Lambda_0)$. 
\end{definition}

\begin{proposition}\label{prop:Dirichlet_Mercat}
\begin{align*}
E_{\Diamond_0}(f)&=\sum\limits_{Q\in V(\Diamond_0)}\frac{1}{2\re\left(\rho_Q\right)}\left(|\rho_Q|^2\left|f(b_+)-f(b_-)\right|^2+\left|f(w_+)-f(w_-)\right|^2\right)\\&+\sum\limits_{Q\in V(\Diamond_0)}\frac{\im\left(\rho_Q\right)}{\re\left(\rho_Q\right)}\re\left(\left(f(b_+)-f(b_-)\right)\overline{\left(f(w_+)-f(w_-)\right)}\right).
\end{align*}
\end{proposition}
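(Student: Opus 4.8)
The plan is to evaluate the Hermitian product $\langle df, df\rangle_{\Diamond_0}$ face by face on the medial graph, using Propositions~\ref{prop:wedge_Mercat} and~\ref{prop:hodge_Mercat} to rewrite every contribution in terms of the differences of $f$ along the two diagonals of each quadrilateral. First I would observe that $df=\partial_\Lambda f\,dz+\bar\partial_\Lambda f\,d\bar z$ is a discrete one-form of type $\Diamond$, hence so is $\star\overline{df}$, and therefore the discrete two-form $df\wedge\star\overline{df}$ is of type $\Diamond$ and vanishes on every face of $X$ corresponding to a vertex of $\Lambda$. Thus
\[E_{\Diamond_0}(f)=\langle df,df\rangle_{\Diamond_0}=\iint_{F(X_0)}df\wedge\star\overline{df}=\sum_{Q\in V(\Diamond_0)}\iint_{F_Q}df\wedge\star\overline{df},\]
and it remains to identify each single-face contribution $\iint_{F_Q}df\wedge\star\overline{df}$ with the corresponding summand of the asserted formula.

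Fix $Q$ with vertices $b_-,w_-,b_+,w_+$ and let $e,e^*$ be the oriented edges of $\partial F_Q$ parallel to the diagonals $b_-b_+$ and $w_-w_+$ with $\im(e^*/e)>0$; concretely $e=(b_+-b_-)/2$ and $e^*=(w_+-w_-)/2$, where $\im(e^*/e)=\im\!\big((w_+-w_-)/(b_+-b_-)\big)=\re(\rho_Q)>0$. By discrete Stokes' Theorem~\ref{th:stokes}, $\int_e df=(f(b_+)-f(b_-))/2$ and $\int_{e^*}df=(f(w_+)-f(w_-))/2$, and complex conjugation yields the analogous values for $\overline{df}$. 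Applying Proposition~\ref{prop:hodge_Mercat} to the type-$\Diamond$ form $\overline{df}$, with $|e^*|/|e|=|w_+-w_-|/|b_+-b_-|=|\rho_Q|$, expresses $\int_e\star\overline{df}$ and $\int_{e^*}\star\overline{df}$ as explicit $\varphi_Q$-linear combinations of $\overline{f(b_+)-f(b_-)}$ and $\overline{f(w_+)-f(w_-)}$.

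Next I would substitute these four edge integrals into the identity $\iint_{F_Q}\omega\wedge\omega'=2\int_e\omega\int_{e^*}\omega'-2\int_{e^*}\omega\int_e\omega'$ of Proposition~\ref{prop:wedge_Mercat} with $\omega=df$, $\omega'=\star\overline{df}$. Writing $B:=f(b_+)-f(b_-)$ and $W:=f(w_+)-f(w_-)$, the terms in $|B|^2$ and $|W|^2$ contribute $\tfrac{|\rho_Q|}{2\sin\varphi_Q}|B|^2+\tfrac{1}{2|\rho_Q|\sin\varphi_Q}|W|^2$, while the two cross terms combine to $-\tfrac{\cot\varphi_Q}{2}(B\bar W+\bar BW)=-\cot\varphi_Q\,\re(B\bar W)$. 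Finally, from the definition of $\varphi_Q$ together with $\re(\rho_Q)=\im\!\big((w_+-w_-)/(b_+-b_-)\big)>0$ one has $\rho_Q=|\rho_Q|(\sin\varphi_Q-i\cos\varphi_Q)$, hence $\re(\rho_Q)=|\rho_Q|\sin\varphi_Q$ and $\im(\rho_Q)=-|\rho_Q|\cos\varphi_Q$; inserting these turns the face contribution into exactly $\tfrac{1}{2\re(\rho_Q)}(|\rho_Q|^2|B|^2+|W|^2)+\tfrac{\im(\rho_Q)}{\re(\rho_Q)}\re(B\bar W)$, and summing over $Q\in V(\Diamond_0)$ completes the proof.

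The computation is essentially mechanical; I expect the only delicate point to be bookkeeping: the hypothesis $\im(e^*/e)>0$ pins down $e$ and $e^*$ only up to a simultaneous sign change, which leaves all bilinear expressions in Propositions~\ref{prop:wedge_Mercat} and~\ref{prop:hodge_Mercat} invariant, and one must keep track of the elementary identities relating $\sin\varphi_Q,\cos\varphi_Q$ to $\re\rho_Q,\im\rho_Q$. As an alternative route, one could expand the face contribution directly from the definitions of the discrete wedge product and the discrete Hodge star to obtain $\iint_{F_Q}df\wedge\star\overline{df}=2\,\textnormal{ar}(F_Q)\big(|\partial_\Lambda f(Q)|^2+|\bar\partial_\Lambda f(Q)|^2\big)$ and then substitute the definitions of $\partial_\Lambda f$ and $\bar\partial_\Lambda f$; this is slightly longer but equally routine.
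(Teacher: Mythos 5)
Your proof is correct, and the computation checks out: with $e=(b_+-b_-)/2$, $e^*=(w_+-w_-)/2$ one has $e^*/e=i\rho_Q$, so $\im(e^*/e)=\re(\rho_Q)>0$ by the counterclockwise ordering of $b_-,w_-,b_+,w_+$, and the identities $\re(\rho_Q)=|\rho_Q|\sin(\varphi_Q)$, $\im(\rho_Q)=-|\rho_Q|\cos(\varphi_Q)$ convert your face contribution into exactly the asserted summand. Your route differs from the paper's in the local computation: the paper first reduces to real $f$ (using that both sides split as a sum over real and imaginary parts), localizes to a single quadrilateral, normalizes by $f(b_-)=f(w_-)=0$ and $b_+-b_-=1$, and then evaluates $E_Q(f)=4\,\textnormal{area}(Q)\,|\partial_\Lambda f(Q)|^2$ directly from the definition of $\partial_\Lambda$ in terms of $\lambda_Q$ — essentially the ``alternative route'' you sketch at the end, specialized to real $f$ where $|\partial_\Lambda f|^2=|\bar\partial_\Lambda f|^2$. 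You instead keep $f$ complex and unnormalized and feed the edge integrals of $df$ and $\star\overline{df}$ into Propositions~\ref{prop:wedge_Mercat} and~\ref{prop:hodge_Mercat}; this buys you a coordinate-free, single-pass derivation of the full Hermitian form in $B$ and $W$ (no splitting into real and imaginary parts, no normalization), at the cost of slightly heavier bookkeeping with the four edge integrals. Both arguments rest on the same underlying fact that $df\wedge\star\overline{df}$ is of type $\Diamond$, so the energy is a sum of face contributions; either is acceptable.
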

\begin{proof}
Let us denote the right hand side by $E'_{\Diamond_0}(f)$. Then, $E_{\Diamond_0}(f)=E_{\Diamond_0}(\re(f))+E_{\Diamond_0}(\im(f))$ and $E'_{\Diamond_0}(f)=E'_{\Diamond_0}(\re(f))+E'_{\Diamond_0}(\im(f))$. Therefore, we can restrict to real functions $f$. Furthermore, it suffices to check the identity for just a singular quadrilateral $Q$, because both $E_{\Diamond_0}(f)$ and $E'_{\Diamond_0}(f)$ are sums over $Q \in V(\Diamond_0)$.

Also, both expressions depend only on $df$, so we may assume $f(b_-)=f(w_-)=0$. In addition, both terms do not change if $Q$ is rotated or scaled. Thus, we can assume $b_+-b_-=1$ as well, such that $w_+-w_-=i\rho_Q$. Then, \begin{align*}
E_Q(f)=&\iint\limits_{F_Q} df \wedge \star df=4\textnormal{area}(Q)|\partial_\Lambda f|^2=2\re(\rho_Q)|\partial_\Lambda f|^2\\
=&2\re(\rho_Q)\left|\lambda_Q f(b_+)+\frac{\bar{\lambda}_Q}{i\rho_Q}f(w_+)\right|^2\\
=&2|\rho_Q|\sin(\varphi_Q)\left(\frac{f^2(b_+)}{4\sin^2(\varphi_Q)}+\frac{ f^2(w_+)}{4\sin^2(\varphi_Q)|\rho_Q|^2}+2f(b_+)f(w_+)\im\left(\frac{\bar{\lambda}_Q^2}{\rho_Q}\right) \right)\\
=&\frac{|\rho_Q|^2}{2\re(\rho_Q)}f^2(b_+)+\frac{1}{2\re(\rho_Q)} f^2(w_+)+2f(b_+)f(w_+)\frac{\im\left(\rho_Q\right)}{2\re\left(\rho_Q\right)}=E'_Q(f).
\end{align*}
Here, we used that $2\textnormal{area}(Q)=|w_+-w_-|\sin(\varphi_Q)=|\rho_Q|\sin(\varphi_Q)=\re(\rho_Q)$, $1/|\lambda_Q|=2\sin(\varphi_Q)$, and $\bar{\lambda}_Q=-\rho_Q/(4\textnormal{area}(Q))$ (see the proof of Lemma~\ref{lem:derivative_lambda}).
\end{proof}

The same formula of $E_{\Diamond_0}(f)$ was given by Mercat in \cite{Me08}.

In the case of rhombic quad-graphs, Duffin proved in \cite{Du68} that the discrete Dirichlet boundary value problem has a unique solution. The same argument applies for general quad-graphs with the discrete Dirichlet energy defined here. Using a different notation, Skopenkov proved existence and uniqueness of solutions of the discrete Dirichlet boundary value problem as well \cite{Sk13}.

\begin{lemma}\label{lem:Dirichlet_boundary}
Let $\Diamond_0 \subset \Diamond$ be finite and $f_0:V(\partial\Lambda_0)\rightarrow\mR$. We consider the vector space of real functions $f:V(\Lambda_0)\to\mR$ that agree with $f_0$ on the boundary.

Then, $E_{\Diamond_0}$ is a strictly convex nonnegative quadratic functional in terms of the interior values $f(v)$. Furthermore, \[-\frac{\partial E_{\Diamond_0}}{\partial f(v)}(f)=2\textnormal{ar}(F_v )\triangle f(v)\] for any $v \in V(\Lambda_0)\backslash V(\partial\Lambda_0)$. In particular, the discrete Dirichlet boundary value problem is uniquely solvable.
\end{lemma}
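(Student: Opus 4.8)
The plan is to exhibit $E_{\Diamond_0}$ as a genuine (positive semidefinite) quadratic form, upgrade semidefiniteness to definiteness on the relevant subspace, and then compute its gradient by a discrete integration by parts. Concretely: since $E_{\Diamond_0}(f)=\langle df,df\rangle_{\Diamond_0}\ge 0$ ($\langle\cdot,\cdot\rangle$ being Hermitian, Corollary~\ref{cor:hodge}~(v)) and $f\mapsto df$ is linear, $E_{\Diamond_0}$ is a homogeneous quadratic form on $\mR^{V(\Lambda_0)}$. Writing any $f$ with $f|_{V(\partial\Lambda_0)}=f_0$ as $f=f_\ast+g$ with a fixed $f_\ast$ and $g$ in the space $W$ of real functions vanishing on $V(\partial\Lambda_0)$, the map $g\mapsto E_{\Diamond_0}(f_\ast+g)$ is a quadratic polynomial in the interior values whose Hessian is twice the restriction to $W$ of the symmetric bilinear form $B(f_1,f_2):=\re\langle df_1,df_2\rangle_{\Diamond_0}$; nonnegativity makes this positive semidefinite, so $E_{\Diamond_0}$ is convex. (One could alternatively read all of this off the explicit expression in Proposition~\ref{prop:Dirichlet_Mercat}.)

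For strict convexity I would show $B$ is positive definite on $W$. If $g\in W$ with $E_{\Diamond_0}(g)=0$, then by Proposition~\ref{prop:Dirichlet_Mercat} the energy is a sum over $Q\in V(\Diamond_0)$ of local energies, each a positive definite quadratic form in $\big(g(b_+)-g(b_-),g(w_+)-g(w_-)\big)$ (since $\re(\rho_Q)>0$ and $|\rho_Q|^2-\im(\rho_Q)^2=\re(\rho_Q)^2$). Hence $g(b_+)=g(b_-)$ and $g(w_+)=g(w_-)$ for every $Q\in V(\Diamond_0)$, so $g$ is biconstant on $\Lambda_0$; as $\partial\Lambda_0$ contains an edge $vv'$, whose endpoints have opposite colors, $g$ vanishes on a black and a white vertex, forcing $g\equiv 0$. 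So $B$ is positive definite on $W$ and $E_{\Diamond_0}$ is strictly convex in the interior values.

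Next, for the gradient: fix an interior vertex $v$ with indicator $e_v\in W$. The expansion above gives $\partial E_{\Diamond_0}/\partial f(v)=2\re\langle df,de_v\rangle_{\Diamond_0}$. I would apply discrete Green's first identity (Theorem~\ref{th:Green_identities}~(i)) to the pair $e_v,f$:
\[\langle e_v,\triangle f\rangle_{\Diamond_0}+\langle de_v,df\rangle_{\Diamond_0}=\oint_{\partial X_0}e_v\star d\bar f.\]
Since $v$ is interior, every edge of $\partial X_0$ has the form $[Q,w]$ with $w\in V(\partial\Lambda_0)$ (the boundary of $X_0$ only meets medial faces $F_w$ with $w$ a boundary vertex), so the right-hand side vanishes. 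Moreover $\langle e_v,\triangle f\rangle_{\Diamond_0}=-\frac{1}{2i}\overline{\triangle f(v)}\iint_{F_v}\Omega_\Lambda=\textnormal{ar}(F_v)\triangle f(v)$, using that $\triangle f$ is real for real $f$ (Corollary~\ref{cor:factorization}). Thus $\langle de_v,df\rangle_{\Diamond_0}=-\textnormal{ar}(F_v)\triangle f(v)$ is real and equals $\langle df,de_v\rangle_{\Diamond_0}$, giving $-\partial E_{\Diamond_0}/\partial f(v)=2\,\textnormal{ar}(F_v)\triangle f(v)$. Unique solvability then follows: a strictly convex quadratic polynomial on the finite-dimensional space of interior values has a unique minimizer, at which all partials vanish, which by the gradient formula is exactly the unique real function with the prescribed boundary values that is discrete harmonic at every interior vertex.

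The conceptual content is modest; the step I expect to need the most care is the claim that $\partial X_0$ only meets medial faces $F_w$ with $w\in V(\partial\Lambda_0)$, i.e.\ the translation between the boundary of the cellular decomposition $X_0$ and $\partial\Lambda_0$, which is what lets the boundary integral be dropped. If one prefers to avoid this, the alternative is to differentiate the explicit formula of Proposition~\ref{prop:Dirichlet_Mercat} termwise over the quadrilaterals of the star of $v$ and match the result against the local representation of $\triangle f$ in Corollary~\ref{cor:factorization}; the only annoyance there is keeping the orientation-dependent signs inside each quadrilateral consistent.
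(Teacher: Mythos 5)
Your proof is correct and follows essentially the same route as the paper's: identify $E_{\Diamond_0}$ as a nonnegative quadratic form, compute its gradient by a discrete integration by parts (the paper invokes the formal adjointness of Proposition~\ref{prop:adjoint2} where you apply Green's first identity with a vanishing boundary term --- the same computation), and reduce uniqueness to the fact that zero energy forces a biconstant function, which must vanish because $\partial\Lambda_0$ contains vertices of both colors. The only organizational difference is that you establish strict convexity directly by checking that each local energy form in Proposition~\ref{prop:Dirichlet_Mercat} is positive definite (determinant $1/4$, using $|\rho_Q|^2-\im(\rho_Q)^2=\re(\rho_Q)^2$), whereas the paper deduces strict convexity from uniqueness of the minimizer; both arguments hinge on the same positive-definiteness of the local forms.
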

\begin{proof}
By construction, $E_{\Diamond_0}$ is a quadratic form in the vector space of real functions $f:V(\Lambda)\to\mR$. In particular, it is convex, nonnegative, and quadratic in terms of the values $f(v)$. Thus, global minima exist. To prove strict convexity, it suffices to check that the minimum is unique.

For an interior vertex $v_0 \in V(\Lambda_0)\backslash V(\partial\Lambda_0)$, let $\phi(v):=\delta_{vv_0}$ be the Kronecker delta function on $V(\Lambda_0)$. Then, \[\frac{\partial E_{\Diamond_0}}{\partial f(v_0)}(f)=\frac{d}{dt}E_{\Diamond_0}(f+t\phi)|_{t=0}=2\langle df,d\phi\rangle_{\Diamond_0}=-2 \langle \triangle f, \phi \rangle=-2\textnormal{ar}(F_{v_0} )\triangle f(v_0)\] due to Proposition~\ref{prop:adjoint2}. It follows that minima of $E_{\Diamond_0}$ are discrete harmonic in the interior of $\Diamond_0$. The difference of two minima is a discrete harmonic function vanishing on the boundary, so its energy has to be zero. But only biconstant functions have zero energy. Thus, the difference has to vanish everywhere, i.e., minima are unique.
\end{proof}

In the following, we apply Lemma~\ref{lem:Dirichlet_boundary} to show that $\partial_\Lambda,\bar{\partial}_\Lambda,\partial_\Diamond,\bar{\partial}_\Diamond,\triangle$ are surjective operators. This implies immediately the existence of discrete Green's functions and discrete Cauchy's kernels, as we will see in Sections~\ref{sec:Green} and~\ref{sec:Cauchy}.

\begin{lemma}\label{lem:finite_surjective}
Let $\Diamond_0\subset\Diamond$ be finite and homeomorphic to a disk. Then, the discrete derivatives $\partial_\Lambda, \bar{\partial}_\Lambda,\partial_\Diamond, \bar{\partial}_\Diamond$ and the discrete Laplacian $\triangle$ are surjective operators. That means, given any complex functions $h_0$ on $V(\Diamond_0)$ and $f_0$ on $V(\Lambda_0 \backslash \partial \Lambda_0)$, there exist functions $h_\partial, h_{\bar{\partial}}$ on $V(\Diamond_0)$ and functions $f_\partial, f_{\bar{\partial}}, f_\triangle$ on $V(\Lambda_0)$ such that $\partial_\Diamond h_\partial= \bar{\partial}_\Diamond h_{\bar{\partial}}=\triangle f_\triangle =f_0$ and $\partial_\Lambda f_\partial= \bar{\partial}_\Lambda f_{\bar{\partial}}=h_0$. If $f_0$ is real-valued, $f_\triangle$ can be chosen real-valued as well.
\end{lemma}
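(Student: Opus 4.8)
The plan is to reduce all five surjectivity claims to the single statement that $\triangle$ is surjective, which follows directly from Lemma~\ref{lem:Dirichlet_boundary}. In a first step I fix the finite-dimensional real vector space $W$ of real functions on $V(\Lambda_0)$ that vanish on $V(\partial\Lambda_0)$. On $W$ the discrete Dirichlet energy $E_{\Diamond_0}(f)=\langle df,df\rangle_{\Diamond_0}$ is a homogeneous quadratic form, and by Lemma~\ref{lem:Dirichlet_boundary} it is strictly convex, hence positive definite on $W$; therefore its gradient $f\mapsto\bigl(\partial E_{\Diamond_0}/\partial f(v)\bigr)_{v\in V(\Lambda_0\setminus\partial\Lambda_0)}$ is a linear isomorphism of $W$ onto $\mR^{V(\Lambda_0\setminus\partial\Lambda_0)}$. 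Since $\partial E_{\Diamond_0}/\partial f(v)=-2\,\textnormal{ar}(F_v)\,\triangle f(v)$ with each $\textnormal{ar}(F_v)>0$, the linear map $f\mapsto\bigl(\triangle f(v)\bigr)_{v}$ is an isomorphism of $W$ onto $\mR^{V(\Lambda_0\setminus\partial\Lambda_0)}$ as well. Hence for real $f_0$ on the interior vertices there is a real $f_\triangle\in W$ with $\triangle f_\triangle=f_0$ on $V(\Lambda_0\setminus\partial\Lambda_0)$; for complex $f_0$ one solves separately for $\re(f_0)$ and $\im(f_0)$ and adds, using $\re(\triangle f)\equiv\triangle\re(f)$ and $\im(\triangle f)\equiv\triangle\im(f)$ from Corollary~\ref{cor:factorization}. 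This already covers $\triangle$ and the real-valuedness addendum.

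Second, for $\partial_\Diamond$ and $\bar{\partial}_\Diamond$: given complex $f_0$ on $V(\Lambda_0\setminus\partial\Lambda_0)$, choose $f$ on $V(\Lambda_0)$ with $\triangle f=4f_0$ in the interior by the first step. The factorization $\triangle f=4\partial_\Diamond\bar{\partial}_\Lambda f=4\bar{\partial}_\Diamond\partial_\Lambda f$ of Corollary~\ref{cor:factorization} then shows that $h_\partial:=\bar{\partial}_\Lambda f$ and $h_{\bar{\partial}}:=\partial_\Lambda f$, both functions on $V(\Diamond_0)$, satisfy $\partial_\Diamond h_\partial=\bar{\partial}_\Diamond h_{\bar{\partial}}=f_0$ on $V(\Lambda_0\setminus\partial\Lambda_0)$.

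Third, for $\partial_\Lambda$ and $\bar{\partial}_\Lambda$: given complex $h_0$ on $V(\Diamond_0)$, the function $\bar{\partial}_\Diamond h_0$ is defined on the interior vertices of $\Lambda_0$, so the first step provides $f$ with $\triangle f=4\bar{\partial}_\Diamond h_0$ there. By the factorization, $\bar{\partial}_\Diamond(\partial_\Lambda f-h_0)$ then vanishes at every interior vertex, i.e.\ $\partial_\Lambda f-h_0$ is discrete holomorphic on $V(\Diamond_0)$. Since $\Diamond_0$ is homeomorphic to a disk, hence simply-connected, Proposition~\ref{prop:primitive} furnishes a discrete primitive $\tilde f$ on $V(\Lambda_0)$ with $\partial_\Lambda\tilde f=\partial_\Lambda f-h_0$, and $f_\partial:=f-\tilde f$ satisfies $\partial_\Lambda f_\partial=h_0$. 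For $\bar{\partial}_\Lambda$ I would use $\bar{\partial}_\Lambda\bar g=\overline{\partial_\Lambda g}$: applying the case just proved to $\overline{h_0}$ gives $g$ with $\partial_\Lambda g=\overline{h_0}$, so $f_{\bar{\partial}}:=\bar g$ obeys $\bar{\partial}_\Lambda f_{\bar{\partial}}=h_0$.

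The only genuinely delicate point is the first step: one must observe that the source $W$ and the target $\mR^{V(\Lambda_0\setminus\partial\Lambda_0)}$ have the same finite dimension, and then invoke that a strictly convex quadratic form has invertible Hessian, so that $\triangle$ itself is invertible between these two spaces. Everything afterwards is formal, using only the factorization $\triangle=4\bar{\partial}_\Diamond\partial_\Lambda=4\partial_\Diamond\bar{\partial}_\Lambda$ and Proposition~\ref{prop:primitive}. Two minor points deserve attention: discrete holomorphicity of $\partial_\Lambda f-h_0$ ``on $V(\Diamond_0)$'' only constrains the interior vertices of $\Lambda_0$, which is precisely the hypothesis under which Proposition~\ref{prop:primitive} produces a primitive; and since the prescribed equations for $f_\triangle,f_\partial,f_{\bar{\partial}},h_\partial,h_{\bar{\partial}}$ involve only interior vertices, it is legitimate to take $f_\triangle\in W$ and therefore real whenever $f_0$ is.
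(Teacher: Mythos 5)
Your proof is correct and follows essentially the same route as the paper's: surjectivity of $\triangle$ from Lemma~\ref{lem:Dirichlet_boundary}, then $\partial_\Diamond,\bar{\partial}_\Diamond$ via the factorization $\triangle=4\partial_\Diamond\bar{\partial}_\Lambda=4\bar{\partial}_\Diamond\partial_\Lambda$, then $\partial_\Lambda,\bar{\partial}_\Lambda$ via Proposition~\ref{prop:primitive}. The only (harmless) variation is in the first step, where you establish surjectivity of $\triangle$ by showing its restriction to functions vanishing on $V(\partial\Lambda_0)$ is injective (positive definiteness of the energy), whereas the paper counts the dimension of the full kernel of $\triangle$ via unique solvability of the Dirichlet problem; your version has the small added benefit of producing $f_\triangle$ vanishing on the boundary, hence real when $f_0$ is.
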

\begin{proof}
 Let $V$, $E$, and $F$ denote the number of vertices, edges, and faces of $\Lambda_0$, respectively. Denote by $BV$ and $BE$ the number of vertices and edges of $\partial \Lambda_0$, respectively. By assumption, $\partial \Lambda_0$ is a simple polygon and $BV=BE$.

By Lemma~\ref{lem:Dirichlet_boundary}, the space of real discrete harmonic functions on $V(\Lambda_0)$ has dimension $BV$. Clearly, real and imaginary part of a discrete harmonic function are itself discrete harmonic. Therefore, the complex dimension of the space of complex discrete harmonic functions, i.e., of the kernel of $\triangle$, is $BV$ as well. Thus, we have shown that $\triangle:\mathds{K}^{V(\Lambda_0)} \rightarrow \mathds{K}^{V(\Lambda_0\backslash\partial\Lambda_0)}$ is a surjective linear operator with $\mathds{K}\in\{\mR,\mC\}$.

Now, $\triangle=4\partial_\Diamond\bar{\partial}_\Lambda=4\bar{\partial}_\Diamond\partial_\Lambda$ by Corollary~\ref{cor:factorization}, so $\partial_\Diamond,\bar{\partial}_\Diamond:\mC^{V(\Diamond_0)} \rightarrow \mC^{V(\Lambda_0\backslash\partial\Lambda_0)}$ are surjective as well. The kernel of $\bar{\partial}_\Diamond$ consists of all discrete holomorphic functions on $V(\Diamond_0)$. By Proposition~\ref{prop:primitive}, any such function has a discrete primitive, i.e., the kernel is contained in the image of $\partial_\Lambda$. Using the surjectivity of $\triangle$, it follows that $\partial_\Lambda:\mC^{V(\Lambda_0)} \rightarrow \mC^{V(\Diamond_0)}$ is surjective. The same is true for $\bar{\partial}_\Lambda$.
\end{proof}

\begin{theorem}\label{th:plane_surjective}
The discrete derivatives $\partial_\Lambda,\bar{\partial}_\Lambda,\partial_\Diamond,\bar{\partial}_\Diamond$ and the discrete Laplacian $\triangle$ (defined on complex or real functions) are surjective operators on the vector space of functions on $V(\Lambda)$ or $V(\Diamond)$.
\end{theorem}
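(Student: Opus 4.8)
The plan is to reduce the whole statement to the surjectivity of the discrete Laplacian $\triangle$ on complex (and real) functions on $V(\Lambda)$, and to establish the latter by exhausting $\Lambda$ by finite disks, invoking Lemma~\ref{lem:finite_surjective} on each piece, and passing to the limit by a Mittag--Leffler argument. For the reduction, recall from Corollary~\ref{cor:factorization} that $\triangle=4\partial_\Diamond\bar{\partial}_\Lambda=4\bar{\partial}_\Diamond\partial_\Lambda$. Hence every element of the image of $\triangle$ has the form $\partial_\Diamond(4\bar{\partial}_\Lambda f)$ and also the form $\bar{\partial}_\Diamond(4\partial_\Lambda f)$, so once $\triangle$ is known to be surjective onto $\mC^{V(\Lambda)}$, the operators $\partial_\Diamond$ and $\bar{\partial}_\Diamond$ are automatically surjective onto $\mC^{V(\Lambda)}$. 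For $\partial_\Lambda$ (and analogously $\bar{\partial}_\Lambda$): given $h:V(\Diamond)\to\mC$, surjectivity of $\triangle$ yields $f_1:V(\Lambda)\to\mC$ with $\bar{\partial}_\Diamond\partial_\Lambda f_1=\tfrac14\triangle f_1=\bar{\partial}_\Diamond h$, so $h-\partial_\Lambda f_1$ is discrete holomorphic on $V(\Diamond)$; since the cell decomposition $\Diamond$ of the entire plane is simply-connected, Proposition~\ref{prop:primitive} provides $f_2$ with $\partial_\Lambda f_2=h-\partial_\Lambda f_1$, and then $\partial_\Lambda(f_1+f_2)=h$. By the real version of Lemma~\ref{lem:finite_surjective}, the argument for $\triangle$ below also produces a real preimage when $f_0$ is real, which then propagates through the same reductions.

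To prove $\triangle:\mC^{V(\Lambda)}\to\mC^{V(\Lambda)}$ surjective, fix $f_0:V(\Lambda)\to\mC$ and choose an exhaustion $\Diamond_1\subset\Diamond_2\subset\cdots$ of $\Diamond$ by finite subgraphs, each homeomorphic to a disk, such that every vertex of $\Lambda$ lies in $V(\Lambda_n)$ and is an interior vertex of $\Lambda_n$ for all sufficiently large $n$ (take the quadrilaterals meeting a growing ball and fill in holes). Put
\[
W_n:=\bigl\{\,f:V(\Lambda_n)\to\mC \ :\ \triangle f(v)=f_0(v)\ \text{for all } v\in V(\Lambda_n)\setminus V(\partial\Lambda_n)\,\bigr\}.
\]
By Lemma~\ref{lem:finite_surjective} each $W_n$ is a nonempty affine subspace of the finite-dimensional space $\mC^{V(\Lambda_n)}$. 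Since $\triangle f(v)$ depends only on the values of $f$ at $v$ and its neighbours, and since an interior vertex of $\Lambda_n$ is interior in $\Lambda_{n+1}$ with all its neighbours in $\Lambda_n$, the restriction map $f\mapsto f|_{V(\Lambda_n)}$ carries $W_{n+1}$ into $W_n$. For each fixed $n$ the images $\mathrm{Im}(W_m\to W_n)\subseteq W_n$, $m\ge n$, form a decreasing chain of affine subspaces of a finite-dimensional space and therefore stabilise; replacing each $W_n$ by its stable image, the transition maps become surjective, so a compatible family $(f_n)_n$ with $f_n\in W_n$ and $f_{n+1}|_{V(\Lambda_n)}=f_n$ can be produced by successive lifting. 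Because $\bigcup_n V(\Lambda_n)=V(\Lambda)$, this family glues to $f:V(\Lambda)\to\mC$ with $\triangle f(v)=f_0(v)$ at every vertex that is interior to some $\Lambda_n$, hence everywhere.

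I do not expect a genuinely hard step: the analytic content sits entirely in Lemma~\ref{lem:finite_surjective} and Proposition~\ref{prop:primitive}, and the rest is bookkeeping. The one place demanding care is the passage from finite to infinite --- one must make sure the exhaustion can be chosen so that every vertex eventually becomes interior, verify the locality of $\triangle$ so that restriction truly maps $W_{n+1}$ into $W_n$, and run the Mittag--Leffler/stable-image argument correctly (equivalently, observe that a decreasing chain of affine subspaces of a finite-dimensional vector space is eventually constant). One must also track the real structure throughout, so that a real-valued $f_0$ yields a real-valued preimage of $\triangle$, since the surjectivity of the remaining operators is deduced from that of $\triangle$.
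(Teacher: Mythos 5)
Your proof is correct and follows essentially the same route as the paper's: the heart of both is the exhaustion of $\Diamond$ by finite disks, the nonempty finite-dimensional affine solution spaces supplied by Lemma~\ref{lem:finite_surjective}, the stabilization of the decreasing chains of restricted images, and the factorization $\triangle=4\partial_\Diamond\bar{\partial}_\Lambda=4\bar{\partial}_\Diamond\partial_\Lambda$ to obtain $\partial_\Diamond,\bar{\partial}_\Diamond$. The only (harmless) reorganization is that the paper runs the limiting argument separately for $\partial_\Lambda,\bar{\partial}_\Lambda$ and for $\triangle$, whereas you run it only for $\triangle$ and recover $\partial_\Lambda,\bar{\partial}_\Lambda$ by a global application of Proposition~\ref{prop:primitive} to the discrete holomorphic difference $h-\partial_\Lambda f_1$ --- the same kernel-of-$\bar{\partial}_\Diamond$ device the paper itself employs inside the proof of Lemma~\ref{lem:finite_surjective}.
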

\begin{proof}
Let $\Diamond_0 \subset \Diamond_1 \subset \Diamond_2 \subset \ldots \subset\Diamond$ be a sequence of simply-connected finite domains such that $\bigcup_{k=0}^\infty \Diamond_k=\Diamond$. By $\Lambda_k$ we denote the subgraph of $\Lambda$ whose vertices and edges are the vertices and edges of quadrilaterals in $\Diamond_k$.

Let us first prove that any $h:V(\Diamond)\to\mC$ has a preimage under the discrete derivatives $\partial_\Lambda, \bar{\partial}_\Lambda$. By Lemma~\ref{lem:finite_surjective}, the affine space $A^{(0)}_k$ of all complex functions on $V(\Lambda_k)$ that are mapped to $h|_{V(\Diamond_k)}$ by $\partial_\Lambda$ (or $\bar{\partial}_\Lambda$) is nonempty. Let $A^{(0)}_k\Big|_{\Lambda_j}$ denote the affine space of restrictions of these functions to $V(\Lambda_j)\subseteq V(\Lambda_k)$. Clearly, \[A^{(0)}_0\supseteq A^{(0)}_1\Big|_{\Lambda_0}\supseteq A^{(0)}_2\Big|_{\Lambda_0}\supseteq \ldots\]

Since all affine spaces are finite-dimensional and nonempty, this chain becomes stationary at some point, giving a function $f_0$ on $V(\Lambda_0)$ mapped to $h|_{V(\Diamond_0)}$ by $\partial_\Lambda$ (or $\bar{\partial}_\Lambda$) that can be extended to a function in $A^{(0)}_k$ for any $k$.

Inductively, assume that $f_j:V(\Lambda_j)\to\mC$ is mapped to $h|_{V(\Diamond_j)}$ by $\partial_\Lambda$ (or $\bar{\partial}_\Lambda$) and that $f_j$ can be extended to a function in $A^{(j)}_k$ for all $k\geq j$. Let $A^{(j+1)}_k$, $k\geq j+1$, be the affine space of all complex functions on $V(\Lambda_k)$ that are mapped to $h|_{V(\Diamond_k)}$ by $\partial_\Lambda$ (or $\bar{\partial}_\Lambda$) and whose restriction to $V(\Lambda_j)$ is equal to $f_j$. By assumption, all these spaces are nonempty. In the same way as above, there is a function $f_{j+1}$ extending $f_j$ to $V(\Lambda_{j+1})$ that is mapped to $h|_{V(\Diamond_{j+1})}$ by $\partial_\Lambda$ (or $\bar{\partial}_\Lambda$) and that can be extended to a function in $A^{(j+1)}_k$ for all $k\geq j+1$.

For $v \in V(\Lambda_k)$, define $f(v):=f_k(v)$. $f$ is a well defined complex function on $V(\Lambda)$ with $\partial_\Lambda f=h$ (or $\bar{\partial}_\Lambda f=h$). Hence, $\partial_\Lambda,\bar{\partial}_\Lambda:\mC^{V(\Lambda)}\to \mC^{V(\Diamond)}$ are surjective.

Replacing $V(\Diamond_k)$ by $V(\Lambda_k \backslash \partial \Lambda_k)$, we obtain with the same arguments that $\triangle$ is surjective, regardless whether $\triangle$ is defined on real or complex functions.

Finally, $\partial_\Diamond,\bar{\partial}_\Diamond:\mC^{V(\Diamond)}\to\mC^{V(\Lambda)}$ are surjective due to $\triangle=4\partial_\Diamond\bar{\partial}_\Lambda=4\bar{\partial}_\Diamond\partial_\Lambda$ by Corollary~\ref{cor:factorization}.
\end{proof}

In the case of rhombic quad-graphs with bounded interior angles, Kenyon proved the existence of a discrete Green's function and a discrete Cauchy's kernel with asymptotic behaviors similar to the classical setting \cite{Ke02}. But in the general case, it seems to be practically impossible to speak about any asymptotic behavior of certain discrete functions. For this reason, we will not require any asymptotic behavior of discrete Green's functions and discrete Cauchy's kernels in Sections~\ref{sec:Green} and~\ref{sec:Cauchy}.

Still, one can expect results concerning the asymptotics of special discrete functions if the interior angles and the side lengths of the quadrilaterals are bounded, meaning that the quadrilaterals do not degenerate at infinity. And indeed, on such quad-graphs we can show that any discrete harmonic function whose difference functions on $V(\Gamma)$ and $V(\Gamma^*)$ have asymptotics $o(v^{-1/2})$ is biconstant. In the rhombic setting, Chelkak and Smirnov showed that a discrete Liouville's theorem holds true, i.e., any bounded discrete harmonic function on $V(\Lambda)$ vanishes \cite{ChSm11}.

\begin{theorem}\label{th:harmonic_asymptotics}
Assume that there exist constants $\alpha_{0}>0$ and $\infty>E_{1}>E_{0}>0$ such that $\alpha\geq\alpha_{0}$ and $E_{1}\geq e \geq E_{0}$ for all interior angles $\alpha$ and side lengths $e$ of quadrilaterals $Q\in V(\Diamond)$.

If $f:V(\Lambda) \rightarrow \mC$ is discrete harmonic and $f(v_+)-f(v_-)=o(v_{\pm}^{-1/2})$ for any two adjacent $v_\pm \in V(\Gamma)$ or $v_\pm \in V(\Gamma^*)$, $f$ is biconstant.
\end{theorem}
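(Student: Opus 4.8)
The plan is to bound the discrete Dirichlet energy of $f$ on an exhaustion of $\Diamond$ by finite simply-connected ``combinatorial disks'' $\Diamond_R$ (the set of all $Q\in V(\Diamond)$ reachable from a fixed quadrilateral by a chain of at most $R$ neighbours, with induced $\Lambda_R$, $X_R$), and then to deduce that $\partial_\Lambda f$ vanishes identically. First I would reduce to $f$ real-valued: by Corollary~\ref{cor:factorization} both $\re(f)$ and $\im(f)$ are again discrete harmonic and inherit the asymptotic hypothesis, so it suffices to treat each separately. For real $f$, discrete Green's first identity (Theorem~\ref{th:Green_identities}~(i)) with $g=f$ and $\triangle f\equiv 0$ collapses to the clean identity
\[
 E_{\Diamond_R}(f)=\langle df,df\rangle_{\Diamond_R}=\oint\limits_{\partial X_R} f\star df .
\]

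The core estimate is to bound this boundary integral. I would first record the standard consequences of bounded geometry (all angles $\ge\alpha_0$, all side lengths in $[E_0,E_1]$): every vertex has at most $2\pi/\alpha_0$ incident quadrilaterals, every quadrilateral has area and diagonals bounded below and above by positive constants, hence the combinatorial and Euclidean metrics are comparable, the number of quadrilaterals meeting $\{r\le|z|<r+1\}$ is $O(r)$, and $\partial X_R$ consists of $O(R)$ edges all at distance $\asymp R$ from the origin. Given these, for an edge $e\in\partial X_R$ the integrals $\int_e df$, $\int_{e^*}df$ equal, by discrete Stokes' Theorem~\ref{th:stokes}, halves of differences of $f$ along diagonals of quadrilaterals near radius $R$, hence are $o(R^{-1/2})$, and then $|\int_e\star df|=o(R^{-1/2})$ by the coordinate formula of Proposition~\ref{prop:hodge_Mercat}; meanwhile, telescoping within $\Gamma$ (resp.\ $\Gamma^*$) along a path of combinatorial length $O(|v|)$ and using the elementary bound $\sum_{r\le R}o(r^{-1/2})=o(R^{1/2})$ gives $|f(v)|=o(|v|^{1/2})$ up to an irrelevant additive biconstant. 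Multiplying and summing over the $O(R)$ boundary edges yields $E_{\Diamond_R}(f)=o(R)$; since $E_{\Diamond_R}(f)=4\sum_{Q\in V(\Diamond_R)}\textnormal{ar}(F_Q)\,|\partial_\Lambda f(Q)|^2$ with $\textnormal{ar}(F_Q)$ bounded below, this gives $\sum_{Q\in V(\Diamond_R)}|\partial_\Lambda f(Q)|^2=o(R)$.

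Finally I would use that $\partial_\Lambda f$ is itself discrete holomorphic on $V(\Diamond)$ (Corollary~\ref{cor:holomorphic_harmonic}~(i)). What I need is a quantitative interior estimate: a discrete holomorphic function $h$ on a bounded-geometry quad-graph satisfies $|h(Q_0)|^2\le C\,R^{-2}\sum_{|Q-Q_0|\le R}|h(Q)|^2$; applied with $h=\partial_\Lambda f$ this yields $|\partial_\Lambda f(Q_0)|^2\le C\,R^{-2}o(R)=o(R^{-1})\to 0$, so $\partial_\Lambda f\equiv 0$, and for real $f$ also $\bar\partial_\Lambda f=\overline{\partial_\Lambda f}\equiv 0$, whence $f$ is biconstant by Proposition~\ref{prop:zeroderivative}~(ii); recombining real and imaginary parts finishes the proof. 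I expect this last step to be the main obstacle: proving the discrete interior gradient estimate (equivalently, that a nonzero discrete holomorphic function on a bounded-geometry quad-graph has $\ell^2$-growth at least of order $R$) without a maximum principle for general quad-graphs. A natural approach is to pass via Proposition~\ref{prop:primitive} to a discrete holomorphic primitive on each $\Diamond_R$ and iterate Green's first identity against a discrete Poincar\'e/Caccioppoli inequality on dyadic annuli, comparing the energies of successive annuli; alternatively one may transport the needed Cauchy-type bound from the rhombic case of Kenyon and of Chelkak--Smirnov, using that bounded geometry keeps all constants uniform.
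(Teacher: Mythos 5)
Your reduction to real $f$ and your catalogue of bounded-geometry consequences are fine, and the identity $E_{\Diamond_R}(f)=\oint_{\partial X_R} f\star df$ is correct, but the route through Green's first identity leaves a genuine gap that your own estimates cannot close. Because the boundary integral contains $f$ itself (not just its differences), the best you can extract from the hypothesis is $|f|=o(R^{1/2})$ on $\partial X_R$ after normalizing the biconstant, and hence only $E_{\Diamond_R}(f)=o(R^{1/2})\cdot o(R^{-1/2})\cdot O(R)=o(R)$. To convert $\sum_{Q\in V(\Diamond_R)}|\partial_\Lambda f(Q)|^2=o(R)$ into $\partial_\Lambda f\equiv 0$ you then invoke an interior estimate $|h(Q_0)|^2\le CR^{-2}\sum_{|Q-Q_0|\le R}|h(Q)|^2$ for discrete holomorphic $h$ on a general bounded-geometry quad-graph. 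This is precisely the step you flag as the main obstacle, and it is not available here: the paper has no maximum principle for general quad-graphs (see the remark after Corollary~\ref{cor:factorization}), no mean-value property, and no discrete Cauchy's kernel with controlled asymptotics outside the parallelogram case, so neither of your two suggested repairs (a Caccioppoli iteration, or transporting the rhombic Cauchy-type bound) is justified in this generality. As written, the proof is incomplete.

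The paper avoids the issue entirely by running the variational argument against a different competitor. Suppose $f$ is not biconstant, so $E(f)\ge\iint_F df\wedge\star df>0$ for some fixed face $F$. Let $f_R$ equal $f$ on $V(\partial\Lambda_R)$ and $0$ at all interior vertices. By the explicit formula of Proposition~\ref{prop:Dirichlet_Mercat}, $E_{\Diamond_R}(f_R)$ receives contributions only from the $O(R)$ boundary quadrilaterals, and each contribution involves only the differences $f(b_+)-f(b_-)$ and $f(w_+)-f(w_-)$, which are $o(R^{-1/2})$ by hypothesis; squaring and summing gives $E_{\Diamond_R}(f_R)=o(1)$. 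Since the discrete harmonic $f$ is the unique minimizer of the Dirichlet energy with its boundary values (Lemma~\ref{lem:Dirichlet_boundary}), $E_{\Diamond_R}(f)\le E_{\Diamond_R}(f_R)=o(1)$, contradicting the fixed positive lower bound. Note that this comparison never requires a bound on $|f|$ itself and never needs any pointwise control of $\partial_\Lambda f$, which is exactly what makes the hypothesis on differences (rather than on $f$) the natural one. If you want to salvage your approach, you would have to supply a full proof of the interior $\ell^2$ estimate for discrete holomorphic functions on general quad-graphs, which is a substantial result in its own right and not a routine adaptation of the rhombic theory.
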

\begin{proof}
Without loss of generality, we can restrict to real functions $f$. Assume that $f$ is not biconstant. Then, $df \wedge \star df$ is nonzero somewhere on a face $F$ of $X$. In particular, the discrete Dirichlet energy of $f$ is bounded away from zero if the domain contains $F$. Now, the idea of proof is to show that if the domain is large enough, the function being zero in the interior and equal to $f$ on the boundary has a smaller discrete Dirichlet energy than $f$, contradicting Lemma~\ref{lem:Dirichlet_boundary}.

Let us first bound the intersection angles and the lengths of diagonals of the quadrilaterals. Take $Q \in V(\Diamond)$. Then, there are two opposite interior angles that are less than $\pi$, say $\alpha_\pm$ at vertices $b_\pm$. Since all interior angles are bounded by $\alpha_0$ from below, one of $\alpha_\pm$ is less than or equal to $\pi-\alpha_0$, say $\alpha_0\leq\alpha_-\leq \pi-\alpha_0$.

By triangle inequality, $|b_+-b_-|,|w_+-w_-|<2E_1$. Twice the area of $Q$ equals \[|w_--b_-||w_+-b_-|\sin(\alpha_-)+|w_--b_+||w_+-b_+|\sin(\alpha_+)\geq E_0^2 \sin(\alpha_0).\] On the other hand, twice the area of $Q$ is equal to $|b_+-b_-||w_+-w_-|\sin(\varphi_Q)$. Combining the two inequalities yields $|b_+-b_-|,|w_+-w_-|>E_0':=E_0^2 \sin(\alpha_0)/(2E_1)$. Furthermore, we have the lower estimate $\sin(\varphi_Q)>E_0^2 \sin(\alpha_0)/(4E_1^2)=E_0'/(2E_1)$. Thus, we can bound
 \[\rho_Q=\frac{|w_+-w_-|}{|b_+-b_-|}\exp\left(i\left(\varphi_Q-\frac{\pi}{2}\right)\right)=\frac{|w_+-w_-|}{|b_+-b_-|}\left(\sin(\varphi_Q)-i\cos(\varphi_Q)\right)\]
\[\textnormal{by }|\rho_Q|<\frac{2E_1}{E_0'} \textnormal{ and } \re\left(\rho_Q\right)>\left(\frac{E_0'}{2E_1}\right)^2.\]

For some $r>0$, denote by $B_{\Diamond}(0,r)\subset V(\Diamond)$ the set of quadrilaterals that have a nonempty intersection with the open ball around $0$ and radius $r$. Let $R>2E_1$, and consider the ball $B_{\Diamond}(0,R)\subset  V(\Diamond)$.

Since edge lengths are bounded by $E_1$, all faces in the boundary of $B_{\Diamond}(0,R)$ are contained in the set $B(0,R+2E_1)\backslash B(0,R-2E_1) \subset \mC$. The area of the latter is $8\pi R E_1$. Any quadrilateral has area at least $E_0^2 \sin(\alpha_0)/2$, so at most $16\pi R E_1/(E_0^2 \sin(\alpha_0))$ quadrilaterals are in the boundary of $B_{\Diamond}(0,R)$.

Consider the real function $f_R$ defined on the vertices of all quadrilaterals in $B_{\Diamond}(0,R)$ that is equal to $f$ at the boundary and equal to 0 in the interior of $B_{\Diamond}(0,R)$. When computing the discrete Dirichlet energy, only boundary faces can give nonzero contributions. If we look at the formula of the discrete Dirichlet energy in Proposition~\ref{prop:Dirichlet_Mercat}, and use that $f(v_+)-f(v_-)=o(R^{-1/2})$ at the boundary, we see that any contribution of a boundary face has asymptotics $o(R^{-1})$. For this, we use that $\left|\re\left(\rho_Q)\right)\right|$ is bounded from below by a constant and $\left|\im\left(\rho_Q\right)\right|\leq\left|\rho_Q\right|<2E_1/E'_0$. Using that there are only $O(R)$ faces in the boundary, the discrete Dirichlet energy $E_{B_{\Diamond}(0,R)}(f_R)$, considered as a function of $R$, behaves as $o(1)$. So if $R$ is large enough, \[E_{B_{\Diamond}(0,R)}(f_R)<\iint\limits_F df \wedge \star df \leq E_{B_{\Diamond}(0,R)}(f),\] contradicting that $f$ minimizes the discrete Dirichlet energy by Lemma~\ref{lem:Dirichlet_boundary}.
\end{proof}


\subsection{Discrete Green's functions} \label{sec:Green}

In this section, we discuss discrete Green's functions.

\begin{definition}
Let $v_0 \in V(\Lambda)$. A real function $G(\cdot;v_0)$ on $V(\Lambda)$ is a \textit{(free) discrete Green's function} for $v_0$ if \[G(v_0;v_0)=0\textnormal{ and }\triangle G(v;v_0)=\frac{1}{2\textnormal{ar}(F_{v_0})}\delta_{vv_0} \textnormal{ for all }v \in V(\Lambda).\]
\end{definition}

\begin{corollary}\label{cor:free_Green_existence}
A discrete Green's function exists for any $v_0 \in V(\Lambda)$.
\end{corollary}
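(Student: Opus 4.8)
The plan is to obtain the discrete Green's function essentially for free from the surjectivity of the discrete Laplacian, and then adjust the normalization at $v_0$. First I would note that the right-hand side prescribed in the definition, namely $v\mapsto\frac{1}{2\textnormal{ar}(F_{v_0})}\delta_{vv_0}$, is a real-valued (indeed compactly supported) function on $V(\Lambda)$. Hence Theorem~\ref{th:plane_surjective}, applied to $\triangle$ acting on real functions on $V(\Lambda)$, yields a real function $G_0$ on $V(\Lambda)$ with \[\triangle G_0(v)=\frac{1}{2\textnormal{ar}(F_{v_0})}\delta_{vv_0}\qquad\textnormal{for all }v\in V(\Lambda).\]

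It then remains only to enforce the condition $G(v_0;v_0)=0$. A constant function on $V(\Lambda)$ is biconstant, hence has vanishing discrete derivatives, so by the factorization $\triangle=4\partial_\Diamond\bar{\partial}_\Lambda$ of Corollary~\ref{cor:factorization} (or by Corollary~\ref{cor:holomorphic_harmonic}~(ii), since constants are discrete holomorphic) it is discrete harmonic. Therefore I would set $G(\cdot\,;v_0):=G_0-G_0(v_0)$; this function is still real, still satisfies $\triangle G(\cdot\,;v_0)=\triangle G_0=\frac{1}{2\textnormal{ar}(F_{v_0})}\delta_{\cdot\,v_0}$, and now $G(v_0;v_0)=0$, which is exactly the defining property of a discrete Green's function for $v_0$.

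There is no real obstacle here: all of the analytic substance is already packaged into Theorem~\ref{th:plane_surjective}, whose proof rests on the unique solvability of the discrete Dirichlet boundary value problem (Lemma~\ref{lem:Dirichlet_boundary}) together with a finite-dimensional exhaustion argument. The only conceptual point worth recording is that subtracting a constant is harmless precisely because constants lie in the kernel of $\triangle$; more generally $G(\cdot\,;v_0)$ is unique only up to an additive biconstant whose value on the color class of $v_0$ equals $0$. As remarked before Theorem~\ref{th:harmonic_asymptotics}, in the general setup no decay condition is available to pin down a canonical choice, which is why the corollary asserts existence only.
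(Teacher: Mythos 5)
Your proof is correct and follows essentially the same route as the paper: invoke the surjectivity of $\triangle$ from Theorem~\ref{th:plane_surjective} to solve $\triangle G_0=\frac{1}{2\textnormal{ar}(F_{v_0})}\delta_{\cdot\,v_0}$, then subtract the constant $G_0(v_0)$, which is harmless because constants are discrete harmonic. (Incidentally, you use the normalization $\frac{1}{2\textnormal{ar}(F_{v_0})}$ from the definition, whereas the paper's own proof writes $\frac{1}{4\textnormal{ar}(F_{v_0})}$ — an internal inconsistency of the paper, not of your argument.)
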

\begin{proof}
By Theorem~\ref{th:plane_surjective}, there exists $G:V(\Lambda)\to\mR$ such that $\triangle G(v)=\delta_{vv_0}/\left(4\textnormal{ar}(F_{v_0})\right)$. Since constant functions are discrete harmonic, we can adjust $G$ to get $G(v_0)=0$.
\end{proof}

\begin{remark}
In contrast to the smooth setting or the rhombic case investigated by Kenyon \cite{Ke02} and Chelkak and Smirnov \cite{ChSm11}, we do not require any asymptotic behavior of the discrete Green's function. But when considering planar parallelogram-graphs with bounded interior angles and bounded ratio of side lengths in Section~\ref{sec:Green_asymptotics} of Section~\ref{sec:parallel}, we will prove the existence of a discrete Green's function with asymptotics generalizing the corresponding result for rhombic quad-graphs.
\end{remark}

Our notion of discrete Greens' functions in a discrete domain and the proof of their existence follow the presentation of Chelkak and Smirnov in \cite{ChSm11}.

\begin{definition}
Let $\Diamond_0 \subset \Diamond$ be finite. For $v_0 \in V(\Lambda_0 \backslash \partial \Lambda_0)$, a real function $G_{\Lambda_0}(\cdot;v_0)$ on $V(\Lambda_0)$ is a \textit{discrete Green's function in $\Lambda_0$} for $v_0$ if \begin{align*}G_{\Lambda_0}(v;v_0)&= 0\textnormal{ for all }v\in V(\partial \Lambda_0)\\ \textnormal{ and }\triangle G_{\Lambda_0}(v;v_0)&=\frac{1}{4\textnormal{ar}(F_{v_0})}\delta_{vv_0} \textnormal{ for all }v\in V(\Lambda_0 \backslash \partial \Lambda_0).\end{align*}
\end{definition}

\begin{corollary}\label{cor:boundary_Green_exitence}
Let $\Diamond_0\subset\Diamond$ be finite and $v_0 \in V(\Lambda_0 \backslash \partial \Lambda_0)$. Then, there exists a unique discrete Green's function in $\Lambda_0$ for $v_0$.
\end{corollary}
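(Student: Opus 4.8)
**Proof proposal for Corollary~\ref{cor:boundary_Green_exitence} (existence and uniqueness of the discrete Green's function in $\Lambda_0$).**

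The plan is to deduce both statements from the solvability theory of the discrete Dirichlet boundary value problem established in Lemma~\ref{lem:Dirichlet_boundary}. For \emph{existence}, the key observation is that the condition defining $G_{\Lambda_0}(\cdot;v_0)$ prescribes the value of $\triangle G_{\Lambda_0}(\cdot;v_0)$ at every interior vertex $v\in V(\Lambda_0\backslash\partial\Lambda_0)$, namely the function $f_0$ with $f_0(v)=\delta_{vv_0}/(4\,\textnormal{ar}(F_{v_0}))$, together with the boundary values $G_{\Lambda_0}(\cdot;v_0)\equiv 0$ on $V(\partial\Lambda_0)$. First I would invoke Lemma~\ref{lem:Dirichlet_boundary}, which says that for any prescribed real interior data $-\partial E_{\Diamond_0}/\partial f(v) = 2\,\textnormal{ar}(F_v)\,\triangle f(v)$ and any boundary data there is a (unique) minimizer of the appropriate affine quadratic functional; more precisely, the linear map sending a real function $f$ on $V(\Lambda_0)$ vanishing on $V(\partial\Lambda_0)$ to the vector $(\triangle f(v))_{v\in V(\Lambda_0\backslash\partial\Lambda_0)}$ is a bijection, because Lemma~\ref{lem:finite_surjective} (equivalently, the dimension count in its proof: the kernel of $\triangle$ has real dimension $BV$, matching the boundary degrees of freedom) shows it is surjective between spaces of equal finite dimension. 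Applying this bijection to the right-hand side $f_0$ yields a real function $G_{\Lambda_0}(\cdot;v_0)$ on $V(\Lambda_0)$ that vanishes on the boundary and satisfies the required Laplace equation.

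For \emph{uniqueness}, suppose $G_1$ and $G_2$ are two discrete Green's functions in $\Lambda_0$ for $v_0$. Their difference $g:=G_1-G_2$ is a real function on $V(\Lambda_0)$ that vanishes on $V(\partial\Lambda_0)$ and satisfies $\triangle g(v)=0$ for all interior vertices $v$, i.e.\ $g$ solves the homogeneous discrete Dirichlet boundary value problem. By the uniqueness part of Lemma~\ref{lem:Dirichlet_boundary} (the difference of two energy-minimizers is discrete harmonic, vanishes on the boundary, hence has zero discrete Dirichlet energy, and by Proposition~\ref{prop:Dirichlet_Mercat} together with the positive-definiteness exhibited there only biconstant functions have zero energy; a biconstant function vanishing on the nonempty boundary of a disk-like domain must vanish identically, since $\partial\Lambda_0$ contains both black and white vertices), we get $g\equiv 0$, so $G_1=G_2$.

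The only point requiring a little care — and the place I would expect a reader to want detail — is the passage from ``the map $\triangle$ on interior vertices is surjective'' (Lemma~\ref{lem:finite_surjective}) to ``it is bijective,'' which uses that the domain space (real functions on $V(\Lambda_0)$ vanishing on $V(\partial\Lambda_0)$) and the target space (real functions on $V(\Lambda_0\backslash\partial\Lambda_0)$) have the same finite dimension, namely the number of interior vertices; equivalently, one argues directly from Lemma~\ref{lem:Dirichlet_boundary} that prescribing $f_0$ on $V(\partial\Lambda_0)$ and requiring discrete harmonicity in the interior determines $f$ uniquely, and then shifts by a particular solution of the inhomogeneous equation to handle the $\delta$-source. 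Since all the substantive analytic work is already contained in Lemma~\ref{lem:Dirichlet_boundary}, no further estimates are needed and the corollary follows formally.
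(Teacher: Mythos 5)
Your proof is correct, and the uniqueness half coincides with the paper's (difference of two Green's functions is discrete harmonic, vanishes on $V(\partial\Lambda_0)$, hence vanishes identically by the unique solvability of the Dirichlet problem from Lemma~\ref{lem:Dirichlet_boundary}). The existence half, however, takes a genuinely different route. The paper starts from the \emph{free} discrete Green's function $G_0$ on all of $V(\Lambda)$ (Corollary~\ref{cor:free_Green_existence}, which rests on the infinite exhaustion argument of Theorem~\ref{th:plane_surjective}), solves the Dirichlet problem with boundary data $G_0|_{V(\partial\Lambda_0)}$ to get a discrete harmonic $G_1$, and sets $G_{\Lambda_0}:=G_0-G_1$ --- exactly the ``particular solution plus harmonic correction'' alternative you mention in passing at the end. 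Your main route instead stays entirely inside the finite domain: the kernel of $\triangle:\mR^{V(\Lambda_0)}\to\mR^{V(\Lambda_0\backslash\partial\Lambda_0)}$ has dimension $BV$ by Lemma~\ref{lem:Dirichlet_boundary}, so $\triangle$ is surjective by rank--nullity; restricting to functions vanishing on $V(\partial\Lambda_0)$ gives a map between spaces of dimension $IV$ whose kernel is trivial (again by Dirichlet uniqueness, noting that a biconstant function vanishing on a boundary containing both colors is zero), hence a bijection, and one solves $\triangle G=\delta_{vv_0}/(4\textnormal{ar}(F_{v_0}))$ directly with zero boundary values. This is more self-contained --- it does not need the free Green's function or any statement about the infinite graph --- and it has the incidental advantage of not depending on the ``homeomorphic to a disk'' hypothesis of Lemma~\ref{lem:finite_surjective}, since the dimension count for $\triangle$ only uses Lemma~\ref{lem:Dirichlet_boundary}; what the paper's route buys is brevity once the free Green's function is already in hand. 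Your phrasing ``Lemma~\ref{lem:finite_surjective} shows it is surjective between spaces of equal finite dimension'' is slightly misleading as written (surjectivity of the \emph{unrestricted} Laplacian does not by itself give surjectivity of the restriction), but you supply the missing injectivity of the restricted map in the uniqueness paragraph, so the argument closes.
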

\begin{proof}
Let $G_0:V(\Lambda)\to\mC$ be the free discrete Green's function for $v_0$ given by Corollary~\ref{cor:free_Green_existence}. By Lemma~\ref{lem:Dirichlet_boundary}, there exists a real discrete harmonic function $G_1$ on $V(\Lambda_0)$ such that $G_1$ and $G_0$ coincide on $V(\partial \Lambda_0)$. Thus, $G_{\Lambda_0}(\cdot;v_0):=G_0-G_1$ is a discrete Green's function in $\Lambda_0$ for $v_0$. Since the difference of two discrete Green's functions in $\Lambda_0$ for $v_0$ is discrete harmonic on $V(\Lambda_0)$ and equals zero on the boundary $V(\partial \Lambda_0)$, it has to be identically zero by Lemma~\ref{lem:Dirichlet_boundary}.
\end{proof}


\subsection{Discrete Cauchy's integral formulae}\label{sec:Cauchy}

In this section, we first formulate discretizations of the standard Cauchy's integral formula, both for discrete holomorphic functions on $V(\Lambda)$ and $V(\Diamond)$. Later, we give with Theorem~\ref{th:Cauchy_formula_derivative} a discrete formulation of Cauchy's integral formula for the derivative of a holomorphic function. We conclude this part with Section~\ref{sec:Cauchy_CS}, where we relate our formulation of the discrete Cauchy's integral formula applied to a discrete holomorphic function on $V(\Lambda)$ with the notation used by Chelkak and Smirnov in \cite{ChSm11}.

\begin{definition}
\textit{Discrete Cauchy's kernels} \textit{with respect to} $Q_0 \in V(\Diamond)$ \textit{and} $v_0\in V(\Lambda)$ are functions $K_{Q_0}:V(\Lambda) \rightarrow \mC$ and $K_{v_0}:V(\Diamond) \rightarrow \mC$, respectively, that satisfy for all $Q \in V(\Diamond)$, $v \in V(\Lambda)$: \[\bar{\partial}_\Lambda K_{Q_0}(Q)=\delta_{QQ_0}\frac{\pi}{\textnormal{ar}(F_Q)} \textnormal{ and } \bar{\partial}_\Diamond K_{v_0}(v)=\delta_{vv_0}\frac{\pi}{\textnormal{ar}(F_v)}.\]
\end{definition}

\begin{remark}
As for the discrete Green's functions in Section~\ref{sec:Green}, we do not require any asymptotic behavior of the discrete Cauchy's kernels. However, if interior angles and side lengths of quadrilaterals are bounded, it follows from Theorem~\ref{th:harmonic_asymptotics} that any discrete Cauchy's kernel with respect to a vertex of $\Diamond$ with asymptotics $o(v^{-1/2})$ is necessarily unique. In Section~\ref{sec:Cauchy_asymptotics}, we will construct discrete Cauchy's kernels with asymptotics similar to the smooth setting, generalizing Kenyon's result \cite{Ke02} on rhombic quad-graphs to parallelogram-graphs.
\end{remark}

The existence of discrete Cauchy's kernels follows from Theorem~\ref{th:plane_surjective}:

\begin{corollary}\label{cor:existence_Cauchykernel}
Let $Q_0 \in V(\Diamond)$ and $v_0\in V(\Lambda)$ be arbitrary. Then, discrete Cauchy's kernels with respect to $Q_0$ and $v_0$ exist.
\end{corollary}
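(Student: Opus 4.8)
The plan is to read this off directly from the surjectivity statement in Theorem~\ref{th:plane_surjective}, exactly as the existence of free discrete Green's functions was deduced in Corollary~\ref{cor:free_Green_existence}. The point is that both defining conditions for the discrete Cauchy's kernels are of the form ``apply a discrete antiderivative operator and obtain a prescribed function'', and the relevant operators $\bar{\partial}_\Lambda\colon\mC^{V(\Lambda)}\to\mC^{V(\Diamond)}$ and $\bar{\partial}_\Diamond\colon\mC^{V(\Diamond)}\to\mC^{V(\Lambda)}$ are already known to be surjective.

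Concretely, first I would observe that the assignment $Q\mapsto \delta_{QQ_0}\,\pi/\mathrm{ar}(F_Q)$ is a perfectly well-defined complex function on $V(\Diamond)$ (note $\mathrm{ar}(F_{Q_0})>0$ since each Varignon parallelogram is nondegenerate). By Theorem~\ref{th:plane_surjective}, $\bar{\partial}_\Lambda$ is surjective onto $\mC^{V(\Diamond)}$, so there is some $K_{Q_0}\colon V(\Lambda)\to\mC$ with $\bar{\partial}_\Lambda K_{Q_0}(Q)=\delta_{QQ_0}\,\pi/\mathrm{ar}(F_Q)$ for all $Q\in V(\Diamond)$; this $K_{Q_0}$ is a discrete Cauchy's kernel with respect to $Q_0$. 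Symmetrically, the assignment $v\mapsto \delta_{vv_0}\,\pi/\mathrm{ar}(F_v)$ is a well-defined element of $\mC^{V(\Lambda)}$, and surjectivity of $\bar{\partial}_\Diamond$ onto $\mC^{V(\Lambda)}$ (again from Theorem~\ref{th:plane_surjective}, via the factorization $\triangle=4\bar{\partial}_\Diamond\partial_\Lambda$) yields $K_{v_0}\colon V(\Diamond)\to\mC$ with $\bar{\partial}_\Diamond K_{v_0}(v)=\delta_{vv_0}\,\pi/\mathrm{ar}(F_v)$, a discrete Cauchy's kernel with respect to $v_0$.

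There is essentially no obstacle here: all the work has been front-loaded into Theorem~\ref{th:plane_surjective} (which in turn rests on Skopenkov's solvability of the discrete Dirichlet problem via Lemma~\ref{lem:Dirichlet_boundary}, on Proposition~\ref{prop:primitive}, and on the factorization of $\triangle$ in Corollary~\ref{cor:factorization}). If anything, the only thing worth stating explicitly is that the right-hand sides are genuine functions on $V(\Diamond)$ and $V(\Lambda)$ respectively, so that surjectivity applies verbatim. I would therefore keep the proof to two or three sentences, mirroring the phrasing of Corollary~\ref{cor:free_Green_existence}, and not attempt to extract any uniqueness or asymptotic information, since (as the surrounding remarks emphasize) no asymptotic normalization is imposed in this generality.
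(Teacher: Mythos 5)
Your proposal is correct and coincides with the paper's own argument: the corollary is stated there as an immediate consequence of the surjectivity of $\bar{\partial}_\Lambda$ and $\bar{\partial}_\Diamond$ from Theorem~\ref{th:plane_surjective}, applied to the right-hand sides $\delta_{QQ_0}\,\pi/\textnormal{ar}(F_Q)$ and $\delta_{vv_0}\,\pi/\textnormal{ar}(F_v)$, exactly as you do. Nothing further is needed.
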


\begin{theorem}\label{th:Cauchy_formula}
Let $f$ and $h$ be discrete holomorphic functions on $V(\Lambda)$ and $V(\Diamond)$, respectively. Let $v_0 \in V(\Lambda)$ and $Q_0 \in V(\Diamond)$, and let $K_{v_0}:V(\Diamond)\to\mC$ and $K_{Q_0}:V(\Lambda)\to\mC$ be discrete Cauchy's kernels with respect to $v_0$ and $Q_0$, respectively.

Then, for any discrete contours $C_{v_0}$ and $C_{Q_0}$ on $X$ surrounding $v_0$ and $Q_0$, respectively, once in counterclockwise order, \textit{discrete Cauchy's integral formulae} hold:
\begin{align*}
 f(v_0)&=\frac{1}{2\pi i}\oint\limits_{C_{v_0}}f K_{v_0} dz,\\
 h(Q_0)&=\frac{1}{2\pi i}\oint\limits_{C_{Q_0}}h K_{Q_0} dz.
\end{align*}
\end{theorem}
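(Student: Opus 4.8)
The plan is to derive both identities from discrete Stokes' Theorem~\ref{th:stokes}, applied to the discrete one-forms $fK_{v_0}dz$ and $hK_{Q_0}dz$, whose discrete exterior derivatives turn out to be supported on a single face of $X$. Here $fK_{v_0}dz$ denotes the discrete one-form with $\int_{[Q,v]}fK_{v_0}dz=f(v)K_{v_0}(Q)\int_{[Q,v]}dz$; around a face $F_v$ it equals $f(v)K_{v_0}\,dz$ and around a face $F_Q$ it equals $fK_{v_0}(Q)\,dz$, so it is a legitimate discrete one-form and $d(fK_{v_0}dz)$ is defined. Since the discrete contour $C_{v_0}$ bounds a topological disk $X_0\subseteq X$ whose face set contains $F_{v_0}$, discrete Stokes' Theorem~\ref{th:stokes} together with additivity of the discrete integral of a discrete two-form over the faces of $X_0$ gives $\oint_{C_{v_0}}fK_{v_0}dz=\iint_{X_0}d(fK_{v_0}dz)=\sum_{F\in F(X_0)}\oint_{\partial F}fK_{v_0}dz$, so it remains to evaluate $\oint_{\partial F}fK_{v_0}dz$ for each face $F$ of $X$.

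First I would treat a face $F=F_Q$ with $Q\in V(\Diamond)$. Every edge of the discrete elementary cycle $\partial F_Q=P_Q$ has the form $[Q,v]$ with $v\sim Q$, so $K_{v_0}$ is constant equal to $K_{v_0}(Q)$ along $P_Q$, and hence $\oint_{P_Q}fK_{v_0}dz=K_{v_0}(Q)\oint_{P_Q}f\,dz=2i\,\textnormal{ar}(F_Q)\,K_{v_0}(Q)\,\bar{\partial}_\Lambda f(Q)=0$ by Lemma~\ref{lem:derivative_lambda} and discrete holomorphicity of $f$ (equivalently, by discrete Morera's Theorem~\ref{prop:morera}). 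Next, for a face $F=F_v$ with $v\in V(\Lambda)$, every edge of $P_v$ is $[Q,v]$ with $Q\sim v$, so $f$ is constant equal to $f(v)$ along $P_v$ and $\oint_{P_v}fK_{v_0}dz=f(v)\oint_{P_v}K_{v_0}dz=2i\,\textnormal{ar}(F_v)\,f(v)\,\bar{\partial}_\Diamond K_{v_0}(v)=2\pi i\,f(v)\,\delta_{vv_0}$ by the definition of $\bar{\partial}_\Diamond$ and the defining property of the discrete Cauchy's kernel $K_{v_0}$. Summing over $F(X_0)$, only the face $F_{v_0}$ contributes, so $\oint_{C_{v_0}}fK_{v_0}dz=2\pi i\,f(v_0)$, which is the first formula. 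The second formula follows by the identical computation with the roles of $\Lambda$ and $\Diamond$ interchanged: along $P_v$ the function $K_{Q_0}$ is constant, whence $\oint_{P_v}hK_{Q_0}dz=2i\,\textnormal{ar}(F_v)\,K_{Q_0}(v)\,\bar{\partial}_\Diamond h(v)=0$ because $h$ is discrete holomorphic, while along $P_Q$ the function $h$ is constant, whence $\oint_{P_Q}hK_{Q_0}dz=2i\,\textnormal{ar}(F_Q)\,h(Q)\,\bar{\partial}_\Lambda K_{Q_0}(Q)=2\pi i\,h(Q)\,\delta_{QQ_0}$; summing over the disk bounded by $C_{Q_0}$ yields $\oint_{C_{Q_0}}hK_{Q_0}dz=2\pi i\,h(Q_0)$.

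I do not expect a genuine obstacle here; this is just the discrete shadow of integrating $f(z)/(z-z_0)$ around $z_0$, and the steps requiring care are purely bookkeeping ones. One must check that $fK_{v_0}dz$ and $hK_{Q_0}dz$ are well defined as discrete one-forms that are of the form $p\,dz$ around every face of $X$, so that Theorem~\ref{th:stokes}, Lemma~\ref{lem:derivative_lambda}, and the definition of $\bar{\partial}_\Diamond$ apply verbatim; that a discrete contour surrounding $v_0$ (respectively $Q_0$) once counterclockwise does bound a disk $X_0\subseteq X$ whose face set contains $F_{v_0}$ (respectively $F_{Q_0}$); and that every remaining face of $X_0$ contributes $0$ — which is precisely what the two cases above establish, the vanishing on faces $F_Q$ (respectively $F_v$) coming from discrete holomorphicity of $f$ (respectively $h$), and the vanishing on faces $F_v\neq F_{v_0}$ (respectively $F_Q\neq F_{Q_0}$) coming from the localization of $\bar{\partial}_\Diamond K_{v_0}$ at $v_0$ (respectively of $\bar{\partial}_\Lambda K_{Q_0}$ at $Q_0$).
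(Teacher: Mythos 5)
Your proposal is correct and follows essentially the same route as the paper: decompose the contour integral into discrete elementary cycles $P_v$ and $P_Q$, evaluate the $P_Q$ contributions via Lemma~\ref{lem:derivative_lambda} and discrete holomorphicity of $f$ (resp.\ $h$), and the $P_v$ contributions via the definition of $\bar{\partial}_\Diamond$ and the defining property of the kernel, so that only the face containing $v_0$ (resp.\ $Q_0$) survives. The constants all check out, and your extra care in verifying that $fK_{v_0}dz$ is a legitimate discrete one-form is a harmless elaboration of what the paper leaves implicit.
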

\begin{proof}
Let $P_v$ and $P_Q$ be discrete elementary cycles, $v$ being a vertex and $Q$ a face of $\Lambda$. By Lemma~\ref{lem:derivative_lambda} and the definition of $\bar{\partial}_{\Diamond}$, we get:

\begin{align*}
 \frac{1}{2\pi i}\oint\limits_{P_v}f K_{v_0}dz&=\frac{1}{\pi}\textnormal{ar}(F_v) f(v)\bar{\partial}_\Diamond K_{v_0}(v)=\delta_{vv_0} f(v),\\
\frac{1}{2\pi i}\oint\limits_{P_Q}f K_{v_0}dz&=\frac{1}{\pi}\textnormal{ar}(F_Q)\bar{\partial}_\Lambda f(Q) K_{v_0}(Q)=0.
\end{align*}
By definition, the discrete contour $C_{v_0}$ bounds a topological disk, and we can decompose the integration along $C_{v_0}$ into a couple of integrations along discrete elementary cycles $P_v$ and $P_Q$ as above. Summing up, only the contribution of $P_{v_0}$ is nonvanishing, and we get the desired result. The second formula is shown in an analog fashion.
\end{proof}

\begin{remark}
In the case of rhombic quad-graphs, Mercat formulated a discrete Cauchy's integral formula for the average of a discrete holomorphic function on $V(\Lambda)$ along an edge of $\Lambda$. In \cite{ChSm11}, Chelkak and Smirnov provided a discrete Cauchy's integral formula for discrete holomorphic functions on $V(\Diamond)$ using an integration along cycles on $\Gamma$ and $\Gamma^*$, see Section~\ref{sec:Cauchy_CS}.
\end{remark}

\begin{theorem}\label{th:Cauchy_formula_derivative}
Let $f:V(\Lambda)\to\mC$ be discrete holomorphic, $Q_0 \in V(\Diamond)$, and let $K_{Q_0}:V(\Lambda)\to\mC$ be a discrete Cauchy's kernel with respect to $Q_0$.

Then, for any discrete contour $C_{Q_0}$ in $X$ surrounding $Q_0$ once in counterclockwise order that does not contain any edge inside $Q_0$ (see Figure~\ref{fig:cycle}), the \textit{discrete Cauchy's integral formula} is true:
\begin{equation*}
 \partial_\Lambda f(Q_0)=-\frac{1}{2\pi i}\oint\limits_{C_{Q_0}}f\partial_\Lambda K_{Q_0}dz.
\end{equation*}
\end{theorem}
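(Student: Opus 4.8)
\textit{Proof strategy.} The plan is to run the same machinery as for the ordinary discrete Cauchy's integral formula (Theorem~\ref{th:Cauchy_formula}), but combined with the fact that the discrete exterior derivative is a derivation for the discrete wedge product (Theorem~\ref{th:derivation}). I would write $\omega:=f\,(\partial_\Lambda K_{Q_0}\,dz)$ for the product of $f:V(\Lambda)\to\mC$ with the discrete one-form $\partial_\Lambda K_{Q_0}\,dz$; the latter is of type $\Diamond$, since $\partial_\Lambda K_{Q_0}$ is a function on $V(\Diamond)$. Letting $D\subseteq X$ be the topological disk bounded by the discrete contour $C_{Q_0}$, discrete Stokes' Theorem~\ref{th:stokes} and Theorem~\ref{th:derivation} give
\[\oint\limits_{C_{Q_0}}\omega=\iint\limits_{D}d\omega=\iint\limits_{D}\bigl(df\wedge\partial_\Lambda K_{Q_0}\,dz\bigr)+\iint\limits_{D}f\,d\bigl(\partial_\Lambda K_{Q_0}\,dz\bigr).\]

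The first summand vanishes identically, and this is exactly where discrete holomorphicity of $f$ is used: by Proposition~\ref{prop:examples}~(i) we have $df=\partial_\Lambda f\,dz$, and the discrete wedge product of two discrete one-forms of type $\Diamond$ with vanishing $d\bar{z}$-part is zero (equivalently, $df\wedge\partial_\Lambda K_{Q_0}\,dz=-\bar{\partial}_\Lambda f\,\partial_\Lambda K_{Q_0}\,\Omega_\Diamond\equiv0$).

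For the second summand I would compute $d(\partial_\Lambda K_{Q_0}\,dz)$ directly from the definition of the discrete exterior derivative. On every face $F_Q$ it vanishes, because the local coordinate representation of $\partial_\Lambda K_{Q_0}\,dz$ around $F_Q$ has the biconstant $dz$-coefficient $\partial_\Lambda K_{Q_0}(Q)$ and zero $d\bar{z}$-coefficient. On a face $F_v$ it equals $-\bar{\partial}_\Diamond(\partial_\Lambda K_{Q_0})(v)\,\Omega_\Lambda$, which by Corollary~\ref{cor:commutativity} equals $-\partial_\Diamond(\bar{\partial}_\Lambda K_{Q_0})(v)\,\Omega_\Lambda$. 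Now $\bar{\partial}_\Lambda K_{Q_0}$, viewed as a function on $V(\Diamond)$, equals $\pi/\textnormal{ar}(F_{Q_0})$ at $Q_0$ and $0$ elsewhere, so $\partial_\Diamond(\bar{\partial}_\Lambda K_{Q_0})(v)$ is supported on the four vertices $v\sim Q_0$, and in its defining contour integral over $P_v$ only the edge $[Q_0,v]$ contributes. The hypothesis that $C_{Q_0}$ surrounds $Q_0$ but contains no edge lying inside $Q_0$ guarantees that $D$ contains $F_{Q_0}$ together with exactly the four faces $F_v$, $v\sim Q_0$, bordering it, i.e.\ precisely the support of $d\omega$. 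Collecting the contributions,
\[\oint\limits_{C_{Q_0}}\omega=\sum\limits_{v\sim Q_0}2i\,\textnormal{ar}(F_v)\,f(v)\,\partial_\Diamond\bar{\partial}_\Lambda K_{Q_0}(v)=-\frac{\pi}{\textnormal{ar}(F_{Q_0})}\sum\limits_{v\sim Q_0}f(v)\,\overline{\int_{[Q_0,v]}dz},\]
with each edge $[Q_0,v]$ oriented as in the discrete elementary cycle $P_v$.

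To finish, I would invoke Lemma~\ref{lem:derivative_lambda}: the expression $\partial_\Lambda f(Q_0)=\frac{-1}{2i\,\textnormal{ar}(F_{Q_0})}\oint_{P_{Q_0}}f\,d\bar{z}$ is the same sum over $v\sim Q_0$, but with the edges $[Q_0,v]$ carrying their $P_{Q_0}$-orientation, which is opposite to their $P_v$-orientation. Hence $\oint_{C_{Q_0}}\omega=-2\pi i\,\partial_\Lambda f(Q_0)$, which rearranges to the asserted formula. The only genuinely delicate points are the orientation bookkeeping for the four shared edges $[Q_0,v]$ (the sign relating their orientation in $P_v$ and in $P_{Q_0}$) and the topological claim that forbidding edges interior to $Q_0$ on $C_{Q_0}$ forces $D$ to contain exactly the faces carrying $d\omega$; everything else is a routine unwinding of the definitions.
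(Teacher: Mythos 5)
Your proof is correct, and it reaches the conclusion by a genuinely different bookkeeping than the paper's, even though both rest on the same two pillars (discrete Stokes' Theorem~\ref{th:stokes} and the Leibniz rule of Theorem~\ref{th:derivation}). The paper first observes that $\bar{\partial}_\Lambda K_{Q_0}\,d\bar{z}$ vanishes on every edge of $C_{Q_0}$ (since $\bar{\partial}_\Lambda K_{Q_0}(Q)=0$ for $Q\neq Q_0$ and the contour avoids the edges inside $Q_0$), so it may replace the integrand by $f\,dK_{Q_0}$; then $d(f\,dK_{Q_0})=df\wedge dK_{Q_0}=\partial_\Lambda f\,\bar{\partial}_\Lambda K_{Q_0}\,\Omega_\Diamond$ by Proposition~\ref{prop:dd0}, and the entire mass sits on the single face $F_{Q_0}$, giving the result in one line with no orientation analysis. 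You instead apply Stokes directly to $f\,\partial_\Lambda K_{Q_0}\,dz$; the price is that $\partial_\Lambda K_{Q_0}\,dz$ is not closed, so the mass is spread over the four faces $F_v$, $v\sim Q_0$, and you need Corollary~\ref{cor:commutativity}, Lemma~\ref{lem:derivative_lambda}, and the sign flip between the $P_v$- and $P_{Q_0}$-orientations of the shared edges $[Q_0,v]$ to reassemble $\partial_\Lambda f(Q_0)$. Your sign and orientation computations check out (the hypothesis on $C_{Q_0}$ is used, as you say, to guarantee that each $F_v$, $v\sim Q_0$, lies on the same side of the contour as $F_{Q_0}$ and hence inside $D$; strictly speaking $D$ may contain many more faces, but they carry no mass). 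The paper's variant is worth knowing because it avoids all of this delicate bookkeeping; yours has the small virtue of never modifying the integrand on the contour.
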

\begin{proof}
Let $D$ be the discrete domain in $X$ bounded by $C_{Q_0}$. Since no edge of $C_{Q_0}$ passes through $Q_0$, the discrete one-form $\bar{\partial}_\Lambda K_{Q_0}d\bar{z}$ vanishes on $C_{Q_0}$. Therefore, \[\oint\limits_{C_{Q_0}}f\partial_\Lambda K_{Q_0}dz=\oint\limits_{C_{Q_0}}fdK_{Q_0}=\iint\limits_D d(fdK_{Q_0})=\iint\limits_D df\wedge dK_{Q_0}\] due to discrete Stokes' Theorem~\ref{th:stokes}, Theorem~\ref{th:derivation}, and Proposition~\ref{prop:dd0}. Now, $f$ is discrete holomorphic, so 
$df\wedge dK_{Q_0}=\partial_\Lambda f \bar{\partial}_\Lambda K_{Q_0} \Omega_\Diamond$. But $\bar{\partial}_\Lambda K_{Q_0}$ vanishes on all vertices of $\Diamond$ but $Q_0$. Finally, \[-\frac{1}{2\pi i}\oint\limits_{C_{Q_0}}f\partial_\Lambda K_{Q_0}dz=-\frac{1}{2\pi i} \iint\limits_{F_{Q_0}}\partial_\Lambda f \bar{\partial}_\Lambda K_{Q_0} \Omega_\Diamond=\partial_\Lambda f(Q_0).\]
\end{proof}

\begin{figure}[htbp]
\begin{center}
\beginpgfgraphicnamed{cycle}
\begin{tikzpicture}
[white/.style={circle,draw=black,fill=black,thin,inner sep=0pt,minimum size=1.2mm},
black/.style={circle,draw=black,fill=white,thin,inner sep=0pt,minimum size=1.2mm},
gray/.style={circle,draw=black,fill=gray,thin,inner sep=0pt,minimum size=1.2mm}]
\node[white] (w1)
at (-2,-2) {};
\node[white] (w2)
 at (0,-2) {};
\node[white] (w3)
 at (-1,-1) {};
\node[white] (w4)
 at (1,-1) {};
\node[white] (w5)
 at (-2,0) {};
\node[white] (w6)
 at (0,0) {};
\node[white] (w7)
 at (-1,1) {};
\node[white] (w8)
 at (1,1) {};

\node[black] (b1)
 at (-1,-2) {};
\node[black] (b2)
 at (1,-2) {};
\node[black] (b3)
 at (-2,-1) {};
\node[black] (b4)
 at (0,-1) {};
\node[black] (b5)
 at (-1,0) {};
\node[black] (b6)
 at (1,0) {};
\node[black] (b7)
 at (-2,1) {};
\node[black] (b8)
 at (0,1) {};

\node[gray] (m1)
 at (0,-1.5) {};
\node[gray] (m2)
 at (0.5,-1) {};
\node[gray] (m3)
 at (0,-0.5) {};
\node[gray] (m4)
 at (0.5,0) {};
\node[gray] (m5)
 at (0,0.5) {};
\node[gray] (m6)
 at (-0.5,0) {};
\node[gray] (m7)
 at (-1,0.5) {};
\node[gray] (m8)
 at (-1.5,0) {};
\node[gray] (m9)
 at (-1,-0.5) {};
\node[gray] (m10)
 at (-1.5,-1) {};
\node[gray] (m11)
 at (-1,-1.5) {};
\node[gray] (m12)
 at (-0.5,-1) {};

\coordinate[label=center:$Q_0$] (z)  at (-0.5,-0.5) {};

\draw[dashed] (w1) -- (b1) -- (w2) -- (b2);
\draw[dashed] (b3) -- (w3) -- (b4) -- (w4);
\draw[dashed] (w5) -- (b5) -- (w6) -- (b6);
\draw[dashed] (b7) -- (w7) -- (b8) -- (w8);

\draw[dashed] (w1) -- (b3) -- (w5) -- (b7);
\draw[dashed] (b1) -- (w3) -- (b5) -- (w7);
\draw[dashed] (w2) -- (b4) -- (w6) -- (b8);
\draw[dashed] (b2) -- (w4) -- (b6) -- (w8);

\draw[color=gray] (m1) -- (m2) -- (m3) -- (m4) -- (m5) -- (m6) -- (m7) -- (m8) -- (m9) -- (m10) -- (m11) -- (m12) -- (m1);
\end{tikzpicture}
\endpgfgraphicnamed
\caption{Discrete contour as in Theorem~\ref{th:Cauchy_formula_derivative}}
\label{fig:cycle}
\end{center}
\end{figure}
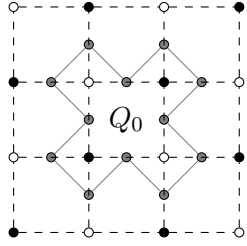

\begin{remark}
In general, there exists no analog of Theorem~\ref{th:Cauchy_formula_derivative} for the discrete derivative of a discrete holomorphic function on $V(\Diamond)$, because the discrete derivative itself does not need to be discrete holomorphic. However, in the special case of integer lattices, any discrete derivate of a discrete holomorphic function is itself discrete holomorphic. In Section~\ref{sec:integer_lattice}, we will obtain discrete analogs of Cauchy's integral formulae for higher derivatives of discrete holomorphic functions on $V(\Lambda)$ or $V(\Diamond)$.
\end{remark}


\subsubsection{A different notation}\label{sec:Cauchy_CS}

Let $W$ be a cycle on the edges of $\Gamma^*$, having (counterclockwise ordered) white vertices $w_0,w_1,\ldots, w_m$, $w_m=w_0$. Then, any edge connecting two consecutive vertices $w_k,w_{k+1}$ forms the diagonal of a face $Q(w_k,w_{k+1})\in V(\Diamond)$. We denote the set of such faces together with the induced orientation of their white diagonals by $W_\Diamond$. For $Q\in W_\Diamond$, we denote its white vertices by $w_-(Q),w_+(Q)$ such that the corresponding oriented diagonal goes from $w_-(Q)$ to $w_+(Q)$. Its black vertices are denoted by $b(Q),b'(Q)$ in such a way that $w_-(Q),b(Q),w_+(Q),b'(Q)$ appear in counterclockwise order. The reason why we do not choose our previous notation of Figure~\ref{fig:quadgraph} is that black and white vertices now play a different role that shall be indicated by the notation.

Now, we construct a cycle $B$ on the edges of $\Gamma$ having (counterclockwise ordered) black vertices $b_0,b_1,\ldots,b_n$, $b_n=b_0$, in the following way. We start with $b_0:=b\left(Q\left(w_0,w_1\right)\right)$. In the star of the vertex $w_1$, there are two simple paths on $\Gamma$ connecting $b_0$ and $b\left(Q\left(w_1,w_2\right)\right)$, and we choose the path that does not go through $Q(w_0,w_1)$. Note that it may happen that $b\left(Q\left(w_1,w_2\right)\right)=b_0$; in this case, we do not add any vertices to $B$. Also, $w_2=w_0$ is possible, which causes adding the nondirect path connecting $b_0$ and $b\left(Q\left(w_1,w_2\right)\right)=b'\left(Q\left(w_0,w_1\right)\right)$.

Continuing this procedure till we have connected $b\left(Q\left(w_{m-1},w_m\right)\right)$ with $b_0$, we end up with a closed path $B$ on $\Gamma$. Without loss of generality, any two consecutive vertices in $B$ are different. As above, any edge connecting two consecutive vertices $b_k,b_{k+1}$ forms the diagonal of a face $Q(b_k,b_{k+1})\in V(\Diamond)$. We denote the set of such faces together with the induced orientation of their black diagonals by $B_\Diamond$. For $Q\in B_\Diamond$, we denote its black vertices by $b_-(Q),b_+(Q)$ such that the corresponding oriented diagonal goes from $b_-(Q)$ to $b_+(Q)$. Finally, its white vertices are denoted by $w(Q),w'(Q)$ in such a way that $b_-(Q),w'(Q),b_+(Q),w(Q)$ appear in counterclockwise order.

\begin{definition}
Let $W$ and $B$ be as above and $h$ a function defined on $W_\Diamond \cup B_\Diamond$. We define the \textit{discrete integrals} along $W$ and $B$ by
\begin{align*}
 \oint_W h(Q)dz &:= \sum\limits_{k=0}^{m-1} f\left(Q\left(w_k,w_{k+1}\right)\right) \left(w_{k+1}-w_k\right),\\
 \oint_W h(Q)d\bar{z} &:= \sum\limits_{k=0}^{m-1} f\left(Q\left(w_k,w_{k+1}\right)\right) \overline{\left(w_{k+1}-w_k\right)};\\
 \oint_B h(Q)dz &:= \sum\limits_{k=0}^{n-1} f\left(Q\left(b_k,b_{k+1}\right)\right) \left(b_{k+1}-b_k\right),\\
 \oint_B h(Q)d\bar{z} &:= \sum\limits_{k=0}^{n-1} f\left(Q\left(b_k,b_{k+1}\right)\right) \overline{\left(b_{k+1}-b_k\right)}.
\end{align*}
\end{definition}

In between the closed paths $B$ and $W$, there is a cycle $P$ on the medial graph $X$ that comprises exactly all edges $[Q,v]$ with $Q \in W_\Diamond$ and $v \in B$ incident to $Q$ and all edges $[Q,v]$ with $Q \in B_\Diamond$ and $v \in W$ incident to $Q$. The orientation of $[Q,v]$ is induced by the orientation of the corresponding parallel white or black diagonal. Figure~\ref{fig:contours} gives an example for this construction.

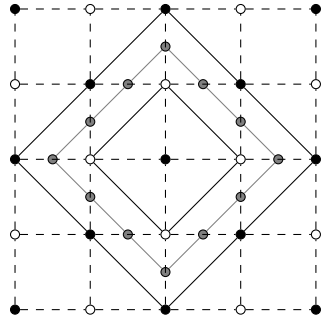
\begin{figure}[htbp]
\begin{center}
\beginpgfgraphicnamed{medial}
\begin{tikzpicture}
[white/.style={circle,draw=black,fill=black,thin,inner sep=0pt,minimum size=1.2mm},
black/.style={circle,draw=black,fill=white,thin,inner sep=0pt,minimum size=1.2mm},
gray/.style={circle,draw=black,fill=gray,thin,inner sep=0pt,minimum size=1.2mm}]
\node[white] (w1)
at (-2,-2) {};
\node[white] (w2)
 at (0,-2) {};
\node[white] (w3)
 at (2,-2) {};
\node[white] (w4)
 at (-1,-1) {};
\node[white] (w5)
 at (1,-1) {};
\node[white] (w6)
 at (-2,0) {};
\node[white] (w7)
 at (0,0) {};
\node[white] (w8)
 at (2,0) {};
\node[white] (w9)
 at (-1,1) {};
\node[white] (w10)
 at (1,1) {};
\node[white] (w11)
 at (-2,2) {};
\node[white] (w12)
 at (0,2) {};
\node[white] (w13)
 at (2,2) {};

\node[black] (b1)
 at (-1,-2) {};
\node[black] (b2)
 at (1,-2) {};
\node[black] (b3)
 at (-2,-1) {};
\node[black] (b4)
 at (0,-1) {};
\node[black] (b5)
 at (2,-1) {};
\node[black] (b6)
 at (-1,0) {};
\node[black] (b7)
 at (1,0) {};
\node[black] (b8)
 at (-2,1) {};
\node[black] (b9)
 at (0,1) {};
\node[black] (b10)
 at (2,1) {};
\node[black] (b11)
 at (-1,2) {};
\node[black] (b12)
 at (1,2) {};

\node[gray] (m1)
 at (0,-1.5) {};
\node[gray] (m2)
 at (0.5,-1) {};
\node[gray] (m3)
 at (1,-0.5) {};
\node[gray] (m4)
 at (1.5,0) {};
\node[gray] (m5)
 at (1,0.5) {};
\node[gray] (m6)
 at (0.5,1) {};
\node[gray] (m7)
 at (0,1.5) {};
\node[gray] (m8)
 at (-0.5,1) {};
\node[gray] (m9)
 at (-1,0.5) {};
\node[gray] (m10)
 at (-1.5,0) {};
\node[gray] (m11)
 at (-1,-0.5) {};
\node[gray] (m12)
 at (-0.5,-1) {};

\draw[dashed] (w1) -- (b1) -- (w2) -- (b2) -- (w3);
\draw[dashed] (b3) -- (w4) -- (b4) -- (w5) -- (b5);
\draw[dashed] (w6) -- (b6) -- (w7) -- (b7) -- (w8);
\draw[dashed] (b8) -- (w9) -- (b9) -- (w10) -- (b10);
\draw[dashed] (w11) -- (b11) -- (w12) -- (b12) -- (w13);

\draw[dashed] (w1) -- (b3) -- (w6) -- (b8) -- (w11);
\draw[dashed] (b1) -- (w4) -- (b6) -- (w9) -- (b11);
\draw[dashed] (w2) -- (b4) -- (w7) -- (b9) -- (w12);
\draw[dashed] (b2) -- (w5) -- (b7) -- (w10) -- (b12);
\draw[dashed] (w3) -- (b5) -- (w8) -- (b10) -- (w13);

\draw (w2) -- (w5) -- (w8) -- (w10) -- (w12) -- (w9) -- (w6) -- (w4) -- (w2);
\draw (b4) -- (b7) -- (b9) -- (b6) -- (b4);

\draw[color=gray] (m1) -- (m2) -- (m3) -- (m4) -- (m5) -- (m6) -- (m7) -- (m8) -- (m9) -- (m10) -- (m11) -- (m12) -- (m1);
\end{tikzpicture}
\endpgfgraphicnamed
\caption{Cycles $W$ on $\Gamma^*$, $B$ on $\Gamma$, and closed path $P$ on $X$ in between}
\label{fig:contours}
\end{center}
\end{figure}

\begin{remark}
Note that a discrete contour $P$ on $X$ induces a white cycle $W$ and a black cycle $B$ in such a way that $W$, $P$, and $B$ are related as above.
\end{remark}

\begin{lemma}\label{lem:medial_blackwhite}
Let $W$, $B$, and $P$ be as above. Let $f$ and $h$ be functions defined on the vertices of $W$ and $B$ and on $W_\Diamond \cup B_\Diamond$, respectively. Then, \[\oint_W f(b(Q))h(Q)dz+\oint_B f(w(Q)) h(Q)dz=2\oint_P fhdz.\]
\end{lemma}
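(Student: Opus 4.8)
The plan is to unfold both sides into explicit sums over edges of the medial path $P$ and match them edge by edge. The cycle $P$ decomposes into two kinds of edges: those of the form $[Q,v]$ with $Q \in W_\Diamond$ and $v \in B$, and those of the form $[Q,v]$ with $Q \in B_\Diamond$ and $v \in W$. Recall from Section~\ref{sec:medial} that for an edge $e=[Q,v]$ joining the midpoints of edges $vv'_-$ and $vv'_+$ of $\Lambda$, one has $\int_e dz = (v'_+-v'_-)/2$, i.e.\ half the oriented diagonal of $Q$ emanating from the ``opposite'' vertex of $Q$. For $Q\in W_\Diamond$, the two medial edges $[Q,v]$ with $v\in B$ are exactly the two halves of the oriented white diagonal of $Q$ (the one from $w_-(Q)$ to $w_+(Q)$), split at the vertices $b(Q)$ and $b'(Q)$; the half at $b(Q)$ carries the value $f(b(Q))h(Q)$ under $fh\,dz$, and similarly the half at $b'(Q)$ carries $f(b'(Q))h(Q)$. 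Dually, for $Q\in B_\Diamond$, the two medial edges $[Q,v]$ with $v\in W$ are the two halves of the oriented black diagonal, split at $w(Q)$ and $w'(Q)$.

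Next I would observe the crucial combinatorial feature of the construction of $B$ from $W$ (and of $P$ sitting between them): as $Q$ ranges over $W_\Diamond$, the black vertices $b(Q)$ are precisely the vertices $b_0,b_1,\dots$ of the black cycle $B$, and the other black vertex $b'(Q)$ of each such $Q$ is the ``far'' black vertex that the path through the star of $w_+(Q)$ avoids. The point is that each medial edge of $P$ of the first kind, $[Q,v]$ with $Q\in W_\Diamond$, is attached to exactly one black vertex $v$ lying on $B$, and under the sum $2\oint_P fh\,dz$ the factor $f$ is evaluated at that $v$. Summing the two halves of the white diagonal of $Q\in W_\Diamond$ contributes $f(b(Q))h(Q)\cdot\tfrac12(w_+(Q)-\text{mid}) + f(b'(Q))h(Q)\cdot\tfrac12(\text{mid}-w_-(Q))$ — but wait, only one of $b(Q),b'(Q)$ lies on $B$ in general; the other half-edge of that white diagonal belongs to $P$ only if that far vertex also happens to be on $B$. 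So I would instead argue telescoping: I claim that
\[
2\oint_P fh\,dz \;=\; \sum_{Q\in W_\Diamond} f(b(Q))h(Q)\,(w_+(Q)-w_-(Q)) \;+\; \sum_{Q\in B_\Diamond} f(w(Q))h(Q)\,(b_+(Q)-b_-(Q)),
\]
which is exactly $\oint_W f(b(Q))h(Q)\,dz + \oint_B f(w(Q))h(Q)\,dz$ by the definition of those discrete integrals. To see the displayed identity, I would fix a $Q_0\in W_\Diamond$, say with oriented white diagonal from $w_-$ to $w_+$ and with $b(Q_0)=b_k$ on $B$; the medial edges of $P$ that are ``parallel to the white diagonals'' and attached near the vertex $w_+$ form a connected arc of $P$ running through the midpoints of the $\Lambda$-edges in the star of $w_+$, and each of them is half of the white diagonal of some $Q\in W_\Diamond$ incident to $w_+$. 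Grouping the half-edges of $P$ of the first kind by the face $Q\in W_\Diamond$ they belong to, each such $Q$ contributes both its halves (the one at $b(Q)$ with factor $f(b(Q))$, the one at $b'(Q)$ with factor $f(b'(Q))$) — this is because the black path $B$, by construction, runs through both endpoints' stars in such a way that each white diagonal of a face in $W_\Diamond$ is traversed in full by $P$. Adding $\tfrac12 f(b(Q))h(Q)(w_+-\text{mid})$ and $\tfrac12 f(b'(Q))h(Q)(\text{mid}-w_-)$... hmm, these do not combine to $f(b(Q))h(Q)(w_+-w_-)$ unless $f(b(Q))=f(b'(Q))$.

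Let me restate the mechanism more carefully, because this mismatch is exactly where the real content lies. I would compute $2\oint_P fh\,dz$ as a sum over the half-diagonals, then reorganize it as a sum over the vertices of $W$ and $B$ rather than over faces. For a black vertex $b=b_k$ on $B$, consider the contribution of all half-edges of $P$ incident to the midpoints of $\Lambda$-edges at $b$; these half-edges all carry the factor $f(b)$, and their $dz$-values are $\tfrac12$ of the white diagonals of the faces $Q\in W_\Diamond$ that have $b$ as one of their two black vertices. Because $B$ was built to pass through $b$ between two consecutive faces $Q(w_{j-1},w_j)$ and $Q(w_j,w_{j+1})$ of $W_\Diamond$ meeting at the shared white vertex $w_j$, and to take the complementary arc in the star of $w_j$, the telescoping of these $\tfrac12$-diagonal vectors around $b$ collapses: the interior contributions cancel in pairs and what survives is $f(b)$ times the ``net'' white-diagonal flux, which is precisely $\sum f(b(Q))h(Q)(w_+(Q)-w_-(Q))$ restricted to faces with $b(Q)=b$. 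The symmetric argument handles the second-kind edges of $P$, giving $\oint_B f(w(Q))h(Q)\,dz$. Comparing, the two sides agree.

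The main obstacle, and the step I expect to be the most delicate, is precisely this bookkeeping of which half-diagonal belongs to $P$ and with which vertex-factor, i.e.\ verifying that after the construction of $B$ from $W$, the medial cycle $P$ traverses each white diagonal of a face in $W_\Diamond$ and each black diagonal of a face in $B_\Diamond$ coherently, so that the pairing of each half-edge of $P$ with its unique incident $\Lambda$-vertex produces exactly the two sums $\oint_W f(b(Q))h(Q)\,dz$ and $\oint_B f(w(Q))h(Q)\,dz$. This is a purely combinatorial claim about the construction in Section~\ref{sec:Cauchy_CS} and Figure~\ref{fig:contours}; once it is in place, the identity follows by a direct term-by-term comparison using $\int_{[Q,v]} dz = \tfrac12(v'_+-v'_-)$ and bilinearity, with no analytic input whatsoever.
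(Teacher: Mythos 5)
Your target identity (the displayed sum over $W_\Diamond$ and $B_\Diamond$) is the correct reformulation of the lemma, but the mechanism you offer for it rests on a geometric misreading, and neither of your two attempts to push it through actually closes. The edges $[Q,b(Q)]$ and $[Q,b'(Q)]$ of $X$ are \emph{not} the two halves of the white diagonal of $Q$ split at some midpoint: each one is a full side of the Varignon parallelogram $F_Q$, namely the single segment joining the midpoints of the two $\Lambda$-edges of $Q$ incident to the respective black vertex, so each separately satisfies $\int dz=(w_+(Q)-w_-(Q))/2$. This misreading is why your first computation produces quarter-diagonals such as ``$\tfrac12(w_+(Q)-\mathrm{mid})$'' and, as you notice yourself, fails to combine unless $f(b(Q))=f(b'(Q))$; that failure is a symptom of the wrong picture, not of any genuine difficulty in the lemma. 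Your second attempt then appeals to a ``telescoping'' in which ``interior contributions cancel in pairs''; no such cancellation occurs, none is needed, and you give no argument for it.

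The actual proof is a direct term-by-term matching with no telescoping. By the construction of $P$ given just before the lemma, the first-kind edges of $P$ are exactly the edges $[Q,b(Q)]$ for $Q\in W_\Diamond$ --- only one of the two black vertices of such a $Q$ lies on $B$ --- and these are in bijection with the edges of $W$; symmetrically for the second kind and $B$. Since $[Q,b(Q)]$ is parallel to the white diagonal and half as long, and since by definition of the products $\int_{[Q,v]}fh\,dz=f(v)h(Q)\int_{[Q,v]}dz$, one gets $2\int_{[Q,b(Q)]}fh\,dz=f(b(Q))h(Q)\left(w_+(Q)-w_-(Q)\right)$, which is precisely the corresponding term of $\oint_W f(b(Q))h(Q)\,dz$; the second-kind edges yield the terms of $\oint_B f(w(Q))h(Q)\,dz$, and summing over the bijection finishes the proof. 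The combinatorial point you flag as ``the most delicate step'' --- that each face of $W_\Diamond$ contributes exactly one edge to $P$, carrying the factor $f(b(Q))$ and not $f(b'(Q))$ --- is indeed the only thing one must know, and it is part of the definition of $P$; your proposal leaves it unverified and substitutes the unjustified cancellation argument in its place, so as written the proof does not go through.
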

\begin{proof}
Any edge $e=[Q,b(Q)]$ ($Q \in W_\Diamond$) or $[Q,w(Q)]$ ($Q \in B_\Diamond$) of $P$ corresponds to either an edge $w_-(Q)w_+(Q)$ of $W$ or to an edge $b_-(Q)b_+(Q)$ of $B$, respectively, and vice versa. Since this edge of $X$ is just half as long as the parallel edge of $\Gamma$ or $\Gamma^*$, $2\int_e fh =f\left(b\left(Q\right)\right)h(Q)(w_+-w_-)$ in the first and $2\int_e fh =f\left(w\left(Q\right)\right)h(Q)(b_+-b_-)$ in the second case. Therefore, the discrete integral along $P$ decomposes into one along $B$ and one along $W$.
\end{proof}

Note that the construction of $B$ and Lemma~\ref{lem:medial_blackwhite} are also valid if $W$ consists of a single point or of only two edges (being the same, but traversed in both directions). In any case, $P$ will be a discrete contour.

The discrete Cauchy's integral formula in the notation of Chelkak and Smirnov in \cite{ChSm11} reads as \[\pi i h(Q_0)= \oint_W h(Q) K_{Q_0}(b(Q))dz+\oint_B h(Q) K_{Q_0}(w(Q))dz\] if $Q_0 \in V(\Diamond)$ is surrounded once by $W$, $h$ is discrete holomorphic on $V(\Diamond)$, and $K_{Q_0}:V(\Lambda)\to\mC$ is a discrete Cauchy's kernel with respect to $Q_0$. Lemma~\ref{lem:medial_blackwhite} directly relates this formulation to ours in Theorem~\ref{th:Cauchy_formula}.

Finally, we conclude this section with a proposition relying on the decomposition of a discrete contour into black and white cycles. In Corollary~\ref{cor:discrete_product}~(i), we have already seen that $fdg+gdf$ is closed for functions $f,g:V(\Lambda)\to\mC$. Actually, a slightly stronger statement is true:

\begin{proposition}\label{prop:BW_Leibniz}
Let $W$ and $B$ be as above, and let $f,g:V(\Lambda)\to\mC$. Then, \[\oint_W f(b(Q)) \left(\partial_\Lambda g(Q)dz+\bar{\partial}_\Lambda g(Q) d\bar{z}\right)+\oint_B g(w(Q)) \left(\partial_\Lambda f(Q)dz+\bar{\partial}_\Lambda f(Q) d\bar{z}\right)=0.\]
\end{proposition}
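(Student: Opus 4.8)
The plan is to convert both discrete integrals into ordinary finite sums over the edges of $W$ and $B$ by means of discrete Stokes' Theorem~\ref{th:stokes}, and then to make them cancel through a summation-by-parts argument that exploits the way $B$ was assembled out of $W$. Note that closedness of $fdg+gdf$ from Corollary~\ref{cor:discrete_product}~(i), together with Lemma~\ref{lem:medial_blackwhite}, only yields that the \emph{sum of all four} Leibniz terms $\oint_W\!\big(f(b(Q))dg+g(b(Q))df\big)+\oint_B\!\big(f(w(Q))dg+g(w(Q))df\big)$ vanishes, so the sharper ``cross'' cancellation asserted in the proposition genuinely requires the combinatorial input below.

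First I would reduce to sums over edges. An edge $w_kw_{k+1}$ of $W$ is the white diagonal of $Q_k:=Q(w_k,w_{k+1})$, whose two white vertices are $w_k$ and $w_{k+1}$; applying discrete Stokes' Theorem~\ref{th:stokes} to the medial edge $[Q_k,b(Q_k)]$, which joins the midpoints of $b(Q_k)w_k$ and $b(Q_k)w_{k+1}$, gives $\partial_\Lambda g(Q_k)(w_{k+1}-w_k)+\bar\partial_\Lambda g(Q_k)\overline{(w_{k+1}-w_k)}=g(w_{k+1})-g(w_k)$. Hence the first discrete integral equals $\sum_{k=0}^{m-1}f(b(Q_k))\big(g(w_{k+1})-g(w_k)\big)$, and symmetrically, writing $Q_j':=Q(b_j,b_{j+1})$, the second equals $\sum_{j=0}^{n-1}g(w(Q_j'))\big(f(b_{j+1})-f(b_j)\big)$. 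An Abel summation on the first sum (with $w_m=w_0$ and $Q_{-1}:=Q_{m-1}$) rewrites it as $\sum_{k=0}^{m-1}g(w_k)\big(f(b(Q_{k-1}))-f(b(Q_k))\big)$.

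Next comes the combinatorial heart. By construction of $B$, while ``going around'' the vertex $w_k$ one adds to $B$ a simple path in $\Gamma$ inside the star of $w_k$ running from $b(Q_{k-1})$ to $b(Q_k)$ (the arc not through $Q_{k-1}$); let $J_k$ be its edge set, so that $J_0,\dots,J_{m-1}$ partition the edges of $B$, some blocks possibly empty. Telescoping along $J_k$ gives $f(b(Q_{k-1}))-f(b(Q_k))=-\sum_{j\in J_k}\big(f(b_{j+1})-f(b_j)\big)$, and the decisive point is that for every edge $b_jb_{j+1}$ with $j\in J_k$ one has $w(Q_j')=w_k$; I would verify this by tracking the counterclockwise conventions ``$w_-(Q),b(Q),w_+(Q),b'(Q)$'' for $Q\in W_\Diamond$ and ``$b_-(Q),w'(Q),b_+(Q),w(Q)$'' for $Q\in B_\Diamond$, after locating $Q_{k-1}$ and $Q_k$ in the fan of faces around $w_k$. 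Substituting, the Abel-summed first sum becomes $-\sum_k\sum_{j\in J_k}g(w(Q_j'))\big(f(b_{j+1})-f(b_j)\big)=-\sum_{j=0}^{n-1}g(w(Q_j'))\big(f(b_{j+1})-f(b_j)\big)$, which is exactly the negative of the second sum, so their sum is $0$, as claimed. I expect the orientation bookkeeping identifying $w(Q_j')=w_k$ to be the only non-routine step; the rest is discrete Stokes' theorem and telescoping.
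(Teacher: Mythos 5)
Your proof is correct and follows essentially the same route as the paper: discrete Stokes' Theorem reduces both contour integrals to sums over the diagonals, and the reindexing identity $\sum_{Q\in W_\Diamond}f(b(Q))\left(g(w_+(Q))-g(w_-(Q))\right)=-\sum_{Q\in B_\Diamond}g(w(Q))\left(f(b_+(Q))-f(b_-(Q))\right)$ does the rest. The paper merely asserts this change of summation from $W$ to $B$; your Abel summation combined with telescoping over the arcs $J_k$ around each $w_k$ (together with the orientation fact $w(Q_j')=w_k$, which indeed holds because $w_k$ lies on the $W$-side of $B$, consistent with the edges $[Q,w(Q)]$, $Q\in B_\Diamond$, lying on the intermediate path $P$) is a correct and more detailed justification of exactly that step.
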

\begin{proof}
We first rewrite the discrete integral along $W$. By definition, we have $dg=\partial_\Lambda g dz+\bar{\partial}_\Lambda g d\bar{z}$. Using discrete Stokes' Theorem~\ref{th:stokes}, \[\oint_W f(b(Q)) \left(\partial_\Lambda g(Q)dz+\bar{\partial}_\Lambda g(Q) d\bar{z}\right)=\sum\limits_{Q\in W_\Diamond}f(b(Q)) \left(g\left(w_+\left(Q\right)\right)-g\left(w_-\left(Q\right)\right)\right).\]
We can change the summation along the path $W$ into a summation along the path $B$ by \begin{equation*}\sum\limits_{Q\in W_\Diamond}f(b(Q)) \left(g\left(w_+\left(Q\right)\right)-g\left(w_-\left(Q\right)\right)\right)=-\sum\limits_{Q\in B_\Diamond}g\left(w\left(Q\right)\right)\left(f\left(b_+\left(Q\right)\right)-f\left(b_-\left(Q\right)\right)\right).\end{equation*} 

In an analog way as for $W$, we can rewrite the discrete integral along $B$ as \[\oint_B g(w(Q)) \left(\partial_\Lambda f(Q)dz+\bar{\partial}_\Lambda f(Q) d\bar{z}\right)=\sum\limits_{Q\in B_\Diamond}g(w(Q)) \left(f\left(b_+\left(Q\right)\right)-f\left(b_-\left(Q\right)\right)\right).\]

In summary, \[\oint_W f(b(Q)) \left(\partial_\Lambda g(Q)dz+\bar{\partial}_\Lambda g(Q) d\bar{z}\right)+\oint_B g(w(Q)) \left(\partial_\Lambda f(Q)dz+\bar{\partial}_\Lambda f(Q) d\bar{z}\right)=0.\]
\end{proof}


\section{Discrete complex analysis on planar parallelogram-graphs}\label{sec:parallel}

This part deals with discrete complex analysis on planar parallelogram-graphs $\Lambda$ that are assumed to be locally finite. The vertices $Q$ of the dual $\Diamond$ are placed at the centers of the parallelograms $Q$. Note that any planar parallelogram-graph is strongly regular and bipartite. As in Section~\ref{sec:general}, the induced graphs on black and white vertices are denoted by $\Gamma$ and $\Gamma^*$, respectively.

In Propositions~\ref{prop:examples}, \ref{prop:examples2}, and~\ref{prop:examples3}, we have already seen that discrete complex analysis on parallel\-o\-gram-graphs is closer to the classical theory than on general quad-graphs. For example, $f(v)=v^2$ is a discrete holomorphic function on $V(\Lambda)$ and $\partial_\Lambda f(Q)=2Q$; $h(Q)=Q$ is a discrete holomorphic function on $V(\Diamond)$ and $\partial_\Diamond h\equiv1$; and the discrete Laplacian $\triangle$ approximates the smooth one correctly up to order two.

In order to concentrate on the calculation of the asymptotics of a certain discrete Green's function and discrete Cauchy's kernels, we postpone the discussion of some necessary combinatorial and geometric results on planar parallelogram-graphs to the appendix. Our setup is closely related to the quasicrystallic paralle\-lo\-gram-graphs discussed by the first author, Mercat, and Suris \cite{BoMeSu05}. In their case, the quad-graph can be lifted to $\mZ^d$ and a discrete exponential defined on $\mZ^d$ can be restricted to the quad-graph. We adapt their construction to our setting in Section~\ref{sec:exp}. The discrete exponential is the basic building block for the construction of a discrete Green's function in Section~\ref{sec:Green_asymptotics} and discrete Cauchy's kernels in Section~\ref{sec:Cauchy_asymptotics}. These constructions base on the ideas of Kenyon in the case of rhombic quad-graphs \cite{Ke02}.

The corresponding functions can be defined for general planar parallelogram-graphs, but we need more regularity of the graph to calculate their asymptotics. The two conditions we use are that all interior angles of the parallelograms are bounded (the same condition was used in the presentation of Chelkak and Smirnov in \cite{ChSm11}) and that the ratio of side lengths of the parallelograms is bounded as well. For rhombic quad-graphs, the second condition is trivially fulfilled; for quasicrystallic graphs, there are only a finite number of interior angles. Note that instead of using boundedness of the ratio of side lengths of the parallelograms, we can assume that the side lengths themselves are bounded. This seems to be a stronger condition at first, but actually, both conditions are equivalent, see Proposition~\ref{prop:ratio_bounded} in the appendix.

We conclude this part by a discussion of integer lattices in Section~\ref{sec:integer_lattice}. On these graphs, discrete holomorphic functions can be discretely differentiated infinitely many times, and for all higher order discrete derivatives, discrete Cauchy's integral formulae with the right asymptotics hold true.

During this section, we use the following shorthand notation.

\begin{definition}
 Let $v,v' \in V(\Lambda)$ and $Q,Q' \in V(\Diamond)$.
\begin{enumerate}
\item Choose any directed path of edges $e_1,\ldots,e_n$ on $\Lambda$ going from $v'$ to $v$. Define \[J(v,v'):=\sum\limits_{j=1}^{n}e_{j}^{-1}.\]

\item Choose any directed path from $v$ to $Q$ that begins with a directed path on $\Lambda$ with edges $e_1,\ldots,e_n$ to a vertex $v_Q$ of the parallelogram $Q$ and that ends with two half-edges $d_1/2,d_2/2$, where $d_1,d_2$ are emanating from $v_Q$. Define \[-J(v,Q)=J(Q,v):=\sum\limits_{j=1}^{n}e_{j}^{-1}+\frac{1}{2}d_1^{-1}+\frac{1}{2}d_2^{-1}.\]

Moreover, let $\tau(v,Q)=\tau(Q,v):=1/(d_1 d_2)$ if $v_Q,v$ are both in $V(\Gamma)$ or both in $V(\Gamma^*)$ and $\tau(v,Q)=\tau(Q,v):=-1/(d_1 d_2)$ otherwise.

\item Choose any directed path from $Q'$ to $Q$ consisting of half-edges $e_1/2,e_2/2$ connecting the center of the parallelogram $Q'$ to one of its vertices $v_{Q'}$, a directed path on $\Lambda$ with edges $e_3,\ldots,e_n$ going from $v_{Q'}$ to a vertex $v_Q$ of the parallelogram $Q$, and two half-edges $d_1/2,d_2/2$ emanating from $v_Q$. Define \[J(Q,Q'):=\frac{1}{2}e_1^{-1}+\frac{1}{2}e_2^{-1}+\sum\limits_{j=3}^{n}e_{j}^{-1}+\frac{1}{2}d_1^{-1}+\frac{1}{2}d_2^{-1}.\]

Furthermore, let $\tau(Q,Q'):=1/(e_1 e_2 d_1 d_2)$ if $v_Q,v_{Q'}$ are both in $V(\Gamma)$ or both in $V(\Gamma^*)$ and $\tau(Q,Q'):=-1/(e_1 e_2 d_1 d_2)$ otherwise.
\end{enumerate}

It is easy to see that these definitions do not depend on the choice of paths.
\end{definition}

\begin{remark}
In the case that all parallelograms are rhombi of side length one, we have $J(x,x')=\overline{x-x'}$.
\end{remark}


\subsection{Discrete exponential function} \label{sec:exp}

\begin{definition}
Let $v_0\in V(\Lambda)$. Then, the \textit{discrete exponentials} $\textnormal{e}(\cdot,\cdot;v_0),\exp(\cdot,\cdot;v_0):\mC \times V(\Lambda) \rightarrow \mC$ are defined by
\begin{align*}
 \textnormal{e}(\lambda,v_0;v_0)&=1=\exp(\lambda,v_0;v_0),\\
 \frac{\textnormal{e}(\lambda,v';v_0)}{\textnormal{e}(\lambda,v;v_0)}&= \frac{\lambda+(v'-v)}{\lambda-(v'-v)},\\
 \frac{\exp(\lambda,v';v_0)}{\exp(\lambda,v;v_0)}&= \frac{1+\frac{\lambda}{2}(v'-v)}{1-\frac{\lambda}{2}(v'-v)}
\end{align*}
for all vertices $v,v' \in V(\Lambda)$ adjacent to each other and all $\lambda\in\mC$.

For a face $Q_0 \in V(\Diamond)$ with incident vertices $v_-,v'_-,v_+,v'_+$ in counterclockwise order and $v_0 \in V(\Lambda)$, we define the \textit{discrete exponentials} $\textnormal{e}(\cdot,\cdot;Q_0):\mC \times V(\Lambda) \rightarrow \mC$ and $\exp(\cdot,\cdot;v_0):\mC \times V(\Diamond) \rightarrow \mC$ as
\begin{align*}
 \textnormal{e}(\lambda,v;Q_0)&:=\frac{\textnormal{e}(\lambda,v;v_\pm)}{\left(\lambda-(v_\pm-v'_+)\right)\left(\lambda-(v_\pm-v'_-)\right)},\\
\exp(\lambda,Q_0;v_0)&:=\frac{\exp(\lambda,v_\pm;v_0)}{\left(1-\frac{\lambda}{2}(v'_+-v_\pm)\right)\left(1-\frac{\lambda}{2}(v'_--v_\pm)\right)}.
\end{align*}
\end{definition}

\begin{remark}
Note that $\exp(\lambda,\cdot;v_0)=\textnormal{e}(2/\lambda,\cdot;v_0)$. Hence, $\textnormal{e}$ and $\exp$ are equivalent up to reparametrization. On square lattices, the discrete exponential was already considered by Ferrand \cite{Fe44} and Duffin \cite{Du56}. The discrete exponential $\textnormal{e}$ on rhombic lattices was used in the work of Kenyon \cite{Ke02}, the first author, Mercat, and Suris \cite{BoMeSu05}, and B\"ucking \cite{Bue08}. To be comparable to their work, we use their notion to perform our calculations of the asymptotic behavior. In contrast, Mercat \cite{Me07} and Chelkak and Smirnov \cite{ChSm11} preferred the parametrization of $\exp$ that is closer to the smooth setting. Indeed, Mercat remarked that the discrete exponential $\exp$ in the rhombic setting is a generalization of the formula \[\exp(\lambda x)=\left(\frac{1+\frac{\lambda x}{2n}}{1-\frac{\lambda x}{2n}}\right)^n+O\left(\frac{\lambda^3x^3}{n^2}\right)\] to the case when the path from the origin to $x$ consists of $O(|x|/\delta)$ straight line segments of length $\delta$ of any directions \cite{Me07}.
\end{remark}

If $v$ is fixed, $\exp(\cdot,v;v_0)$ is a rational function with poles at all edges of a shortest directed path connecting $v_0$ with $v$. It follows from Lemma~\ref{lem:cover} that the arguments of all poles are contained in an interval of length less than $\pi$. If in addition the interior angles of parallelograms are bounded from below by $\alpha_0$, the arguments of all poles lie even in an interval of length at most $\pi-\alpha_0$ by Lemma~\ref{lem:cover}. But in any case, the following motivation to call $\exp$ discrete exponential can be checked in a straightforward way.

\begin{proposition}\label{prop:exp_holomorphic}
For any $\lambda \in \mC$, $v_0 \in V(\Lambda)$, and $Q \in V(\Diamond)$, $\exp(\lambda,\cdot;v_0)$ is discrete holomorphic and \[\left(\partial_\Lambda \exp\left(\lambda,\cdot;v_0\right)\right) \left(Q\right)=\lambda\exp(\lambda,Q;v_0).\]
\end{proposition}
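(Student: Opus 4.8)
The plan is to verify the two assertions — discrete holomorphicity of $\exp(\lambda,\cdot;v_0)$ and the eigenfunction property $\partial_\Lambda\exp(\lambda,\cdot;v_0)(Q)=\lambda\exp(\lambda,Q;v_0)$ — by a direct local computation on a single parallelogram $Q$, using only the multiplicative defining relation of $\exp$ and the definition of $\exp(\lambda,Q;v_0)$ at faces. Fix $Q\in V(\Diamond)$ with vertices $v_-,v'_-,v_+,v'_+$ in counterclockwise order; since $\Lambda$ is a parallelogram-graph we have $v_+-v'_-=v'_+-v_-$ and $v_+-v'_+=v'_--v_-$, i.e. the two diagonal differences satisfy $(v_+-v_-)=(v'_+-v_-)+(v'_--v_-)$ and similarly for the white pair. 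Write $a:=v'_+-v_-$ and $b:=v'_--v_-$ for the two edge vectors emanating from $v_-$; then the four edges of $Q$ are $\pm a,\pm b$ and the diagonals are $v_+-v_-=a+b$, $v'_+-v'_-=a-b$.

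First I would reduce discrete holomorphicity to a purely algebraic identity. By the discrete Cauchy–Riemann equation it suffices to show
\[
\frac{\exp(\lambda,v_+;v_0)-\exp(\lambda,v_-;v_0)}{v_+-v_-}
=\frac{\exp(\lambda,v'_+;v_0)-\exp(\lambda,v'_-;v_0)}{v'_+-v'_-}.
\]
Normalizing by dividing through by $\exp(\lambda,v_-;v_0)$ and using the defining relation to express each of $\exp(\lambda,v'_+;v_0)$, $\exp(\lambda,v_+;v_0)$, $\exp(\lambda,v'_-;v_0)$ as $\exp(\lambda,v_-;v_0)$ times an explicit Möbius factor in the corresponding edge vector ($a$, $b$, $a-b$ used as a composition of the two steps, etc. — note here one uses that the value is path-independent, which is part of why $\exp$ is well defined on a parallelogram-graph), both sides become rational expressions in $\lambda, a, b$ only, and the claimed equality collapses to an identity one checks by clearing denominators. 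The denominator $(1-\tfrac{\lambda}{2}(v'_+-v_\pm))(1-\tfrac{\lambda}{2}(v'_--v_\pm))$ appearing in the definition of $\exp(\lambda,Q;v_0)$ will turn out to be exactly the common denominator that makes both difference quotients equal, which is the reason that particular normalization was chosen at faces.

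Next, for the derivative formula I would plug the same Möbius factorizations into the definition $\partial_\Lambda f(Q)=\lambda_Q\frac{f(b_+)-f(b_-)}{b_+-b_-}+\bar\lambda_Q\frac{f(w_+)-f(w_-)}{w_+-w_-}$ with $f=\exp(\lambda,\cdot;v_0)$. Once discrete holomorphicity is established, both difference quotients are equal, so (using $\lambda_Q+\bar\lambda_Q=1$, exactly as in the proof of Proposition~\ref{prop:examples}~(ii)) $\partial_\Lambda f(Q)$ equals that common value. It then remains to compute this common value and check it equals $\lambda\exp(\lambda,Q;v_0)$; after substituting the Möbius factors and the definition of $\exp(\lambda,Q;v_0)$, this is again a rational identity in $\lambda,a,b$ to be confirmed by direct manipulation. (Alternatively, via the substitution $\exp(\lambda,\cdot;v_0)=\mathrm{e}(2/\lambda,\cdot;v_0)$ noted in the remark, one may transfer everything to the $\mathrm{e}$-normalization if the algebra is cleaner there, but I would expect no real savings.)

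The only genuine subtlety — not a hard obstacle but the point requiring care — is well-definedness: the factorization argument writes $\exp$ at $v_+$ in two ways (via $v_-\to v'_+\to v_+$ and via $v_-\to v'_-\to v_+$), and one must either invoke the ``does not depend on the choice of paths'' remark for $\exp$ on a parallelogram-graph or check directly on the single face $Q$ that the two one-step Möbius compositions agree, which is the identity $\frac{1+\frac\lambda2 a}{1-\frac\lambda2 a}\cdot\frac{1+\frac\lambda2 b}{1-\frac\lambda2 b}$ being symmetric in $a\leftrightarrow b$ — trivially true. Everything else is routine rational algebra, and no step poses a real difficulty; the ``hard part,'' such as it is, is simply bookkeeping the four Möbius factors and the face-normalization denominator so that the cancellation is transparent.
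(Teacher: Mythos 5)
Your proposal is correct and is exactly the ``straightforward check'' that the paper leaves to the reader (the paper gives no written proof of this proposition). Writing $A=\tfrac{\lambda}{2}(v'_+-v_-)$, $B=\tfrac{\lambda}{2}(v'_--v_-)$, both difference quotients indeed reduce to $\lambda\exp(\lambda,v_-;v_0)/\bigl((1-A)(1-B)\bigr)=\lambda\exp(\lambda,Q;v_0)$, which simultaneously gives the Cauchy--Riemann equation and, via $\lambda_Q+\bar{\lambda}_Q=1$, the derivative formula, just as you describe.
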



\subsection{Asymptotics of the discrete Green's function} \label{sec:Green_asymptotics}

Following the presentation of the first author, Mercat, and Suris \cite{BoMeSu05}, we first define a discrete logarithmic function on a certain branched covering $\tilde{\Lambda}_{v_0}$ of $\Lambda$.

Fix $v_0 \in V(\Lambda)$, and let $e_1, e_2, \ldots, e_n$ be the directed edges starting in $v_0$, ordered according to their slopes. To each of these edges $e$ we assign the real angle $\theta_e:=\arg(e) \in [0,2\pi)$. We assume that $\theta_{e_1}<\theta_{e_n}$. Now, define $\theta_{a+bn}:=\theta_a+2\pi b$, where $a \in \left\{1,\ldots,n\right\}$ and $b\in \mZ$.

Let $U_e \subset V(\Lambda)$ denote the set of all vertices to that $v_0$ can be connected by a directed path of edges whose arguments lie in $[\arg(e),\arg(e)+\pi)$, where $e$ is one of the $e_k$. Lemma~\ref{lem:cover} shows that the union of all these $U_{e_k}$, $k=1,\ldots,n$, covers the whole quad-graph. It follows that \[\tilde{U}:=\bigcup\limits_{m=-\infty}^\infty\tilde{U}_m\] defines a parallelogram-graph $\tilde{\Lambda}_{v_0}$ that is a branched covering of $\Lambda$, branched over $v_0$. Here, $\tilde{U}_m$ is the set of all vertices of $\Lambda$ to that $v_0$ can be connected by a directed path of edges whose arguments lie in $[\theta_m,\theta_m+\pi)$, equipped with the additional datum of this interval. Then, all $\tilde{U}_{m+bn}$, $b\in\mZ$ and $1\leq m\leq n$, cover the same sector $U_{e_m}$, and $\tilde{U}_{m} \cap \tilde{U}_{m'}\neq\emptyset$ if and only if $|m-m'|<n$.

To each vertex $\tilde{v} \in V(\tilde{\Lambda}_{v_0})$ covering a vertex $v \neq v_0$ of $\Lambda$, we assign $\theta_{\tilde{v}} \equiv \arg(v-v_0) \mod 2\pi$ such that $\theta_{\tilde{v}} \in [\theta_m,\theta_m+\pi)$ if $\tilde{v}\in \tilde{U}_{m}$. Then, $\theta_{\tilde{v}}$ increases by $2 \pi$ when $\tilde{v}$ winds once around $v_0$ in counterclockwise order; and if $\tilde{v},\tilde{v}' \neq v_0$ are adjacent vertices of $\tilde{\Lambda}_{v_0}$, $|\theta_{\tilde{v}}-\theta_{\tilde{v}'}|<\pi$.

Note that the strip passing through an edge $\tilde{e}$ separates its two endpoints from each other in the quad-graph $\tilde{\Lambda}_{v_0}$. For the definition of a strip, see the appendix. By construction, if we connect $v_0$ to some $\tilde{v} \neq v_0$ by a shortest directed path of edges of $\tilde{\Lambda}_{v_0}$, the angles assigned to the edges lie all in $(\theta_{\tilde{v}}-\pi,\theta_{\tilde{v}}+\pi)$.

\begin{definition}
Let $v_0 \in V(\Lambda)$ and let $\tilde{\Lambda}_{v_0}$ be the corresponding branched covering of $\Lambda$. The \textit{discrete logarithmic function} on $V(\tilde{\Lambda}_{v_0})$ is given by \[\log(\tilde{v};v_0):=\frac{1}{2\pi i}\int\limits_{C_{\tilde{v}}} \frac{\log(\lambda)}{2 \lambda}\textnormal{e}(\lambda,v;v_0) d\lambda,\] where $C_{\tilde{v}}$ is a collection of counterclockwise oriented loops going once around each pole of $\textnormal{e}(\cdot,v;v_0)$, $v \in V(\Lambda)$ being the projection of $\tilde{v}\in V(\tilde{\Lambda}_{v_0})$. The arguments of all poles shall lie in $(\theta_{\tilde{v}}-\pi,\theta_{\tilde{v}}+\pi)$.
\end{definition}

It is easy to see that the real part of the discrete logarithm $\log(\cdot;v_0)$ is a well defined function on $V(\Lambda)$. Divided by $2\pi$, one actually obtains a discrete Green's function with respect to $v_0$. The first author, Mercat, and Suris showed that this function coincides with the one of Kenyon \cite{Ke02} in the rhombic case \cite{BoMeSu05}. Their proof can be adapted to our setting.

\begin{proposition}\label{prop:Green_construction}
Let $v_0 \in V(\Lambda)$. The function $G(\cdot;v_0):V(\Lambda)\to\mR$ defined by $G(v_0;v_0)=0$ and \[G(v;v_0)=\frac{1}{2\pi}\re\left(\frac{1}{2\pi i}\int\limits_{C_v} \frac{\log(\lambda)}{2 \lambda}\textnormal{e}(\lambda,v;v_0) d\lambda\right)\] is a (free) discrete Green's function with respect to $v_0$. Here, $C_v$ is a collection of counterclockwise oriented loops going once around each pole of $\textnormal{e}(\cdot,v;v_0)$, where the arguments of all poles shall lie in $(\arg(v-v_0)-\pi,\arg(v-v_0)+\pi)$.
\end{proposition}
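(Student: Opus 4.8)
The plan is to verify the two defining properties of a free discrete Green's function: $G(v_0;v_0)=0$ holds by construction, so the whole content lies in proving $\triangle G(v;v_0)=\frac{1}{2\textnormal{ar}(F_{v_0})}\delta_{vv_0}$ for all $v\in V(\Lambda)$. Throughout I write $L(v;v_0):=\frac{1}{2\pi i}\int_{C_v}\frac{\log(\lambda)}{2\lambda}\textnormal{e}(\lambda,v;v_0)\,d\lambda$ (this is the discrete logarithm transported to $V(\Lambda)$, whose real part is well defined), so that $G(\cdot;v_0)=\frac{1}{2\pi}\re L(\cdot;v_0)$; I recall from Proposition~\ref{prop:exp_holomorphic} that for each fixed $\lambda\in\mC$ the function $\textnormal{e}(\lambda,\cdot;v_0)$ is discrete holomorphic in the vertex variable (via $\exp(\lambda,\cdot;v_0)=\textnormal{e}(2/\lambda,\cdot;v_0)$), hence discrete harmonic by Corollary~\ref{cor:holomorphic_harmonic}; and I will use that $\triangle$ commutes with taking real parts (Corollary~\ref{cor:factorization}).

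For a vertex $v\neq v_0$ I claim $G$ is discrete harmonic at $v$. The local representation of $\triangle$ at $v$ in Corollary~\ref{cor:factorization} is a fixed finite linear combination of the values of $G$ at the vertices of the quadrilaterals in the star of $v$. Since $v\neq v_0$, this finite vertex set does not wind around $v_0$, so by the covering construction preceding the definition of the discrete logarithm (which rests on Lemma~\ref{lem:cover}) all poles of all the associated functions $\textnormal{e}(\cdot,v';v_0)$ have arguments in a common interval of length less than $2\pi$. Fixing a branch of $\log$ whose cut avoids that interval and a single loop $C$ encircling all those poles, Cauchy's theorem yields $L(v';v_0)=\frac{1}{2\pi i}\int_{C}\frac{\log(\lambda)}{2\lambda}\textnormal{e}(\lambda,v';v_0)\,d\lambda$ for each such $v'$, because the additional points enclosed by $C$ are not poles of the respective integrand. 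Pulling the finite linear combination $\triangle$ under $\int_C d\lambda$ and using $\triangle_v\textnormal{e}(\lambda,\cdot;v_0)\equiv0$ gives $\triangle L(\cdot;v_0)(v)=0$, hence $\triangle G(v;v_0)=\frac{1}{2\pi}\re\bigl(\triangle L(\cdot;v_0)(v)\bigr)=0$.

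It remains to compute $\triangle G(v_0;v_0)$; here the merging of contours is impossible, since the relevant poles now point in all directions around $v_0$, so I compute by residues directly. If $v'$ is a $\Lambda$-neighbour of $v_0$ and $e:=v'-v_0$, then $\textnormal{e}(\lambda,v';v_0)=\frac{\lambda+e}{\lambda-e}$ and $L(v';v_0)=\log e$, so $G(v';v_0)=\frac{1}{2\pi}\log|v'-v_0|$. If $v'$ is the vertex of a quadrilateral $Q_s\sim v_0$ opposite to $v_0$, and $a_{s-1},a_s$ are the two edges of $Q_s$ emanating from $v_0$ (so $v'=v_0+a_{s-1}+a_s$ and the shortest path from $v_0$ has edges $a_{s-1},a_s$), a two–residue computation gives $L(v';v_0)=\frac{(a_{s-1}+a_s)(\log a_{s-1}-\log a_s)}{a_{s-1}-a_s}=\frac{1}{t_s}\bigl(\log a_s-\log a_{s-1}\bigr)$ with $t_s:=\frac{a_s-a_{s-1}}{a_s+a_{s-1}}$. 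Substituting these into the local formula of Corollary~\ref{cor:factorization} at $v_0$, using $G(v_0;v_0)=0$ together with $\rho_s=-it_s$ (consistent with the convention $\rho_s=\rho_{Q_s}^{\pm1}$ according to the colour of $v_0$, so $\re\rho_s=\im t_s>0$, $\im\rho_s=-\re t_s$, $|\rho_s|^2=|t_s|^2$), the contribution of each $Q_s$ is $\frac{1}{2\pi\re\rho_s}\bigl(|t_s|^2\re L(v';v_0)-\re(t_s)(\log|a_s|-\log|a_{s-1}|)\bigr)$; writing $\log a_s-\log a_{s-1}=\log|a_s|-\log|a_{s-1}|+i\phi_s$ with $\phi_s=\arg(a_s)-\arg(a_{s-1})\in(0,\pi)$ the interior angle of $Q_s$ at $v_0$, one has $|t_s|^2\re L(v';v_0)=\re(\overline{t_s}(\log a_s-\log a_{s-1}))=\re(t_s)(\log|a_s|-\log|a_{s-1}|)+\im(t_s)\phi_s$, so the contribution collapses to $\frac{\phi_s}{2\pi}$. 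Summing over the star, $\sum_{Q_s\sim v_0}\phi_s=2\pi$, whence $\triangle G(v_0;v_0)=\frac{1}{2\textnormal{ar}(F_{v_0})}\cdot\frac{1}{2\pi}\cdot2\pi=\frac{1}{2\textnormal{ar}(F_{v_0})}$, as required.

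The main obstacle is the $v=v_0$ step: keeping track of the branches of $\log$ on the covering $\tilde\Lambda_{v_0}$ (so that $\re L$ genuinely descends to a single-valued function on $V(\Lambda)$ and the contour-merging of the second paragraph is legitimate), and confirming that the per-quadrilateral residue sum telescopes to exactly the interior angle $\phi_s$ with no leftover terms, independently of how irregular the parallelogram-graph is. This is precisely the adaptation of the argument of Bobenko, Mercat, and Suris \cite{BoMeSu05} from rhombic quad-graphs to parallelogram-graphs.
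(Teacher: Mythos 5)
The paper itself gives no proof of this proposition — it only remarks that the argument of Bobenko, Mercat, and Suris \cite{BoMeSu05} ``can be adapted to our setting'' — and your proposal is exactly that adaptation, carried out correctly: harmonicity away from $v_0$ via the discrete holomorphicity (hence harmonicity) of $\textnormal{e}(\lambda,\cdot;v_0)$ in the vertex variable after merging the contours, and a residue computation at $v_0$. I checked the computation at $v_0$: with $\rho_s=-it_s$ one indeed gets the per-face contribution $\phi_s/(2\pi)$ and hence $\triangle G(v_0;v_0)=1/(2\textnormal{ar}(F_{v_0}))$, matching the paper's normalization of the free Green's function. The only step that still needs care is the one you flag yourself — justifying a common contour and branch of $\log$ for all vertices entering $\triangle G(v;v_0)$ when $v$ is close to $v_0$; this is slightly cleaner if you verify discrete holomorphicity of $L(\cdot;v_0)$ face by face rather than over the whole star at once, since a single parallelogram not containing $v_0$ in its interior subtends an angle less than $\pi$ at $v_0$, which together with the covering construction bounds the spread of the relevant pole arguments by less than $2\pi$.
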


\begin{theorem}\label{th:Green_asymptotics}
Assume that there are $\alpha_0,q_0>0$ such that $\alpha\geq\alpha_0$ and $e/e'\geq q_0$ for all interior angles $\alpha$ and the two side lengths $e,e'$ of any parallelogram of $\Lambda$. Let $v_0 \in V(\Lambda)$ be fixed.

Then, the discrete Green's function $G(\cdot;v_0)$ constructed in Proposition~\ref{prop:Green_construction} has the following asymptotic behavior:
\begin{align*}
 G(v;v_0)&=\frac{1}{4\pi}\log\left|\frac{v-v_0}{J(v,v_0)}\right|+O\left(|v-v_0|^{-2}\right) \textnormal{ if } v \textnormal{ and } v_0 \textnormal{ are of different color},\\
 G(v;v_0)&=\frac{\gamma_{\textnormal{Euler}}+\log(2)}{2\pi}+\frac{1}{4\pi}\log\left|(v-v_0) J(v,v_0)\right|+O\left(|v-v_0|^{-2}\right) \textnormal{ otherwise.} 
\end{align*}
Here, $\gamma_{\textnormal{Euler}}$ denotes the Euler-Mascheroni constant.
\end{theorem}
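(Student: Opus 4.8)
The strategy is to evaluate the contour integral defining $G(v;v_0)$ by deforming the contour $C_v$ and extracting the leading-order asymptotics, following the analysis of Kenyon \cite{Ke02} and Bobenko--Mercat--Suris \cite{BoMeSu05}. Recall that $\textnormal{e}(\lambda,v;v_0) = \prod_j \frac{\lambda + e_j}{\lambda - e_j}$, where $e_1,\dots,e_n$ are the directed edges of a shortest path from $v_0$ to $v$; by the geometric hypotheses, $n = O(|v-v_0|)$ and all $\arg(e_j)$ lie in an interval of length at most $\pi - \alpha_0$ (using Lemma~\ref{lem:cover} and the boundedness of interior angles). The poles of $\textnormal{e}(\cdot,v;v_0)$ are at the $e_j$, which all have modulus bounded above and below by positive constants (from the bounded-ratio-of-side-lengths hypothesis, combined with the argument in the proof of Theorem~\ref{th:harmonic_asymptotics} that bounds edge lengths once the ratio is bounded). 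The first step is to write $G(v;v_0) = \frac{1}{4\pi^2}\re\left(\oint_{C_v} \frac{\log\lambda}{2\lambda}\textnormal{e}(\lambda,v;v_0)\,d\lambda\right)$ and deform $C_v$, which encircles only the $e_j$, into a large circle $|\lambda| = R$ minus a small circle $|\lambda| = r$ around the origin minus a contribution picking up the branch cut of $\log\lambda$ (placed opposite to the cone containing the poles), and then let $R \to \infty$, $r \to 0$.

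The key computation is the contribution from the branch cut together with the residue at $\lambda = 0$. Writing $\log\textnormal{e}(\lambda,v;v_0) = \sum_j \left(\log(1 + e_j/\lambda) - \log(1 - e_j/\lambda)\right)$ and expanding for large $|\lambda|$ gives $\frac{2}{\lambda}\sum_j e_j + \frac{2}{3\lambda^3}\sum_j e_j^3 + \cdots$; note $\sum_j e_j = v - v_0$. For small $|\lambda|$, one instead expands $\log\textnormal{e}(\lambda,v;v_0) = \sum_j\left(\log\frac{e_j + \lambda}{-e_j + \lambda} \cdot (\text{sign factors})\right)$; the dominant behavior near $0$ produces $\textnormal{e}(\lambda,v;v_0) \to \prod_j(-1) = \pm 1$ times a factor built from $\lambda \sum_j e_j^{-1} = \lambda J(v,v_0)$. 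The standard contour manipulation (as in the smooth case where $\frac{1}{2\pi i}\oint \frac{\log\lambda}{2\lambda}e^{\lambda x/\cdots}$ reproduces $\log|x|$) then yields that the real part of the integral equals $\frac12\log|v - v_0| - \frac12\log|J(v,v_0)|$ plus lower-order terms in the different-color case, where the product of sign factors is $-1$; in the same-color case the product is $+1$, which shifts the role of the small-$\lambda$ expansion and produces instead $\frac12\log|(v-v_0)J(v,v_0)|$ together with the constant $\gamma_{\textnormal{Euler}} + \log 2$ arising (exactly as in Kenyon's computation) from $\int_0^\infty \frac{\log t}{t}\left(e^{-t} - \frac{1}{1+t}\right)dt$-type integrals that appear when comparing the discrete sum to its continuum limit. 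The factor $\frac{1}{4\pi}$ in the statement is then $\frac{1}{2\pi}\cdot\frac12$.

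**Error control.** The remaining and most delicate step is establishing the $O(|v-v_0|^{-2})$ error. Here one must control the difference between $\log\textnormal{e}(\lambda,v;v_0)$ along the deformed contour and its leading approximation. The point is that $\sum_j e_j^2$ need not vanish for a general parallelogram-graph, but the \emph{odd} power sums are what enter the real part along the branch cut, and $\sum_j e_j = v - v_0$ is exact; the next correction involves $\sum_j e_j^3$, and one needs a combinatorial lemma (from the appendix) bounding $|\sum_j e_j^3| = O(|v-v_0|)$ — since $n = O(|v-v_0|)$ and $|e_j|$ is bounded — which after the $\lambda^{-3}$ weight and integration gives the claimed $O(|v-v_0|^{-2})$. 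A steepest-descent/saddle-point estimate on the large circle, with radius $R$ chosen proportional to $|v-v_0|^{1/2}$ or similar to balance the two tails, combined with the analyticity of $\textnormal{e}(\cdot,v;v_0)$ in the cone complementary to the poles, completes the bound. The main obstacle is precisely this uniform error analysis: one must ensure the constants in the $O(\cdot)$ do not degrade as the path from $v_0$ to $v$ wanders, which is where the two regularity hypotheses ($\alpha \geq \alpha_0$ and bounded side-length ratio) are used essentially — they guarantee the poles stay in a fixed cone of angle $\le \pi - \alpha_0$ and at bounded distance from $0$ and $\infty$, so that all contour deformations and expansions are uniform in $v$.
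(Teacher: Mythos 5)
Your proposal follows essentially the same route as the paper's proof: deform $C_v$ into a large circle, a small circle, and a doubled segment along the branch cut opposite the cone of poles; extract $\log|v-v_0|$ from the large-$\lambda$ expansion via $\sum_j e_j = v-v_0$ and $(-1)^{k(v)}\log|J(v,v_0)|$ plus the constant $\gamma_{\textnormal{Euler}}+\log 2$ from the small-$\lambda$ expansion via $\sum_j e_j^{-1}=J(v,v_0)$; and control the error through the cubic correction terms together with the uniform bounds on pole moduli (Proposition~\ref{prop:ratio_bounded}) and on their arguments (Lemma~\ref{lem:cover}). The only cosmetic differences are your attribution of the edge-length bound to the argument in Theorem~\ref{th:harmonic_asymptotics} rather than to Proposition~\ref{prop:ratio_bounded}, and the paper's explicit choice $R(v)=\Omega(|v-v_0|^4)$, $r(v)=\Omega(|v-v_0|^{-4})$ with the segment split at $E_1|v-v_0|^{\pm 1/2}$ in place of your looser balancing heuristic.
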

The proof follows the ideas of Kenyon \cite{Ke02} and B\"ucking \cite{Bue08}. Both considered just quasicrystallic rhombic quad-graphs. But the main difference to \cite{Ke02} is that we deform the path of integration into an equivalent one different from Kenyon's, since his approach does not generalize to parallelogram-graphs. As Chelkak and Smirnov did for rhombic quad-graphs with bounded interior angles in \cite{ChSm11}, Kenyon used that two points $v,v' \in V(\Lambda)$ can be connected by a directed paths of edges such that the angle between each directed edge and $v'-v$ is less than $\pi/2$ or the angle between the sum of two consecutive edges and $v'-v$ is less than $\pi/2$. This is true for rhombic quad-graphs, but not for parallelogram-graphs. Instead, we use essentially the same deformation of the path of integration as B\"ucking did. In this paper, we will omit the straightforward calculations, the full proof can be found in the thesis of the second author \cite{Gue14}.
\begin{proof}
The poles $e_1,\ldots,e_{k(v)}$ of $\textnormal{e}(\cdot,v;v_0)$ correspond to the directed edges of a shortest path from $v_0$ to $v$. By Lemma~\ref{lem:cover}, there is a real $\theta_0$ such that the angles associated to the directed edges above lie all in $[\theta_0,\theta_0+\pi-\alpha_0]$. Without loss of generality, we assume $\theta_0=-(\pi-\alpha_0)/2$. By definition,
\[G(v;v_0)=\re\left(\frac{1}{8\pi^2 i}\int\limits_{C_v} \frac{\log \lambda}{ \lambda}\textnormal{e}(\lambda,v;v_0) d\lambda\right),\]
where $C_v$ is a collection of counterclockwise oriented loops going once around each $e_1,\ldots,e_{k(v)}$. The arguments of the poles correspond to the assigned angles. By residue theorem, we can deform $C_v$ into a new path of integration $C_v'$ that goes first along a circle centered at $0$ with large radius $R(v)$ (such that all poles lie inside this disk) in counterclockwise direction starting and ending in $-R(v)$, then goes along the line segment $[-R(v),-r(v)]$ followed by the circle centered at $0$ with small radius $r(v)$ (such that all poles lie outside this disk) in clockwise direction, and finally goes the line segment $[-R(v),-r(v)]$ backwards. Note that $\log$ differs by $2\pi i$ for the two integrations along $[-R(v),-r(v)]$.

By Proposition~\ref{prop:ratio_bounded}, there are $E_1>E_0>0$ such that $E_1\geq e \geq E_0$ for all lengths $e$ of edges of $\Lambda$. In particular, we can choose the radii of order \[R(v)=\Omega(|v-v_0|^4) \textnormal{ and } r(v)=\Omega(|v-v_0|^{-4}).\] By construction, $k(v)\leq \re(v-v_0)/(E_0\cos(\theta_0))$, so $k(v)=O(|v-v_0|)$.  Also, \[\left|J(v,v_0)\right|=\left|\sum\limits_{j=1}^{k(v)} e_j^{-1}\right|\leq \frac{k(v)}{E_0} \textnormal{ and } \re(J(v,v_0))\geq\frac{\cos(\theta_0)|v-v_0|}{E_1^2}.\] Hence, $|J(v,v_0)|=\Omega(|v-v_0|).$

We first look at the contributions of the circles with radii $r(v)$ and $R(v)$ to $G(v;v_0)$. For $\lambda \rightarrow 0$, 
$(-1)^{k(v)} \textnormal{e}(\lambda,v;v_0)=1+O(\lambda |v-v_0|)$, and for $\lambda \rightarrow \infty$, $\textnormal{e}(\lambda,v;v_0)=1+O(\lambda^{-1} |v-v_0|).$

Thus, we get $(-1)^{k(v)}\log(r(v))/(4\pi)\left(1+O(|v-v_0|^{-3})\right)$ as the contribution of the integration along the small circle with radius $r(v)$. Similarly, $\log(R(v))/(4\pi)\cdot\left(1+O(|v-v_0|^{-3})\right)$ is the contribution of the circle of radius $R(v)$.

The two integrations along $[-R(v),-r(v)]$ can be combined into the integral \[\frac{1}{4\pi}\int\limits_{-R(v)}^{-r(v)}\frac{\textnormal{e}(\lambda,v;v_0)}{\lambda}d\lambda.\] For $|v-v_0|\geq 1$ large enough, we split the integration into the three parts along \[[-R(v),-E_1\sqrt{|v-v_0|}], [-E_1\sqrt{|v-v_0|},-\frac{E_1}{\sqrt{|v-v_0|}}], \textnormal{ and }[-\frac{E_1}{\sqrt{|v-v_0|}},-r(v)].\]

We first consider the intermediate range $\lambda \in [-E_1\sqrt{|v-v_0|},-E_1/\sqrt{|v-v_0|}]$. Using that \[\frac{|\lambda+e|^2}{|\lambda-e|^2}=1+\frac{4\lambda \re(e)}{\lambda^2-2\lambda\re(e)+|e|^2}\leq 1+\frac{4\lambda \re(e)}{(\lambda-|e|)^2}\leq \exp\left(\frac{4\lambda \re(e)}{(\lambda-E_1)^2}\right),\] \[|\textnormal{e}(\lambda,v;v_0)|\leq \exp\left(\frac{2\lambda \re(v-v_0)}{(\lambda-E_1)^2}\right)\leq\exp\left(\frac{2\lambda \cos(\theta_0)|v-v_0|}{(\lambda-E_1)^2}\right).\]
As a consequence, the contribution of the intermediate range decays faster to zero than any power of $|v-v_0|$.

Now, let $\lambda\in[-E_1/\sqrt{|v-v_0|},-r(v)]$ be small. Then,
\begin{align*}(-1)^{k(v)}\textnormal{e}(\lambda,v;v_0)&=\exp\left(2\lambda J(v,v_0)+O(k(v)\lambda^3)\right)=\exp\left(2\lambda J(v,v_0)\right)\left(1+O(|v-v_0|\lambda^3)\right).
\end{align*}
Thus, the integral near the origin is equal to 
\begin{align*}
&\frac{(-1)^{k(v)}}{4\pi}\int\limits_{-E_1/\sqrt{|v-v_0|}}^{-r(v)}\left(\frac{\exp\left(2\lambda J(v,v_0)\right)}{\lambda} +\exp\left(2\lambda J(v;v_0)\right)O(|v-v_0|\lambda^2)\right)  d\lambda\\
=&\frac{(-1)^{k(v)}}{4\pi}\left(\gamma_{\textnormal{Euler}}+\Omega(|v-v_0|^{-3})+ \log(2r(v) J(v,v_0))\right).
\end{align*}

Similarly, the integration along $\lambda\in[-R(v),-E_1\sqrt{|v-v_0|}]$ gives \[\gamma_{\textnormal{Euler}}-\log\left(\frac{R(v)}{2(v-v_0)}\right)+O(|v-v_0|^{-2}).\]

Summing up and taking the real part, we finally get \[4\pi G(v;v_0)=\left(1+(-1)^{k(v)}\right)\left(\gamma_{\textnormal{Euler}}+\log(2)\right)+\log|v-v_0|+(-1)^{k(v)}\log\left|J(v;v_0)\right|+O(|v-v_0|^{-2}).\]
\end{proof}
\begin{remark}
Under the conditions of Theorem~\ref{th:Green_asymptotics}, it follows from Proposition~\ref{prop:ratio_bounded} and Theorem~\ref{th:harmonic_asymptotics} that there is exactly one discrete Green's function having the same asymptotics as the one we constructed above up to terms of order $o(|v-v_0|^{-1/2})$.

Let us compare this result to the case of rhombi of side length one. Assume that $v_0\in V(\Gamma)$. Then, the discrete logarithm is purely real and nonbranched on $V(\Gamma)$ and purely imaginary and branched on $V(\Gamma^*)$. Thus, $G(v;v_0)=0$ if $v \in V(\Gamma^*)$, well fitting to the fact that $\triangle$ splits into two discrete Laplacians on $\Gamma$ and $\Gamma^*$. Using $J(v,v_0)=\overline{v-v_0}$, \[G(v;v_0)=\frac{1}{2\pi}\left(\gamma_{\textnormal{Euler}}+\log(2)+\log|v-v_0|\right)+O(|v-v_0|^{-2}),\] exactly as in the work of B\"ucking \cite{Bue08}, who slightly strengthened the error term in Kenyon's work \cite{Ke02}. In this paper, Kenyon showed that there is no further discrete Green's function with asymptotics $o(|v-v_0|)$. 
\end{remark}


\subsection{Asymptotics of discrete Cauchy's kernels}\label{sec:Cauchy_asymptotics}

Let $v_0 \in V(\Lambda)$ and $Q_0 \in V(\Diamond)$. We first construct a discrete Cauchy's kernel $K_{v_0}$ with respect to $v_0$ on $V(\Diamond)$ that has asymptotics $\Omega(|Q-v_0|^{-1})$. Then, we construct a discrete Cauchy's kernel $K_{Q_0}$ with respect to $Q_0$ on $V(\Lambda)$ with asymptotics $\Omega(|v-Q_0|^{-1})$. In both cases, there are no further discrete Cauchy's kernels with asymptotics $o(|Q-v_0|^{-1/2})$ or $o(|v-Q_0|^{-1/2})$. In the end of this section, we determine the asymptotics of $\partial_\Lambda K_{Q_0}$.

\begin{theorem}\label{th:Cauchy_asymptotics_1}
Let $v_0 \in V(\Lambda)$ and $G(\cdot;v_0)$ be a discrete Green's function with respect to $v_0$.
\begin{enumerate}
\item $K_{v_0}:=8\pi \partial_{\Lambda} G(\cdot;v_0)$ is a discrete Cauchy's kernel with respect to $v_0$.
\item Assume additionally that there exist positive real numbers $\alpha_0,q_0$ such that $\alpha\geq\alpha_0$ and $e/e'\geq q_0$ for all interior angles $\alpha$ and the two side lengths $e,e'$ of any parallelogram of $\Lambda$. Suppose that $G(\cdot;v_0)$ is the discrete Green's function constructed in Proposition~\ref{prop:Green_construction} and $K_{v_0}$ the discrete Cauchy's kernel given in (i). Then, \[K_{v_0}(Q)=\frac{1}{Q-v_0}+\frac{\tau(Q,v_0)}{J(Q,v_0)}+O(|Q-v_0|^{-2}).\]
\item Under the conditions of (ii), there is exactly one discrete Cauchy's kernel with respect to $v_0$ with asymptotics $o(|Q-v_0|^{-1/2})$.
\end{enumerate}
\end{theorem}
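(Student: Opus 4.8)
The plan is to handle the three parts in order, with part~(ii) carrying essentially all the work. Part~(i) is immediate from the factorisation of the discrete Laplacian: by Corollary~\ref{cor:factorization} one has $\triangle=4\bar{\partial}_\Diamond\partial_\Lambda$, so applying $\bar{\partial}_\Diamond$ to $K_{v_0}=8\pi\partial_\Lambda G(\cdot;v_0)$ and inserting the defining relation $\triangle G(v;v_0)=\delta_{vv_0}/(2\textnormal{ar}(F_{v_0}))$ gives $\bar{\partial}_\Diamond K_{v_0}(v)=8\pi\cdot\tfrac14\triangle G(v;v_0)=\delta_{vv_0}\,\pi/\textnormal{ar}(F_v)$, the two normalisations agreeing since both sides vanish unless $v=v_0$. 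Hence $K_{v_0}$ is a discrete Cauchy's kernel with respect to $v_0$.

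For part~(ii) I would first turn $K_{v_0}$ into a contour integral. Writing $G(\cdot;v_0)=\re g$ with $g(v)=\frac{1}{4\pi^2 i}\int_{C_v}\frac{\log\lambda}{2\lambda}\,\textnormal{e}(\lambda,v;v_0)\,d\lambda$ as in Proposition~\ref{prop:Green_construction}, the function $g$ is discrete holomorphic, being a superposition of the discrete holomorphic functions $\textnormal{e}(\lambda,\cdot;v_0)$ (when evaluating $\partial_\Lambda$ at a fixed $Q$ one may replace the $v$-dependent loops $C_v$ by a single sufficiently large loop, since only the four vertices of $Q$ are involved). Therefore $\partial_\Lambda G=\partial_\Lambda\re g=\tfrac12\partial_\Lambda g$, using $\partial_\Lambda\bar g=\overline{\bar{\partial}_\Lambda g}$ (directly from the definitions of $\partial_\Lambda,\bar{\partial}_\Lambda$, as in the proof of Proposition~\ref{prop:examples}~(iv)) together with $\bar{\partial}_\Lambda g=0$. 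Combining this with the identity $\partial_\Lambda\textnormal{e}(\lambda,\cdot;v_0)(Q)=\tfrac{2}{\lambda}\exp(2/\lambda,Q;v_0)$ — which follows from Proposition~\ref{prop:exp_holomorphic} and $\exp(\lambda,\cdot;v_0)=\textnormal{e}(2/\lambda,\cdot;v_0)$ — yields
\[K_{v_0}(Q)=\frac{1}{\pi i}\int\limits_{C}\frac{\log\lambda}{\lambda^2}\exp(2/\lambda,Q;v_0)\,d\lambda,\]
where $C$ is a collection of counterclockwise loops around the poles $\lambda=e_j$ (edges of a shortest $\Lambda$-path from $v_0$ to a corner $v_Q$ of $Q$) and $\lambda=d_1,d_2$ (the two edges of $Q$ at $v_Q$), whose arguments lie in an interval of length at most $\pi-\alpha_0$ by Lemma~\ref{lem:cover}.

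From here I would mimic the proof of Theorem~\ref{th:Green_asymptotics} step by step. After rotating coordinates so the pole arguments lie in $[\theta_0,\theta_0+\pi-\alpha_0]$ with $\theta_0=-(\pi-\alpha_0)/2$, deform $C$ into a large circle of radius $R(Q)=\Omega(|Q-v_0|^4)$, a small circle of radius $r(Q)=\Omega(|Q-v_0|^{-4})$, and the segment $[-R(Q),-r(Q)]$ traversed twice, the traversals differing by the jump $2\pi i$ of $\log\lambda$. The two circular arcs contribute negligibly ($O(|Q-v_0|^{-2})$ and much smaller), once one has the two expansions
\[\exp(2/\lambda,Q;v_0)=e^{2(Q-v_0)/\lambda}\bigl(1+O(\lambda^{-2})\bigr)\quad(\lambda\to\infty),\qquad \exp(2/\lambda,Q;v_0)=\tau(Q,v_0)\,\lambda^{2}\,e^{2\lambda J(Q,v_0)}\bigl(1+O(\lambda^{2})\bigr)\quad(\lambda\to 0),\]
both obtained by expanding $\log\exp(\mu,Q;v_0)=\sum_j\log\frac{1+\mu e_j/2}{1-\mu e_j/2}-\log(1-\mu d_1/2)-\log(1-\mu d_2/2)$ and using the identities $\sum_j e_j+\tfrac12(d_1+d_2)=Q-v_0$, $\sum_j e_j^{-1}+\tfrac12(d_1^{-1}+d_2^{-1})=J(Q,v_0)$, and $(-1)^{\#\{e_j\}}(d_1 d_2)^{-1}=\tau(Q,v_0)$ (the last by bipartiteness). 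Combining the two segment traversals reduces $K_{v_0}(Q)$ to $-2\int_{-R(Q)}^{-r(Q)}\lambda^{-2}\exp(2/\lambda,Q;v_0)\,d\lambda$, which I would split into a range near $\lambda=\infty$, a middle range, and a range near $\lambda=0$. The substitution $\mu=2/\lambda$ turns the first range into $\int e^{\mu(Q-v_0)}(1+O(\mu^2))\,d\mu$, producing the term $1/(Q-v_0)$; on the third range $\lambda^{-2}\exp(2/\lambda,Q;v_0)=\tau(Q,v_0)e^{2\lambda J(Q,v_0)}(1+O(\lambda^2))$, producing $\tau(Q,v_0)/J(Q,v_0)$; the middle range and all error terms are $O(|Q-v_0|^{-2})$, exactly as in Theorem~\ref{th:Green_asymptotics}, using (via Proposition~\ref{prop:ratio_bounded}) that edge lengths lie in a fixed interval $[E_0,E_1]$, that the number of poles is $O(|Q-v_0|)$, and that $\re J(Q,v_0)=\Omega(|Q-v_0|)$ with $|J(Q,v_0)|$ of the same order. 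The main obstacle here is precisely this last bundle of uniform estimates along the deformed contour; but it is structurally identical to the already-established Green's function asymptotics, so I would present the reduction to a one-dimensional integral in detail and only sketch the remaining routine estimates, deferring the full computation to \cite{Gue14} as is done for Theorem~\ref{th:Green_asymptotics}.

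For part~(iii), existence of a discrete Cauchy's kernel with asymptotics $o(|Q-v_0|^{-1/2})$ is furnished by part~(ii): each of $1/(Q-v_0)$, $\tau(Q,v_0)/J(Q,v_0)$ and $O(|Q-v_0|^{-2})$ is $O(|Q-v_0|^{-1})$, using $|\tau(Q,v_0)|\le E_0^{-2}$ and $|J(Q,v_0)|=\Omega(|Q-v_0|)$. For uniqueness, let $K_{v_0},K'_{v_0}$ be two such kernels and put $g:=K_{v_0}-K'_{v_0}$; then $\bar{\partial}_\Diamond g\equiv0$, i.e.\ $g$ is discrete holomorphic on $V(\Diamond)$, and $g(Q)=o(|Q-v_0|^{-1/2})$. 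By Proposition~\ref{prop:primitive} there is a discrete holomorphic $f:V(\Lambda)\to\mC$ with $\partial_\Lambda f=g$, and $f$ is discrete harmonic by Corollary~\ref{cor:holomorphic_harmonic}~(ii). For adjacent $v_\pm\in V(\Gamma)$ (or $V(\Gamma^*)$) lying on a common face $Q$ one has $f(v_+)-f(v_-)=(v_+-v_-)\,g(Q)$; since the side lengths and hence the diagonals of the parallelograms are bounded (Proposition~\ref{prop:ratio_bounded}), $|Q-v_0|$ and $|v_\pm-v_0|$ differ by a bounded amount, so $f(v_+)-f(v_-)=o(v_\pm^{-1/2})$. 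The hypotheses of Theorem~\ref{th:harmonic_asymptotics} are therefore met, forcing $f$ to be biconstant, whence $g=\partial_\Lambda f\equiv0$ and $K_{v_0}=K'_{v_0}$.
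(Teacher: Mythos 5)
Your proposal is correct and follows essentially the same route as the paper, which disposes of (i) via Corollary~\ref{cor:factorization}, of (ii) by the same contour-deformation asymptotics as in the proof of Theorem~\ref{th:Green_asymptotics} (with the routine estimates likewise deferred), and of (iii) via Theorem~\ref{th:harmonic_asymptotics} applied to a discrete primitive of the difference of two kernels. The only blemish is a sign slip: combining the two traversals of $[-R(Q),-r(Q)]$ with the $2\pi i$ jump of $\log\lambda$ yields $+2\int_{-R(Q)}^{-r(Q)}\lambda^{-2}\exp(2/\lambda,Q;v_0)\,d\lambda$ rather than $-2\int$, and it is this positive sign that makes your subsequent evaluation produce $+1/(Q-v_0)$ and $+\tau(Q,v_0)/J(Q,v_0)$ as claimed.
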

The first part is an immediate consequence of Corollary~\ref{cor:factorization}. A straightforward calculation shows the second part, Theorem~\ref{th:harmonic_asymptotics} the third part.

Since we do not have discrete Green's functions on $V(\Diamond)$, we have to construct discrete Cauchy's kernels on $V(\Lambda)$ differently. To do so, we follow the original approach of Kenyon using the discrete exponential \cite{Ke02} that was reintroduced by Chelkak and Smirnov in \cite{ChSm11}. Their proofs can be adapted to our setting.

\begin{proposition}\label{prop:Cauchy_exp_construction}
Let $Q_0 \in V(\Diamond)$. The function defined by \[K_{Q_0}(v):=\frac{1}{\pi i}\int\limits_{C_v} \log(\lambda) \textnormal{e}(\lambda,v;Q_0) d\lambda=2\int\limits_{-(v-Q_0)\infty}^0 \textnormal{e}(\lambda,v;Q_0)d\lambda\] is a discrete Cauchy's kernel with respect to $Q_0$. Here, $C_v$ is a collection of counterclockwise oriented loops going once around each pole of $\textnormal{e}(\cdot,v;Q_0)$. The arguments of all poles shall lie in $(\theta_v-\pi,\theta_v+\pi)$, where $\theta_v:=\arg(v-Q_0)$.
\end{proposition}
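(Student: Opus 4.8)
The plan is to verify the two defining properties of a discrete Cauchy's kernel with respect to $Q_0$, namely that $\bar\partial_\Lambda K_{Q_0}(Q) = \delta_{QQ_0}\,\pi/\textnormal{ar}(F_Q)$ for all $Q\in V(\Diamond)$, working directly from the contour-integral expression. First I would justify the equality of the two formulas for $K_{Q_0}(v)$. Since $\textnormal{e}(\cdot,v;Q_0)$ is a rational function whose poles are the (half-)edges of a shortest directed path from $Q_0$ to $v$, and since by Lemma~\ref{lem:cover} the arguments of these poles lie in an open half-plane around $\theta_v=\arg(v-Q_0)$, one can collapse the loops $C_v$ onto a branch cut along the ray $-(v-Q_0)\infty$ to $0$; the jump of $\log\lambda$ across the cut is $2\pi i$, so $\frac{1}{\pi i}\oint_{C_v}\log(\lambda)\,\textnormal{e}(\lambda,v;Q_0)\,d\lambda = 2\int_{-(v-Q_0)\infty}^0 \textnormal{e}(\lambda,v;Q_0)\,d\lambda$. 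For this I must check that $\textnormal{e}(\lambda,v;Q_0)$ decays like $\lambda^{-2}$ as $\lambda\to\infty$ (it is a rational function of degree $-2$, since the definition of $\textnormal{e}(\cdot;Q_0)$ divides $\textnormal{e}(\cdot;v_\pm)$, a degree-zero rational function, by a quadratic) so the integral along the large arc vanishes; decay and behaviour near $0$ also show the integral on the ray converges.

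Next I would compute $\bar\partial_\Lambda K_{Q_0}(Q)$ for a fixed $Q\in V(\Diamond)$ with black vertices $b_\pm$ and white vertices $w_\pm$. Using the definition of $\bar\partial_\Lambda$ and linearity, $\bar\partial_\Lambda K_{Q_0}(Q)$ is an integral over the same contour (or ray) of $\bar\partial_\Lambda \textnormal{e}(\lambda,\cdot;Q_0)(Q)$ against $\log\lambda/(\pi i)$. The key algebraic fact is that $\textnormal{e}(\lambda,\cdot;v_0)$ is discrete holomorphic in its second argument — this is Proposition~\ref{prop:exp_holomorphic} (for $\exp$, hence for $\textnormal{e}$ by the reparametrization $\exp(\lambda,\cdot;v_0)=\textnormal{e}(2/\lambda,\cdot;v_0)$) — so $\bar\partial_\Lambda \textnormal{e}(\lambda,\cdot;v_0)(Q)=0$. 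For $\textnormal{e}(\cdot,\cdot;Q_0)$, which is $\textnormal{e}(\lambda,\cdot;v_\pm)$ divided by a factor depending only on $\lambda$ and $Q_0$ (not on $v$), this division does not affect the discrete Cauchy–Riemann relation at any quadrilateral $Q\neq Q_0$, so $\bar\partial_\Lambda \textnormal{e}(\lambda,\cdot;Q_0)(Q)=0$ for $Q\neq Q_0$, giving $\bar\partial_\Lambda K_{Q_0}(Q)=0$ there. At $Q=Q_0$ the $\lambda$-dependent denominator has poles precisely on the edges of $Q_0$, and the discrete Cauchy–Riemann combination of these four boundary values of $\textnormal{e}(\lambda,\cdot;Q_0)$ produces a rational function of $\lambda$ with a nonzero residue structure; integrating $\log\lambda$ times this against $C_{v}$ extracts a residue computation that yields exactly $\pi/\textnormal{ar}(F_{Q_0})$.

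The main obstacle, and where I would spend the most care, is the residue bookkeeping at $Q=Q_0$: one must pin down the poles of $\textnormal{e}(\lambda,\cdot;Q_0)$ coming from the factors $\bigl(\lambda-(v_\pm-v'_+)\bigr)^{-1}\bigl(\lambda-(v_\pm-v'_-)\bigr)^{-1}$, where $v'_\pm$ are the vertices of $Q_0$ of the other color, check the normalization $\textnormal{e}(\lambda,v_\pm;Q_0)$ against these, and then evaluate $\frac{1}{\pi i}\oint \log(\lambda)\bigl[\text{discrete CR combination}\bigr]\,d\lambda$ by residues, tracking the branch of $\log$ and the factor $\textnormal{ar}(F_{Q_0})=\tfrac12|b_+-b_-||w_+-w_-|\sin\varphi_{Q_0}$. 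This is the step Kenyon \cite{Ke02} and Chelkak–Smirnov \cite{ChSm11} carried out in the rhombic case; since the definitions of $\textnormal{e}(\cdot;Q_0)$ and $\bar\partial_\Lambda$ reduce to theirs up to the scalar weights $\lambda_Q$, their computation transfers verbatim once the above structural points are in place. I would therefore present the half-plane/branch-cut reduction and the discrete-holomorphicity argument in full, and refer to \cite{Ke02,ChSm11} for the final residue evaluation at $Q_0$.
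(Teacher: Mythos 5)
Your overall route is the one the paper intends: the paper in fact gives no proof of this proposition, stating only that "the original approach of Kenyon \ldots reintroduced by Chelkak and Smirnov \ldots can be adapted to our setting," which is precisely your plan. Your reduction of the loop integral to the ray integral is correct and more than the paper supplies: $\textnormal{e}(\cdot,v;Q_0)$ is a rational function of degree $-2$, so the large arc contributes nothing, the integral on the ray converges, and collapsing $C_v$ onto the cut along $\arg\lambda=\theta_v+\pi$ (which misses all poles by the hypothesis on their arguments) picks up the jump $2\pi i$ of $\log$.

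There is, however, one step that would fail as written. At $Q=Q_0$ the discrete Cauchy--Riemann combination of the four boundary values of $\textnormal{e}(\lambda,\cdot;Q_0)$ is \emph{identically zero} for every fixed $\lambda$, not "a rational function with a nonzero residue structure": $\textnormal{e}(\lambda,\cdot;Q_0)$ is $\textnormal{e}(\lambda,\cdot;v_\pm)$ divided by a $v$-independent factor, hence discrete holomorphic at $Q_0$ exactly as at every other face (Proposition~\ref{prop:exp_holomorphic}). If one could integrate that combination over a single contour, the answer would be $0$. The entire $\delta_{QQ_0}$ contribution comes from the fact that the four contours $C_{b_\pm},C_{w_\pm}$ --- equivalently the four branch cuts and branches of $\log$ --- cannot be deformed to a common one when $Q=Q_0$: writing $a,b$ for the edge vectors of $Q_0$ pointing into $v_+$, the pole sets of the four functions are $\{a,b\},\{-a,-b\},\{-a,b\},\{a,-b\}$, whose union lies in no open half-plane, so the prescribed branches of $\log$ necessarily disagree by $2\pi i$ at some of these poles, and it is the residues of that disagreement that produce $\pi/\textnormal{ar}(F_{Q_0})$. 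Dually, for $Q\neq Q_0$ your appeal to "the same contour" and linearity requires first checking that the poles for all four vertices of $Q$ do lie in a common sector admitting one contour and one branch; only then does $\bar\partial_\Lambda K_{Q_0}(Q)=\frac{1}{\pi i}\int_C\log(\lambda)\,\bar\partial_\Lambda\textnormal{e}(\lambda,\cdot;Q_0)(Q)\,d\lambda=0$ follow. You do flag "tracking the branch of $\log$" and defer the evaluation to \cite{Ke02,ChSm11}, where this is carried out, so the plan is salvageable once the mechanism is stated correctly.
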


\begin{theorem}\label{th:Cauchy_asymptotics_2}
Assume that there are $\alpha_0,q_0>0$ such that $\alpha\geq\alpha_0$ and $e/e'\geq q_0$ for all interior angles $\alpha$ and the two side lengths $e,e'$ of any parallelogram of $\Lambda$. Let $Q_0 \in V(\Diamond)$ be fixed.

\begin{enumerate}
 \item The discrete Cauchy's kernel $K_{Q_0}$ constructed in Proposition~\ref{prop:Cauchy_exp_construction} has the following asymptotics:
\begin{equation*}
 K_{Q_0}(v)=\frac{1}{v-Q_0}+\frac{\tau(v,Q_0)}{J(v,Q_0)}+O\left(|v-Q_0|^{-3}\right).
\end{equation*}
 \item There is no further discrete Cauchy's kernel with respect to $Q_0$ of asymptotics $o(|v-Q_0|^{-1/2})$.
 \item For $K_{Q_0}$ constructed in Proposition~\ref{prop:Cauchy_exp_construction}, $\partial_\Lambda K_{Q_0}$ has asymptotics \[\partial_\Lambda K_{Q_0}(Q)=-\frac{1}{(Q-Q_0)^2}-\frac{\tau(Q,Q_0)}{J(Q,Q_0)^2}+O\left(|Q-Q_0|^{-3}\right).\]
 \item Up to two additive constants on $\Gamma$ and $\Gamma^*$, there is no further discrete Cauchy's kernel with respect to $Q_0$ such that its discrete derivative has asymptotics $o(|Q-Q_0|^{-1/2})$.
\end{enumerate}
\end{theorem}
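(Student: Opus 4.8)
The theorem has four parts, and the plan is to treat them in two groups: the asymptotic expansions (i) and (iii), which rest on the integral representation of Proposition~\ref{prop:Cauchy_exp_construction} and a Laplace-type analysis patterned on the proof of Theorem~\ref{th:Green_asymptotics}; and the two uniqueness statements (ii) and (iv), which reduce to Theorem~\ref{th:harmonic_asymptotics} once the relevant decay has been transferred to nearest-neighbour differences.

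For (i) I would start from $K_{Q_0}(v)=2\int_{-(v-Q_0)\infty}^{0}\textnormal{e}(\lambda,v;Q_0)\,d\lambda$. First, Proposition~\ref{prop:ratio_bounded} upgrades the hypotheses $\alpha\ge\alpha_0$, $e/e'\ge q_0$ to two-sided bounds $E_0\le|e|\le E_1$ on all edge lengths, and Lemma~\ref{lem:cover} confines the poles of $\textnormal{e}(\cdot,v;Q_0)$ --- the edges of a shortest directed path from $Q_0$ to $v$ together with the two half-edges at each end --- to an arc of argument of length at most $\pi-\alpha_0$; hence that path has $O(|v-Q_0|)$ edges, $\re J(v,Q_0)$ and $|J(v,Q_0)|$ are both $\Omega(|v-Q_0|)$, and $|\tau(v,Q_0)|$ stays bounded away from $0$ and $\infty$. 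Parametrising the ray by $\lambda=-(v-Q_0)t$ and splitting it into the parts $|\lambda|\le E_1|v-Q_0|^{-1/2}$, $E_1|v-Q_0|^{-1/2}\le|\lambda|\le E_1|v-Q_0|^{1/2}$ and $|\lambda|\ge E_1|v-Q_0|^{1/2}$, I would use on the inner part the expansion $\textnormal{e}(\lambda,v;Q_0)=\tau(v,Q_0)\exp(2\lambda J(v,Q_0))(1+O(|v-Q_0|\lambda^{3}))$, which integrates to $\tfrac{\tau(v,Q_0)}{J(v,Q_0)}$ with error $O(|v-Q_0|^{-3})$; on the outer part the expansion $\textnormal{e}(\lambda,v;Q_0)=\lambda^{-2}\exp(2(v-Q_0)/\lambda)(1+O(|v-Q_0|\lambda^{-3}))$, where one uses the identity $2(v-v_\pm)-(d_1+d_2)=2(v-Q_0)$ relating the half-edges $d_1,d_2$ at the endpoint near $Q_0$ to the centre $Q_0$ of the parallelogram, and the substitution $\mu=1/\lambda$ turns this into $\tfrac{1}{v-Q_0}$; on the intermediate annulus one bounds $|\textnormal{e}(\lambda,v;Q_0)|$ by an exponential exactly as in the proof of Theorem~\ref{th:Green_asymptotics}, getting a contribution smaller than every power of $|v-Q_0|^{-1}$. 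Summing gives (i). For (iii) I would differentiate the representation: by Proposition~\ref{prop:exp_holomorphic} (applied to the numerator, since $\textnormal{e}(\lambda,\cdot;v_0)=\exp(2/\lambda,\cdot;v_0)$) $\partial_\Lambda$ turns $\textnormal{e}(\lambda,\cdot;Q_0)$ into $\tfrac{2}{\lambda}$ times a discrete exponential on $V(\Diamond)$ based at $Q_0$, so $\partial_\Lambda K_{Q_0}(Q)$ is an integral of the same shape with an extra factor $1/\lambda$ in the integrand and $\textnormal{e}$ evaluated at the face $Q$; the analogous three-range analysis yields the leading terms $-\tfrac{1}{(Q-Q_0)^{2}}$ from $\lambda=\infty$ and $-\tfrac{\tau(Q,Q_0)}{J(Q,Q_0)^{2}}$ from $\lambda=0$, with error $O(|Q-Q_0|^{-3})$.

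For (ii) and (iv), let $K,K'$ be discrete Cauchy's kernels with respect to $Q_0$ and set $g:=K'-K$. Then $\bar\partial_\Lambda g\equiv0$, so $g$ is discrete holomorphic on all of $V(\Lambda)$, hence discrete harmonic by Corollary~\ref{cor:holomorphic_harmonic}~(ii). In (ii) I take $K=K_{Q_0}$ from Proposition~\ref{prop:Cauchy_exp_construction}, which by (i) decays like $O(|v-Q_0|^{-1})=o(|v-Q_0|^{-1/2})$, and assume $K'$ has decay $o(|v-Q_0|^{-1/2})$ too; then $g=o(|v-Q_0|^{-1/2})$, and since edge lengths are bounded (Proposition~\ref{prop:ratio_bounded}) the quantities $|v_+-Q_0|$ and $|v_--Q_0|$ differ by $O(1)$ for adjacent $v_\pm$, so $g(v_+)-g(v_-)=o(v_\pm^{-1/2})$ for adjacent $v_\pm$ of the same colour; Theorem~\ref{th:harmonic_asymptotics} then forces $g$ to be biconstant, and since $g\to0$ along $V(\Gamma)$ and along $V(\Gamma^*)$ both constants vanish, i.e.\ $K'=K_{Q_0}$. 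In (iv) I assume $\partial_\Lambda K'$ and $\partial_\Lambda K$ have decay $o(|Q-Q_0|^{-1/2})$, so $\partial_\Lambda g=o(|Q-Q_0|^{-1/2})$; because $g$ is discrete holomorphic, $\lambda_Q+\bar\lambda_Q=1$ gives $\partial_\Lambda g(Q)=\tfrac{g(b_+)-g(b_-)}{b_+-b_-}=\tfrac{g(w_+)-g(w_-)}{w_+-w_-}$, and since the diagonals of every parallelogram have bounded length (angles are bounded away from $0$ and $\pi$) while $|Q-Q_0|$, $|b_\pm-Q_0|$, $|w_\pm-Q_0|$ agree up to $O(1)$, we obtain $g(v_+)-g(v_-)=o(v_\pm^{-1/2})$ for adjacent $v_\pm$ of the same colour; Theorem~\ref{th:harmonic_asymptotics} then makes $g$ biconstant, i.e.\ $K'$ and $K_{Q_0}$ differ only by two additive constants on $\Gamma$ and $\Gamma^*$.

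The hardest part is the bookkeeping of the error terms in (i) and (iii): one must verify that the corrections $O(|v-Q_0|\lambda^{\pm 3})$ survive integration against the concentrated weight $\exp(2\lambda J(v,Q_0))$, whose mass sits at $|\lambda|\sim|v-Q_0|^{-1}$, only at the claimed order; and --- the structural obstruction specific to this setting --- one cannot use the directed path of Kenyon and of Chelkak--Smirnov along which every edge (or every consecutive pair of edges) makes an angle $<\pi/2$ with $v-Q_0$, since that fails on parallelogram-graphs, so one deforms the contour as B\"ucking does and relies essentially on boundedness of the edge lengths (Proposition~\ref{prop:ratio_bounded}), not just of the angles. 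The remaining computations are the straightforward ones carried out in the proof of Theorem~\ref{th:Green_asymptotics}, and the full details can be found in \cite{Gue14}.
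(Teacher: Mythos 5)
Your proposal follows the same route as the paper: parts (ii) and (iv) are reduced to Theorem~\ref{th:harmonic_asymptotics} exactly as the paper indicates (with Proposition~\ref{prop:ratio_bounded} supplying the required two-sided edge-length bounds), part (iii) is the direct computation obtained by differentiating the integral representation via Proposition~\ref{prop:exp_holomorphic}, and part (i) reproduces the three-range analysis of the proof of Theorem~\ref{th:Green_asymptotics} with the B\"ucking-style deformation replacing Kenyon's path, which is precisely what the paper sketches before deferring the details to \cite{Gue14}. The only quibble is that near $\lambda=0$ the two denominator factors of $\textnormal{e}(\lambda,v;Q_0)$ contribute an additional multiplicative $1+O(\lambda^2)$ correction beyond the $1+O(|v-Q_0|\lambda^3)$ you record, but since $\int \lambda^2 \exp(2\lambda J(v,Q_0))\,d\lambda=O(|v-Q_0|^{-3})$ this is absorbed in the claimed error term.
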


The proof of the first part follows the ideas of Kenyon \cite{Ke02}. Similar to the proof of Theorem~\ref{th:Green_asymptotics}, the path of integration is deformed into one different from $(-(v-Q_0)\infty,0]$ that was used by Kenyon. For the same reasons as before, his approach does not generalize to parallelogram-graphs. The second and the fourth part of the theorem are immediate consequences of Theorem~\ref{th:harmonic_asymptotics}; the third part is shown by a direct computation.

\begin{remark}
Note that Kenyon and Chelkak and Smirnov proved that in the rhombic setting there is a unique discrete Cauchy's kernel with asymptotics $o(1)$ \cite{Ke02,ChSm11}.
\end{remark}

\subsection{Integer lattice} \label{sec:integer_lattice}

Let us consider a locally finite planar parallelogram-graph $\Lambda$ such that each vertex has degree four. This happens if and only if $\Diamond$ is also a quad-graph or, equivalently, if $\Lambda$ has the combinatorics of the integer lattice $\mZ^2$.

As usual, let $\Gamma$ and $\Gamma^*$ denote the subgraphs on black and white vertices, and let us place the vertices of $\Diamond$ at the centers of corresponding faces. Then, the vertices of $\Diamond$ lie at the midpoints of diagonals. Since any vertex of $\Gamma$ or $\Gamma^*$ is enclosed by a quadrilateral of $\Gamma^*$ or $\Gamma$, respectively, the faces of $\Diamond$ are parallelograms by Varignon's theorem. Thus, $\Diamond$ becomes a planar parallelogram-graph as well.

Of particular interest is the case that the two notions of discrete holomorphicity on $\Diamond$, the one coming from $\Diamond$ being the dual of $\Lambda$ and the other coming from the quad-graph $\Diamond$ itself, coincide. It is not hard to show that this happens only for the integer lattice of a skew coordinate system, onto which we restrict ourselves in the following. If $e_1$, $e_2$ denote two spanning vectors, $\Diamond$ is a parallel shift of $\Lambda$ by $e_1/2+e_2/2$. Furthermore, the discrete differentials on $\Diamond$ seen as the dual of $\Lambda$ coincide with the discrete differentials on $\Diamond$ seen as a parallelogram-graph.

Since corresponding notions coincide and $\Diamond$ and $\Lambda$ are congruent, we can skip all subscripts $\Lambda$ and $\Diamond$ in the definitions of discrete derivatives. Moreover, the discrete Laplacian $\triangle$ is now defined for functions on $V(\Lambda)$ and functions on $V(\Diamond)$ in the same way. Due to Corollary~\ref{cor:factorization}, $4\partial\bar{\partial}=\triangle=4\bar{\partial}\partial$ is now true on both graphs. It follows that all discrete derivatives $\partial^n f$ of a discrete holomorphic function $f$ are discrete holomorphic themselves. Conversely, a discrete primitive exists for any discrete holomorphic function on a simply-connected domain by Proposition~\ref{prop:primitive}.

Our main interests lie in giving discrete Cauchy's integral formulae for higher order derivatives of a discrete holomorphic function and determining the asymptotics of higher order discrete derivatives of the discrete Cauchy's kernel constructed in Section~\ref{sec:Cauchy_asymptotics}. Note that due to the uniqueness statements in Theorems~\ref{th:Cauchy_asymptotics_1} and~\ref{th:Cauchy_asymptotics_2}, both constructions yield the same discrete Cauchy's kernel.

Without loss of generality, we restrict our attention to functions on $V(\Lambda)$. For the ease of notation, we introduce the \textit{discrete distance} $D(\cdot,\cdot)$ on $V(\Lambda) \cup V(\Diamond)$ that is induced by the $|\cdot|_{\infty}$-distance on the integer lattice spanned by $e_1/2,e_2/2$.

\begin{theorem}\label{th:Cauchy_formula_n_derivative}
Let $v_0\in V(\Lambda)$, $Q_0:=v_0+e_1/2+e_2/2 \in V(\Diamond)$, let $f$ be a discrete holomorphic function on $V(\Lambda)$ and let $K_{v_0}$ and $K_{Q_0}$ be discrete Cauchy's kernels with respect to $v_0$ and $Q_0$, respectively. Let $n$ be a nonnegative integer and define $x_0:=v_0$ if $n$ is even and $x_0:=Q_0$ if $n$ is odd. Similarly, let $x \in V(\Lambda)$ if $n$ is even and $x \in V(\Diamond)$ if $n$ is odd.

\begin{enumerate}
 \item For any counterclockwise oriented discrete contour $C_{x_0}$ in the medial graph $X$ enclosing all points $x' \in V(\Lambda) \cup V(\Diamond)$ with $D(x',x_0)\leq n/2$, 
\begin{equation*}
 \partial^n f(x_0)=\frac{(-1)^n}{2\pi i}\oint\limits_{C_{x_0}}f\partial^n K_{x_0} dz.
\end{equation*}
 \item If $K_{Q_0}$ is the discrete Cauchy's kernel constructed in Proposition~\ref{prop:Cauchy_exp_construction}, \[\frac{(-1)^n}{n!} \partial^n K_{Q_0}(x)=\frac{1}{(x-Q_0)^{n+1}}+\frac{\tau'(x,Q_0)}{(J(x,Q_0)e_1e_2)^{n+1}}+O(|x-Q_0|^{-n-3}),\]
where $\tau'(x,Q_0)=1$ if $x$ and $Q_0$ or $(x+e_1/2+e_2/2)$ and $Q_0$ can be connected by a path on $V(\Diamond)$ of even length and $\tau'(x,Q_0)=-1$ otherwise.
\end{enumerate}
\end{theorem}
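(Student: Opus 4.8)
The plan is to prove the two parts essentially independently, each by reduction to results already established in the paper, with the integer-lattice structure used only to iterate those results.

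For part (i), the strategy is an induction on $n$ using the discrete Cauchy integral formulae of Theorems~\ref{th:Cauchy_formula} and~\ref{th:Cauchy_formula_derivative} as base cases and the following mechanism for the inductive step. On the integer lattice all subscripts $\Lambda,\Diamond$ may be dropped, so $\partial$ maps functions on $V(\Lambda)$ to functions on $V(\Diamond)$ and functions on $V(\Diamond)$ back to functions on $V(\Lambda)$; by the discussion preceding the theorem, $\partial^n f$ is again discrete holomorphic whenever $f$ is, and by Proposition~\ref{prop:primitive} every discrete holomorphic function has a discrete primitive on a simply-connected domain. First I would check the case $n=0$ (this is exactly Theorem~\ref{th:Cauchy_formula}) and the case $n=1$ (this is exactly Theorem~\ref{th:Cauchy_formula_derivative}, noting that for the integer lattice the contour enclosing all $x'$ with $D(x',x_0)\le 1/2$ is precisely a contour of the type admitted there, namely one containing no edge inside $Q_0$). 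For the inductive step, suppose the formula holds for $n$; applying it to the discrete holomorphic function $\partial^2 f$ in place of $f$, which lives on the same graph as $f$, gives $\partial^{n}(\partial^2 f)(x_0)=\partial^{n+2}f(x_0)$ as a contour integral of $(\partial^2 f)\,\partial^n K_{x_0}$. The point is then to convert this into an integral of $f\,\partial^{n+2}K_{x_0}$ by ``discrete integration by parts'' along the medial graph: using that the discrete exterior derivative is a derivation for the discrete wedge product (Theorem~\ref{th:derivation}), that $d(f\omega)$ integrates to $\oint f\omega$ over the enclosed disk (discrete Stokes, Theorem~\ref{th:stokes}), and that $df\wedge dK=\partial_\Lambda f\,\bar\partial_\Lambda K\,\Omega_\Diamond$ vanishes off the singular vertex exactly as in the proof of Theorem~\ref{th:Cauchy_formula_derivative}, one transfers two discrete derivatives from $f$ onto $K_{x_0}$ at the cost of the sign $(-1)^2$, while the contour must be enlarged by one lattice step on each side to absorb the support of the relevant discrete two-forms, which is precisely the condition $D(x',x_0)\le (n+2)/2$. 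The main obstacle here is bookkeeping: verifying that the boundary terms produced by repeated integration by parts genuinely cancel on the integer lattice — where both the primitive exists globally on simply-connected pieces and $\partial$ is bijective up to biconstants — and that the ``grow the contour by one'' step matches the stated discrete-distance condition exactly rather than off by one.

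For part (ii), the plan is a direct computation starting from the contour/integral representation $K_{Q_0}(v)=2\int_{-(v-Q_0)\infty}^{0}\mathrm{e}(\lambda,v;Q_0)\,d\lambda$ of Proposition~\ref{prop:Cauchy_exp_construction}, together with the asymptotic expansion machinery already developed for Theorem~\ref{th:Cauchy_asymptotics_2}. By Proposition~\ref{prop:exp_holomorphic}, $\partial_\Lambda\mathrm{e}(\lambda,\cdot;Q_0)=\lambda\,\mathrm{e}(\lambda,\cdot;Q_0)$ on the integer lattice (with the subscript dropped, the same identity holds iterated), so $\partial^n K_{Q_0}(x)=2\int \lambda^n\,\mathrm{e}(\lambda,x;Q_0)\,d\lambda$ up to the appropriate normalization. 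One then repeats, verbatim, the contour-deformation argument of Theorem~\ref{th:Cauchy_asymptotics_2}(i): deform the ray into a large circle, a small circle, and a doubled segment along the negative real axis; on the near-origin and far regions use $(-1)^{k}\mathrm{e}(\lambda,x;Q_0)=\exp\!\big(2\lambda J(x,Q_0)\big)\big(1+O(|x-Q_0|\lambda^3)\big)$ for small $\lambda$ and $\mathrm{e}(\lambda,x;Q_0)=1+O(\lambda^{-1}|x-Q_0|)$ for large $\lambda$, now with the extra factor $\lambda^n$, which shifts the dominant residues so that the leading term becomes $(x-Q_0)^{-n-1}$ together with the $J$-term raised to the power $n+1$; the intermediate range again contributes faster than any power. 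The combinatorial sign $\tau'(x,Q_0)$ arises from tracking $(-1)^{k(x)}$ through this computation and identifying its parity with the length-parity of a path on $V(\Diamond)$, exactly as $\tau$ was identified in the earlier theorems, while the factor $e_1 e_2$ appears because on the integer lattice every half-edge entering $J(x,Q_0)$ is $\pm e_1/2$ or $\pm e_2/2$, so that $\tau(x,Q_0)$ and the higher-order terms naturally carry powers of $(e_1 e_2)^{-1}$. I expect the hardest point in (ii) to be verifying that after differentiating $n$ times the error term genuinely improves to $O(|x-Q_0|^{-n-3})$ rather than degrading — that is, checking that each extra factor of $\lambda^n$ is compensated by the decay of $\mathrm{e}$ in the relevant regimes and by the choice of radii $R(x)=\Omega(|x-Q_0|^4)$, $r(x)=\Omega(|x-Q_0|^{-4})$ — and correctly pinning down the parity in the definition of $\tau'$ when $n$ is odd, since then $x$ and $x_0$ live on $V(\Diamond)$ rather than $V(\Lambda)$ and the two clauses in the definition of $\tau'$ become relevant.

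Throughout, the integer-lattice hypothesis is what makes the whole scheme go: it is exactly the condition under which $\partial$ is a well-defined self-map (up to relabeling $\Lambda\leftrightarrow\Diamond$) that preserves discrete holomorphicity, so that ``differentiating $n$ times'' and ``integrating $n$ times'' both make sense, and under which the uniqueness statements of Theorems~\ref{th:Cauchy_asymptotics_1} and~\ref{th:Cauchy_asymptotics_2} guarantee that the two constructions of $K$ agree, so there is no ambiguity about which kernel appears in (i).
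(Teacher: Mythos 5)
Your proposal is correct in substance and uses the same ingredients as the paper, but part (i) is organized differently in a way worth comparing. You induct on $n$ in steps of two and transfer derivatives from $K_{x_0}$ to $f$ by literal contour integration by parts, which forces you to track boundary terms and to grow the contour at each step --- you correctly identify this bookkeeping as the main obstacle, but you do not resolve it. The paper avoids the issue entirely: it converts $\oint_{C_{x_0}} f\,\partial^n K_{x_0}\,dz$ in one shot into $\iint_D df\wedge d(\partial^{n-1}K_{x_0})=-2i\langle \partial f,\bar{\partial}^{n-1}\partial\bar{K}_{x_0}\rangle$ via Theorems~\ref{th:stokes} and~\ref{th:derivation} and Proposition~\ref{prop:dd0}, and then applies the formal adjoint identity of Proposition~\ref{prop:adjoint} $n-1$ times; since $\partial^{n-1}\bar{\partial}K_{x_0}$ is compactly supported (by Corollary~\ref{cor:commutativity} and the defining delta property of the kernel), each application is a genuine global identity with no boundary terms, and the hypothesis $D(x',x_0)\le n/2$ enters only to guarantee that $\bar{\partial}\partial^{n-1}K_{x_0}\,d\bar{z}$ vanishes on $C_{x_0}$ and that the support of $\partial^{n-1}\bar\partial K_{x_0}$ lies inside $D$. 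If you want to salvage your inductive scheme, the cleanest route is to phrase each two-derivative transfer through Proposition~\ref{prop:adjoint} rather than through contour integration by parts. For part (ii) your plan coincides with the paper's (which only sketches the computation): differentiate under the integral using Proposition~\ref{prop:exp_holomorphic} and rerun the contour deformation of Theorem~\ref{th:Green_asymptotics}. One small correction there: since $\exp(\lambda,\cdot;v_0)=\textnormal{e}(2/\lambda,\cdot;v_0)$, the eigenvalue of $\partial$ on $\textnormal{e}(\lambda,\cdot;Q_0)$ is $2/\lambda$ rather than $\lambda$, so the extra factor in the integrand is $(2/\lambda)^n$; this is what produces the $(J(x,Q_0)e_1e_2)^{-(n+1)}$ term from the small-$\lambda$ regime and the $(x-Q_0)^{-(n+1)}$ term from the large-$\lambda$ regime with the stated $n!$ normalization.
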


\begin{proof}
(i) Let $D$ be the discrete domain in $X$ bounded by $C_{x_0}$. By the assumptions on $C_{x_0}$, the discrete one-form $\bar{\partial} \partial^{n-1} K_{x_0} d\bar{z}$ vanishes on $C_{x_0}$. Thus, \[\oint\limits_{C_{x_0}}f\partial^n K_{x_0}dz=\oint\limits_{C_{x_0}}fd\left(\partial^{n-1} K_{x_0}\right)=\iint\limits_D d(fd(\partial^{n-1} K_{x_0}))=\iint\limits_D df\wedge d\left(\partial^{n-1}K_{x_0}\right)\] by discrete Stokes' Theorem~\ref{th:stokes}, Theorem~\ref{th:derivation}, and Proposition~\ref{prop:dd0}. Now, $f$ is discrete holomorphic, so 
$df\wedge d\left(\partial^{n-1}K_{x_0}\right)=\partial f \bar{\partial}\partial^{n-1}K_{x_0} \Omega_\Diamond$. But since the discrete derivatives commute according to Corollary~\ref{cor:commutativity}, $\bar{\partial}\partial^{n-1}K_{x_0}=\partial^{n-1}\bar{\partial}K_{x_0}$ vanishes outside $C_{x_0}$, so by subsequent use of Proposition~\ref{prop:adjoint}, \[\oint\limits_{C_{x_0}}f\partial^n K_{x_0}dz=-2i\langle \partial f, \bar{\partial}^{n-1}\partial\bar{K}_{x_0}  \rangle = 2i(-1)^n\langle \partial^n f,\partial\bar{K}_{x_0}  \rangle=2\pi i(-1)^n \partial^n f(x_0).\]

(ii) Again, the proof follows the lines of the proof of Theorem~\ref{th:Green_asymptotics}. In addition, we use that Proposition~\ref{prop:exp_holomorphic} shows that the discrete derivative of the discrete exponential is up to an explicit factor again a discrete exponential. Again, we exchange the discrete differentiation and the integration in the proof.
\end{proof}


\begin{appendix}
\numberwithin{theorem}{section}

\section{Appendix: Planar parallelogram-graphs} \label{sec:basic_parallel}

The aim of this appendix is to discuss some combinatorial and geometric properties of parallelogram-graphs that were used in Section~\ref{sec:parallel}. The following notion of a strip is standard, see for example the book of the first author and Suris \cite{BoSu08}.

\begin{definition}
A \textit{strip} in a planar quad-graph $\Lambda$ is a path on its dual $\Diamond$ such that two successive faces share an edge and the strip leaves a face in the opposite edge where it enters it. Moreover, strips are assumed to have maximal length, i.e., there are no strips containing it apart from itself.
\end{definition}

Note that a strip is uniquely determined by the edges it passes through, meaning the edges two successive faces share.

\begin{definition}
For a strip $S$ of a parallelogram-graph $\Lambda$, there exists a complex vector $a_S$ such that any (nonoriented) edge through which $S$ passes is $\pm a_S$. We call $a_S$ a \textit{common parallel}.
\end{definition}

$a_S$ is unique up to sign; the choice of the sign induces an orientation on all edges. The parallel edges of the strip can be rescaled to length $|a_S|=1$, without changing the combinatorics. Hence, rhombic planar quad-graphs and planar parallelogram-graphs are combinatorially equivalent. Rhombic planar quad-graphs are characterized by the following proposition of Kenyon and Schlenker \cite{KeSch05}:

\begin{proposition}\label{prop:rhombic}
A planar quad-graph $\Lambda$ admits a combinatorially equivalent embedding in $\mC$ with all rhombic faces if and only if the following two conditions are satisfied:
\begin{itemize}
\item No strip crosses itself or is periodic.
\item Two distinct strips cross each other at most once.
\end{itemize}
\end{proposition}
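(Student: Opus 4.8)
The plan is to prove the two implications of Proposition~\ref{prop:rhombic} separately; constructing a rhombic embedding out of the two combinatorial conditions is the substantial direction, while their necessity is a monodromy computation.

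\emph{Necessity.} Suppose $\Lambda$ is embedded with all faces rhombic and fix a strip $S$ with crossed edges $\dots,E_{-1},E_0,E_1,\dots$ in order. Since opposite edges of a rhombus are equal, all the $E_i$ are translates of one segment; orienting $S$ and the $E_i$ compatibly we get $E_i=E_{i-1}+c_i$, where $c_i$ is the corresponding side vector of the rhombus lying between $E_{i-1}$ and $E_i$, of the same length as $a_S$ and not parallel to it. The first thing I would prove is that all the $c_i$ lie in one open half-plane bounded by the line $\mR a_S$: this is forced because consecutive rhombi of $S$ sit on opposite sides of their shared edge (two distinct faces of an embedded, strongly regular decomposition that share an edge cannot lie on the same side of it), which pins down the sign of $\im(c_i)$ once $a_S$ is normalized to be real. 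Then the imaginary parts of the endpoints of the $E_i$ are strictly increasing, so the $E_i$ are pairwise distinct: $S$ is neither periodic nor self-crossing. For the second condition, observe that a crossing of two distinct strips $S,S'$ is a rhombus whose edge directions are exactly $a_S$ and $a_{S'}$, whence $a_S\neq\pm a_{S'}$; the monotonicity just established shows that the core curve of $S$ is a properly embedded line meeting every translate of $\mR a_S$ at most once. If $S$ and $S'$ crossed in two rhombi, the sub-strips between those two crossings would bound a disk in $\Lambda$, and summing edge vectors around its boundary — using the half-plane property for the side vectors of $S$ and of $S'$ — yields a contradiction by a short winding/convexity argument, which I expect to be routine.

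\emph{Sufficiency.} Assume the two conditions. The strategy has three steps: (a) assign to every strip $S$ a unit vector $u_S$, meant to be the common direction of all edges that $S$ crosses, in such a way that $u_S\neq\pm u_{S'}$ whenever $S$ and $S'$ cross and that, at each vertex $v$, the cyclic order of incident edges prescribed by $\Lambda$ agrees with the order of the arguments of their assigned directions; (b) integrate these data to a locally rhombic immersion $\Lambda\to\mC$; (c) prove this immersion is an embedding. Step (b) is the easy one given (a): each face is the crossing of two strips, hence a parallelogram with two unit side directions that is non-degenerate by the first clause of (a), i.e. a genuine rhombus; the angles around any vertex sum to $2\pi$ automatically because the relevant arguments are in cyclic order, and the four edge vectors around any face sum to $0$, so on the simply connected carrier $\mC$ a base-point developing construction gives a well-defined immersion with no holonomy. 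Step (a) is where both combinatorial conditions are used: since no strip is periodic or self-crossing and any two meet at most once, the strips form, combinatorially, an arrangement of pseudoline type, from which one extracts a consistent system of ``slopes'' $u_S$ realizing the prescribed vertex incidences; making this extraction precise is the main structural point and I expect it to require the most care.

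The hard part will be step (c), injectivity of the developing immersion, and this is exactly where ``two distinct strips cross at most once'' becomes indispensable. In the developed picture the faces of a strip $S$ form a union of rhombi all sharing the direction $u_S$; rerunning the half-plane argument from the necessity part shows this union is an embedded bi-infinite ribbon whose core meets every line parallel to $u_S$ once, and two such ribbons intersect in at most one rhombus because their combinatorial strips do. Since each vertex and each face of $\Lambda$ is singled out by the strips through it, distinct vertices or faces cannot be developed to the same point; a covering argument using local finiteness (so that the developed ribbons exhaust $\mC$) then upgrades the immersion to a homeomorphism onto $\mC$ whose faces are all rhombi and which is combinatorially equivalent to $\Lambda$. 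This completes the proof; alternatively, one may simply invoke the theorem of Kenyon and Schlenker \cite{KeSch05}, whose original argument follows essentially this outline.
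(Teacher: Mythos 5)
The paper does not prove this proposition at all: it is quoted verbatim from Kenyon and Schlenker \cite{KeSch05}, and the only argument supplied in the surrounding text is the verification that planar parallelogram-graphs satisfy the two conditions (the monotonicity of a strip $S$ in the direction $ia_S$, plus the half-plane argument forbidding a second crossing). Your \emph{necessity} half is essentially that same argument and is fine; the observation that consecutive rhombi of a strip lie on opposite sides of their common edge, forcing all the transverse side vectors $c_i$ into one open half-plane, is exactly the monotonicity the paper uses, and the non-recrossing of two strips then follows (the paper's version — the angle between $ia_{S'}$ and $a_S$ is less than $\pi/2$ at the first crossing, so $S'$ cannot return to the other side — is cleaner than your proposed winding argument around the enclosed disk, but both work).

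The \emph{sufficiency} half, however, has a genuine gap at your step (a), and it is precisely the step that constitutes the theorem. You propose to "extract a consistent system of slopes $u_S$" from the fact that the strips form "an arrangement of pseudoline type." This cannot be waved through: pseudoline arrangements are in general not stretchable (not realizable by straight lines), so the existence of a global assignment of directions to strips that is simultaneously compatible with the prescribed cyclic order of edges at every vertex is not a formal consequence of the two combinatorial conditions plus general position — it requires the actual construction of \cite{KeSch05} (and, for infinite quad-graphs, an additional exhaustion or compactness argument). Step (c) likewise leans on (a): injectivity of the developed immersion needs the quantitative control on the angles that the specific choice of $u_S$ provides, not merely the combinatorial statement that two strips cross at most once. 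Since you ultimately fall back on "one may simply invoke the theorem of Kenyon and Schlenker," which is exactly what the paper does, the honest summary is that your proposal proves the easy direction and outlines, but does not prove, the hard one.
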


That planar parallelogram-graphs fulfill these two conditions was already noted by Kenyon in \cite{Ke02}. The reason for that is that any strip $S$ is monotone with respect to the direction $ia_S$: The coordinates of the endpoints of the edges parallel to $a_S$ are strictly increasing or strictly decreasing if they are projected to $ia_S$. Whether the projections are decreasing or increasing depends on the direction in which the faces of $S$ are passed through. Without loss of generality, we assume that the faces of $S$ are passed through in such a way that the projections of the corresponding coordinates are strictly increasing. For $Q \in S$, let $S^Q$ denote the semi-infinite part of $S$ starting in the quadrilateral $Q$ that passes through the faces of $S$ in the same order.

As a consequence, no strip crosses itself or is periodic. Furthermore, $S$ divides the complex plane into two unbounded regions, to one is $a_S$ pointing and to the other $-a_S$. When a distinct strip $S'$ crosses $S$, it enters a different region determined by $S$, say it goes to the one to which $a_S$ is pointing. Due to monotonicity, the angle between $ia_{S'}$ and $a_S$ is less than $\pi/2$. It follows that $S'$ cannot cross $S$ another time, since it would then go to the region $-a_S$ is pointing to, contradicting that the angle between $ia_{S'}$ and $-a_S$ is greater and not less than $\pi/2$.

In order to construct the discrete Green's function and the discrete Cauchy's kernels in Sections~\ref{sec:Green_asymptotics} and~\ref{sec:Cauchy_asymptotics}, we chose a particular directed path connecting two vertices (or a face and a vertex) by edges of the parallelogram-graph $\Lambda$. This path was monotone in one direction and any angle between two consecutive was less than $\pi$. The existence of such a path follows from the following lemma, generalizing a result of the first author, Mercat and Suris \cite{BoMeSu05} to general parallelogram-graphs. The proof bases on the same ideas.

\begin{lemma}\label{lem:cover}
Let $\Lambda$ be a parallelogram-graph and let $v_0 \in V(\Lambda)$ be fixed. For a directed edge $e$ of $\Lambda$ starting in $v_0$, consider the set $U_e \subset V(\Lambda)$ of all vertices to that $v_0$ can be connected by a directed path of edges whose arguments lie in $[\arg(e),\arg(e)+\pi)$.

Then, the union of all $U_e$, $e$ a directed edge starting in $v_0$, covers the whole quad-graph.

If there is $\alpha_0>0$ such that $\alpha\geq\alpha_0$ for all interior angles $\alpha$ of faces of $\Lambda$, the same statement holds true if $[\arg(e),\arg(e)+\pi)$ is replaced by $[\arg(e),\arg(e)+\pi-\alpha_0]$.
\end{lemma}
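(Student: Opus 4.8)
The plan is to argue by a covering/connectivity argument on the parallelogram-graph, essentially following the strip structure that has just been set up. First I would fix $v_0\in V(\Lambda)$ and let $e_1,\dots,e_n$ be the directed edges emanating from $v_0$, ordered by argument, so that the angular sectors $[\arg(e_k),\arg(e_k)+\pi)$ collectively ``sweep'' all directions as $k$ runs over $\{1,\dots,n\}$ (with the cyclic wrap-around). The key observation is that each $U_{e_k}$ is by definition closed under appending an edge whose argument lies in the prescribed half-open interval of length $\pi$, so $U_{e_k}$ is a ``monotone reachable set'' in the direction perpendicular to the interval. I would then show that $\bigcup_k U_{e_k} = V(\Lambda)$ by assuming the contrary: if some vertex $v$ is not reached, pick a shortest edge-path $\gamma$ from $v_0$ to $v$ (shortest in number of edges, which exists since $\Lambda$ is connected and locally finite) and look at the first vertex $v'$ on $\gamma$ that fails to lie in any $U_{e_k}$, together with the edge $e'$ of $\gamma$ entering $v'$.

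The heart of the argument is a local/geometric contradiction at that first failure. The predecessor $v''$ of $v'$ on $\gamma$ lies in some $U_{e_k}$, witnessed by a directed path $P$ from $v_0$ to $v''$ with all edge-arguments in $[\arg(e_k),\arg(e_k)+\pi)$. The edge $e''=v''v'$ has some argument $\theta$. If $\theta$ happens to lie in the same interval, we are done, so assume not; then I would rotate to the edge $e_j$ whose sector $[\arg(e_j),\arg(e_j)+\pi)$ does contain $\theta$, and show that the whole path $P$ followed by $e''$ can be ``re-sorted'' or re-routed so that all its edge-arguments lie in $[\arg(e_j),\arg(e_j)+\pi)$. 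This is where the parallelogram structure enters: via the strip decomposition, a monotone edge-path between two vertices can be taken with edges in any half-plane of directions that contains the displacement vector $v'-v_0$, because the relevant strips are exactly those whose common parallels appear among the edge directions, and each strip is crossed at most once (Proposition~\ref{prop:rhombic} and the monotonicity discussion preceding it). Concretely, I would use that the multiset of edge-directions along any minimal path from $v_0$ to a given vertex is determined by which strips separate them, and that these directions, being the $\pm a_S$ of a consistent family of pairwise-at-most-once-crossing strips, can always be arranged to fit inside a common half-plane of directions — indeed inside a sector of width $\pi-\alpha_0$ once interior angles are bounded below, since consecutive admissible edge-directions at $v_0$ differ by at least $\alpha_0$ and the strips crossed form a ``consistent'' (non-self-crossing) fan.

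For the sharper statement with $[\arg(e),\arg(e)+\pi-\alpha_0]$, I would run the same argument but track angles quantitatively: the lower bound $\alpha_0$ on all interior angles forces the directions of edges emanating from any single vertex to be separated by at least $\alpha_0$, and more importantly forces the set of strip-directions relevant to a minimal path $v_0\to v$ to be contained in a cone of opening at most $\pi-\alpha_0$ (otherwise two of those strips would have to cross twice, or a face would have an interior angle smaller than $\alpha_0$). Hence one can always choose the witnessing monotone path with all edge-arguments inside a closed interval of length $\pi-\alpha_0$, and in particular inside $[\arg(e),\arg(e)+\pi-\alpha_0]$ for a suitable emanating edge $e$.

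\textbf{Main obstacle.} The routine part is the set-up and the covering-by-sectors bookkeeping; the genuine difficulty is the re-routing step, i.e.\ proving that a monotone edge-path from $v_0$ to a vertex $v$ can always be chosen with all its edge-directions inside a prescribed half-plane (resp.\ $(\pi-\alpha_0)$-cone) of directions containing $v-v_0$. I expect this to rest entirely on the strip lemmas: one must show that the strips separating $v_0$ from $v$ have common parallels that can be consistently oriented so as to all lie in such a cone, and that traversing them in the monotone order yields an actual edge-path of $\Lambda$ — this is exactly the place where ``two distinct strips cross at most once'' and the monotonicity of strips with respect to $ia_S$ are indispensable, and where the bounded-angle hypothesis upgrades ``half-plane'' to ``$(\pi-\alpha_0)$-cone''. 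I would isolate this as the key combinatorial claim and prove it first, after which the covering statement follows formally.
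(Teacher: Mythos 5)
Your proposal correctly identifies the relevant structures (strips, their monotonicity in the direction $ia_S$, the at-most-one-crossing property), but it reduces the lemma to a ``key combinatorial claim'' that is essentially equivalent to the lemma itself and is never proved. That claim --- that the strips separating $v_0$ from $v$ can be consistently oriented so that all their common parallels lie in a half-plane (resp.\ a cone of opening $\pi-\alpha_0$) anchored at the argument of an edge emanating from $v_0$, and that an edge-path of $\Lambda$ realizing exactly these crossings exists --- is precisely what the paper \emph{deduces from} Lemma~\ref{lem:cover} in Section~\ref{sec:exp} (``the arguments of all poles are contained in an interval of length less than $\pi$''). The justifications you sketch for it do not hold up. First, two separating strips that do \emph{not} cross are not constrained by the interior-angle bound: only crossing strips share a parallelogram, which is what forces the angle between their common parallels into $[\alpha_0,\pi-\alpha_0]$; so ``otherwise two of those strips would have to cross twice or a face would have an angle smaller than $\alpha_0$'' produces no contradiction. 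Second, the assertion that a monotone path can be taken ``with edges in any half-plane of directions that contains the displacement vector'' is false: in $\mZ^2$ the displacement $1+i$ lies in half-planes of directions that exclude $+i$, yet every path from $0$ to $1+i$ must use the direction $+i$. There is also a gap in the induction itself: at the first failure $v'$ you need the predecessor $v''$ to lie in $U_{e_j}$ for the \emph{specific} $e_j$ whose sector contains $\arg(v''v')$, which is exactly what may fail; the ``re-sorting'' needed to repair this is again the full content of the lemma.

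For comparison, the paper argues one sector at a time: it builds the two extremal greedy paths $U_e^{\pm}$ (always choosing the smallest/largest admissible argument), takes a vertex $v$ between them of minimal combinatorial distance not in $U_e$, finds a neighbour $v'$ with $\arg(v'v)$ admissible (this is the only place the bound $\pi-\alpha_0$ enters, purely locally: the angular gaps between consecutive edges at $v$ are at most $\pi-\alpha_0$), and then proves by a case analysis --- using $U_e^{\pm}$ and the rays $v_0+ta_S$ as barriers --- that the strip $S$ through $v'v$ \emph{separates} $v_0$ from $v$; rerouting a shortest path along $S$ then makes $v'$ strictly closer to $v_0$ than $v$, contradicting minimality. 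If you want to pursue your route, you must prove the separation/half-plane statement for the family of separating strips from scratch, which amounts to carrying out essentially this case analysis; as it stands the proposal defers the entire difficulty to an unproven and incorrectly motivated claim.
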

\begin{proof}
Let us rescale the edges such that all of them have length one. By this, we change neither the combinatorics of $\Lambda$ nor the size of interior angles.

For a directed edge $e$ starting in $v_0$, let $U_e^-$ and $U_e^+$ denote the (directed) paths on $\Lambda$ starting in $v_0$, obtained by choosing the directed edge with the least or largest argument in $[\arg(e),\arg(e)+\pi)$ (or $[\arg(e),\arg(e)+\pi-\alpha_0]$) at a vertex, respectively. We first show that all vertices in between $U_e^-$ and $U_e^+$ are contained in $U_e$. Then, it follows that $U_e$ is the conical sector with boundary $U_e^-$ and $U_e^+$.

Suppose the contrary, i.e., suppose that there is a vertex $v$ between $U_e^-$ and $U_e^+$ to which $v_0$ cannot be connected by a directed path of edges whose arguments lie all in the interval $[\arg(e),\arg(e)+\pi)$ (or $[\arg(e),\arg(e)+\pi-\alpha_0]$). Let the combinatorial distance between $v_0$ and $v$ be minimal among all such vertices.

In the case that interior angles of rhombi are bounded by $\alpha_0$ from below, they are bounded from above by $\pi-\alpha_0$. Hence, there is a vertex $v'$ adjacent to $v$ such that the argument of the directed edge $v'v$ lies in $[\arg(e),\arg(e)+\pi-\alpha_0]$. Even if interior angles are not uniformly bounded, $v'$ can be chosen in such a way that the argument of $v'v$ lies in $[\arg(e),\arg(e)+\pi)$. By construction, $v'$ is not in $U_e$, but still between $U_e^-$ and $U_e^+$. Let us look at the strip $S$ passing through $v'v$. Suppose that the common parallel $a_S$ points from $v'$ to $v$.

If $S$ intersects $U_e^-$ or $U_e^+$, an edge parallel to $a_S$ is contained in $U_e^-$ or $U_e^+$, respectively. By construction, $v_0$ and $v'$ then lie on the same side of the strip $S$.

If $S$ does neither intersect $U_e^-$ nor $U_e^+$, it is completely contained in the left half space determined by the oriented line $v_0+te$, $t \in \mR$, as $U_e^-$ and $U_e^+$ are. Suppose $S$ intersects the ray $v_0+ta_S$, $t\geq 0$. Again, it follows that $v_0$ and $v'$ lie on the same side of $S$.

It remains the case that $S$ neither intersects $U_e^-$, $U_e^+$, nor the ray $v_0+ta_S$, $t\geq 0$. Consider the quadrilateral area $R$ in between the parallels $v_0+ta_S$, $v'+ta_S$ and $v_0+te$, $v'+te$, $t\in \mR$. By assumption, the semi-infinite part of $S$ that starts with the edge $v'v$ and then goes into $R$ does not intersect an edge of $R$ incident to $v_0$, and by monotonicity, it does not intersect $v'+ta_S$ again. Now, $\Lambda$ is locally finite, such that only finitely many quadrilaterals of $S$ are inside $P$. Thus, $S$ leaves $P$ on the edge $v'+te$, $t\in \mR$, and it follows that $S$ separates $v_0$ and $v$.

So in any case, $S$ separates $v_0$ from $v$. Any shortest path $P$ connecting both points has to pass through $S$. Let $w$ be the first point of $P$ that lies on the same side of $S$ as $v$ does. Any strip passing through an edge on the shortest path connecting $w$ and $v$ on $S$ has to intersect $P$ as well. It follows that replacing all edges of $P$ on the same side of $S$ as $v$ by the path from $w$ to $v$ does not change its length. But then, $v'$ is combinatorially nearer to $v_0$ than $v$, contradiction.

Finally, we can cyclically order the directed edges starting in $v_0$ according to their slopes. Then, the sectors $U_e$ are interlaced, i.e., $U_e$ contains both $U_{e_-}^+$ and $U_{e_+}^-$, where $e_-,e,e_+$ are consecutive according to the cyclic order. As a consequence, the union of all these $U_e$ covers the whole of $V(\Lambda)$.
\end{proof}

To perform our computations in Sections~\ref{sec:Green_asymptotics} and~\ref{sec:Cauchy_asymptotics}, we needed not only that the interior angles were bounded, but also that the ratio of side lengths of all parallelograms were bounded. For general quad-graphs, we used instead boundedness of side lengths to show in Theorem~\ref{th:harmonic_asymptotics} of Section~\ref{sec:general} that biconstant functions are the only discrete harmonic function whose difference functions on $V(\Gamma)$ and $V(\Gamma^*)$ have asymptotics $o(v^{-1/2})$. When the side lengths are bounded, the ratio of side lengths is trivially bounded as well. Under the assumption of bounded interior angles, the converse is true for parallelogram-graphs.

\begin{proposition}\label{prop:ratio_bounded}
Let $\Lambda$ be a parallelogram-graph and assume that there are $\alpha_0,q_0>0$ such that $\alpha\geq\alpha_0$ and $e/e'\geq q_0$ for all interior angles $\alpha$ and two side lengths $e,e'$ of any parallelogram of $\Lambda$. Then, there exist positive numbers $E_1>E_0$ such that $E_1\geq e \geq E_0$ for all edge lengths $e$.
\end{proposition}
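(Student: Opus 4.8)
The plan is to translate the claim about edge lengths into a claim about strips, and then to show that the ``crossing graph'' of strips has bounded diameter. Every edge of $\Lambda$ lies on exactly one strip $S$, and all edges crossed by $S$ equal $\pm a_S$, hence have the common length $|a_S|$; so the set of edge lengths of $\Lambda$ is exactly $\{\,|a_S| : S \text{ a strip}\,\}$ and it suffices to bound the numbers $|a_S|$ away from $0$ and $\infty$. The two strips $S,S'$ meeting at a parallelogram $Q$ have $a_S$ and $a_{S'}$ as the two side vectors of $Q$, so the hypothesis gives $q_0 \le |a_S|/|a_{S'}| \le q_0^{-1}$: crossing strips have lengths differing by at most the factor $q_0^{-1}$. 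Therefore, if $\mathcal G$ denotes the graph whose vertices are strips and whose edges join strips that cross, it is enough to prove that $\mathcal G$ has diameter bounded by some $N=N(\alpha_0)$; then $q_0^{N}\le |a_S|/|a_{S'}|\le q_0^{-N}$ for all strips, and fixing one edge as a reference yields admissible $E_0,E_1$.

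To control $\mathcal G$ I would use that strips are quasi-straight. Along a strip $S$, two consecutive crossed edges are opposite sides of a parallelogram, so the vector joining their midpoints is the adjacent side vector of that parallelogram; by the angle hypothesis this vector makes an angle at least $\alpha_0$ with $a_S$, and by monotonicity of $S$ with respect to $ia_S$ (established in the appendix before Lemma~\ref{lem:cover}) all these vectors have the same sign of $ia_S$-component. Hence the forward half of $S$ stays in a cone of half-angle $\tfrac\pi2-\alpha_0$ around the ray $\mR_{>0}\,ia_S$, and the backward half in the opposite cone, so $S$ is a bi-infinite curve whose two ends escape to infinity in directions within $\tfrac\pi2-\alpha_0$ of $\pm ia_S$. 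A planarity argument then gives a crossing criterion: if the parallels $a_S,a_{S'}$ of two strips are sufficiently non-parallel, with the angle between them bounded below in terms of $\alpha_0$, the four ends of the two curves alternate around the circle at infinity, so the curves must cross.

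It then remains to exhibit boundedly many reference strips which are crossed by every strip. Around a fixed face $Q_*$ one has bounded vertex degrees ($\deg v\le 2\pi/\alpha_0$, since the interior angles of the faces at $v$ sum to $2\pi$ and each is at least $\alpha_0$), and from this one can pick strips $R_1,\dots,R_M$, with $M=M(\alpha_0)$, whose directions $\arg a_{R_k}$ are spread over $[0,\pi)$ finely enough that every strip $T$ has $\arg a_T$ transversal, in the quantitative sense above, to at least one $R_k$; by the crossing criterion $T$ crosses that $R_k$. This gives $\operatorname{diam}\mathcal G\le 2M+O(1)$ and finishes the proof. The main obstacle is exactly this last geometric step: one must rule out a bi-infinite quasi-straight strip that avoids all of $R_1,\dots,R_M$ by being trapped in a region ``too thin'' to contain such a curve, and for this one genuinely needs both the lower bound on interior angles (which forces quasi-straightness) and the fact that $\Lambda$ is a locally finite decomposition of the whole plane, so that every strip is bi-infinite and splits the plane into two unbounded pieces.
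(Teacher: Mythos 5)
Your opening reduction is sound and is in fact the same one the paper uses implicitly: every edge lies on a unique strip, all edges crossed by a strip $S$ have the common length $|a_S|$, two strips crossing at a parallelogram $Q$ have $a_S,a_{S'}$ as the two side vectors of $Q$, so the hypothesis $e/e'\geq q_0$ bounds the length ratio of crossing strips, and it suffices to connect any two strips by a chain of boundedly many pairwise-crossing strips. The fatal problem is the ``crossing criterion'' on which your diameter bound rests: it is false that two strips whose common parallels make a sufficiently large angle must cross. The paper's own remark at the end of the appendix supplies a counterexample. Take $n=8$ rays from the origin at angles $k\pi/4$ and tile the sector between $r_k$ and $r_{k+1}$ like the positive quadrant of the lattice spanned by the unit vectors $r_k,r_{k+1}$; here $\alpha_0=\pi/4$. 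Edges parallel to $r_j$ occur only in the two sectors adjacent to the ray $r_j$, so a strip with $a_S=r_0=1$ is confined to sectors $7$ and $0$ while a strip with $a_{S'}=r_2=i$ is confined to sectors $1$ and $2$; they share no face and hence never cross, although their common parallels are \emph{perpendicular}, i.e., maximally transversal. Since no angular threshold can exceed $\pi/2$, no quantitative version of your criterion can hold. Your cone argument also cannot deliver the claimed alternation of ends at infinity: each of the four ends is only confined to an arc of directions of width $\pi-2\alpha_0$, and for $\alpha_0<\pi/4$ four such arcs can never be made pairwise disjoint, whatever the angle between $a_S$ and $a_{S'}$.

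This is not a repairable detail but the entire content of the proposition: the step you defer as ``the main obstacle'' is exactly what the paper's proof consists of. There, crossings are never deduced from transversality of directions; instead, starting from a strip $S_0$ through $Q'$, one iteratively chooses $S_{k+1}$ \emph{through a face of} $S_k$ (so the crossing is guaranteed by construction), with $\arg(a_{S_{k+1}})$ strictly increasing, and a separation/monotonicity argument (the strip $S_{k+1}$ is chosen to avoid the segments $[x,y_0]$ and $[x,y_k]$, and a semi-infinite strip trapped in a cone not containing its direction of monotonicity yields a contradiction) shows that after at most $\lceil 2\pi/\alpha_0\rceil$ steps the chain closes into a cycle of at most $\lfloor 2\pi/\alpha_0\rfloor$ strips surrounding the center $x$ of $Q$; a final refinement produces a chain of at most $\lceil\lfloor 2\pi/\alpha_0\rfloor/2\rceil$ pairwise-consecutively-crossing strips from $Q'$ to $Q$. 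If you want to complete your argument, you would need to replace the false transversality criterion by such a constructive crossing mechanism, at which point you essentially reproduce the paper's proof.
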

\begin{proof}
Let $Q' \in V(\Diamond)$ be fixed with edge lengths $e_1\geq e_0$ and let $Q \in V(\Diamond)$ be another parallelogram with center $x$. In the following, we construct a sequence of $n$ strips such that any two consecutive strips are crossing each other, the first one contains $Q'$, the last one contains $Q$, and $n \leq N:= \lceil \lfloor 2\pi/\alpha_0 \rfloor/2\rceil$. Then, it follows that the side lengths of $Q$ are bounded by $E_0:=e_0 q_0^N$ and $E_1:=e_1 q_0^{-N}$.

Let $S_0$ be a strip containing $Q'$. If $Q \in S_0$, we are done. Otherwise, we choose the common parallel $a_{S_0}$ in such a way that $x$ lies in the region $-a_{S_0}$ is pointing to. Since $S_0$ is monotone in the direction $ia_{S_0}$ and interior angles are uniformly bounded, the ray $x+ta_{S_0}$, $t>0$, intersects $S$ in exactly one line segment. Let $y_0$ be the first intersection point and $Q_0$ a quadrilateral of $S$ containing $y_0$.

Because $\Lambda$ is locally finite, the line segment connecting $x$ and $y_0$ intersects only finitely many parallelograms. Through any such parallelogram at most two strips are passing. Thus, only a finite number of strips intersect this line segment. Therefore, we can choose a strip $S_1$ intersecting $S_0^{Q_0}$ in a parallelogram $Q_{0,1}$ such that $S_1$ does not contain $Q$ and does not intersect the line segment connecting $x$ and $y_0$. Moreover, we require that $Q' \notin S_0^{Q_{0,1}}$. Now, choose the common parallel $a_{S_1}$ of $S_1$ in such a way that $\pi+\arg(a_{S_0})>\arg(a_{S_1})>\arg(a_{S_0})$. By construction, $x$ lies in the region $-a_{S_1}$ is pointing to. Note that $S_1^{Q_{0,1}}$ cannot cross $S_0$ a second time.

Suppose we have already constructed the strip $S_k$ with common parallel $a_{S_k}$, $k>0$, and $x$ lies in the region $-a_{S_k}$ is pointing to. $S_k$ shall not intersect the line segments connecting $x$ and $y_0$, or connecting $x$ and $y_{k-1}$. Moreover, assume that the semi-infinite part $S_k^{Q_{k-1,k}}$ starting in the intersection $Q_{k-1,k}$ of $S_k$ with $S_{k-1}$ does not cross $S_0$.

Let $y_k$ be the first intersection of the ray $x+ta_{S_k}$, $t>0$, with a quadrilateral $Q_k \in S_k$. By the same arguments as above, there exists a strip $S_{k+1}$ intersecting $S_k^{Q_{k-1,k}}\cap S_k^{Q_k}$ that does not contain $Q$ and does not intersect the line segments connecting $x$ and $y_0$ or $x$ and $y_{k}$. Choose its common parallel $a_{S_{k+1}}$ in such a way that $\pi+\arg(a_{S_k})>\arg(a_{S_{k+1}})>\arg(a_{S_k})$. By construction, $x$ lies in the region $-a_{S_{k+1}}$ is pointing to. If the semi-infinite part $S_{k+1}^{Q_{k,k+1}}$ starting in the intersection $Q_{k,k+1}$ with $S_{k}$ does not cross $S_0$, we continue this procedure.

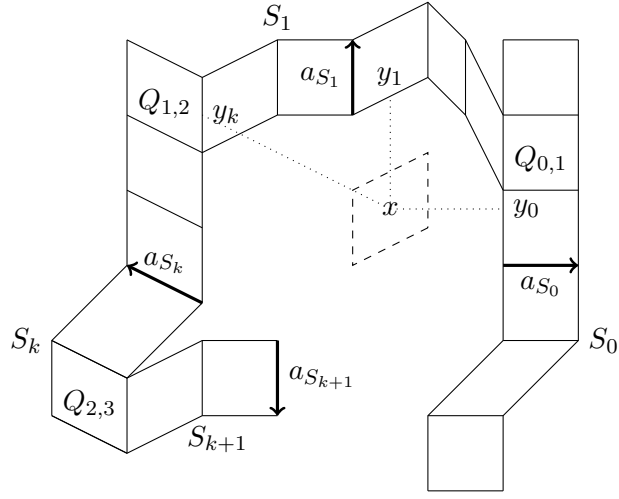
\begin{figure}[htbp]
\begin{center}
\beginpgfgraphicnamed{ratio_bounded}
\begin{tikzpicture}
\draw (1,-3) -- (1,-2) -- (2,-1) -- (2,3);
\draw (2,-3) -- (2,-2) -- (3,-1) -- (3,3);
\draw (1,-3) -- (2,-3);
\draw (1,-2) -- (2,-2);
\draw (2,-1) -- (3,-1);
\draw[very thick,->] (2,0) --node[midway,below] {$a_{S_0}$} (3,0);
\draw (2,1) -- (3,1);
\draw (2,2) -- (3,2);
\draw (2,3) -- (3,3);
\draw (1.5,2) -- (1.5,3);
\draw (1,2.5) -- (1,3.5);
\draw[very thick,->] (0,2) --node[midway,left] {$a_{S_1}$} (0,3);
\draw (-1,2) -- (-1,3);
\draw (-2,1.5) -- (-2,2.5);
\draw (-3,2) -- (-3,3);
\draw (2,1) -- (1.5,2) -- (1,2.5) -- (0,2) -- (-1,2) -- (-2,1.5) -- (-3,2);
\draw (2,2) -- (1.5,3) -- (1,3.5) -- (0,3) -- (-1,3)-- (-2,2.5)-- (-3,3);
\draw (-3,2) -- (-3,0) -- (-4,-1) -- (-4,-2);
\draw (-2,1.5) -- (-2,-0.5) -- (-3,-1.5) -- (-3,-2.5);
\draw (-3,1) -- (-2,0.5);
\draw[very thick,->] (-2,-0.5) --node[midway,above] {$a_{S_k}$} (-3,0);
\draw (-4,-1) -- (-3,-1.5);
\draw (-4,-2) -- (-3,-2.5);
\draw[very thick,->] (-1,-1) --node[midway,right] {$a_{S_{k+1}}$} (-1,-2);
\draw (-2,-2) -- (-2,-1);
\draw(-1,-2) -- (-2,-2) -- (-3,-2.5) -- (-4,-2);
\draw (-1,-1) -- (-2,-1)-- (-3,-1.5)-- (-4,-1);

\coordinate[label=right:$S_0$] (S0) at (3,-1);
\coordinate[label=above:$S_1$] (S1) at (-1,3);
\coordinate[label=left:$S_k$] (S2) at (-4,-1);
\coordinate[label=below:$S_{k+1}$] (S3) at (-1.8,-2);

\coordinate[label=center:$Q_{0,1}$] (z01) at (2.5,1.4);
\coordinate[label=center:$Q_{1,2}$] (z12) at (-2.5,2.15);
\coordinate[label=center:$Q_{2,3}$] (z23) at (-3.5,-1.85);

\draw[dashed] (1,0.5) -- (0,0) -- (0,1) -- (1,1.5) -- (1,0.5);
\coordinate[label=center:$x$] (x) at (0.5,0.75);
\coordinate[label=right:$y_0$] (y0) at (2,0.75);
\coordinate[label=above:$y_1$] (y1) at (0.5,2.25);
\coordinate[label=right:$y_k$] (y2) at (-2,2);
\draw[dotted] (x) -- (y0);
\draw[dotted] (x) -- (y1);
\draw[dotted] (x) -- (y2);
\end{tikzpicture}
\endpgfgraphicnamed
\caption{Schematic picture of the proof of Proposition~\ref{prop:ratio_bounded}}
\label{fig:ratio_bounded}
\end{center}
\end{figure}

After at most $l:=\lceil 2\pi/\alpha_0 \rceil$ steps, we end up with a strip $S_l$ such that $S_l^{Q_{l-1,l}}$ intersects $S_0$. Indeed, let us suppose the contrary, that is, let us suppose that all $S_2^{Q_{1,2}},\ldots,S_l^{Q_{l-1,l}}$ do not cross $S_0$.

By assumption, $\arg(a_{S_k})+\pi-\alpha_0\geq\arg(a_{S_{k+1}})\geq\arg(a_{S_k})+\alpha_0$. It follows that the first $j$ such that $\arg(a_{S_j})$ is greater or equal than $\arg(a_{S_0})+2\pi$ satisfies $j\leq\lceil 2\pi/\alpha_0 \rceil$. In addition, we have $\arg(a_{S_j})<\arg(a_{S_0})+3\pi-\alpha_0$.

By construction, $S_j$ does not intersect the line segment connecting $x$ and $y_{j-1}$. Moreover, $S_j^{Q_{j-1,j}}$ cannot cross $S_{j-1}$ a second time. It follows that $S_j^{Q_{j-1,j}}$ cannot intersect the ray $x+ta_{S_{j-1}}$, $t>0$.

Also, $S_j^{Q_{j-1,j}}$ does neither cross $S_0$ nor does it intersect the line segment connecting $x$ and $y_{0}$, so it does not intersect the ray $x+ta_{S_0}$, $t>0$. Thus, $S_j^{Q_{j-1,j}}$ is contained in the cone with tip $x$ spanned by $a_{S_{j-1}}$ and $a_{S_0}$ (with angle less than $\pi$). This contradicts the monotonicity of $S_j^{Q_{j-1,j}}$ into the direction $ia_{S_j}$, because the ray $x+ta_{S_j}$, $t>0$, is not contained in the interior of the cone above.

In summary, we found a cycle of $m$ strips $S_0,S_1,\ldots,S_{m-1}$ surrounding $x$, where $m\leq\lfloor 2\pi/\alpha_0 \rfloor+1$. Actually, $m\leq\lfloor 2\pi/\alpha_0 \rfloor$, because the $a_{S_k}$ are cyclically ordered. Since only finitely many strips intersect the strip $S_0$ in between $Q'$ and $Q_{0,1}$, we can assume that $Q'$ is contained in $S_0^{Q_{m-1,0}}$.

These $m$ strips determine a bounded region $x$ is contained in. If $Q'\neq Q_{m-1,0}$, we look at the semi-infinite part of the strip $\tilde{S}_0$ different from $S_0$ that passes through $Q'$ and goes into the interior of the bounded region above. It has to intersect one of the strips $S_1,\ldots,S_{m-1}$, say $S_k$. Then, $S_0,\ldots,S_k,\tilde{S}_0$ or $\tilde{S}_0,S_k,\ldots,S_{m-1},S_0$ determine a bounded region $x$ is contained in ($Q$ may be an element of $\tilde{S}_0$). Clearly, they are at most $\lfloor 2\pi/\alpha_0 \rfloor$ such strips, and $Q'$ lies on an intersection.

If $Q \notin \tilde{S}_0$, a strip $S_Q$ containing $Q$ has to cross two different strips of the cycle due to local finiteness. In the same way as above, we can find a cycle of at most $m'\leq\lfloor 2\pi/\alpha_0 \rfloor$ strips $S'_0,S'_1,\ldots,S'_{m'-1}$ such that $Q$ lies on one of the strips, say $S'_k$, and the intersection of $S'_0$ and $S'_{m'-1}$ is $Q'$. If $k \leq \lceil m'/2\rceil$, we choose the sequence of strips $S'_0,S'_1,\ldots,S'_k$; otherwise, we take $S'_{m'-1}, S'_{m'-2},\ldots,S'_k$. Any two consecutive strips are crossing each other, $Q'$ is on the first strip, $Q$ on the last one, and there are at most $\lceil \lfloor 2\pi/\alpha_0 \rfloor/2\rceil$ of them.
\end{proof}

\begin{remark}
In general, the bound $\lceil \lfloor 2\pi/\alpha_0 \rfloor/2\rceil$ in the proof is optimal. Indeed, consider $n$ rays emanating from 0 such that the angle between any two neighboring rays is $2\pi/n$. In each of the $n$ segments, choose the quad-graph combinatorially equivalent to the positive octant of the integer lattice that is spanned by two consecutive rays. For example, if $n=4$, we obtain $\mZ^2$. Then, any strip passes through exactly two adjacent segments, and $\lceil n/2 \rceil$ is the optimal bound.
\end{remark}
\end{appendix}

\section*{Acknowledgments}

The authors thank Richard Kenyon, Christian Mercat, and Mikhail Skopenkov for fruitful discussions and helpful suggestions.

\newpage
\bibliographystyle{plain}
\bibliography{Discrete_complex_analysis}

\end{document}